\newtheorem{theorem}{Theorem}
\theoremstyle{plain}
\newtheorem{corollary}[theorem]{Corollary}
\newtheorem{definition}[theorem]{Definition}
\newtheorem{example}{Example}
\newtheorem{lemma}[theorem]{Lemma}
\newtheorem{proposition}[theorem]{Proposition}
\newtheorem{remark}[theorem]{Remark}
\numberwithin{equation}{section}
\numberwithin{theorem}{section}  
\begin{document}
\title[3D Quintic NLS as Mean-field Limit]{The Derivation of the $\mathbb{T}%
^{3}$ Energy-critical NLS from Quantum Many-body Dynamics}
\author{Xuwen Chen}
\address{Department of Mathematics, University of Rochester, Rochester, NY
14627}
\email{chenxuwen@math.umd.edu}
\urladdr{http://www.math.rochester.edu/people/faculty/xchen84/}
\author{Justin Holmer}
\address{Department of Mathematics, Brown University, 151 Thayer Street,
Providence, RI 02912}
\email{holmer@math.brown.edu}
\urladdr{http://www.math.brown.edu/\symbol{126}holmer/}
\date{V3 for Invent. Math., 01/14/2019}
\subjclass[2010]{Primary 35Q55, 35A02, 81V70; Secondary 35A23, 35B45, 81Q05.}
\keywords{Energy-critical NLS, Uniform Frequency Localization in Time,
Quantum Many-body Dynamic, Mean-field Limit, Multilinear Estimates}

\begin{abstract}
We derive the 3D energy critical quintic NLS from quantum many-body dynamics
with $3$-body interaction in the $\mathbb{T}^{3}$ (periodic) setting. Due to
the known complexity of the energy critical setting, previous progress was
limited in comparison to the $2$-body interaction case yielding energy
subcritical cubic NLS. Previously, the only result for the 3D energy
critical case was \cite{HTX}, which proved the uniqueness part of the
argument in the case of small solutions. In the main part of this paper, we
develop methods to prove the convergence of the BBGKY hierarchy to the
infinite Gross-Pitaevskii (GP) hierarchy, and separately, the uniqueness of
large GP solutions. Since the trace estimate used in the previous proofs of
convergence is the false endpoint trace estimate in our setting, we instead
introduce a new frequency interaction analysis and apply the finite
dimensional quantum de Finetti theorem. For the large solution uniqueness
argument, we discover the new HUFL (hierarchical uniform frequency
localization) property for the GP hierarchy and use it to prove a new type
of uniqueness theorem. The HUFL property reduces to a new statement even for
NLS. With the help of \cite{CKSTT,IP} which proved the global well-posedness
for the quintic NLS, this new uniqueness theorem establishes global
uniqueness.
\end{abstract}

\maketitle
\tableofcontents

\section{Introduction\label{sec:Introduction}}

The energy-critical NLS in three-dimension has been studied in \cite%
{CKSTT,IP}, but it has been an open problem for a while to prove that the 3D
energy-critical nonlinear Schr\"{o}dinger equation (NLS)%
\begin{equation}
i\partial _{t}\phi =-\Delta \phi +\left\vert \phi \right\vert ^{4}\phi \text{
in }\mathbb{R}\times \Lambda \text{,}  \label{eqn:3d Quintic with 1 Coupling}
\end{equation}%
where $\Lambda =\mathbb{R}^{3}$ or $\mathbb{T}^{3}$, is the mean-field limit
of the $N$-body Bosonic Schr\"{o}dinger equation with three-body interaction%
\begin{equation}
i\partial _{t}\psi _{N}=H_{N}\psi _{N}\text{ in }\mathbb{R}\times \Lambda
^{N}  \label{eqn:N-body Schrodinger}
\end{equation}%
where $\psi _{N}\in L_{s}^{2}(\Lambda ^{N})$ and the $N$-body Hamiltonian is
given by%
\begin{equation}
H_{N}=\sum_{j=1}^{N}\left( -\Delta _{x_{j}}\right) +\frac{1}{N^{2}}%
\sum_{1\leqslant i<j<k\leqslant N}V_{N}(x_{i}-x_{j},x_{i}-x_{k})
\label{Hamiltonian:quintic N-body}
\end{equation}%
where $V_{N}(x,y)=N^{6\beta }V(N^{\beta }(x_{i}-x_{j}),N^{\beta
}(x_{i}-x_{k}))$ and $V\geqslant 0$.\footnote{%
When $\Lambda =\mathbb{R}^{3},$ $V_{N}$ is just a rescaling of the
three-body interaction $V$. When $\Lambda =\mathbb{T}^{3}$, $V_{N}$ should
be understood as the periodic extension of the rescaling of $V$ with $V$
being compactly supported on $\mathbb{R}^{3}\times \mathbb{R}^{3}$.} That
is, if we adopt the notation $\mathbf{x}_{k}=\left(
x_{1},x_{2},...,x_{k}\right) \in \Lambda ^{k}$ and define the $k$-particle
marginal density associated with $\psi _{N}$ by%
\begin{eqnarray}
\gamma _{N}^{(k)} &=&\int_{\Lambda ^{N-k}}\bar{\psi}_{N}(\mathbf{x}_{k},%
\mathbf{x}_{N-k})\psi _{N}(\mathbf{x}_{k}^{\prime },\mathbf{x}_{N-k})d%
\mathbf{x}_{N-k}  \label{def:marginals} \\
&=&\limfunc{Tr}\nolimits_{x_{k+1},...,x_{N}}\left\vert \psi
_{N}\right\rangle \left\langle \psi _{N}\right\vert  \notag
\end{eqnarray}%
then the problem asks to prove $\lim_{N\rightarrow \infty }\gamma
_{N}^{(k)}(t)=\left\vert \phi (t)\right\rangle \left\langle \phi
(t)\right\vert ^{\otimes k}$ if $\lim_{N\rightarrow \infty }\gamma
_{N}^{(1)}(0)=\left\vert \phi _{0}\right\rangle \left\langle \phi
_{0}\right\vert $ and $\phi $ solves (\ref{eqn:3d Quintic with 1 Coupling})
subject to initial datum $\phi _{0}$. In this paper, we answer the open
question for the $\Lambda \mathbb{=T}^{3}$ case, while the $\mathbb{R}^{3}$
case is still open. We note that for the $\mathbb{T}^{3}$

\begin{theorem}
\label{Thm:Main1Introuction}Let $\Lambda \mathbb{=T}^{3}$. Assume the
three-body interaction $V\ $is a nonnegative compactly supported smooth
function on $\mathbb{R}^{3}\times \mathbb{R}^{3}$ with the property that $%
V\left( x,y\right) =V\left( y,x\right) $. Let $\psi _{N}\left( t,\mathbf{x}%
_{N}\right) $ be the $N-body$ Hamiltonian evolution $e^{itH_{N}}\psi _{N}(0)$%
, with the Hamiltonian given by (\ref{Hamiltonian:quintic N-body}) for some $%
\beta \in \left( 0,\frac{1}{9}\right) $. Let $\left\{ \gamma
_{N}^{(k)}\right\} $ be the family of marginal densities associated with $%
\psi _{N}$. Suppose that the initial datum $\psi _{N}(0)$ satisfies the
following conditions:

(a) the initial datum is normalized, that is 
\begin{equation*}
\left\Vert \psi _{N}(0)\right\Vert _{L^{2}}=1,
\end{equation*}

(b) the initial datum is asymptotically factorized, in the sense that,%
\begin{equation}
\lim_{N\rightarrow \infty }\limfunc{Tr}\left\vert \gamma
_{N}^{(1)}(0)-\left\vert \phi _{0}\right\rangle \left\langle \phi
_{0}\right\vert \right\vert =0,  \label{eqn:asym factorized}
\end{equation}%
for some one particle wave function $\phi _{0}\in H^{1}\left( \mathbb{T}%
^{3}\right) $.

(c) initially, we have bounded energy per particle, 
\begin{equation}
\sup_{N}\frac{1}{N}\left\langle \psi _{N}(0),H_{N}\psi _{N}(0)\right\rangle
<\infty .  \label{Condition:FiniteKineticOnManyBodyInitialData}
\end{equation}%
Then $\forall t\geqslant 0$, $\forall k\geqslant 1$, we have the convergence
in the trace norm or the propagation of chaos%
\begin{equation*}
\lim_{N\rightarrow \infty }\limfunc{Tr}\left\vert \gamma
_{N}^{(k)}(t)-\left\vert \phi (t)\right\rangle \left\langle \phi
(t)\right\vert ^{\otimes k}\right\vert =0,
\end{equation*}%
where $\phi (t,x)$ solves the 3D defocusing quintic / energy-critical NLS%
\begin{eqnarray}
i\partial _{t}\phi &=&-\Delta \phi +b_{0}\left\vert \phi \right\vert
^{4}\phi \text{ in }\mathbb{R}\times \mathbb{T}^{3},
\label{equation:TargetQuinticNLS} \\
\phi (0,x) &=&\phi _{0}(x),  \notag
\end{eqnarray}%
with the coupling constant $b_{0}=\int_{\mathbb{T}^{3}\times \mathbb{T}%
^{3}}V(x,y)dxdy.$
\end{theorem}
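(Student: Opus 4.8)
The plan is to run the by-now standard three-step scheme for deriving NLS from quantum many-body dynamics — a priori energy bounds, compactness and convergence of the BBGKY hierarchy to the Gross--Pitaevskii (GP) hierarchy, and uniqueness for the GP hierarchy — but with the two substantive steps replaced by arguments adapted to the energy-critical scaling and the compact geometry, since the collapsing/trace estimate underlying the classical treatment is the \emph{false} endpoint here.

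First I would establish uniform-in-$N$ regularity of the marginals. Since $V\geqslant 0$, the Hamiltonian $H_N$ controls a multiple of $\sum_j(-\Delta_{x_j})$; combining this coercivity with conservation of the many-body energy $\la\psi_N(t),H_N\psi_N(t)\ra=\la\psi_N(0),H_N\psi_N(0)\ra$ and hypothesis (c) — and propagating the higher-order energy estimates through the hierarchy, which is where the restriction $\beta\in(0,\tfrac19)$ is used to keep the three-body term subordinate to the kinetic energy — one gets $\sup_t\tr\!\big(S^{(k)}\gamma_N^{(k)}(t)\,S^{(k)}\big)\leqslant C^k$ with $S^{(k)}=\prod_{j=1}^k\la\nabla_{x_j}\ra$. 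Banach--Alaoglu together with an Arzel\`{a}--Ascoli argument in the weak-$*$ trace topology then produce, along a subsequence, limits $\gamma_N^{(k)}\rightharpoonup\gamma_\infty^{(k)}$ in $C\big([0,T];\text{trace-class operators with weak-}*\text{ topology}\big)$ for every $k$.

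Second, I would show that every such limit point $\{\gamma_\infty^{(k)}\}$ solves the (suitably normalized) quintic GP hierarchy
\begin{equation*}
i\partial_t\gamma_\infty^{(k)}=\sum_{j=1}^k[-\Delta_{x_j},\gamma_\infty^{(k)}]+b_0\sum_{j=1}^k B_{j;k+1,k+2}\,\gamma_\infty^{(k+2)},
\end{equation*}
where $B_{j;k+1,k+2}$ collapses $x_{k+1}=x_{k+2}=x_j$ (and the primed variables) and $b_0=\int\!\!\int V$. In the BBGKY equations the pure-$k$-particle interaction carries a factor $O(k^3N^{-2})$ and drops out, so the point is to pass to the limit in the term coupling $\gamma_N^{(k)}$ to $\gamma_N^{(k+2)}$. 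This is exactly where the Klainerman--Machedon board-game plus space-time collapsing estimate used to bound the Duhamel errors is unavailable. Instead I would decompose the interaction term according to which Fourier modes the rescaled potential $V_N$ excites — this is tractable precisely because $\beta$ is small, so $\widehat{V_N}(\xi,\eta)=\widehat{V}(N^{-\beta}\xi,N^{-\beta}\eta)$ spreads only mildly — and, after truncating the states to finitely many low frequencies, apply the \emph{finite-dimensional} quantum de Finetti theorem (which is exact) to the truncated marginals, for which the diagonal restriction in $B$ is just evaluation of a fixed function; the high-frequency remainder is absorbed by the Step~1 bounds, and $V_N\to b_0\delta$ then identifies the limit with $b_0B\gamma_\infty^{(k+2)}$.

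Third, I would prove uniqueness for the GP hierarchy in a class large enough to contain both the limit points from Step~2 and the factorized family $\gamma^{(k)}(t)=|\phi(t)\ra\la\phi(t)|^{\otimes k}$, where $\phi$ is the global $H^1$ solution of (\ref{equation:TargetQuinticNLS}) furnished by \cite{IP} (and \cite{CKSTT} in the $\mathbb{R}^3$ case). Here too the standard GP uniqueness relies on the missing endpoint estimate, so instead I would establish the new HUFL (hierarchical uniform frequency localization) property: a GP solution in the relevant class enjoys, uniformly across all levels $k$, a frequency-localization-in-time estimate modeled on the one energy-critical NLS solutions satisfy, and this frequency-localized information is precisely what is needed to close the Duhamel iteration for the difference of two solutions without the endpoint collapsing estimate. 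Finally, uniqueness forces every subsequential limit to equal $|\phi(t)\ra\la\phi(t)|^{\otimes k}$, so the full sequence converges weak-$*$; since each $\gamma_N^{(k)}(t)$ is a nonnegative trace-one operator and its weak-$*$ limit is an orthogonal projection of trace one, testing against that rank-one projection gives $\la\phi^{\otimes k},\gamma_N^{(k)}(t)\,\phi^{\otimes k}\ra\to1$, whence $\tr\big|\gamma_N^{(k)}(t)-|\phi(t)\ra\la\phi(t)|^{\otimes k}\big|\to0$, the asserted propagation of chaos. I expect the main obstacle to be Step~3 — showing that HUFL is propagated by the GP flow (a hierarchy analogue of a delicate, and itself new, NLS statement) and is quantitatively strong enough to replace the endpoint estimate in the iteration — with the de Finetti/frequency-interaction analysis of Step~2 the second substantial departure from the classical argument.
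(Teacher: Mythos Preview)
Your three-step scheme --- energy bounds, then compactness/convergence via frequency analysis and the finite-dimensional quantum de Finetti theorem, then uniqueness via HUFL --- is exactly the paper's strategy.  There is, however, a genuine gap in Step~1 as you have written it.

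Hypothesis (c) gives only $\sup_N N^{-1}\langle\psi_N(0),H_N\psi_N(0)\rangle<\infty$, i.e.\ bounded energy \emph{per particle}.  From this and energy conservation you control $\tr S_1\gamma_N^{(1)}(t)S_1$, but not the higher-order quantities $\tr S^{(k)}\gamma_N^{(k)}(t)S^{(k)}$ for $k\geq 2$: those bounds require the stronger input $\langle\psi_N(0),H_N^k\psi_N(0)\rangle\leqslant C^kN^k$ for all $k$ (condition (c$'$) of Theorem~\ref{THM:Main Theorem}), after which an induction on powers of $H_N$ (Theorem~\ref{Thm:energy estimate}) yields the claimed $C^k$ bound.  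The paper does not attempt to prove Theorem~\ref{Thm:Main1Introuction} directly; it first reduces it to Theorem~\ref{THM:Main Theorem} via the standard smooth-approximation argument of \cite{E-S-Y2}, and then runs your Steps~1--3 under hypothesis (c$'$).  Your outline needs to insert this reduction before Step~1.

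Two smaller corrections.  First, the threshold $\beta<\tfrac19$ is not consumed in the energy estimate; the paper's induction on $H_N^k$ works for $\beta<\tfrac18$.  The restriction to $\beta<\tfrac19$ enters in Step~2, in the convergence of the key coupling term, through the error incurred when applying finite-dimensional de Finetti (of order $N^{6\beta}R^3/N$ with $R\gg N^\beta$).  Second, the compactness argument itself must also bypass the endpoint Sobolev inequality; the paper does so by recognizing part of the offending term $\tr J^{(k)}V_N\gamma_N^{(k+2)}$ as a piece of the conserved many-body energy and handling the remainder with a subcritical Sobolev bound after a frequency comparison --- a trick you do not mention, though it is lighter than the machinery in Steps~2 and~3.
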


The above theorem is well-known to be equivalent to the following theorem
using a smooth approximation argument.\footnote{%
See, for example, \cite{E-S-Y2}.}

\begin{theorem}
\label{THM:Main Theorem}Let $\Lambda \mathbb{=T}^{3}$. Assume the three-body
interaction $V\ $is a nonnegative compactly supported smooth function on $%
\mathbb{R}^{3}\times \mathbb{R}^{3}$ with the property that $V\left(
x,y\right) =V\left( y,x\right) $. Let $\psi _{N}\left( t,\mathbf{x}%
_{N}\right) $ be the $N-body$ Hamiltonian evolution $e^{itH_{N}}\psi _{N}(0)$%
, with the Hamiltonian given by (\ref{Hamiltonian:quintic N-body}) for some $%
\beta \in \left( 0,\frac{1}{9}\right) $. Let $\left\{ \gamma
_{N}^{(k)}\right\} $ be the family of marginal densities associated with $%
\psi _{N}$. Suppose that the initial datum $\psi _{N}(0)$ is normalized and
asymptotically factorized in the sense of (a) and (b) in Theorem \ref%
{Thm:Main1Introuction} and verifies the following energy condition:

(c') there is a $C>0$ independent of $N$ or $k$ such that 
\begin{equation}
\left\langle \psi _{N}(0),H_{N}^{k}\psi _{N}(0)\right\rangle \leqslant
C^{k}N^{k},\text{ }\forall k\geqslant 1,
\label{Condition:EnergyBoundOnManyBodyInitialData}
\end{equation}

Then $\forall t\geqslant 0$, $\forall k\geqslant 1$, we have the convergence
in the trace norm or the propagation of chaos%
\begin{equation*}
\lim_{N\rightarrow \infty }\limfunc{Tr}\left\vert \gamma
_{N}^{(k)}(t)-\left\vert \phi (t)\right\rangle \left\langle \phi
(t)\right\vert ^{\otimes k}\right\vert =0,
\end{equation*}%
where $\phi (t,x)$ is the solution to the 3D defocusing quintic /
energy-critical NLS (\ref{equation:TargetQuinticNLS}).
\end{theorem}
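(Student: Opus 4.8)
The strategy follows the established BBGKY$\to$GP hierarchy framework adapted to the three-body interaction and energy-critical setting. First I would derive the BBGKY hierarchy satisfied by the marginals $\{\gamma_N^{(k)}(t)\}$: differentiating \eqref{def:marginals} in time and using \eqref{eqn:N-body Schrodinger} produces a coupled system in which the collision term for $\gamma_N^{(k)}$ involves $\gamma_N^{(k+2)}$ (since the interaction is three-body, each collision couples two new particles, unlike the cubic case where only one new particle appears). Using the energy bound \eqref{Condition:EnergyBoundOnManyBodyInitialData}, which is propagated by the Hamiltonian flow since $[H_N,H_N]=0$, I would establish a priori bounds $\tr|S^{(k)}\gamma_N^{(k)}(t)| \le C^k$ uniformly in $N$ (here $S^{(k)}=\prod_j\langle\nabla_{x_j}\rangle\langle\nabla_{x_j'}\rangle$), which gives compactness of the sequence $\{\Gamma_N(t)=(\gamma_N^{(k)}(t))_k\}$ in a suitable weak-* topology on the space of marginal sequences. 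Any limit point $\Gamma_\infty(t)=(\gamma_\infty^{(k)}(t))_k$ is then shown to satisfy the infinite GP hierarchy
\begin{equation*}
i\partial_t\gamma^{(k)} = \sum_{j=1}^k[-\Delta_{x_j},\gamma^{(k)}] + b_0\sum_{j=1}^k B_{j,k+1,k+2}\gamma^{(k+2)},
\end{equation*}
where $B_{j,k+1,k+2}$ is the appropriate contraction/restriction operator with coupling constant $b_0=\int V$.

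The crucial and delicate step is passing to the limit in the collision term — this is where the excerpt's abstract indicates the main innovation. In the subcritical cubic case one controls the collision term via the Klainerman--Machedon-type trace/Strichartz estimates after a Duhamel expansion, but here the relevant endpoint trace estimate is \emph{false} in the $\mathbb{T}^3$ energy-critical regime. The plan, as signaled, is instead to (i) perform a frequency decomposition of the interaction potential and the marginals, carrying out a new "frequency interaction analysis" to isolate the resonant versus non-resonant contributions and to show that the rescaled potential $V_N$ converges (in the weak sense appropriate for the hierarchy) to $b_0\delta$, and (ii) invoke the \emph{finite dimensional} quantum de Finetti theorem (rather than the infinite-dimensional version) to reduce the analysis of $\gamma_N^{(k+2)}$ to an average of factorized states, which makes the frequency-localized collision estimates tractable even without the endpoint trace bound. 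The condition $\beta\in(0,\frac19)$ is exactly what is needed to make this frequency interaction analysis close: the loss in each frequency-truncated estimate must be beaten by the gain coming from the $N^{6\beta}$ concentration of $V_N$, and $\frac19$ is the threshold where the three-body scaling balances.

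Once the GP hierarchy is derived as a limit, I would invoke the uniqueness theorem for the GP hierarchy established elsewhere in the paper — based on the newly discovered HUFL (hierarchical uniform frequency localization) property — to conclude that $\Gamma_\infty(t)$ is uniquely determined and equals the factorized sequence $(|\phi(t)\rangle\langle\phi(t)|^{\otimes k})_k$, where $\phi$ solves \eqref{equation:TargetQuinticNLS}; here one checks that the factorized ansatz indeed solves the GP hierarchy (reducing each equation to NLS) and that the global NLS solution from \cite{CKSTT,IP} has the HUFL property, hence is the unique GP solution. Since this identifies the limit uniquely and independently of the subsequence, the full sequence $\gamma_N^{(k)}(t)$ converges, and standard arguments upgrade weak-* convergence to trace-norm convergence (weak-* convergence to a limit of the same trace, for nonnegative trace-class operators, implies trace-norm convergence). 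The main obstacle is unquestionably step (ii)–(i) above: replacing the false endpoint trace estimate by the frequency-interaction-plus-finite-de-Finetti machinery, and verifying that the combinatorics of the Duhamel expansion (now with board-game arguments accommodating the $k\to k+2$ coupling of the three-body hierarchy) remains summable under $\beta<\frac19$.
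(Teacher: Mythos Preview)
Your overall architecture matches the paper's: energy estimates $\Rightarrow$ compactness $\Rightarrow$ convergence to the GP hierarchy $\Rightarrow$ uniqueness via HUFL $\Rightarrow$ upgrade to trace norm. Two points, however, are misplaced in your sketch and would cause trouble if you tried to carry them out as written.

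First, compactness is \emph{not} routine from the a priori $H^1$ bound. The equicontinuity argument requires bounding terms like $\operatorname{Tr} J^{(k)} V_N(x_j-x_{k+1},x_j-x_{k+2})\gamma_N^{(k+2)}$, and the standard route through the operator inequality $\|S_i^{-1}S_j^{-1}S_{k+1}^{-1}V_N S_i^{-1}S_j^{-1}S_{k+1}^{-1}\|_{op}\le C\|V_N\|_{L^1}$ is exactly the false endpoint trace theorem. The paper's fix (its \S 3) is to expand the trace back to $\psi_N$, split by Cauchy--Schwarz into a piece that is literally part of the conserved energy and a second piece handled by a frequency-ordering trick plus a subcritical Sobolev embedding. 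You should flag this as a separate new ingredient, not fold it into ``a priori bounds give compactness.''

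Second, you attribute the constraint $\beta<\frac19$ to summability of the Duhamel/board-game expansion. That is not where it lives. The Duhamel iteration and Klainerman--Machedon board game appear only in the \emph{uniqueness} argument for the GP hierarchy, which involves no $\beta$ at all; the smallness there comes from HUFL (a factor $\varepsilon^{4k/5}$ via the unclogged/congested coupling dichotomy). The threshold $\beta<\frac19$ arises solely in the \emph{convergence} step: after projecting $\gamma_N^{(k+2)}$ to frequencies $\le R$ with $N^\beta\ll R\ll N^{(1-6\beta)/3}$ and applying the finite-dimensional de Finetti theorem, one incurs an error $\sim R^3 N^{6\beta}/N$, and requiring this to vanish forces $\beta<\frac19$. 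Your plan has the right ingredients but the causal chain between them is scrambled.
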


This problem arises from the study of Bose-Einstein condensate (BEC). Since
the Nobel prize winning experiments \cite{Anderson, Davis} of BEC in
interacting many-particle systems using laser cooling techniques, this new
state of matter has attracted a lot of attention in physics and mathematics.
BEC is a peculiar gaseous state in which particles of integer spin (bosons)
occupy a macroscopic quantum state. In short, BEC means that every particle
in the $N$-body system takes the same quantum state ("does the same thing").
This new state of matter can be used to explore fundamental questions in
quantum mechanics, such as the emergence of interference, decoherence,
superfluidity and quantized vortices. Investigating various condensates has
become one of the most active areas of contemporary research.

A single particle in quantum mechanics is governed by a linear one-body Schr%
\"{o}dinger equation. If $N$ bodies interact quantum mechanically, they are
governed by the $N$-body \textit{linear} Schr\"{o}dinger equation due to the
superposition principle. Thus we arrive at the $N$-particle dynamic (\ref%
{eqn:N-body Schrodinger}) for the analysis of BEC. It is self-evident that (%
\ref{eqn:N-body Schrodinger}) is impossible to solve or simulate when $N$ is
large. In fact, the largest system one could simulate at the moment only
allows $N\sim 10^{2}$ while $N\sim 10^{3}$ for very dilute Bose-Einstein
samples and $N$ is up to values of the order $10^{30}$ in Boson stars. Thus,
it is necessary to find reductions or approximations. It is long believed
that NLS like (\ref{equation:TargetQuinticNLS}) is the mean-field limit for
these $N$-body systems. Such a belief can be formally seen, through the
following heuristics.

The marginal densities $\left\{ \gamma _{N}^{(k)}\right\} $ defined via (\ref%
{def:marginals}) satisfy the 3D quintic
Bogoliubov--Born--Green--Kirkwood--Yvon (BBGKY) hierarchy:

\begin{eqnarray}
&&i\partial _{t}\gamma _{N}^{(k)}  \label{hierarchy:quintic BBGKY} \\
&=&\sum_{j=1}^{k}\left[ -\bigtriangleup _{x_{j}},\gamma _{N}^{(k)}\right] +%
\frac{1}{N^{2}}\sum_{1\leqslant i<j<l\leqslant k}\left[
V_{N}(x_{i}-x_{j},x_{i}-x_{l}),\gamma _{N}^{(k)}\right]  \notag \\
&&+\frac{(N-k)}{N^{2}}\sum_{1\leqslant i<j\leqslant k}\limfunc{Tr}%
\nolimits_{k+1}\left[ V_{N}(x_{i}-x_{j},x_{i}-x_{k+1}),\gamma _{N}^{(k+1)}%
\right]  \notag \\
&&+\frac{\left( N-k\right) (N-k-1)}{N^{2}}\sum_{j=1}^{k}\limfunc{Tr}%
\nolimits_{k+1,k+2}\left[ V_{N}(x_{j}-x_{k+1},x_{j}-x_{k+2}),\gamma
_{N}^{(k+2)}\right] .  \notag
\end{eqnarray}%
Here, $V_{N}(x,y)=N^{6\beta }V(N^{\beta }x,N^{\beta }y)$ and we do not
distinguish the kernels and the operators they define. If there is some
regular $N\rightarrow \infty $ limit $\gamma ^{(k)}$ of $\gamma _{N}^{(k)}$,
if the sizes of the seemingly error terms, $\left[
V_{N}(x_{i}-x_{j},x_{i}-x_{l}),\gamma _{N}^{(k)}\right] $ and $\left[
V_{N}(x_{i}-x_{j},x_{i}-x_{k+1}),\gamma _{N}^{(k+1)}\right] $, are
manageable so that they tend to zero when combined with the $\frac{1}{N^{2}}$
and $\frac{(N-k)}{N^{2}}$ in front of them, and if the coupling term $\left[
V_{N}(x_{j}-x_{k+1},x_{j}-x_{k+2}),\gamma _{N}^{(k+2)}\right] $ behaves
nicely, then, as $N\rightarrow \infty $, (\ref{hierarchy:quintic BBGKY})
becomes the 3D energy-critical defocusing Gross--Pitaevskii (GP) hierarchy 
\begin{equation}
i\partial _{t}\gamma ^{(k)}(t)=\sum_{j=1}^{k}\left[ -\bigtriangleup
_{x_{j}},\gamma ^{(k)}\right] +b_{0}\sum_{j=1}^{k}\limfunc{Tr}%
\nolimits_{k+1,k+2}\left[ \delta (x_{j}-x_{k+1})\delta
(x_{j}-x_{k+2}),\gamma ^{(k+2)}\right] .
\label{hierarchy:quintic GP in differential form}
\end{equation}%
When $\gamma ^{(k)}(0)=\left\vert \phi _{0}\right\rangle \left\langle \phi
_{0}\right\vert ^{\otimes k}$, one solution to (\ref{hierarchy:quintic GP in
differential form}) is $\left\vert \phi (t)\right\rangle \left\langle \phi
(t)\right\vert ^{\otimes k}$ given by (\ref{equation:TargetQuinticNLS}).
That is, if the solution to (\ref{hierarchy:quintic GP in differential form}%
) is unique, then $\gamma ^{(k)}(t)=\left\vert \phi (t)\right\rangle
\left\langle \phi (t)\right\vert ^{\otimes k}$ which is the conclusion of
Theorems \ref{Thm:Main1Introuction} and \ref{THM:Main Theorem}.

However, such a heuristics has to be studied via rigorous mathematical
proofs, not only because computers are of no use in such a large system,
also because there are experiments \cite{Cornish,JILA2} proving that such
reasonings are not true, or in other words, at least one of the four "if"s
in the above heuristics does not hold, when one considers the focusing
quantum many-body dynamics which is a current research hot spot.

It was Erd\"{o}s, Schlein, and Yau who first proved this type of mean-field
limit for the case of defocusing pair interaction and $\Lambda =\mathbb{R}%
^{3}$, in which, the mean-field equation would be a defocusing cubic NLS, in 
\cite{E-S-Y2,E-S-Y3,E-S-Y5} around 2005.\footnote{%
See also \cite{AGT} for the 1D defocusing pair interaction case around the
same time.} They first proved an a-priori $L_{T}^{\infty }H_{x}^{1}$-type
bound using condition (\ref{Condition:EnergyBoundOnManyBodyInitialData}) but
in the 2-body interaction setting. They then proved using the a-priori $%
L_{T}^{\infty }H_{x}^{1}$-type bound, which enables the application of the
Sobolev inequality%
\begin{equation}
\left\Vert \left\langle \nabla _{x_{1}}\right\rangle ^{-1}\left\langle
\nabla _{x_{2}}\right\rangle ^{-1}V_{2}(x_{1}-x_{2})\left\langle \nabla
_{x_{1}}\right\rangle ^{-1}\left\langle \nabla _{x_{2}}\right\rangle
^{-1}\right\Vert _{op}\leqslant C\left\Vert V_{2}\right\Vert _{L^{1}}
\label{estimate:OldESYSobolevInequality}
\end{equation}%
in operator form or 
\begin{equation*}
\left\Vert f(x_{1},x_{1})\right\Vert _{L_{x_{1}}^{2}(\mathbb{R}%
^{3})}\leqslant C\left\Vert \left\langle \nabla _{x_{1}}\right\rangle
\left\langle \nabla _{x_{2}}\right\rangle f\right\Vert _{L^{2}},
\end{equation*}%
in usual form, that the sequence of all $k$-particle marginal densities in
the particle number $N$ is a compact sequence with respect to a topology on
the trace class operators, and proved that every limit point must satisfy an
infinite limiting hierarchy (the defocusing cubic GP hierarchy in $\mathbb{R}%
^{3}$) to which the cubic defocusing NLS generates a solution. Finally, they
proved that there is a unique solution to the cubic GP hierarchy satisfying
the a-priori $L_{T}^{\infty }H_{x}^{1}$-type bound. Their uniqueness proof
uses sophisticated Feynman graph analysis which is closely related to
renormalization methods in quantum field theory, and was regarded as the
most involved part of their analysis.\footnote{%
When revisiting \cite{E-S-Y2,E-S-Y3,E-S-Y5} in \cite{SchleinNew}, the
authors actually wrote that proving the $L_{T}^{\infty }H_{x}^{1}$ bound was
tough.}

In 2007, inspired by their work on the wave equation \cite%
{KlainermanMachedonNullForm} and the combinatorial argument in \cite{E-S-Y2}%
, Klainerman and Machedon gave a different uniqueness theorem, with a short
analysis of PDE style proof, for the cubic GP hierarchy in $\mathbb{R}^{3}$,
which, instead of a $L_{T}^{\infty }H_{x}^{1}$-type bound, requires a
Strichartz type bound. Though the desired limit generated by the cubic NLS
easily satisfies both of the $L_{T}^{\infty }H_{x}^{1}$-type bound and the
Strichartz type bound, at that time, it was unknown how to prove that the
limits coming from the $N$-body dynamics actually satisfy the Strichartz
type bound.

In 2008, Kirkpatrick, Schlein, and Staffilani discovered that the
Klainerman-Machedon Strichartz type bound can be obtained via a simple trace
theorem in $\mathbb{R}^{2}$ and $\mathbb{T}^{2}$\ and hence greatly
simplified the argument and derived the 2D cubic NLS in \cite{Kirpatrick}.%
\footnote{%
See \cite{HerrSohinger} later on for the general tori case.} In fact,
Kirkpatrick, Schlein, and Staffilani also introduced some fine tunings to
the compactness and the convergence arguments in \cite{Kirpatrick}, like the
Fourier analysis proof of the Poincar\'{e} / approximation of identity type
lemmas. The tuned scheme became standard in the later work, for example, 
\cite{ChenAnisotropic,C-H3Dto2D,Sohinger}. However, how to check the
Klainerman-Machedon Strichartz type bound in the 3D cubic case remained
fully open at that time.

Later in 2008, T. Chen and Pavlovic initiated the study of the repelling
three-body interaction case / the quintic case in \cite{TChenAndNP} and
proved that the 1D and 2D defocusing quintic NLS arise as the mean-field
limits. They also showed in \cite{TChenAndNP} that the 2D quintic case,
which is usually considered the same as the 3d cubic case because the
corresponding NLS are both $H^{\frac{1}{2}}$-critical, does satisfy the
Klainerman-Machedon Strichartz type bound though proving it for the 3D cubic
case was still open. They also considered more general data to attack the
problem in \cite{TChenAndNpGP1,TChenAndNP1.5,TChenAndNP2}. Then in 2011, T.
Chen and Pavlovic proved that the 3D cubic Klainerman-Machedon Strichartz
type bound does hold for the defocusing $\beta <1/4$ case in \cite%
{Chen3DDerivation}. The result was quickly improved to $\beta \leqslant 2/7$
by X.C. in \cite{Chen3DDerivation}. In \cite{C-H2/3,C-H<1}, X.C. and J.H.
proved the bound up to the almost optimal case, $\beta <1$, by lifting the $%
X_{1,b}$ space techniques from NLS theory into the field.

In 2013, T. Chen, Hainzl, Pavlovic, and Seiringer, introduced the quantum de
Finetti theorem, from \cite{Lewin}, to the derivation of the time-dependent
power-type NLS and provided, in \cite{TCNPdeFinitte}, a simplified proof of
the $L_{T}^{\infty }H_{x}^{1}$-type 3D cubic uniqueness theorem in \cite%
{E-S-Y2}. The proof in \cite{TCNPdeFinitte} inspired work for more refined
uniqueness theorems in the cubic case like \cite%
{Sohinger3,HoTaXi14,C-PUniqueness} and the uniqueness problem for the cubic
case is settled away from the critical ones.

Convergence rate for the cubic defocusing cases have also been studied using
Fock space methods and even the metaplectic representation. See \cite%
{SchleinNew,GM1,GM2,Kuz,Kuz2}, and also \cite%
{Frolich,E-Y1,KnowlesAndPickl,RodnianskiAndSchlein,MichelangeliSchlein,GMM1,GMM2,Chen2ndOrder,AKS,Ammari2,Ammari1,Lewin}
for the $\beta =0$ case (Hartree dynamics\footnote{%
See also the Hartree-Fock case, for example, in \cite{BPS}.}). The general
well-posedness theory of the limiting GP hierarchy as a nonlinear PDE is
also of interest. See \cite{TCNPNT,Sohinger2,SoSt13,MNPS}.

We can see that the 3D quintic / energy-critical problem we are dealing with
in this paper is a main open problem in the defocusing setting. From the
analysis of PDE point of view, it is an interesting but at the same time
difficult problem since the energy-critical case has always been a very
delicate case in the theory of defocusing dispersive equations. See, for
example, Colliander-Keel-Staffilani-Takaoka-Tao \cite{CKSTT} and
Ionescu-Pausader \cite{IP} for the NLS case, and Grillakis \cite{ManosNLW}
for the NLW case.

As already pointed out in \cite{TChenAndNP}, in many situations,
interactions more general than pair interactions are of importance. For
instance, if the Bose gas interacts with a background field of matter (such
as phonons or photons), averaging over the latter will typically lead to a
linear combination of effective (renormalized, in the sense of quantum field
theory) $n$-particle interactions, $n=2,3,....$ For systems exhibiting
effective interactions of this general structure, it remains a key problem
to determine the mean-field dynamics. Thus there is a strong physical reason
to study the 3D quintic / energy-critical problem in the defocusing setting.
What's more interesting is that, in fact, there is also a very strong reason
in theoretic physics to analyze the 3D defocusing quintic / energy-critical
problem coming from the theory of focusing quantum many-body systems.

While the focusing case is a natural continuation of the defocusing
problems, the first full focusing result \cite{C-HFocusing}, which derives
the 1D focusing cubic NLS, did not come around until 2013. The mathematical
reason is that when the interaction is nonpositive, the a-priori $%
L_{T}^{\infty }H_{x}^{1}$-type bound, required in the compactness,
convergence, and uniqueness arguments, cannot be deduced by the standard
process due to an accumulation of constants and the technical difficulty
that one estimates a trace instead of a pure power in NLS theory. By
introducing the 2-body operator and a new argument, X.C. and J.H.
circumvented this problem in the mass-subcritical case and worked out \cite%
{C-HFocusing} and later a 3D to 1D reduction \cite{C-HFocusingII}. But the $%
L^{2}$ critical case (2D cubic), did not see any progress until \cite%
{LewinFocusing}, in which, Lewin, Nam, and Rougerie showed in the static
case that the ground state energy of the 2D $N$-body focusing Hamiltonian
right below the Gagliardo-Nirenberg threshold is still described by a NLS
ground state energy using a version the finite dimensional quantum de
Finetti theorem from \cite{OriginalDeFinette}. In \cite{C-HFocusingIII},
using the finite dimensional quantum de Finetti theorem in \cite%
{LewinFocusing} to exchange a trace with a power with an manageable error in
the analysis of the 2-body Hamiltonian, X.C. and J.H. obtained the a-priori $%
L_{T}^{\infty }H_{x}^{1}$-type bound and hence pushed the time-dependent
case to the aforementioned threshold as well. In \cite{LNR2018}, Lewin, Nam,
and Rougerie proved in the static case that, if the potential approaches the
Gagliardo-Nirenberg threshold sufficiently slowly, the ground state of the
2D $N$-body focusing Hamiltonian tends to infinity at the origin as $N$
tends to infinity. That is, at the moment, the mathematical study of the
focusing quantum many-body problem stops right before any possible blow ups
in time. In fact, the theoretical study in physics stops right at this place
as well, while, in the focusing experiments \cite{Cornish,JILA2},\ we have
already observed blow ups in time.

On \cite[p.4-5]{JILA2}, Cornell and Wiemann,\footnote{%
2001 Nobel Laureates} wrote, "At the moment all the theoretical calculation
in physics use a mean-field approach, and describe the condensate dynamics
using the NLS equation. None of the predictions in these papers match our
measurements except for the general feature that atoms are lost from the
condensate. Also, we see several phenomena that are not discussed in these
papers." That is, the 3D cubic focusing NLS is not the right model nor the
correct mean-field limit. As Einstein said, "A theory can be proved by
experiment; but no path leads from experiment to the birth of a theory." We
need a new theory. Interestingly, Cornell and Wiemann also suggested that
"the missing mechanism could be a three-body recombination." Hence, it is
natural to assume that, to have more insight of the focusing problem, one
has to first understand the case with the simplest 3D three-body interaction
which is exactly the defocusing energy-critical case we are dealing with
here.\footnote{%
Experiments are also trying to go to this direction. See, for example, \cite%
{3-body Experiment}.} After this paper, it would be interesting to consider
the two-body and three-body interactions combined which would result in a
cubic + quintic equation as the next step. For the 1D and 2D case, this has
been done by Xie \cite{Xie}.

There are certainly difficulties so that this problem remained open for so
long. For quite a while, it was widely perceived that the difficulty lies
solely on the uniqueness part because the Klainerman-Machedon argument
requires, by scaling, $\frac{3}{2}$-derivatives to work. After the
introduction of the quantum de Finetti theorem, which can deal with traces
like a superposition of powers, to the uniqueness theory, in \cite%
{TCNPdeFinitte}, this problem was then conceived as solvable via the
standardized procedure from \cite{E-S-Y2,Kirpatrick,TCNPdeFinitte}. However,
the most recent work \cite{HTX} by Hong, Taliferro, and Xie showed that such
a belief is actually not true. They could only prove a uniqueness theorem
regarding small solutions to (\ref{hierarchy:quintic GP in differential form}%
) and could not reproduce any of the compactness nor the convergence
results. This is exactly caused by the fact that, the problem we are dealing
with here is energy-critical.

We now give more details about the difficulties.\footnote{%
It is definitely interesting to analyze this energy-critical problem with
the Fock space formalism which is a newer method and should even yield a
convergence rate result without the difficulties mentioned here. However, to
apply such a method, away from the many more error terms one has to chewed
through, one would have to assume much more than $H^{1}$ regularity for the
initial datum of the $H^{1}$-critical NLS (\ref{equation:TargetQuinticNLS})
and kind of defeat the purpose of the problem. (In fact, already at least $%
H^{4}$ in the cubic setting \cite{SchleinNew,GM1,GM2,Kuz}.) Moreover, one
would need to prove "uniform in $N$" global well-posedness results for the
auxiliary middle equation $i\partial _{t}\phi =-\Delta \phi +(V_{N}\ast
\left\vert \phi \right\vert ^{4})\phi $ and compare it with (\ref%
{equation:TargetQuinticNLS}) in $H^{1}$. Considering this equation's $%
N\rightarrow \infty $ limit is (\ref{equation:TargetQuinticNLS}) and the
tremendous effort in \cite{CKSTT, IP} to prove the global well-posedness of (%
\ref{equation:TargetQuinticNLS}), this does not sound like any easier than
what we are doing here.} For the compactness and the convergence part, say
one would like to estimate a term like%
\begin{equation}
\left\vert \limfunc{Tr}J^{(k)}V_{N}(x_{i}-x_{j},x_{i}-x_{k+1})\gamma
_{N}^{(k+1)}\right\vert  \label{eqn:sample term of compactness in intro}
\end{equation}%
where $J^{(k)}$ is a test function. In the standard argument, one would
first regroup as the following%
\begin{eqnarray*}
(\ref{eqn:sample term of compactness in intro}) &\leqslant &\left\Vert
S_{i}^{-1}S_{j}^{-1}J^{(k)}S_{i}S_{j}\right\Vert _{op}\left\Vert
S_{i}^{-1}S_{j}^{-1}S_{k+1}^{-1}V_{N}(x_{i}-x_{j},x_{i}-x_{k+1})S_{i}^{-1}S_{j}^{-1}S_{k+1}^{-1}\right\Vert _{op}
\\
&&\limfunc{Tr}S_{i}S_{j}S_{k+1}\gamma _{N}^{(k+1)}S_{i}S_{j}S_{k+1},
\end{eqnarray*}%
where $S_{i}=\left\langle \nabla _{x_{i}}\right\rangle $, then use estimates
similar to (\ref{estimate:OldESYSobolevInequality}) which in this case would
be 
\begin{equation*}
\left\Vert
S_{i}^{-1}S_{j}^{-1}S_{k+1}^{-1}V_{N}(x_{i}-x_{j},x_{i}-x_{k+1})S_{i}^{-1}S_{j}^{-1}S_{k+1}^{-1}\right\Vert _{op}\leqslant C\left\Vert V_{N}\right\Vert _{L^{1}}
\end{equation*}%
or%
\begin{equation}
\left\Vert f(x_{1},x_{1},x_{1})\right\Vert _{L^{2}(\Lambda )}\leqslant
C\left\Vert \left\langle \nabla _{x_{1}}\right\rangle \left\langle \nabla
_{x_{2}}\right\rangle \left\langle \nabla _{x_{3}}\right\rangle f\right\Vert
_{L^{2}(\Lambda ^{3})}.  \label{estimate:failed sobolev 1}
\end{equation}%
Alert readers can immediately tell that (\ref{estimate:failed sobolev 1}) is
exactly the sharp endpoint Sobolev trace theorem and is certainly not true.
Due to the same reason, the usual approximation of identity / the Poincar%
\'{e} type inequality used to prove the convergence of the key coupling
term, which reads 
\begin{align}
\hspace{0.3in}& \hspace{-0.3in}\left\vert \limfunc{Tr}J^{(k)}\left( \rho
_{\alpha }\left( x_{j}-x_{k+1},x_{j}-x_{k+2}\right) -\delta \left(
x_{j}-x_{k+1}\right) \delta \left( x_{j}-x_{k+2}\right) \right) \gamma
^{(k+2)}\right\vert  \label{estimate:failed sobolev 2} \\
& \leqslant C_{\rho }\alpha ^{\kappa }C_{J}\limfunc{Tr}%
S_{j}^{2}S_{k+1}^{2}S_{k+2}^{2}\gamma ^{(k+2)}  \notag
\end{align}%
in the needed version here, also fails. Hence, neither compactness nor
convergence follow from the standard argument.

Notice that, estimating a trace, which is the case for the BBGKY / GP
hierarchies, needs an $\varepsilon $ more derivative than estimating a
power, which is the case of the NLS.

Despite how closely related the GP hierarchy and its corresponding NLS
equation are, proving the $L_{T}^{\infty }H_{x}^{1}$ uniqueness for the GP
hierarchy is more like proving the local existence which iteratively uses
the Strichartz estimates, instead of proving the unconditional uniqueness
which usually involves absorbing the nonlinear term to the left, for NLS.
Recall from \cite{Tao} the local existence theory for the NLS%
\begin{equation*}
i\partial _{t}u=-\Delta u\pm \left\vert u\right\vert ^{p-1}u\text{ in }%
\mathbb{R}\times \Lambda ,
\end{equation*}%
we would prove%
\begin{eqnarray}
&&\left\Vert \left\vert u\right\vert ^{p-1}u-\left\vert v\right\vert
^{p-1}v\right\Vert _{L_{T}^{2}W_{x}^{1,\frac{6}{5}}}
\label{estimate:NLSwell-posedness} \\
&\lesssim &T^{\frac{5-p}{10}}\left( \left\Vert u\right\Vert
_{L_{T}^{10}W_{x}^{1,\frac{30}{13}}}^{p-1}+\left\Vert v\right\Vert
_{L_{T}^{10}W_{x}^{1,\frac{30}{13}}}^{p-1}\right) \left\Vert u-v\right\Vert
_{L_{T}^{10}W_{x}^{1,\frac{30}{13}}}.  \notag
\end{eqnarray}%
That is, when we are dealing with the energy subcritical $p<5$ case there is
a small factor $T^{\frac{5-p}{10}}$ in front so that we can get a
contraction, and when we are dealing with the energy-critical $p=5$ case, we
would have to use the size of the solution to build a contraction.

Very vaguely and heuristically speaking, proving $L_{T}^{\infty }H_{x}^{1}$
uniqueness for the GP hierarchy is like, taking $\left( \left\vert
u\right\vert ^{p-1}u-\left\vert v\right\vert ^{p-1}v\right) $ to be $\gamma
^{(k)}$ (or $B_{j,k+1}\gamma ^{(k+1)}$) and $\left( u-v\right) $ as $\gamma
^{(k+1)\text{ }}$, and using the hierarchical structure to iterate (\ref%
{estimate:NLSwell-posedness}). That is, one would end up with a zero and
thus uniqueness if one has a small factor in front. When $p=5$, the factor $%
T^{\frac{5-p}{10}}$ is gone,\footnote{%
To be precise, the power of $T$ for the GP hierarchy is not $(5-p)/10$. We
put that here just for illustration purpose.} therefore, to conclude
uniqueness, one would then have to use the size of the solution. This is why
the standard argument could only reach a uniqueness regarding small
solutions.

\subsection{Outline of the Paper}

We first state and prove, Theorem \ref{Thm:energy estimate}, which derives
the a-priori estimate (\ref{estimate:key a-priori estimate}) from (\ref%
{Condition:EnergyBoundOnManyBodyInitialData}) in \S \ref{sec:energy estimate}%
. We obtain an improvement in $\beta $ over the standard method for the
defocusing case by using the focusing scheme. It turns out that the focusing
scheme is fairly straightforward here since we are dealing with a defocusing
case. Readers familiar with this matter could skip \S \ref{sec:energy
estimate}, assume the $L_{T}^{\infty }H_{x}^{1}$ a-priori bound, and start
from \S \ref{sec:Compactness}. Because \S \ref{sec:energy estimate} is the
first and necessary step, in the sense that, compactness, convergence, and
uniqueness all require (\ref{estimate:key a-priori estimate}) to hold, even
in this defocusing setting, we did not put it in the appendix.

In \S \ref{sec:Compactness}, we investigate terms like (\ref{eqn:sample term
of compactness in intro}) in detail. By looking into their defining
structure, we are able to cleverly use the conserved energy and a
subcritical Sobolev inequality to prove their boundedness and therefore
establish the compactness of the sequence $\left\{ \Gamma _{N}\right\}
_{N}=\left\{ \left\{ \gamma _{N}^{(k)}\right\} _{k=1}^{N}\right\} _{N}$ with
respect to the product topology $\tau _{prod}$ defined in \S \ref%
{sec:Compactness}. This technique in \S \ref{sec:Compactness} is new and a
minor novelty of this paper.

In \S \ref{sec:convergence}, while we bound the irrelevant error terms
generated by $\left[ V_{N}(x_{i}-x_{j},x_{i}-x_{l}),\gamma _{N}^{(k)}\right] 
$ and $\left[ V_{N}(x_{i}-x_{j},x_{i}-x_{k+1}),\gamma _{N}^{(k+1)}\right] $
with the energy method we used in \S \ref{sec:Compactness}, the main concern
is how to prove the convergence%
\begin{equation}
\limfunc{Tr}\nolimits_{k+1,k+2}\left[ V_{N}(x_{j}-x_{k+1},x_{j}-x_{k+2}),%
\gamma _{N}^{(k+2)}\right] \rightarrow \limfunc{Tr}\nolimits_{k+1,k+2}\left[
\delta (x_{j}-x_{k+1})\delta (x_{j}-x_{k+2}),\gamma ^{(k+2)}\right] ,
\label{convergence:key coupling}
\end{equation}%
without the endpoint Sobolev. We found that, one could circumvent this $%
\varepsilon $ loss with a delicate frequency interaction analysis up to $%
\beta <\frac{1}{9}$. Since the $\beta <\frac{1}{9}$ argument is very
different from the standard argument, we first present, in \S \ref%
{Sec:Convergence 1/12}, an easier argument for $\beta <\frac{1}{12}$, which
is a bit closer to the usual argument and conveys the basic idea of why the
frequency decomposition helps. There are three main terms: high frequency
approximation error, low frequency approximation error, and low frequency
main term. We notice that, when one variable is at frequency higher than $%
N^{2\beta }$, without considering interaction, we can replace the powers of $%
N$ in $V_{N}$ by derivatives on $\gamma _{N}^{(k+2)}$ and decrease the
strength of the singularity to obtain a decay. When all variables rest at a
relatively low frequency, at the price of a low frequency approximation
error, a version of the finite dimensional quantum de Finetti theorem
effectively turns the leftover problem from estimating a trace to dealing
with a power. We then prove Lemma \ref{lem:approximation of identity for
products}, the approximation of identity for the low frequency main term,
without an $\varepsilon $ loss and use some part of the usual argument to
conclude the convergence. This usage of the finite dimensional quantum de
Finetti theorem is where we require $\Lambda =\mathbb{T}^{3}$ in this paper.
After the easier-to-understand $\beta <\frac{1}{12}$ argument, we then
present the $\beta <\frac{1}{9}$ proof which uses only the frequency
analysis language in \S \ref{Sec:Convergence 1/9}. In \S \ref%
{Sec:Convergence 1/9}, we use two frequency parameters, one of them depends
on $N$ and the other one does not. We investigate the low-high interaction
inside (\ref{convergence:key coupling}), we set the $N$-dependent frequency
splitting at $N^{\beta }$ this time and further decompose the high region
into four different interaction regions. Using the fact that for frequencies
above $N^{\beta }$, there is a coupling between $x_{k+1}$ and $%
x_{k+1}^{\prime }$, and a coupling between $x_{k+2}$ and $x_{k+2}^{\prime }$
induced by $V_{N}$, we can then squeeze a gain from the trace lemma applied
to the primed-factor. For the other terms, we use the advantage of the $N$%
-independent frequency and obtain a decay. This frequency analysis in \S \ref%
{sec:convergence} is fully new and is a novelty of this paper.

In \S \ref{sec:uniqueness}, which is the main part of this paper, we
establish Theorem \ref{Thm:TotalUniqueness}, uniqueness of solutions to
hierarchy (\ref{hierarchy:quintic GP in differential form}) regardless of
the size of the solution. We are able to prove Theorem \ref%
{Thm:TotalUniqueness} because we have discovered the fully new \emph{%
hierarchically uniform frequency localization} (HUFL) property, Definition %
\ref{def:HUFL}, for solutions to the GP hierarchy and hence for solutions to
NLS as well. We break the proof of Theorem \ref{Thm:TotalUniqueness} into
two main parts, Theorems \ref{Thm:UTFLinTime} and \ref%
{THM:UniquessAssumingUFL}.

We first prove that all admissible $L_{T}^{\infty }H_{x}^{1}$ solutions to
hierarchy (\ref{hierarchy:quintic GP in differential form}) satisfy HUFL
uniformly for a small time independent of $k$ as Theorem \ref{Thm:UTFLinTime}
in \S \ref{subsec:GPUTFL}. At a glance, the hierarchical structure does
nothing but produces numerous "Deja vu" terms from the NLS energy. However,
if one treats them like in the NLS case in which there is no hierarchical
structure, one ends up with a $k$-dependent time period which has to be
avoided. Thus the proof of Theorem \ref{Thm:UTFLinTime} is structured in a
special way to avoid any $k$-dependent time emerging from the final
estimate. Via a delicate computation in which we need to estimate 42 terms,
we prove a growth estimate (\ref{E:main-1}) for the high frequency part of
the energy of the hiearchy (\ref{hierarchy:quintic GP in differential form})
in \S \ref{Sec:Proof of Main High Energy Estimate}. With (\ref{E:main-1})
and some refined Sobolev inequalities in Lemma \ref{lemma:refined sobolev
for continuity} which allow a gain through the intermediate kinetic energy,
we argue by contradiction that the high kinetic energy must stay small in \S %
\ref{Sec:Proof of High Kinetic Eats High Energy} if we assume the continuity
of the intermediate kinetic energy, which implies a strong continuity result
of the de Finetti measure. Then in \S \ref{Sec:Proof of High Kinetic
Continuity}, we prove the continuity of the intermediate kinetic energy. \S %
\ref{Sec:Proof of Main High Energy Estimate}-\S \ref{Sec:Proof of High
Kinetic Continuity} together prove Theorem \ref{Thm:UTFLinTime}.

After we have proved Theorem \ref{Thm:UTFLinTime}, we examine the couplings
in the Duhamel-Born expansion of $\gamma ^{(1)}$ carefully in \S \ref%
{subsec:GPUTFLUniqueness}. We find out that we can classify the couplings as
unclogged couplings and congested couplings as defined in Definition \ref%
{def:type of coupling}. In the unclogged couplings, we explore uniform in
time HUFL and manage to gain a small factor via uniform in time HUFL even in
this energy-critical case, by applying the refined multilinear estimates in
Lemma \ref{Lem:MultilinearWithFreqLocal}. In the congested couplings, due to
its structure which consists of a nonlinear term solely made of nonlinear
terms, one cannot gain anything via HUFL. We then show that, at least $4/5$
of the total couplings has to be unclogged couplings. This allows us to
prove uniqueness for a small time assuming HUFL initial datum, which is
Theorem \ref{THM:UniquessAssumingUFL}.

We then prove Lemma \ref{Lem:MultilinearWithFreqLocal} in \S \ref%
{sec:UniquenessMultilinearEstimate} with a meticulous frequency interaction
argument. The proof of Lemma \ref{Lem:MultilinearWithFreqLocal} happens to
be highly technical, and in fact, relies heavily on the scale invariant $%
\mathbb{T}^{3}$ Strichartz estimates and bilinear Strichartz estimates
recently proved in Bougain-Demeter \cite{BD}, Killip-Visan \cite{KV}, and
Zhang \cite{Z}. This is due to the fact that the $\mathbb{T}^{3}$ Strichartz
estimates have loss and we are dealing with an "endpoint" problem in which
we could not afford any loss.

With Theorems \ref{Thm:UTFLinTime} and \ref{THM:UniquessAssumingUFL}, we
prove by contradiction that uniqueness must persist for all time, which is
Theorem \ref{Thm:TotalUniqueness} and establishes uniqueness for the
energy-critical GP hierarchy with critical regularity regardless of the size
of the solution.\footnote{%
We do not know if Theorem \ref{Thm:TotalUniqueness} should be classified as
a conditional uniqueness theorem or an unconditional uniqueness theorem. See
Remark \ref{Remark:QtoGigliola} for detail.} \S \ref{sec:uniqueness} records
this new discovery and is the main novelty of this paper.

Through \S \ref{sec:energy estimate} to \S \ref{sec:uniqueness}, we have
concluded that, under the setting of Theorem \ref{THM:Main Theorem}, $\gamma
_{N}^{(k)}(t)\rightarrow \left\vert \phi (t)\right\rangle \left\langle \phi
(t)\right\vert ^{\otimes k}$ weak* as trace class operators. The standard
argument then proves $\gamma _{N}^{(k)}(t)\rightarrow \left\vert \phi
(t)\right\rangle \left\langle \phi (t)\right\vert ^{\otimes k}$ strongly as
trace class operators, hence deduces Theorem \ref{THM:Main Theorem}. Again,
by the standard argument, one can prove Theorem \ref{Thm:Main1Introuction}
via Theorem \ref{THM:Main Theorem}. We neglect the details here because it
is not new.\footnote{%
As usual, proof of Theorems \ref{Thm:Main1Introuction} and \ref{THM:Main
Theorem} work for initial datum more general than (\ref{eqn:asym factorized}%
). We are not stating the main theorems that way because such a fact is
well-known by now.}

For completeness, we include an appendix on how HUFL works for NLS (\ref%
{equation:TargetQuinticNLS}).

\bigskip

\noindent\textbf{Acknowledgements.}  X.C. was supported in part by NSF grant
DMS-1464869.  J.H. was partially supported by the NSF grant DMS-1500106 (until August 2017). Since September 2017, J.H. has been serving as a Program Director in the Division of Mathematical Sciences at the National Science Foundation (NSF), USA, and as a component of this position, J.H. received support from NSF for research, which included work on this paper. Any opinions, findings, and conclusions or recommendations expressed in this material are those of the authors and do not necessarily reflect the views of the National Science Foundation. 

X.C. would like to thank Thomas Chen and Natasa Pavlovic for their heartwarming hospitality and the delightful discussions during his visit to Austin. Moreover, X.C. and J.H. would like to thank the referees for their careful and detailed reading and checking of the paper and their many insightful comments and helpful suggestions which have made the paper better.

\section{Energy Estimates\label{sec:energy estimate}}

\begin{theorem}
\label{Thm:energy estimate}Assume $\beta <\frac{1}{8}.$ $\forall c_{1}\in %
\left[ 0,1\right) $, $\forall k\geqslant 1$, there exists an $%
N_{0}(c_{1},k)>0$ such that 
\begin{equation}
\left\langle \psi _{N},\left( N^{-1}H_{N}+1\right) ^{k}\psi
_{N}\right\rangle \geqslant c_{1}^{k}\left( \left\Vert S^{(1,k)}\psi
_{N}\right\Vert _{L_{x}^{2}}^{2}+\frac{1}{N}\left\Vert S_{1}S^{(1,k-1)}\psi
_{N}\right\Vert _{L_{x}^{2}}^{2}\right) \text{, }
\label{energy etimate: stability of matter}
\end{equation}%
for all $N>N_{0}$ and for all $\psi _{N}\in L_{s}^{2}(\Lambda ^{N})$. Here, $%
S_{j}=\left\langle \nabla _{x_{j}}\right\rangle $, 
\begin{equation*}
S^{(\alpha ,k)}=\dprod\limits_{j=1}^{k}\left\langle \nabla
_{x_{j}}\right\rangle ^{\alpha }
\end{equation*}%
and $N_{0}$ grows to infinity as $c_{1}$ approaches $1$.

In particular, under the assumptions of Theorem \ref{THM:Main Theorem}, (\ref%
{Condition:EnergyBoundOnManyBodyInitialData}) yields%
\begin{equation}
\sup_{t\in \left[ 0,T\right] }S^{(1,k)}\gamma _{N}^{(k)}S^{(1,k)}\leqslant
C^{k}\text{ for all large enough }N  \label{estimate:key a-priori estimate}
\end{equation}
\end{theorem}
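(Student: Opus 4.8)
The plan is to prove the operator inequality \eqref{energy etimate: stability of matter} by induction on $k$, and then deduce the a-priori bound \eqref{estimate:key a-priori estimate} from it by a standard commutation/energy-conservation argument. For the base case $k=1$, one writes $\langle\psi_N,(N^{-1}H_N+1)\psi_N\rangle = N^{-1}\langle\psi_N,(\sum_j(-\Delta_{x_j}))\psi_N\rangle + N^{-1}\langle\psi_N,N^{-2}\sum_{i<j<l}V_N\psi_N\rangle + \|\psi_N\|^2$; the interaction term is nonnegative since $V\geqslant 0$, and by symmetry of $\psi_N$ the kinetic term equals $\|S_1\psi_N\|_{L^2}^2$ on the nose, so the $k=1$ estimate holds with $c_1=1$ after discarding the (small, lower-order) $N^{-1}\|S_1\psi_N\|^2$ term; the loss of the constant $c_1<1$ only becomes necessary at higher $k$ when cross terms must be absorbed.

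For the inductive step one multiplies out $(N^{-1}H_N+1)^k = (N^{-1}H_N+1)^{k-1}(N^{-1}H_N+1)$ and inserts the inductive hypothesis. The key technical device, as in Erd\H{o}s--Schlein--Yau, is the following: $N^{-1}H_N+1$ controls, up to a small error, a product of $\langle\nabla\rangle^2$'s plus the interaction, and the interaction term itself can be dominated using a Sobolev-type inequality. Here, because we have a \emph{three}-body interaction and need to stay subcritical, the relevant inequality is \emph{not} the endpoint \eqref{estimate:failed sobolev 1} but a Sobolev trace estimate with room to spare: for $\beta<\frac18$ one has room for
\begin{equation*}
\left\| S_i^{-1-\varepsilon}S_j^{-1-\varepsilon}S_l^{-1-\varepsilon}V_N(x_i-x_j,x_i-x_l)S_i^{-1-\varepsilon}S_j^{-1-\varepsilon}S_l^{-1-\varepsilon}\right\|_{\op}\lesssim N^{6\beta}\cdot N^{-6\beta(1+2\varepsilon/\text{something})}\|V\|,
\end{equation*}
i.e. the $N^{6\beta}$ in $V_N$ is beaten by the rescaling once one spends $\varepsilon$ extra derivatives on each of the three legs; the condition $\beta<\frac18$ is exactly what makes the resulting power of $N$ summable against the $N^{-2}$ and the combinatorial factors $(N-k)(N-k-1)/N^2 \sim 1$ coming from the three traced-out variables, while leaving enough derivatives to reconstruct $S^{(1,k)}$. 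One organizes the $k$ factors of $(-\Delta_{x_j})$ into the product $\|S^{(1,k)}\psi_N\|^2$, pays the cross terms between different factors of $H_N$ into the constant $c_1^k$ (this is why $c_1$ must be taken $<1$ and $N_0\to\infty$ as $c_1\to1$), and keeps the extra $\frac1N\|S_1S^{(1,k-1)}\psi_N\|^2$ term because it arises naturally from the diagonal part of $H_N^2$ where two Laplacians land on the same variable. The "focusing scheme" improvement alluded to in the outline presumably enters by choosing to estimate the two-particle (here three-particle) operator $H_N^{(3)} := -\Delta_{x_i}-\Delta_{x_j}-\Delta_{x_l}+N^{-2}V_N$ as a nonnegative block and using its positivity directly rather than first expanding, which avoids an accumulation of constants; since the interaction is defocusing this is clean.

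The passage from \eqref{energy etimate: stability of matter} to \eqref{estimate:key a-priori estimate} is the routine closing argument: $H_N$ commutes with the flow $e^{itH_N}$, so $\langle\psi_N(t),H_N^k\psi_N(t)\rangle = \langle\psi_N(0),H_N^k\psi_N(0)\rangle\leqslant C^kN^k$ by hypothesis \eqref{Condition:EnergyBoundOnManyBodyInitialData}; since $H_N\geqslant 0$ (again $V\geqslant 0$), one has $(N^{-1}H_N+1)^k \leqslant$ a binomial combination of $N^{-j}H_N^j$ for $j\leqslant k$, hence $\langle\psi_N(t),(N^{-1}H_N+1)^k\psi_N(t)\rangle \leqslant (C+1)^k$; applying \eqref{energy etimate: stability of matter} with a fixed $c_1\in(0,1)$ gives $\|S^{(1,k)}\psi_N(t)\|_{L^2}^2 \leqslant (c_1^{-1}(C+1))^k$ uniformly in $t$ for $N>N_0(c_1,k)$, and tracing out $N-k$ variables turns this into the operator bound $S^{(1,k)}\gamma_N^{(k)}S^{(1,k)}\leqslant C'^k$. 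I expect the main obstacle to be the bookkeeping in the inductive step: one must carefully track which of the $\sim k$ copies of $H_N$ a given pair/triple of derivatives comes from, ensure that the "off-diagonal" contributions (different particles hit by different copies) genuinely reconstruct $\|S^{(1,k)}\psi_N\|^2$ rather than something with the wrong derivative pattern, and verify that every error term — there will be many, indexed by how many of the three interaction legs coincide with derivative legs — carries a genuinely negative power of $N$ when $\beta<\frac18$, so that it can be absorbed into the $c_1^k$ at the cost of enlarging $N_0$. The combinatorics of three-body interactions (three traced variables at the top level of the BBGKY, three legs each needing an $\varepsilon$ of derivative) is what forces $\beta<\frac18$ here rather than the $\beta<\frac12$ one might expect from the two-body analogue, and getting that exponent sharp is the delicate point.
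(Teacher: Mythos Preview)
Your overall architecture (induction plus energy conservation for the closing step) is right, and the $k=1$ base case and the final passage to \eqref{estimate:key a-priori estimate} are fine. But the inductive step as you describe it has two genuine gaps.

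First, the mechanism you propose for beating $V_N$ is not the one that works here. The operator bound
\[
\left\|S_i^{-1-\varepsilon}S_j^{-1-\varepsilon}S_l^{-1-\varepsilon}V_N(x_i-x_j,x_i-x_l)S_i^{-1-\varepsilon}S_j^{-1-\varepsilon}S_l^{-1-\varepsilon}\right\|_{\op}
\]
is exactly the kind of endpoint-adjacent trace estimate the paper goes out of its way to \emph{avoid}; even with $\varepsilon>0$ it does not by itself produce the threshold $\beta<\tfrac18$, and you cannot afford $1+\varepsilon$ derivatives per leg anyway since you only have the $H^1$ bound from the previous step. The paper instead estimates the specific commutators $[S_j,V_{N,i,j,l}]$ that arise, using weighted Cauchy--Schwarz and H\"older/Sobolev in mixed norms (e.g.\ putting $V_N'$ in $L^{3/2}$ in some variables and $L^\infty$ in others), which is where the concrete powers of $N^\beta$ come from.

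Second, and more structurally, the paper does not do a one-step induction $k-1\to k$. It does a \emph{two-step} induction $n\to n+2$: assume $(N^{-1}H_N+1)^n\geqslant c_1^n\|S^{(1,n)}\cdot\|^2$, then write $(N^{-1}H_N+1)^{n+2}\geqslant c_1^n\|S^{(1,n)}(N^{-1}H_N+1)\psi_N\|^2$ and expand the square. The crucial move is to decompose $N^{-1}H_N+1$ into a sum of \emph{three-body} blocks $H_{i,j,l}=S_i^2+S_j^2+S_l^2+V_{N,i,j,l}$ and split the resulting double sum $\sum H_{i_1,j_1,l_1}H_{i_2,j_2,l_2}$ according to how many indices overlap. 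The zero-overlap piece $K_1$ is positive and supplies the main term via $H_{i,j,l}\geqslant S_i^2+S_j^2+S_l^2$; the one-overlap piece $K_2$ supplies the $\frac1N\|S_1S^{(1,n+1)}\psi_N\|^2$ term; the cross terms with $i_1\leqslant n$ (where $S^{(1,n)}$ does not commute with $H_{i_1,j_1,l_1}$) are the errors $E$. The worst of these, $E_{13}$, is the term with one index $\leqslant n$ and no overlap, and after the weighted Cauchy--Schwarz it produces an error of size $N^{-(1-8\beta)}\|S^{(1,n+2)}\psi_N\|^2$; \emph{this} is where $\beta<\tfrac18$ comes from, not from a balance between $N^{6\beta}$ and $N^{-2}$ with combinatorial factors. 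Your description of the ``focusing scheme'' as using block positivity is roughly right in spirit but misses that the gain over the straightforward method (which gives only $\beta<\tfrac1{12}$) comes precisely from this finer overlap-based grouping of the cross terms.
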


We prove Theorem \ref{Thm:energy estimate} with the scheme in the focusing
literature \cite{C-HFocusing,C-HFocusingII,C-HFocusingIII} though it is a
defocusing problem. While this proof is longer and more complicated than 
\cite[Proposition 2.1]{TChenAndNP}, it yields $\beta <\frac{1}{8}$. If one
follows \cite[Proposition 2.1]{TChenAndNP} directly, one obtains $\beta <%
\frac{1}{12}$ instead. As we go through the proof, we see that the reason of
the improvement is that, the focusing scheme, though much more complicated,
groups terms finer.

Recall the $N$-body Hamiltonian (\ref{Hamiltonian:quintic N-body}) 
\begin{equation}
H_{N}+N=\sum_{j=1}^{N}S_{j}^{2}+\frac{1}{N^{2}}\sum_{1\leqslant
i<j<k\leqslant N}V_{N}(x_{i}-x_{j},x_{i}-x_{k}).  \label{hamiltonian:H_N+N}
\end{equation}%
Since $V\geqslant 0$, it is trivial to conclude the $k=1$ case%
\begin{equation}
\left\langle \psi _{N},\left( \frac{H_{N}}{N}+1\right) \psi
_{N}\right\rangle \geqslant c_{1}\left\Vert S_{1}\psi _{N}\right\Vert
_{L_{x}^{2}}^{2},  \label{eqn:nergy when n=1}
\end{equation}%
regardless of the parameter $\beta $, away from the even more trivial $k=0$
case.

To prove Theorem \ref{Thm:energy estimate} for general $k$, we prove the $%
k=n+2$ case 
\begin{equation*}
\left\langle \psi _{N},\left( \frac{H_{N}}{N}+1\right) ^{n+2}\psi
_{N}\right\rangle \geqslant c_{1}^{n+2}\left( \left\Vert S^{(1,n+2)}\psi
_{N}\right\Vert _{L_{x}^{2}}^{2}+\frac{1}{N}\left\Vert S_{1}S^{(1,n+1)}\psi
_{N}\right\Vert _{L_{x}^{2}}^{2}\right) ,
\end{equation*}%
provided that the weaker$\ $version of the $k=n$ case%
\begin{equation}
\left\langle \psi _{N},\left( \frac{H_{N}}{N}+1\right) ^{n}\psi
_{N}\right\rangle \geqslant c_{1}^{n}\left\Vert S^{(1,n)}\psi
_{N}\right\Vert _{L_{x}^{2}}^{2}  \label{eqn:induction hypothesis in energy}
\end{equation}%
holds. We will use the following lemma.

\begin{lemma}[{\protect\cite[Lemma A.2]{C-HFocusing}}]
\quad \label{LemmaInEnergyEstimate:commuteop}If $A_{1}\geq A_{2}\geq 0$, $%
B_{1}\geq B_{2}\geq 0$ and $A_{i}B_{j}=B_{j}A_{i}$ for all $1\leq i,j\leq 2$%
, then $A_{1}B_{1}\geq A_{2}B_{2}$.
\end{lemma}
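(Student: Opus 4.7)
The plan is to reduce the inequality $A_{1}B_{1} \geq A_{2}B_{2}$ to the elementary fact that the product of two commuting positive operators is positive. First I would write the telescoping identity
\[
A_{1}B_{1} - A_{2}B_{2} = A_{1}(B_{1}-B_{2}) + (A_{1}-A_{2})B_{2},
\]
and observe that each summand on the right is the product of two commuting positive self-adjoint operators. Indeed, $B_{1}-B_{2} \geq 0$ by hypothesis, while $A_{1}$ commutes with both $B_{1}$ and $B_{2}$ and hence with their difference; symmetrically, $A_{1}-A_{2} \geq 0$ and $B_{2}$ commutes with both $A_{1}$ and $A_{2}$. Thus the problem reduces to showing the following auxiliary claim: \emph{if $P,Q$ are positive self-adjoint operators with $PQ=QP$, then $PQ \geq 0$.}

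To establish the auxiliary claim, I would apply the spectral functional calculus to $P$ to obtain the positive square root $P^{1/2}$. Since $P^{1/2}$ is a (strong) limit of polynomials in $P$ via functional calculus, every operator commuting with $P$ also commutes with $P^{1/2}$; in particular $P^{1/2}Q = QP^{1/2}$. Consequently, for any vector $\psi$ in the appropriate domain,
\[
\langle PQ\psi,\psi\rangle = \langle P^{1/2}QP^{1/2}\psi,\psi\rangle = \langle Q(P^{1/2}\psi),\,P^{1/2}\psi\rangle \geq 0
\]
since $Q \geq 0$. Applying this claim first with $(P,Q) = (A_{1}, B_{1}-B_{2})$ and then with $(P,Q) = (A_{1}-A_{2}, B_{2})$ yields $A_{1}(B_{1}-B_{2}) \geq 0$ and $(A_{1}-A_{2})B_{2} \geq 0$; summing the two gives $A_{1}B_{1}-A_{2}B_{2}\geq 0$, as desired.

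The expected main obstacle is essentially bookkeeping rather than substance: one has to check that the commutation relations are preserved under the two subtractions $B_{1}-B_{2}$ and $A_{1}-A_{2}$ (immediate from the four commutation hypotheses $A_{i}B_{j}=B_{j}A_{i}$) and that the spectral-calculus identity $P^{1/2}Q=QP^{1/2}$ still holds in the (possibly unbounded) setting in which the lemma will be used. The latter causes no trouble when one works on a common invariant dense domain, as is the case in the applications in \S\ref{sec:energy estimate}, where $A_{i}$ and $B_{j}$ will be functions of commuting self-adjoint operators built from $\langle \nabla_{x_{j}}\rangle$ and $H_{N}$-type expressions, so the spectral theorem for a commuting family applies directly.
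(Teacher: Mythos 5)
Your proof is correct and is essentially the standard argument (and the one in the cited reference \cite[Lemma A.2]{C-HFocusing}, since the present paper only quotes the lemma without reproving it): telescope $A_{1}B_{1}-A_{2}B_{2}=A_{1}(B_{1}-B_{2})+(A_{1}-A_{2})B_{2}$ and use that a product of commuting nonnegative self-adjoint operators is nonnegative via the square root $P^{1/2}$. Your attention to the commutation of $P^{1/2}$ with $Q$ and to domain issues in the unbounded setting is exactly the right bookkeeping.
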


By (\ref{eqn:induction hypothesis in energy}), we have%
\begin{equation*}
\left\langle \psi _{N},\left( \frac{H_{N}}{N}+1\right) ^{n+2}\psi
_{N}\right\rangle \geqslant c_{1}^{n}\left\langle S^{(1,n)}\left( \frac{H_{N}%
}{N}+1\right) \psi _{N},S^{(1,n)}\left( \frac{H_{N}}{N}+1\right) \psi
_{N}\right\rangle .
\end{equation*}%
To continue estimating, we decompose the $N$-body Hamiltonian into its $3$%
-body summands. We define the $3$-body Hamiltonian as 
\begin{eqnarray*}
H_{i,j,k} &=&S_{i}^{2}+S_{j}^{2}+S_{k}^{2} \\
&&+\frac{(N-1)(N-2)}{N^{2}}V_{N}(x_{i}-x_{j},x_{i}-x_{k}) \\
&=&S_{i}^{2}+S_{j}^{2}+S_{k}^{2}+V_{N,i,j,k},
\end{eqnarray*}%
then we can decompose (\ref{hamiltonian:H_N+N}) with $H_{i,j,k}$ into%
\begin{eqnarray*}
\frac{H_{N}}{N}+1 &=&\frac{1}{N(N-1)(N-2)}\sum_{1\leqslant i<j<k\leqslant
N}H_{i,j,k} \\
&=&\frac{1}{N(N-1)(N-2)}\left( \sum_{\substack{ 1\leqslant i<j<k\leqslant N 
\\ i\leqslant n}}H_{i,j,k}+\sum_{\substack{ 1\leqslant i<j<k\leqslant N  \\ %
i>n}}H_{i,j,k}\right) .
\end{eqnarray*}%
We then expand 
\begin{equation*}
\left\langle S^{(1,n)}\psi _{N},\left( \frac{H_{N}}{N}+1\right)
^{2}S^{(1,n)}\psi _{N}\right\rangle \equiv K+E+P
\end{equation*}%
with%
\begin{equation*}
K\equiv \frac{\left\langle S^{(1,n)}\left( \sum_{\substack{ 1\leqslant
i_{1}<j_{1}<k_{1}\leqslant N  \\ i_{1}>n}}H_{i_{1},j_{1},k_{1}}\right) \psi
_{N},S^{(1,n)}\left( \sum_{\substack{ 1\leqslant i_{2}<j_{2}<k_{2}\leqslant
N  \\ i_{2}>n}}H_{i_{2},j_{2},k_{2}}\right) \psi _{N}\right\rangle }{%
N^{2}(N-1)^{2}(N-2)^{2}}\geqslant 0,
\end{equation*}

\begin{equation*}
E\equiv \frac{2\func{Re}\left\langle S^{(1,n)}\left( \sum_{\substack{ %
1\leqslant i_{1}<j_{1}<k_{1}\leqslant N  \\ i_{1}\leqslant n}}%
H_{i_{1},j_{1},k_{1}}\right) \psi _{N},S^{(1,n)}\left( \sum_{\substack{ %
1\leqslant i_{2}<j_{2}<k_{2}\leqslant N  \\ i_{2}>n}}H_{i_{2},j_{2},k_{2}}%
\right) \psi _{N}\right\rangle }{N^{2}(N-1)^{2}(N-2)^{2}},
\end{equation*}%
\begin{equation*}
P\equiv \frac{\left\langle S^{(1,n)}\left( \sum_{\substack{ 1\leqslant
i_{1}<j_{1}<k_{1}\leqslant N  \\ i_{1}\leqslant n}}H_{i_{1},j_{1},k_{1}}%
\right) \psi _{N},S^{(1,n)}\left( \sum_{\substack{ 1\leqslant
i_{2}<j_{2}<k_{2}\leqslant N  \\ i_{2}\leqslant n}}H_{i_{2},j_{2},k_{2}}%
\right) \psi _{N}\right\rangle }{N^{2}(N-1)^{2}(N-2)^{2}}\geqslant 0.
\end{equation*}%
On the one hand, $P\geqslant 0$. On the other hand, there are only $\sim
n^{2}N^{2}$ terms inside $P$. That is, $P$ will not be our main term. Hence,
we discard $P$.

\subsubsection{Estimate for $K$}

We decompose $K$ into four terms 
\begin{equation*}
K\equiv K_{1}+K_{2}+K_{3}+K_{4}\text{,}
\end{equation*}%
where $K_{1}$ consists of the terms in which $\left\vert \left\{
i_{1},j_{1},k_{1}\right\} \cap \left\{ i_{2},j_{2},k_{2}\right\} \right\vert
=0$, $K_{2}$ consists of the terms in which $\left\vert \left\{
i_{1},j_{1},k_{1}\right\} \cap \left\{ i_{2},j_{2},k_{2}\right\} \right\vert
=1$, $K_{3}$ consists of the terms in which $\left\vert \left\{
i_{1},j_{1},k_{1}\right\} \cap \left\{ i_{2},j_{2},k_{2}\right\} \right\vert
=2$, $K_{4}$ consists of the terms $\left\vert \left\{
i_{1},j_{1},k_{1}\right\} \cap \left\{ i_{2},j_{2},k_{2}\right\} \right\vert
=3$. Since $i_{1},i_{2}>n$, we can commute $H_{i,j,k}$ with $S^{(1,n)}$.
Hence, by symmetry, we have%
\begin{equation*}
K_{1}=a_{1,N}\left\langle S^{(1,n)}\psi
_{N},H_{n+1,n+2,n+3}H_{n+4,n+5,n+6}S^{(1,n)}\psi _{N}\right\rangle ,
\end{equation*}%
\begin{equation*}
K_{2}=b_{1,N}\left\langle S^{(1,n)}\psi
_{N},H_{n+1,n+2,n+3}H_{n+3,n+4,n+5}S^{(1,n)}\psi _{N}\right\rangle ,
\end{equation*}%
\begin{equation*}
K_{3}=c_{1,N}\left\langle S^{(1,n)}\psi
_{N},H_{n+1,n+2,n+3}H_{n+2,n+3,n+4}S^{(1,n)}\psi _{N}\right\rangle ,
\end{equation*}%
\begin{equation*}
K_{4}=d_{1,N}\left\langle S^{(1,n)}\psi
_{N},H_{n+1,n+2,n+3}H_{n+1,n+2,n+3}S^{(1,n)}\psi _{N}\right\rangle ,
\end{equation*}%
with 
\begin{equation*}
a_{1,N}=\frac{\left( N-n\right) \left( N-n-1\right) \left( N-n-2\right)
\left( N-n-3\right) \left( N-n-4\right) \left( N-n-5\right) }{%
N^{2}(N-1)^{2}(N-2)^{2}}\sim 1,
\end{equation*}%
\begin{equation*}
b_{1,N}=\frac{\left( N-n\right) \left( N-n-1\right) \left( N-n-2\right)
\left( N-n-3\right) \left( N-n-4\right) }{N^{2}(N-1)^{2}(N-2)^{2}}\sim \frac{%
1}{N},
\end{equation*}%
\begin{equation*}
c_{1,N}=\frac{\left( N-n\right) \left( N-n-1\right) \left( N-n-2\right)
\left( N-n-3\right) }{N^{2}(N-1)^{2}(N-2)^{2}}\sim \frac{1}{N^{2}},
\end{equation*}%
\begin{equation*}
d_{1,N}=\frac{\left( N-n\right) \left( N-n-1\right) \left( N-n-2\right) }{%
N^{2}(N-1)^{2}(N-2)^{2}}\sim \frac{1}{N^{3}}.
\end{equation*}%
where the constants $C$ inside $b_{1,N}$, $c_{1,N}$, $d_{1,N}$ are some
combinatorics number between $1$ and $6!$ and does not matter at all.

Since $\left[ H_{i_{1},j_{1},k_{1}},H_{i_{2},j_{2},k_{2}}\right] =0$ and $%
\left[ S_{i_{1}}^{2}+S_{j_{1}}^{2}+S_{k_{1}}^{2},H_{i_{2},j_{2},k_{2}}\right]
=0$ whenever $\left\vert \left\{ i_{1},j_{1},k_{1}\right\} \cap \left\{
i_{2},j_{2},k_{2}\right\} \right\vert =0$ and $H_{i,j,k}\geqslant
S_{i}^{2}+S_{j}^{2}+S_{k}^{2}$ trivially, by Lemma \ref%
{LemmaInEnergyEstimate:commuteop}, we know 
\begin{eqnarray}
K_{1} &\geqslant &\left\langle S^{(1,n)}\psi
_{N},(S_{n+1}^{2}+S_{n+2}^{2}+S_{n+3}^{2})(S_{n+4}^{2}+S_{n+5}^{2}+S_{n+6}^{2})S^{(1,n)}\psi _{N}\right\rangle
\label{energy estimate:K1} \\
&\geqslant &9\left\langle S^{(1,n+2)}\psi _{N},S^{(1,n+2)}\psi
_{N}\right\rangle  \notag \\
&\geqslant &\left\Vert S^{(1,n+2)}\psi _{N}\right\Vert _{L_{\mathbf{x}%
}^{2}}^{2}.  \notag
\end{eqnarray}

For $K_{2},$ we have%
\begin{eqnarray*}
K_{2} &=&\frac{1}{N}\left\langle S^{(1,n)}\psi
_{N},H_{n+1,n+2,n+3}H_{n+3,n+4,n+5}S^{(1,n)}\psi _{N}\right\rangle \\
&=&\frac{1}{N}\left\langle S^{(1,n)}\psi _{N},\left(
S_{n+1}^{2}+S_{n+2}^{2}+S_{n+3}^{2}\right) \left(
S_{n+3}^{2}+S_{n+4}^{2}+S_{n+5}^{2}\right) S^{(1,n)}\psi _{N}\right\rangle \\
&&+\frac{1}{N}\left\langle S^{(1,n)}\psi _{N},\left(
S_{n+1}^{2}+S_{n+2}^{2}+S_{n+3}^{2}\right) V_{N,n+3,n+4,n+5}S^{(1,n)}\psi
_{N}\right\rangle \\
&&+\frac{1}{N}\left\langle S^{(1,n)}\psi _{N},V_{N,n+1,n+2,n+3}\left(
S_{n+3}^{2}+S_{n+4}^{2}+S_{n+5}^{2}\right) S^{(1,n)}\psi _{N}\right\rangle \\
&&+\frac{1}{N}\left\langle S^{(1,n)}\psi
_{N},V_{N,n+1,n+2,n+3}V_{N,n+3,n+4,n+5}S^{(1,n)}\psi _{N}\right\rangle
\end{eqnarray*}%
Dropping the nonnegative 4th term and combining the 2nd and the 3rd gives%
\begin{eqnarray*}
K_{2} &\geqslant &\frac{1}{N}\left\langle S_{1}S^{(1,n+1)}\psi
_{N},S_{1}S^{(1,n+1)}\psi _{N}\right\rangle \\
&&-\frac{2}{N}\left\vert \func{Re}\left\langle S_{n+3}S^{(1,n)}\psi _{N},%
\left[ S_{n+3},V_{N,n+3,n+4,n+5}\right] S^{(1,n)}\psi _{N}\right\rangle
\right\vert
\end{eqnarray*}%
For the 2nd term above, 
\begin{eqnarray}
&&\left\vert \left\langle S_{n+3}S^{(1,n)}\psi _{N},\left[
S_{n+3},V_{N,n+3,n+4,n+5}\right] S^{(1,n)}\psi _{N}\right\rangle \right\vert
\label{energy estimate:an error term in K} \\
&=&N^{\beta }\left\vert \left\langle S_{n+3}S^{(1,n)}\psi
_{N},V_{N,n+3,n+4,n+5}^{\prime }S^{(1,n)}\psi _{N}\right\rangle \right\vert ,
\notag
\end{eqnarray}%
we first apply Cauchy-Schwarz%
\begin{eqnarray*}
&\leqslant &\varepsilon N^{\beta }\left\vert \left\langle
S_{n+3}S^{(1,n)}\psi _{N},\left\vert V^{\prime }\right\vert
_{N,n+3,n+4,n+5}S_{n+3}S^{(1,n)}\psi _{N}\right\rangle \right\vert \\
&&+\frac{N^{\beta }}{\varepsilon }\left\vert \left\langle S^{(1,n)}\psi
_{N},\left\vert V^{\prime }\right\vert _{N,n+3,n+4,n+5}S^{(1,n)}\psi
_{N}\right\rangle \right\vert ,
\end{eqnarray*}%
then use H\"{o}lder's inequality and Sobolev's inequality%
\begin{eqnarray*}
&\leqslant &\varepsilon N^{\beta }\left\Vert V_{N,n+3,n+4,n+5}^{\prime
}\right\Vert _{L_{x_{n+4}}^{\infty }L_{x_{n+5}}^{\frac{3}{2}}}\left\Vert
S_{n+3}S^{(1,n)}\psi _{N}\right\Vert _{L_{\mathbf{x}%
}^{2}L_{x_{_{n+5}}}^{6}}^{2} \\
&&\frac{N^{\beta }}{\varepsilon }\left\Vert V_{N,n+3,n+4,n+5}^{\prime
}\right\Vert _{L_{x_{n+4}}^{\frac{3}{2}}L_{x_{n+5}}^{\frac{3}{2}}}\left\Vert
S^{(1,n)}\psi _{N}\right\Vert _{L_{\mathbf{x}%
}^{2}L_{x_{_{n+4}}}^{6}L_{x_{_{n+5}}}^{6}}^{2} \\
&\leqslant &C\left( \varepsilon N^{5\beta }+\frac{N^{3\beta }}{\varepsilon }%
\right) \left\Vert S^{(1,n+2)}\psi _{N}\right\Vert _{L_{\mathbf{x}}^{2}}^{2}.
\end{eqnarray*}%
Selecting $\varepsilon =\frac{1}{N^{\beta }}$ yields%
\begin{eqnarray*}
&&\left\vert \left\langle S^{(1,n)}\psi _{N},\left[ S_{n+3},V_{N,n+3,n+4,n+5}%
\right] S^{(1,n)}\psi _{N}\right\rangle \right\vert \\
&\leqslant &CN^{4\beta }\left\Vert S^{(1,n+2)}\psi _{N}\right\Vert _{L_{%
\mathbf{x}}^{2}}^{2}.
\end{eqnarray*}%
That is,%
\begin{equation}
K_{2}\geqslant \frac{1}{N}\left\Vert S_{1}S^{(1,n+1)}\psi _{N}\right\Vert
_{L_{\mathbf{x}}^{2}}^{2}-\frac{C}{N^{1-4\beta }}\left\Vert S^{(1,n+2)}\psi
_{N}\right\Vert _{L_{\mathbf{x}}^{2}}^{2}.  \label{energy estimate:K2}
\end{equation}

For $K_{3}$, we write out%
\begin{eqnarray*}
K_{3} &=&\frac{1}{N^{2}}\left\langle S^{(1,n)}\psi
_{N},H_{n+1,n+2,n+3}H_{n+2,n+3,n+4}S^{(1,n)}\psi _{N}\right\rangle \\
&=&\frac{1}{N^{2}}\left\langle S^{(1,n)}\psi _{N},\left(
S_{n+1}^{2}+S_{n+2}^{2}+S_{n+3}^{2}\right) \left(
S_{n+2}^{2}+S_{n+3}^{2}+S_{n+4}^{2}\right) S^{(1,n)}\psi _{N}\right\rangle \\
&&+\frac{1}{N^{2}}\left\langle S^{(1,n)}\psi _{N},\left(
S_{n+1}^{2}+S_{n+2}^{2}+S_{n+3}^{2}\right) V_{N,n+2,n+3,n+4}S^{(1,n)}\psi
_{N}\right\rangle \\
&&+\frac{1}{N^{2}}\left\langle S^{(1,n)}\psi _{N},V_{N,n+1,n+2,n+3}\left(
S_{n+2}^{2}+S_{n+3}^{2}+S_{n+4}^{2}\right) S^{(1,n)}\psi _{N}\right\rangle \\
&&+\frac{1}{N^{2}}\left\langle S^{(1,n)}\psi
_{N},V_{N,n+1,n+2,n+3}V_{N,n+2,n+3,n+4}S^{(1,n)}\psi _{N}\right\rangle .
\end{eqnarray*}%
Discarding the nonnegative terms, we have%
\begin{equation}
K_{3}\geqslant -\frac{4}{N^{2}}\left\vert \func{Re}\left\langle
S_{n+2}S^{(1,n)}\psi _{N},\left[ S_{n+2},V_{N,n+2,n+3,n+4}\right]
S^{(1,n)}\psi _{N}\right\rangle \right\vert  \label{energy estimate:PreK3}
\end{equation}%
which can be treated like $\left( \ref{energy estimate:an error term in K}%
\right) $. Hence, 
\begin{equation}
K_{3}\geqslant -\frac{C}{N^{2-4\beta }}\left\Vert S^{(1,n+2)}\psi
_{N}\right\Vert _{L_{\mathbf{x}}^{2}}^{2}.  \label{energy estimate:K3}
\end{equation}

Noticing that $K_{4}\geqslant 0$, putting together (\ref{energy estimate:K1}%
), (\ref{energy estimate:K2}), and (\ref{energy estimate:K3}) yields%
\begin{equation}
K\geqslant (1-\frac{C}{N^{1-4\beta }})\left( \left\Vert S^{(1,n+2)}\psi
_{N}\right\Vert _{L_{\mathbf{x}}^{2}}^{2}+\frac{1}{N}\left\Vert
S_{1}S^{(1,n+1)}\psi _{N}\right\Vert _{L_{\mathbf{x}}^{2}}^{2}\right) .
\label{energy estimate:K}
\end{equation}

\subsubsection{Estimate of $E$}

We group the summands inside $E$ into three groups%
\begin{equation}
E\equiv E_{1}+E_{2}+E_{3}.  \label{energu estimate:decomposition of error}
\end{equation}%
by the size of $\left\vert \{j_{1},k_{1}\}\cap \left\{
i_{2},j_{2},k_{2}\right\} \right\vert $.

$E_{1}$ consists of the terms in which $\left\vert \{j_{1},k_{1}\}\cap
\left\{ i_{2},j_{2},k_{2}\right\} \right\vert =0,$ and there are $\sim N^{5}$
such terms if $j_{1}>n$, $\sim N^{4}$ such terms if $\left( j_{1}\leqslant
n\right) \&\&(k_{1}>n)$, and $\sim N^{3}$ such terms if $\left(
j_{1}\leqslant n\right) \&\&(k_{1}\leqslant n)$. Thus, by symmetry,%
\begin{eqnarray*}
E_{1} &=&E_{11}+E_{12}+E_{13} \\
&=&\frac{1}{N^{3}}\left\langle S^{(1,n)}H_{1,2,3}\psi
_{N},H_{n+1,n+2,n+3}S^{(1,n)}\psi _{N}\right\rangle \\
&&+\frac{1}{N^{2}}\left\langle S^{(1,n)}H_{1,2,n+1}\psi
_{N},H_{n+2,n+3,n+4}S^{(1,n)}\psi _{N}\right\rangle \\
&&+\frac{1}{N}\left\langle S^{(1,n)}H_{1,n+1,n+2}\psi
_{N},H_{n+3,n+4,n+5}S^{(1,n)}\psi _{N}\right\rangle .
\end{eqnarray*}

$E_{2}$ consists of the terms in which $\left\vert \{j_{1},k_{1}\}\cap
\left\{ i_{2},j_{2},k_{2}\right\} \right\vert =1,$ and there are $\sim N^{4}$
such terms if $j_{1}>n$, and $\sim N^{3}$ such terms if $j_{1}\leqslant n$.
Thus, by symmetry,%
\begin{eqnarray*}
E_{2} &=&E_{21}+E_{22} \\
&=&\frac{1}{N^{2}}\left\langle S^{(1,n)}H_{1,n+1,n+2}\psi
_{N},H_{n+2,n+3,n+4}S^{(1,n)}\psi _{N}\right\rangle \\
&&+\frac{1}{N^{3}}\left\langle S^{(1,n)}H_{1,2,n+1}\psi
_{N},H_{n+1,n+2,n+3}S^{(1,n)}\psi _{N}\right\rangle .
\end{eqnarray*}

$E_{3}$ consists of the terms in which $\left\vert \{j_{1},k_{1}\}\cap
\left\{ i_{2},j_{2},k_{2}\right\} \right\vert =2,$ and there are $\sim N^{3}$
such terms. Thus, by symmetry,%
\begin{equation*}
E_{3}=\frac{1}{N^{3}}\left\langle S^{(1,n)}H_{1,n+1,n+2}\psi
_{N},H_{n+1,n+2,n+3}S^{(1,n)}\psi _{N}\right\rangle .
\end{equation*}

Due to the $\frac{1}{N}$ factor, $E_{13}$ is the worst term. The other
terms, $E_{11}$, $E_{12}$, $E_{2}$, and $E_{3}$ can be estimated crudely by
simply taking the $L^{\infty }$ at $V_{N}$ and the relevant derivatives.
Hence, we only analyze $E_{11}$ and $E_{13}$ in detail. Writing out $E_{11}$%
, we have%
\begin{eqnarray*}
E_{11} &=&\frac{1}{N^{3}}\left\langle S^{(1,n)}\left(
S_{1}^{2}+S_{2}^{2}+S_{3}^{2}\right) \psi _{N},\left(
S_{n+1}^{2}+S_{n+2}^{2}+S_{n+3}^{2}\right) S^{(1,n)}\psi _{N}\right\rangle \\
&&+\frac{1}{N^{3}}\left\langle S^{(1,n)}\left(
S_{1}^{2}+S_{2}^{2}+S_{3}^{2}\right) \psi
_{N},V_{N,n+1,n+2,n+3}S^{(1,n)}\psi _{N}\right\rangle \\
&&+\frac{1}{N^{3}}\left\langle S^{(1,n)}V_{N,1,2,3}\psi _{N},\left(
S_{n+1}^{2}+S_{n+2}^{2}+S_{n+3}^{2}\right) S^{(1,n)}\psi _{N}\right\rangle \\
&&+\frac{1}{N^{3}}\left\langle S^{(1,n)}V_{N,1,2,3}\psi
_{N},V_{N,n+1,n+2,n+3}S^{(1,n)}\psi _{N}\right\rangle
\end{eqnarray*}%
Discarding the nonnegative terms, it becomes%
\begin{eqnarray*}
E_{11} &\geqslant &-\frac{3}{N^{3}}\left\vert \left\langle \frac{S^{(1,n+1)}%
}{S_{1}S_{2}S_{3}}\psi _{N},\left[ S_{1}S_{2}S_{3},V_{N,1,2,3}\right]
S^{(1,n+1)}\psi _{N}\right\rangle \right\vert \\
&&-\frac{1}{N^{3}}\left\vert \left\langle \frac{S^{(1,n+1)}}{S_{1}S_{2}S_{3}}%
\psi _{N},\left[ S_{1}S_{2}S_{3},V_{N,1,2,3}\right]
V_{N,n+1,n+2,n+3}S^{(1,n)}\psi _{N}\right\rangle \right\vert
\end{eqnarray*}%
Putting $\left[ S_{1}S_{2}S_{3},V_{N,1,2,3}\right] $ and $\left[
S_{1}S_{2}S_{3},V_{N,1,2,3}\right] V_{N,n+1,n+2,n+3}$ in $L^{\infty }$, then
Cauchy-Schwarz, we have%
\begin{eqnarray}
E_{11} &\geqslant &-\frac{C}{N^{3-9\beta }}\left\Vert \frac{S^{(1,n+1)}}{%
S_{1}S_{2}S_{3}}\psi _{N}\right\Vert _{L_{x}^{2}}\left\Vert S^{(1,n+1)}\psi
_{N}\right\Vert _{L_{x}^{2}}  \label{estimate:E11} \\
&&-\frac{C}{N^{3-15\beta }}\left\Vert \frac{S^{(1,n+1)}}{S_{1}S_{2}S_{3}}%
\psi _{N}\right\Vert _{L_{x}^{2}}\left\Vert S^{(1,n)}\psi _{N}\right\Vert
_{L_{x}^{2}}  \notag \\
&\geqslant &-\frac{C}{N^{3-15\beta }}\left\Vert S^{(1,n+2)}\psi
_{N}\right\Vert _{L_{x}^{2}}^{2}  \notag
\end{eqnarray}

We can apply similar argument to $E_{12}$, $E_{2}$, and $E_{3}$ and conclude%
\begin{equation}
E_{12}\geqslant -\frac{C}{N^{2-14\beta }}\left\Vert S^{(1,n+2)}\psi
_{N}\right\Vert _{L_{x}^{2}}^{2},  \label{energy estimate:E12}
\end{equation}%
\begin{equation}
E_{21}\geqslant -\frac{C}{N^{2-13\beta }}\left\Vert S^{(1,n+2)}\psi
_{N}\right\Vert _{L_{\mathbf{x}}^{2}}^{2},  \label{energy estimate:E21}
\end{equation}%
\begin{equation}
E_{22}\geqslant -\frac{C}{N^{3-14\beta }}\left\Vert S^{(1,n+2)}\psi
_{N}\right\Vert _{L_{\mathbf{x}}^{2}}^{2},  \label{energy estimate:E22}
\end{equation}%
\begin{equation}
E_{3}\geqslant -\frac{C}{N^{3-13\beta }}\left\Vert S^{(1,n+2)}\psi
_{N}\right\Vert _{L_{\mathbf{x}}^{2}}^{2}.  \label{energy estimate:E3}
\end{equation}

We are left to deal with $E_{13}$ which cannot be estimated this crudely.
For $E_{13}$, we have%
\begin{eqnarray*}
E_{13} &=&\frac{1}{N}\left\langle S^{(1,n)}\left(
S_{1}^{2}+S_{n+1}^{2}+S_{n+2}^{2}\right) \psi _{N},\left(
S_{n+3}^{2}+S_{n+4}^{2}+S_{n+5}^{2}\right) S^{(1,n)}\psi _{N}\right\rangle \\
&&+\frac{1}{N}\left\langle S^{(1,n)}\left(
S_{1}^{2}+S_{n+1}^{2}+S_{n+2}^{2}\right) \psi
_{N},V_{N,n+3,n+4,n+5}S^{(1,n)}\psi _{N}\right\rangle \\
&&+\frac{1}{N}\left\langle S^{(1,n)}V_{N,1,n+1,n+2}\psi _{N},\left(
S_{n+3}^{2}+S_{n+4}^{2}+S_{n+5}^{2}\right) S^{(1,n)}\psi _{N}\right\rangle \\
&&+\frac{1}{N}\left\langle S^{(1,n)}V_{N,1,n+1,n+2}\psi
_{N},V_{N,n+3,n+4,n+5}S^{(1,n)}\psi _{N}\right\rangle
\end{eqnarray*}%
Putting away the nonnegative terms,%
\begin{eqnarray*}
E_{13} &\geqslant &-\frac{1}{N}\left\vert \left\langle \frac{S^{(1,n)}S_{n+3}%
}{S_{1}}\psi _{N},\left[ S_{1},V_{N,1,n+1,n+2}\right] S^{(1,n)}S_{n+3}\psi
_{N}\right\rangle \right\vert \\
&&-\frac{1}{N}\left\vert \left\langle \frac{S^{(1,n)}}{S_{1}}\psi _{N},\left[
S_{1},V_{N,1,n+1,n+2}\right] V_{N,n+3,n+4,n+5}S^{(1,n)}\psi
_{N}\right\rangle \right\vert \\
&=&E_{131}+E_{132}
\end{eqnarray*}%
We apply Cauchy-Schwarz with a weight and attain%
\begin{eqnarray*}
E_{131} &\geqslant &-\frac{\varepsilon _{1}C}{N^{1-\beta }}\left\vert
\left\langle \frac{S^{(1,n)}S_{n+3}}{S_{1}}\psi _{N},\left\vert V^{\prime
}\right\vert _{N,1,n+1,n+2}\frac{S^{(1,n)}S_{n+3}}{S_{1}}\right\rangle
\right\vert \\
&&-\frac{\varepsilon _{1}^{-1}C}{N^{1-\beta }}\left\vert \left\langle
S^{(1,n)}S_{n+3}\psi _{N},\left\vert V^{\prime }\right\vert
_{N,1,n+1,n+2}S^{(1,n)}S_{n+3}\psi _{N}\right\rangle \right\vert ,
\end{eqnarray*}%
and%
\begin{eqnarray*}
E_{132} &\geqslant &-\frac{\varepsilon _{2}C}{N^{1-\beta }}\left\vert
\left\langle \frac{S^{(1,n)}}{S_{1}}\psi _{N},\left\vert V^{\prime
}\right\vert _{N,1,n+1,n+2}V_{N,n+3,n+4,n+5}\frac{S^{(1,n)}}{S_{1}}\psi
_{N}\right\rangle \right\vert \\
&&-\frac{\varepsilon _{2}^{-1}C}{N^{1-\beta }}\left\vert \left\langle
S^{(1,n)}\psi _{N},\left\vert V^{\prime }\right\vert
_{N,1,n+1,n+2}V_{N,n+3,n+4,n+5}S^{(1,n)}\psi _{N}\right\rangle \right\vert
\end{eqnarray*}%
with $\varepsilon _{1}$ and $\varepsilon _{2}$ to be determined. Utilize H%
\"{o}lder and Sobolev,%
\begin{eqnarray*}
E_{131} &\geqslant &-\frac{\varepsilon _{1}C}{N^{1-\beta }}\left\Vert
\left\vert V^{\prime }\right\vert _{N,1,n+1,n+2}\right\Vert
_{L_{x_{n+1}}^{1+}L_{x_{n+2}}^{1+}}\left\Vert \frac{S^{(1,n)}S_{n+3}}{S_{1}}%
\psi _{N}\right\Vert _{L_{x}^{2}L_{x_{n+1}}^{\infty -}L_{x_{n+2}}^{\infty
-}}^{2} \\
&&-\frac{\varepsilon _{1}^{-1}C}{N^{1-\beta }}\left\Vert \left\vert
V^{\prime }\right\vert _{N,1,n+1,n+2}\right\Vert _{L_{x_{n+1}}^{\infty
}L_{x_{n+2}}^{\frac{3}{2}}}\left\Vert S^{(1,n)}S_{n+3}\psi _{N}\right\Vert
_{L_{x}^{2}L_{x_{n+2}}^{6}}^{2} \\
&\geqslant &-\frac{C}{N^{1-\beta }}\left( \varepsilon _{1}N^{+}+\varepsilon
_{1}^{-1}N^{4\beta }\right) \left\Vert S^{(1,n+2)}\psi _{N}\right\Vert
_{L_{x}^{2}}^{2} \\
&\geqslant &-\frac{C}{N^{1-3\beta -}}\left\Vert S^{(1,n+2)}\psi
_{N}\right\Vert _{L_{x}^{2}}^{2}
\end{eqnarray*}%
and%
\begin{eqnarray*}
E_{132} &\geqslant &-\frac{\varepsilon _{2}C}{N^{1-\beta }}\left\Vert
\left\vert V^{\prime }\right\vert _{N,1,n+1,n+2}\right\Vert
_{L_{x_{n+1}}^{\infty }L_{x_{n+2}}^{\frac{3}{2}}}\left\Vert
V_{N,n+3,n+4,n+5}\right\Vert _{L_{x_{n+4}}^{\frac{3}{2}}L_{x_{n+5}}^{\frac{3%
}{2}}} \\
&&\times \left\Vert \frac{S^{(1,n)}}{S_{1}}\psi _{N}\right\Vert
_{L_{x}^{2}L_{x_{n+2}}^{6}L_{x_{n+4}}^{6}L_{x_{n+5}}^{6}}^{2} \\
&&-\frac{\varepsilon _{2}^{-1}C}{N^{1-\beta }}\left\Vert \left\vert
V^{\prime }\right\vert _{N,1,n+1,n+2}\right\Vert _{L_{x_{n+1}}^{\infty
}L_{x_{n+2}}^{\infty }}\left\Vert V_{N,n+3,n+4,n+5}\right\Vert
_{L_{x_{n+4}}^{\frac{3}{2}}L_{x_{n+5}}^{\frac{3}{2}}} \\
&&\times \left\Vert S^{(1,n)}\psi _{N}\right\Vert
_{L_{x}^{2}L_{x_{n+4}}^{6}L_{x_{n+5}}^{6}}^{2} \\
&\geqslant &-\frac{C}{N^{1-\beta }}\left( \varepsilon _{2}N^{6\beta }+\frac{%
N^{8\beta }}{\varepsilon _{2}}\right) \left\Vert S^{(1,n+2)}\psi
_{N}\right\Vert _{L_{x}^{2}}^{2} \\
&=&-\frac{C}{N^{1-8\beta }}\left\Vert S^{(1,n+2)}\psi _{N}\right\Vert
_{L_{x}^{2}}^{2}
\end{eqnarray*}%
Therefore, we have the worst term%
\begin{equation}
E_{13}\geqslant -\frac{C}{N^{1-8\beta }}\left\Vert S^{(1,n+2)}\psi
_{N}\right\Vert _{L_{x}^{2}}^{2}.  \label{estimate:E13}
\end{equation}

Putting (\ref{estimate:E11}), (\ref{energy estimate:E12}), (\ref%
{estimate:E13}), (\ref{energy estimate:E21}), (\ref{energy estimate:E22}), (%
\ref{energy estimate:E3}) together, we have%
\begin{equation}
E\geqslant -\frac{C}{N^{1-8\beta }}\left\Vert S^{(1,n+2)}\psi
_{N}\right\Vert _{L_{x}^{2}}^{2}  \label{energy estimate:E}
\end{equation}

With (\ref{energy estimate:K}), we have%
\begin{eqnarray*}
&&\left\langle \psi _{N},\left( \frac{H_{N}}{N}+1\right) ^{n+2}\psi
_{N}\right\rangle \\
&\geqslant &c_{1}^{n}\left\langle S^{(1,n)}\left( \frac{H_{N}}{N}+1\right)
\psi _{N},S^{(1,n)}\left( \frac{H_{N}}{N}+1\right) \psi _{N}\right\rangle \\
&\geqslant &c_{1}^{n}(1-\frac{C}{N^{1-8\beta }})\left( \left\Vert
S^{(1,n+2)}\psi _{N}\right\Vert _{L_{\mathbf{x}}^{2}}^{2}+\frac{1}{N}%
\left\Vert S_{1}S^{(1,n+1)}\psi _{N}\right\Vert _{L_{\mathbf{x}%
}^{2}}^{2}\right)
\end{eqnarray*}%
as claimed. So we have proved (\ref{energy etimate: stability of matter}).
One can then use the argument on \cite[p.465]{C-HFocusing} to prove (\ref%
{estimate:key a-priori estimate}) as usual. We omit the details here.

\section{Compactness via Energy\label{sec:Compactness}}

We start by introducing an appropriate topology on the density matrices as
was previously done in \cite{E-Y1, E-S-Y2,E-S-Y5,
E-S-Y3,Kirpatrick,TChenAndNP,ChenAnisotropic,Chen3DDerivation,C-H3Dto2D,C-H2/3}%
. Denote the spaces of compact operators and trace class operators on $%
L^{2}\left( \mathbb{T}^{3k}\right) $ as $\mathcal{K}_{k}$ and $\mathcal{L}%
_{k}^{1}$, respectively. Then $\left( \mathcal{K}_{k}\right) ^{\prime }=%
\mathcal{L}_{k}^{1}$. By the fact that $\mathcal{K}_{k}$ is separable, we
select a dense countable subset $\{J_{i}^{(k)}\}_{i\geqslant 1}\subset 
\mathcal{K}_{k}$ in the unit ball of $\mathcal{K}_{k}$ (so $\Vert
J_{i}^{(k)}\Vert _{\func{op}}\leqslant 1$ where $\left\Vert \cdot
\right\Vert _{\func{op}}$ is the operator norm). For $\gamma ^{(k)},\tilde{%
\gamma}^{(k)}\in \mathcal{L}_{k}^{1}$, we then define a metric $d_{k}$ on $%
\mathcal{L}_{k}^{1}$ by 
\begin{equation*}
d_{k}(\gamma ^{(k)},\tilde{\gamma}^{(k)})=\sum_{i=1}^{\infty
}2^{-i}\left\vert \limfunc{Tr}J_{i}^{(k)}\left( \gamma ^{(k)}-\tilde{\gamma}%
^{(k)}\right) \right\vert .
\end{equation*}%
A uniformly bounded sequence $\gamma _{N}^{(k)}\in \mathcal{L}_{k}^{1}$
converges to $\gamma ^{(k)}\in \mathcal{L}_{k}^{1}$ with respect to the
weak* topology if and only if 
\begin{equation*}
\lim_{N\rightarrow \infty }d_{k}(\gamma _{N}^{(k)},\gamma ^{(k)})=0.
\end{equation*}%
For fixed $T>0$, let $C\left( \left[ 0,T\right] ,\mathcal{L}_{k}^{1}\right) $
be the space of functions of $t\in \left[ 0,T\right] $ with values in $%
\mathcal{L}_{k}^{1}$ which are continuous with respect to the metric $d_{k}.$
On $C\left( \left[ 0,T\right] ,\mathcal{L}_{k}^{1}\right) ,$ we define the
metric 
\begin{equation*}
\hat{d}_{k}(\gamma ^{(k)}\left( \cdot \right) ,\tilde{\gamma}^{(k)}\left(
\cdot \right) )=\sup_{t\in \left[ 0,T\right] }d_{k}(\gamma ^{(k)}\left(
t\right) ,\tilde{\gamma}^{(k)}\left( t\right) ),
\end{equation*}%
and denote by $\tau _{prod}$ the topology on the space $\oplus _{k\geqslant
1}C\left( \left[ 0,T\right] ,\mathcal{L}_{k}^{1}\right) $ given by the
product of topologies generated by the metrics $\hat{d}_{k}$ on $C\left( %
\left[ 0,T\right] ,\mathcal{L}_{k}^{1}\right) $.

\begin{theorem}
\label{Theorem:CompactnessOfBBGKY}For every $T\in \left( 0,\infty \right) $,
the sequence $\left\{ \Gamma _{N}(t)=\left\{ \gamma _{N}^{(k)}\right\}
_{k=1}^{N}\right\} \subset \bigoplus_{k\geqslant 1}C\left( \left[ 0,T\right]
,\mathcal{L}_{k}^{1}\right) ,$ which satisfies equation (\ref%
{hierarchy:quintic BBGKY}) and the a-priori bound (\ref{estimate:key
a-priori estimate}) and has finite energy per particle initially%
\begin{equation*}
\frac{1}{N}\limfunc{Tr}H_{N}\gamma _{N}^{(k)}(0)=\frac{1}{N}\left\langle
\psi _{N}(0),H_{N}\psi _{N}(0)\right\rangle \leqslant C,\footnote{%
(\ref{Condition:EnergyBoundOnManyBodyInitialData}) certainly implies both of
(\ref{estimate:key a-priori estimate}) and finite energy per particle.}
\end{equation*}%
is compact with respect to the product topology $\tau _{prod}$. For any
limit point $\Gamma (t)=\left\{ \gamma ^{(k)}\right\} _{k=1}^{N},$ $\gamma
^{(k)}$ is a symmetric nonnegative trace class operator with trace bounded
by $1,$ and it verifies the kinetic energy bound%
\begin{equation}
\sup_{t\in \left[ 0,T\right] }\limfunc{Tr}S^{(1,k)}\gamma
^{(k)}S^{(1,k)}\leqslant C^{k}.  \label{EnergyBound:GP}
\end{equation}
\end{theorem}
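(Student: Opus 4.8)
The plan is to run the by-now-standard Kirkpatrick--Schlein--Staffilani compactness scheme, with one new step in the equicontinuity estimate that bypasses the false endpoint trace inequality (\ref{estimate:failed sobolev 1}). Since $\tau _{prod}$ is the countable product of the topologies generated by the $\hat{d}_{k}$, a diagonal extraction reduces matters to showing that, for each fixed $k$, the sequence $\{\gamma _{N}^{(k)}(\cdot )\}_{N}$ is precompact in $(C([0,T],\mathcal{L}_{k}^{1}),\hat{d}_{k})$. I would apply the Arzel\`{a}--Ascoli theorem in this metric-space setting, reducing this in turn to: (i) for each $t\in \lbrack 0,T]$ the set $\{\gamma _{N}^{(k)}(t)\}_{N}$ is precompact in $(\mathcal{L}_{k}^{1},d_{k})$; and (ii) the family $\{t\mapsto \gamma _{N}^{(k)}(t)\}_{N}$ is uniformly $\hat{d}_{k}$-equicontinuous on $[0,T]$. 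Because $d_{k}$ metrizes the weak-$\ast $ topology on bounded subsets of $\mathcal{L}_{k}^{1}=(\mathcal{K}_{k})^{\prime }$ — a topology that does not depend on the choice of the dense countable family $\{J_{i}^{(k)}\}$ — I am free to take each $J_{i}^{(k)}$ to be a finite-rank operator with a smooth (Schwartz-class) kernel, so that all the commutator manipulations below are legitimate. Step (i) is immediate: the evolution $e^{itH_{N}}$ is unitary, so $\operatorname{Tr}\gamma _{N}^{(k)}(t)=\Vert \psi _{N}(0)\Vert _{L^{2}}^{2}=1$ for all $N$ and $t$; hence $\{\gamma _{N}^{(k)}(t)\}_{N}$ lies in the unit ball of $\mathcal{L}_{k}^{1}$, which is weak-$\ast $ sequentially compact by Banach--Alaoglu and the separability of $\mathcal{K}_{k}$.

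For (ii), integrating (\ref{hierarchy:quintic BBGKY}) in time yields $\operatorname{Tr}J_{i}^{(k)}(\gamma _{N}^{(k)}(t)-\gamma _{N}^{(k)}(s))=\int_{s}^{t}\operatorname{Tr}J_{i}^{(k)}(\partial _{\tau }\gamma _{N}^{(k)}(\tau ))\,d\tau $, so it is enough to bound $|\operatorname{Tr}J_{i}^{(k)}(\partial _{\tau }\gamma _{N}^{(k)})|$ by a constant $C(J_{i}^{(k)},k)$ uniformly in $N$ and $\tau \in \lbrack 0,T]$; splitting the series defining $d_{k}$ into its first $i_{0}$ terms (where this bound gives smallness once $|t-s|$ is small, with a threshold independent of $N$) and its tail (bounded by $\sum_{i>i_{0}}2^{-i}\cdot 2$, using $|\operatorname{Tr}J_{i}^{(k)}(\gamma -\tilde{\gamma})|\leqslant 2$) then produces a $\hat{d}_{k}$-modulus of continuity uniform in $N$. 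The free part of $\operatorname{Tr}J_{i}^{(k)}(\partial _{\tau }\gamma _{N}^{(k)})$ equals $\sum_{j\leqslant k}\operatorname{Tr}([\Delta _{x_{j}},J_{i}^{(k)}]\gamma _{N}^{(k)})$, bounded by $(\sum_{j}\Vert \lbrack \Delta _{x_{j}},J_{i}^{(k)}]\Vert _{\op})\operatorname{Tr}\gamma _{N}^{(k)}=C(J_{i}^{(k)},k)$ because $J_{i}^{(k)}$ has a smooth kernel. The internal interaction term and the first coupling term carry the prefactors $N^{-2}$ and $(N-k)N^{-2}$ in front of $O(k^{3})$, respectively $O(k^{2})$, summands; placing the relevant $V_{N}$'s and a few derivatives in $L^{\infty }$, using (\ref{estimate:key a-priori estimate}), and invoking the uniform bound $\operatorname{Tr}V_{N}(x_{a}-x_{b},x_{a}-x_{c})\gamma _{N}^{(m)}=\langle \psi _{N},V_{N}(x_{a}-x_{b},x_{a}-x_{c})\psi _{N}\rangle \lesssim 1$ bounds each summand by $C(J_{i}^{(k)},k)N^{a\beta }$ for a fixed $a$, which the prefactors more than absorb when $\beta <\tfrac{1}{8}$; so these terms in fact tend to $0$ as $N\rightarrow \infty $.

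The genuinely delicate term is the cubic coupling
\begin{equation*}
\frac{(N-k)(N-k-1)}{N^{2}}\sum_{j=1}^{k}\operatorname{Tr}J_{i}^{(k)}\left[ V_{N}(x_{j}-x_{k+1},x_{j}-x_{k+2}),\gamma _{N}^{(k+2)}\right] ,
\end{equation*}
whose prefactor is $\sim 1$, so each summand must be bounded uniformly in $N$; the textbook regrouping of $\operatorname{Tr}J_{i}^{(k)}V_{N}(x_{j}-x_{k+1},x_{j}-x_{k+2})\gamma _{N}^{(k+2)}$ would require the sharp endpoint Sobolev trace inequality (\ref{estimate:failed sobolev 1}), which is false. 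Instead I would look into the defining structure of this trace: writing $\gamma _{N}^{(k+2)}=\operatorname{Tr}_{k+3,\dots ,N}|\psi _{N}\rangle \langle \psi _{N}|$ turns it into an inner product built directly out of $\psi _{N}$, and, since $V\geqslant 0$, one may split $V_{N}=V_{N}^{1/2}V_{N}^{1/2}$ and place one factor on a copy of $\psi _{N}$. One of the two resulting factors is then controlled by nothing but the conserved energy: by particle symmetry and $V\geqslant 0$,
\begin{equation*}
\begin{split}
\left\Vert V_{N}(x_{j}-x_{k+1},x_{j}-x_{k+2})^{1/2}\psi _{N}(\tau )\right\Vert _{L^{2}}^{2}& =\left\langle \psi _{N}(\tau ),V_{N}(x_{j}-x_{k+1},x_{j}-x_{k+2})\psi _{N}(\tau )\right\rangle \\
& \lesssim \tfrac{1}{N}\left\langle \psi _{N}(\tau ),(H_{N}+N)\psi _{N}(\tau )\right\rangle \lesssim 1
\end{split}
\end{equation*}
uniformly in $N$ and $\tau $ (energy conservation together with $\tfrac{1}{N}\operatorname{Tr}H_{N}\gamma _{N}^{(k)}(0)\leqslant C$). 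The other factor still carries $J_{i}^{(k)}$ and a residual $V_{N}^{1/2}$; here a \emph{subcritical} Sobolev inequality enters, using only the $H^{1}$-in-each-variable regularity of $\gamma _{N}^{(k+2)}$ afforded by (\ref{estimate:key a-priori estimate}) and the extra smoothness built into $J_{i}^{(k)}$, so that one stays strictly inside the endpoint of (\ref{estimate:failed sobolev 1}) and closes the estimate with a constant $C(J_{i}^{(k)},k)$ independent of $N$. This is the new ingredient of the compactness argument, and the step I expect to be the main obstacle: one must exploit the concrete structure of $\gamma _{N}^{(k+2)}$ as a marginal of $\psi _{N}$ rather than treating it abstractly, and carefully balance the powers of $N$ carried by $V_{N}$ against the derivatives one can afford, for every $\beta <\tfrac{1}{8}$. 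Granting this, every summand of the cubic coupling term is $\leqslant C(J_{i}^{(k)},k)$, which completes (ii).

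Arzel\`{a}--Ascoli now gives, for each $k$, a subsequence along which $\gamma _{N}^{(k)}(\cdot )$ converges in $(C([0,T],\mathcal{L}_{k}^{1}),\hat{d}_{k})$; a diagonal argument over $k$ produces a single subsequence with $\Gamma _{N_{\ell }}\rightarrow \Gamma =\{\gamma ^{(k)}\}$ in $\tau _{prod}$, so that $\gamma _{N_{\ell }}^{(k)}(t)\rightarrow \gamma ^{(k)}(t)$ weak-$\ast $ for every $t$ and $k$, with $\gamma ^{(k)}\in C([0,T],\mathcal{L}_{k}^{1})$. Symmetry and nonnegativity of $\gamma ^{(k)}(t)$ pass to the weak-$\ast $ limit by testing against $\sigma J^{(k)}\sigma ^{-1}$ (for permutation operators $\sigma $) and against rank-one projections $|f\rangle \langle f|$, respectively; and
\begin{equation*}
\begin{split}
\operatorname{Tr}\gamma ^{(k)}(t)& =\sup_{0\leqslant J\leqslant I,\ J\text{ finite rank}}\operatorname{Tr}\big(J\gamma ^{(k)}(t)\big)=\sup_{J}\lim_{\ell }\operatorname{Tr}\big(J\gamma _{N_{\ell }}^{(k)}(t)\big)\\
& \leqslant \limsup_{\ell }\operatorname{Tr}\gamma _{N_{\ell }}^{(k)}(t)=1 .
\end{split}
\end{equation*}
Finally, for (\ref{EnergyBound:GP}): given $0\leqslant J\leqslant I$ with a smooth kernel, $S^{(1,k)}JS^{(1,k)}$ is compact, so
\begin{equation*}
\begin{split}
\operatorname{Tr}\big(J\,S^{(1,k)}\gamma ^{(k)}(t)S^{(1,k)}\big)& =\operatorname{Tr}\big(S^{(1,k)}JS^{(1,k)}\gamma ^{(k)}(t)\big)=\lim_{\ell }\operatorname{Tr}\big(J\,S^{(1,k)}\gamma _{N_{\ell }}^{(k)}(t)S^{(1,k)}\big)\\
& \leqslant \limsup_{\ell }\operatorname{Tr}\big(S^{(1,k)}\gamma _{N_{\ell }}^{(k)}(t)S^{(1,k)}\big)\leqslant C^{k}
\end{split}
\end{equation*}
by (\ref{estimate:key a-priori estimate}); taking the supremum over such $J$ and then over $t\in \lbrack 0,T]$ gives (\ref{EnergyBound:GP}).
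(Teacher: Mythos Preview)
Your overall scheme matches the paper's: reduce compactness to equicontinuity in $\hat d_k$, bound $|\partial_t\operatorname{Tr}J^{(k)}\gamma_N^{(k)}|$ term by term, and for the cubic coupling summand unwind $\gamma_N^{(k+2)}$ as a marginal of $\psi_N$ and Cauchy--Schwarz so that one of the two resulting factors is controlled by the conserved interaction energy $\langle\psi_N,V_N\psi_N\rangle\lesssim 1$. That is exactly the paper's new ingredient, and you have it.

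The gap is in the \emph{other} Cauchy--Schwarz factor. After your split, the two factors (the paper's $A$ and $B$) both carry the full $V_N$; with $j=k$,
\[
B=\int |J^{(k)}(\mathbf{x}_k,\mathbf{y}_k)|\,V_N(y_k-x_{k+1},y_k-x_{k+2})\,|\psi_N(\mathbf{x}_{k+2},\mathbf{x}_{N-k-2})|^2\,d\mathbf{x}_N\,d\mathbf{y}_k.
\]
Your proposal to close $B$ with ``a subcritical Sobolev inequality using the extra smoothness of $J_i^{(k)}$'' does not work as stated: the smoothness of $J^{(k)}$ lives only in the $(\mathbf{x}_k,\mathbf{y}_k)$ variables and contributes nothing in $x_{k+1},x_{k+2}$, which is where the singularity of $V_N$ is felt; taken at face value, you are still staring at the false endpoint estimate~(\ref{estimate:failed sobolev 1}). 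What is missing is a structural observation: in $B$ the $\psi_N$-factor depends only on $\mathbf{x}$-variables, so the variable $y_k$ inside $V_N$ is \emph{decoupled} from $\psi_N$. After putting $|J^{(k)}|$ in $L^\infty_{\mathbf{x}_k,y_k}L^1_{\mathbf{y}_{k-1}}$ one may integrate $y_k$ freely, collapsing the six-dimensional $V_N$ to a three-dimensional potential $\tilde V_N(x_{k+1}-x_{k+2})$ with $\|\tilde V_N\|_{L^1}=b_0$; only then does the genuinely subcritical two-body estimate~(\ref{estimate:OldESYSobolevInequality}) apply, giving $B\lesssim C_J\,b_0\,\|S_{k+1}S_{k+2}\psi_N\|_{L^2}^2\leqslant C_J\,b_0\,C^2$ uniformly in $N$ --- note there is no residual $N^{a\beta}$ to balance, contrary to what you anticipate. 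The paper itself closes $B$ by a different mechanism: a Littlewood--Paley split $\psi_N=\psi_{N,1}+\psi_{N,2}$ according to whether the $x_{k+1}$ or $x_{k+2}$ frequency dominates, then $L^\infty$-Sobolev in the low-frequency variable (costing two derivatives there) followed by trading one derivative back to the high-frequency variable. Either route works; you need to supply one of them.
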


As in the usual argument, it suffices to show that for every "nice" test
kernel / operator $J^{(k)}(\mathbf{x}_{k},\mathbf{x}_{k}^{\prime })$, we have%
\begin{equation*}
\left\vert \partial _{t}\limfunc{Tr}\left( J^{(k)}\gamma _{N}^{(k)}\right)
\right\vert \leqslant C_{k,J}.
\end{equation*}%
However, as pointed out in \S \ref{sec:Introduction}, due to the fact that
the case we are dealing with is energy-critical and we are estimating a
trace instead of a product, one will miss an $\varepsilon $ if one estimates
the terms involving $V_{N}$ with the usual method. This can be circumvented
using the conservation of energy:%
\begin{equation*}
\frac{1}{N}\limfunc{Tr}H_{N}\gamma _{N}^{(k)}(t)=\frac{1}{N}\limfunc{Tr}%
H_{N}\gamma _{N}^{(k)}(0)\leqslant C.
\end{equation*}

Rewrite (\ref{hierarchy:quintic BBGKY}) as, 
\begin{equation*}
\left\vert \partial _{t}\limfunc{Tr}\left( J^{(k)}\gamma _{N}^{(k)}\right)
\right\vert \leqslant I+II+III+IV
\end{equation*}%
where%
\begin{equation*}
I\equiv \sum_{j=1}^{k}\left\vert \limfunc{Tr}\left( J^{(k)}\left[
-\bigtriangleup _{x_{j}},\gamma _{N}^{(k)}\right] \right) \right\vert ,
\end{equation*}%
\begin{equation*}
II\equiv \frac{1}{N^{2}}\sum_{1\leqslant i<j<l\leqslant k}\left\vert 
\limfunc{Tr}\left( J^{(k)}\left[ V_{N}(x_{i}-x_{j},x_{i}-x_{l}),\gamma
_{N}^{(k)}\right] \right) \right\vert ,
\end{equation*}%
\begin{equation*}
III\equiv \frac{(N-k)}{N^{2}}\sum_{1\leqslant i<j\leqslant k}\left\vert 
\limfunc{Tr}\left( J^{(k)}\left[ V_{N}(x_{i}-x_{j},x_{i}-x_{k+1}),\gamma
_{N}^{(k+1)}\right] \right) \right\vert ,
\end{equation*}%
and%
\begin{equation*}
IV=\frac{\left( N-k\right) (N-k-1)}{N^{2}}\sum_{j=1}^{k}\left\vert \limfunc{%
Tr}\left( J^{(k)}\left[ V_{N}(x_{j}-x_{k+1},x_{j}-x_{k+2}),\gamma
_{N}^{(k+2)}\right] \right) \right\vert .
\end{equation*}

We can estimate $I$ as usual since it does not contain $V_{N}$.%
\begin{eqnarray*}
&&\left\vert \limfunc{Tr}\left( J^{(k)}\left[ -\bigtriangleup
_{x_{j}},\gamma _{N}^{(k)}\right] \right) \right\vert \\
&=&\left\vert \limfunc{Tr}\left( J^{(k)}\left[ S_{j}^{2},\gamma _{N}^{(k)}%
\right] \right) \right\vert \\
&\leqslant &\left\vert \limfunc{Tr}\left( S_{j}^{-1}J^{(k)}S_{j}S_{j}\gamma
_{N}^{(k)}S_{j}\right) \right\vert +\left\vert \limfunc{Tr}\left(
S_{j}J^{(k)}S_{j}^{-1}S_{j}\gamma _{N}^{(k)}S_{j}\right) \right\vert \\
&\leqslant &\left( \left\Vert S_{j}^{-1}J^{(k)}S_{j}\right\Vert
_{op}+\left\Vert S_{j}J^{(k)}S_{j}^{-1}\right\Vert _{op}\right) \limfunc{Tr}%
\left( S_{j}\gamma _{N}^{(k)}S_{j}\right) \\
&\leqslant &C_{J}C.
\end{eqnarray*}

For $II$, $III$, and $IV$, we estimate a typical term 
\begin{equation}
\limfunc{Tr}J^{(k)}V_{N}(x_{j}-x_{k+1},x_{j}-x_{k+2})\gamma _{N}^{(k+2)}.
\label{estimate:typical term estimate in compactness}
\end{equation}%
The composition is given\ by%
\begin{eqnarray*}
&&J^{(k)}V_{N}(x_{j}-x_{k+1},x_{j}-x_{k+2})\gamma _{N}^{(k+2)} \\
&=&\int d\mathbf{y}_{k+2}\left[ J^{(k)}(\mathbf{x}_{k},\mathbf{y}_{k})\delta
(x_{k+1}-y_{k+1})\delta (x_{k+2}-y_{k+2})\right] \\
&&\times \left[ V_{N}(y_{j}-y_{k+1},y_{j}-y_{k+2})\gamma _{N}^{(k+2)}(%
\mathbf{y}_{k+2},\mathbf{x}_{k+2}^{\prime })\right] \\
&=&\int d\mathbf{y}_{k}J^{(k)}(\mathbf{x}_{k},\mathbf{y}%
_{k})V_{N}(y_{j}-x_{k+1},y_{j}-x_{k+2})\gamma _{N}^{(k+2)}(\mathbf{y}%
_{k},x_{k+1},x_{k+2},\mathbf{x}_{k+2}^{\prime }).
\end{eqnarray*}%
Hence,%
\begin{eqnarray*}
&&\limfunc{Tr}J^{(k)}V_{N}(x_{j}-x_{k+1},x_{j}-x_{k+2})\gamma _{N}^{(k+2)} \\
&=&\int d\mathbf{x}_{k+2}\int d\mathbf{y}_{k}J^{(k)}(\mathbf{x}_{k},\mathbf{y%
}_{k})V_{N}(y_{j}-x_{k+1},y_{j}-x_{k+2})\gamma _{N}^{(k+2)}(\mathbf{y}%
_{k},x_{k+1},x_{k+2},\mathbf{x}_{k+2})
\end{eqnarray*}%
Putting in the definition of $\gamma _{N}^{(k+2)}$ yields%
\begin{eqnarray*}
&&\left\vert \limfunc{Tr}J^{(k)}V_{N}(x_{j}-x_{k+1},x_{j}-x_{k+2})\gamma
_{N}^{(k+2)}\right\vert \\
&\leqslant &\int d\mathbf{x}_{k+2}\int d\mathbf{y}_{k}\left\vert J^{(k)}(%
\mathbf{x}_{k},\mathbf{y}_{k})\right\vert V_{N}(y_{j}-x_{k+1},y_{j}-x_{k+2})
\\
&&\times \left\vert \int \bar{\psi}_{N}(t,\mathbf{y}_{k},x_{k+1},x_{k+2},%
\mathbf{x}_{N-k-2})\psi _{N}(t,\mathbf{x}_{k+2},\mathbf{x}_{N-k-2})d\mathbf{x%
}_{N-k-2}\right\vert .
\end{eqnarray*}%
Cauchy-Schwarz at $d\mathbf{x}_{N-k}$,%
\begin{eqnarray*}
&\leqslant &\int d\mathbf{x}_{k+2}\int d\mathbf{y}_{k}\left\vert J^{(k)}(%
\mathbf{x}_{k},\mathbf{y}_{k})\right\vert V_{N}(y_{j}-x_{k+1},y_{j}-x_{k+2})
\\
&&\times \left( \int \left\vert \psi _{N}\right\vert ^{2}(t,\mathbf{y}%
_{k},x_{k+1},x_{k+2},\mathbf{x}_{N-k-2})d\mathbf{x}_{N-k-2}\right) ^{\frac{1%
}{2}}\left( \int \left\vert \psi _{N}\right\vert ^{2}(t,\mathbf{x}_{k+2},%
\mathbf{x}_{N-k-2})d\mathbf{x}_{N-k-2}\right) ^{\frac{1}{2}}.
\end{eqnarray*}%
Cauchy-Schwarz at $d\mathbf{x}_{k+2}d\mathbf{y}_{k}$,%
\begin{eqnarray*}
&\leqslant &\left( \int \left\vert J^{(k)}(\mathbf{x}_{k},\mathbf{y}%
_{k})\right\vert V_{N}(y_{j}-x_{k+1},y_{j}-x_{k+2})\left\vert \psi
_{N}\right\vert ^{2}(t,\mathbf{y}_{k},x_{k+1},x_{k+2},\mathbf{x}_{N-k-2})d%
\mathbf{x}_{N}d\mathbf{y}_{k}\right) ^{\frac{1}{2}} \\
&&\times \left( \int \left\vert J^{(k)}(\mathbf{x}_{k},\mathbf{y}%
_{k})\right\vert V_{N}(y_{j}-x_{k+1},y_{j}-x_{k+2})\left\vert \psi
_{N}\right\vert ^{2}(t,\mathbf{x}_{k+2},\mathbf{x}_{N-k-2})d\mathbf{x}_{N}d%
\mathbf{y}_{k}\right) ^{\frac{1}{2}} \\
&\equiv &A^{\frac{1}{2}}B^{\frac{1}{2}}.
\end{eqnarray*}%
We estimate $A$ by%
\begin{eqnarray*}
A &\leqslant &\left\Vert J^{(k)}\right\Vert _{L_{\mathbf{y}_{k}}^{\infty }L_{%
\mathbf{x}_{k}}^{1}}\int V_{N}(y_{j}-x_{k+1},y_{j}-x_{k+2}) \\
&&\times \left\vert \psi _{N}\right\vert ^{2}(t,\mathbf{y}%
_{k},x_{k+1},x_{k+2},\mathbf{x}_{N-k-2})d\mathbf{x}_{N-k-2}dx_{k+1}dx_{k+2}d%
\mathbf{y}_{k}
\end{eqnarray*}%
Because the quantity%
\begin{equation*}
\int V_{N}(y_{j}-x_{k+1},y_{j}-x_{k+2})\left\vert \psi _{N}\right\vert
^{2}(t,\mathbf{y}_{k},x_{k+1},x_{k+2},\mathbf{x}_{N-k-2})d\mathbf{x}%
_{N-k-2}dx_{k+1}dx_{k+2}d\mathbf{y}_{k}
\end{equation*}%
is part of the energy and we are dealing with a defocusing case, we have 
\begin{equation*}
A\leqslant C_{J}C.
\end{equation*}

Though $B$ is not part of the energy, $B$ is in fact not as singular as $A$
because $V_{N}(y_{j}-x_{k+1},y_{j}-x_{k+2})\left\vert \psi _{N}\right\vert
^{2}(t,\mathbf{x}_{k+2},\mathbf{x}_{N-k})$ effectively only set two
variables inside $\left\vert \psi _{N}\right\vert ^{2}$ to be equal. But
there is a technical point inside. For simpler notation, let us assume $j=k$%
. Denote a Littlewood-Paley projector by $P$. Decompose $\psi _{N}$ in
frequency that%
\begin{equation}
\psi _{N}=\psi _{N,1}+\psi _{N,2}
\label{eqn:freq decomposition in compactness}
\end{equation}%
where 
\begin{equation*}
\psi _{N,1}=P_{M_{k+1}\geqslant M_{k+2}}\psi _{N},
\end{equation*}%
and 
\begin{equation*}
\psi _{N,2}=P_{M_{k+2}\geqslant M_{k+1}}\psi _{N}.
\end{equation*}%
That is, inside $\psi _{N,1}$, the $M_{k+1}$ frequency dominates the $%
M_{k+2} $ frequency, while the $M_{k+2}$ frequency dominates the $M_{k+1}$
frequency inside $\psi _{N,2}$.

Plugging (\ref{eqn:freq decomposition in compactness}) into $B$ yields,%
\begin{eqnarray*}
B &\leqslant &2\left\Vert J^{(k)}\right\Vert _{L_{\mathbf{x}_{k}}^{\infty
}L_{y_{k}}^{\infty }L_{\mathbf{y}_{k-1}}^{1}} \\
&&\times \int V_{N}(y_{k}-x_{k+1},y_{k}-x_{k+2})\left\vert \psi
_{N,1}\right\vert ^{2}(t,\mathbf{x}_{k+2},\mathbf{x}_{N-k-2})d\mathbf{x}%
_{k+2}dy_{k}d\mathbf{x}_{N-k-2} \\
&&+2\left\Vert J^{(k)}\right\Vert _{L_{\mathbf{x}_{k}}^{\infty
}L_{y_{k}}^{\infty }L_{\mathbf{y}_{k-1}}^{1}} \\
&&\times \int V_{N}(y_{k}-x_{k+1},y_{k}-x_{k+2})\left\vert \psi
_{N,2}\right\vert ^{2}(t,\mathbf{x}_{k+2},\mathbf{x}_{N-k-2})d\mathbf{x}%
_{k+2}y_{k}d\mathbf{x}_{N-k-2}
\end{eqnarray*}%
The second term actually equals to the first term by symmetry. We then
estimate%
\begin{equation*}
B\leqslant 4C_{J}\left( \int
V_{N}(y_{k}-x_{k+1},y_{k}-x_{k+2})dy_{k}dx_{k+2}\right) \left\Vert \psi
_{N,1}\right\Vert _{L_{x_{k+2}}^{\infty }L_{\mathbf{x}_{k+1}}^{2}L_{\mathbf{x%
}_{N-k-2}}^{2}}^{2}
\end{equation*}%
By Sobolev,%
\begin{equation*}
B\leqslant C_{J}b_{0}\left\Vert S_{k+2}^{2}\psi _{N,1}\right\Vert
_{L_{x_{k+2}}^{2}L_{\mathbf{x}_{k+1}}^{2}L_{\mathbf{x}_{N-k-2}}^{2}}^{2}
\end{equation*}%
But the $M_{k+1}$ frequency dominates the $M_{k+2}$ frequency in $\psi
_{N,1} $, so 
\begin{eqnarray*}
B &\leqslant &C_{J}\left\Vert S_{k+1}S_{k+2}\psi _{N,1}\right\Vert _{L_{%
\mathbf{x}_{N}}^{2}}^{2} \\
&\leqslant &C_{J}C^{2}.
\end{eqnarray*}%
That is%
\begin{equation*}
\left\vert \limfunc{Tr}J^{(k)}V_{N}(x_{j}-x_{k+1},x_{j}-x_{k+2})\gamma
_{N}^{(k+2)}\right\vert \leqslant C_{J}C^{\frac{3}{2}}\text{.}
\end{equation*}%
All terms inside $II$, $III$, and $IV$ can be estimated similarly. We do not
repeat the process here. Putting the estimates for $I$, $II$, $III$, and $IV$
together, we have proved 
\begin{equation*}
\left\vert \partial _{t}\limfunc{Tr}\left( J^{(k)}\gamma _{N}^{(k)}\right)
\right\vert \leqslant C_{k,J}
\end{equation*}%
and hence Theorem \ref{Theorem:CompactnessOfBBGKY}.

\section{Convergence without the Sharp Trace Theorem\label{sec:convergence}}

\begin{theorem}
\label{Theorem:Convergence}Assume $\beta <\frac{1}{9}$, let $\Gamma
(t)=\left\{ \gamma ^{(k)}\right\} _{k=1}^{\infty }$ $\in \oplus _{k\geqslant
1}C\left( \left[ 0,T\right] ,\mathcal{L}_{k}^{1}\right) $ be a limit point
of the sequence $\left\{ \Gamma _{N}(t)=\left\{ \gamma _{N}^{(k)}\right\}
_{k=1}^{N}\right\} $ in Theorem \ref{Theorem:CompactnessOfBBGKY}, with
respect to the product topology $\tau _{prod}$, then $\Gamma (t)$ is a
solution to the energy-critical GP hierarchy (\ref{hierarchy:quintic GP in
differential form}) subject to initial data $\gamma ^{(k)}\left( 0\right)
=\left\vert \phi _{0}\right\rangle \left\langle \phi _{0}\right\vert
^{\otimes k}$ with coupling constant $b_{0}=$ $\int_{\mathbb{T}^{3}\times 
\mathbb{T}^{3}}V\left( x,y\right) dxdy$, which, written in integral form, is 
\begin{eqnarray}
&&\gamma ^{(k)}\left( t\right)  \label{hierarchy:quintic GP in integral form}
\\
&=&U^{(k)}(t)\gamma ^{(k)}\left( 0\right)
-ib_{0}\sum_{j=1}^{k}\int_{0}^{t}U^{(k)}(t-s)\limfunc{Tr}\nolimits_{k+1,k+2}%
\left[ \delta (x_{j}-x_{k+1})\delta (x_{j}-x_{k+2}),\gamma ^{(k+2)}(s)\right]
ds.  \notag
\end{eqnarray}%
where $U^{(k)}(t)=\dprod\limits_{j=1}^{k}e^{it\bigtriangleup
_{x_{j}}}e^{-it\bigtriangleup _{x_{j}^{\prime }}}$.
\end{theorem}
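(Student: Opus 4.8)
The plan is to follow the standard compactness-plus-convergence architecture of \cite{E-S-Y2,Kirpatrick,TChenAndNP}, with the crucial approximation-of-identity step replaced by the frequency-interaction analysis announced in \S\ref{sec:Introduction}. Passing to a subsequence (still written $\Gamma_N$) converging to $\Gamma$ in $\tau_{prod}$, it suffices, for each fixed $k$ and $t\in[0,T]$ and each admissible test operator $J^{(k)}\in\mathcal{K}_k$, to identify $\limfunc{Tr}J^{(k)}\gamma^{(k)}(t)$ with the right-hand side of (\ref{hierarchy:quintic GP in integral form}) tested against $J^{(k)}$, since a countable dense family of such $J^{(k)}$ determines the metric $d_k$. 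To this end I would write the BBGKY hierarchy (\ref{hierarchy:quintic BBGKY}) in Duhamel form, test against $J^{(k)}$, and organize the result into the four families $I$--$IV$ of \S\ref{sec:Compactness}: the free-evolution term, the internal potential term with prefactor $N^{-2}$, the recollision term with prefactor $(N-k)N^{-2}$, and the main coupling term with prefactor $(N-k)(N-k-1)N^{-2}$.

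The initial data and the two error families are routine. Condition (b), together with the a-priori bound (\ref{estimate:key a-priori estimate})---which forces $\{\gamma_N^{(k)}(0)\}$ to be compact and, in the standard way, propagates the factorization from $k=1$ to all $k$---gives $\gamma_N^{(k)}(0)\to|\phi_0\rangle\langle\phi_0|^{\otimes k}$ in trace norm, so the free term converges to $\limfunc{Tr}J^{(k)}U^{(k)}(t)|\phi_0\rangle\langle\phi_0|^{\otimes k}$. Each term in the internal and recollision families is bounded uniformly in $N$ by $C_{k,J}$ by exactly the energy method of \S\ref{sec:Compactness} (Cauchy--Schwarz together with the frequency split); multiplied by the combinatorial prefactors, which are $O(N^{-2}k^3)$ and $O(N^{-1}k^2)$, and integrated over $[0,T]$, these contributions are $O(N^{-1})$ and vanish. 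Since $(N-k)(N-k-1)N^{-2}\to1$, the only surviving term is the coupling term, for which I must establish the genuine convergence (\ref{convergence:key coupling}), with enough uniformity in $s$ (via the uniform-in-$N$ bounds just obtained and dominated convergence) to pass the limit inside the $ds$-integral.

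The heart of the matter, and the main obstacle, is exactly this convergence, since the usual proof routes through the false endpoint Sobolev trace estimate (\ref{estimate:failed sobolev 1})--(\ref{estimate:failed sobolev 2}). My plan is to Littlewood--Paley decompose $\psi_N$ (equivalently $\gamma_N^{(k+2)}$) in the variables $x_{k+1},x_{k+2}$ at an $N$-dependent threshold and to split $V_N-b_0\,\delta\otimes\delta$ into a high-frequency approximation error, a low-frequency approximation error, and a low-frequency main term. When some variable lies above the threshold ($\sim N^{2\beta}$ in the easier $\beta<1/12$ argument of \S\ref{Sec:Convergence 1/12}; $\sim N^{\beta}$ with a further four-region decomposition of the high part in the $\beta<1/9$ argument of \S\ref{Sec:Convergence 1/9}), one trades the powers of $N$ in $V_N$ for derivatives falling on $\gamma_N^{(k+2)}$, which lowers the order of the singularity below the endpoint and extracts a negative power of $N$ from the scaling; this kills the term against (\ref{estimate:key a-priori estimate}), and in the wider range one additionally exploits the $V_N$-induced couplings $x_{k+1}\leftrightarrow x_{k+1}'$ and $x_{k+2}\leftrightarrow x_{k+2}'$ to squeeze a gain out of the trace lemma applied to the primed factor. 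For the low-frequency part, a finite-dimensional quantum de Finetti theorem for $\gamma^{(k+2)}$ (this is where $\Lambda=\mathbb{T}^3$ is used) replaces the trace by an integral of pure tensor powers $|\phi\rangle\langle\phi|^{\otimes(k+2)}$ against a probability measure, up to the controlled low-frequency error; on each such power what remains is precisely the $\varepsilon$-loss-free approximation of identity for products, Lemma \ref{lem:approximation of identity for products}. Assembling the three pieces, sending $N\to\infty$, and then passing to the limiting de Finetti measure yields (\ref{convergence:key coupling}), and hence the integral identity (\ref{hierarchy:quintic GP in integral form}) with coupling constant $b_0$.
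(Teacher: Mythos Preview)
Your plan is correct and follows the paper's own argument closely: the Duhamel/testing setup, the disposal of the internal and recollision terms via the energy method of \S\ref{sec:Compactness}, and the frequency-interaction analysis of the key coupling term are exactly what the paper does. One minor point of precision: the finite-dimensional quantum de Finetti theorem is applied to the \emph{projected $N$-body} marginals $P_{\leqslant M}^{(k+2)}\gamma_N^{(k+2)}P_{\leqslant M}^{(k+2)}$ (not to the limit $\gamma^{(k+2)}$, which uses the ordinary de Finetti), and in the full $\beta<1/9$ argument of \S\ref{Sec:Convergence 1/9} the paper actually uses \emph{two} thresholds---an $N$-independent $M$ and an $N$-dependent $R$ with $N^\beta\ll R\ll N^{(1-6\beta)/3}$---so that the lowest-frequency piece ($F_3$) is handled by direct convergence of smoothing kernels rather than by Lemma~\ref{lem:approximation of identity for products}; your description leans more on the \S\ref{Sec:Convergence 1/12} organization, but the ideas you list are the right ones.
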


Use the short hand notation $J_{t}^{(k)}=J^{(k)}U^{(k)}(t)$, we rewrite (\ref%
{hierarchy:quintic BBGKY}) as%
\begin{eqnarray}
&&\limfunc{Tr}J^{(k)}\gamma _{N}^{(k)}\left( t\right)
\label{hierarchy:Integral BBGKY with test function} \\
&\equiv &I-\frac{i}{N^{2}}\sum_{1\leqslant i<j<l\leqslant k}II-\frac{i(N-k)}{%
N^{2}}\sum_{1\leqslant i<j\leqslant k}III-\frac{i\left( N-k\right) (N-k-1)}{%
N^{2}}\sum_{j=1}^{k}IV  \notag
\end{eqnarray}%
where%
\begin{equation*}
I=\limfunc{Tr}J_{t}^{(k)}\gamma _{N}^{(k)}\left( 0\right) ,
\end{equation*}%
\begin{equation*}
II=\int_{0}^{t}\limfunc{Tr}J_{t-s}^{(k)}\left[
V_{N}(x_{i}-x_{j},x_{i}-x_{l}),\gamma _{N}^{(k)}\right] ds,
\end{equation*}%
\begin{equation*}
III=\int_{0}^{t}\limfunc{Tr}J_{t-s}^{(k)}\left[
V_{N}(x_{i}-x_{j},x_{i}-x_{k+1}),\gamma _{N}^{(k+1)}(s)\right] ds,
\end{equation*}%
\begin{equation*}
IV=\int_{0}^{t}\limfunc{Tr}J_{t-s}^{(k)}\left[ V_{N}\left(
x_{j}-x_{k+1},x_{j}-x_{k+2}\right) ,\gamma _{N}^{(k+2)}(s)\right] ds.
\end{equation*}

By Theorem \ref{Theorem:CompactnessOfBBGKY}, we already know $\Gamma
(t)=\left\{ \gamma ^{(k)}\right\} _{k=1}^{\infty }$ exists and is in $%
\bigoplus_{k\geqslant 1}C\left( \left[ 0,T\right] ,\mathcal{L}%
_{k}^{1}\right) $. We will prove that, for every "nice" test kernel /
operator $J^{(k)}$, we have (\ref{hierarchy:Integral BBGKY with test
function}) converges to%
\begin{eqnarray}
&&\limfunc{Tr}J^{(k)}\gamma ^{(k)}\left( t\right)
\label{hierarchy:Integral GP with test function} \\
&=&\limfunc{Tr}J_{t}^{(k)}\gamma ^{(k)}\left( 0\right)  \notag \\
&&-ib_{0}\sum_{j=1}^{k}\int_{0}^{t}\limfunc{Tr}J_{t-s}^{(k)}\left[ \delta
(x_{j}-x_{k+1})\delta (x_{j}-x_{k+2}),\gamma ^{(k+2)}(s)\right] ds.  \notag
\end{eqnarray}

Passing to further subsequences if necessary, let us assume 
\begin{equation*}
\lim_{N\rightarrow \infty }\sup_{t\in \lbrack 0,T]}\limfunc{Tr}J^{(k)}\left(
\gamma _{N}^{(k)}-\gamma ^{(k)}\right) =0,\forall J^{(k)}\in \mathcal{K}_{k}.
\end{equation*}%
By the standard argument,%
\begin{equation*}
\lim_{N\rightarrow \infty }\limfunc{Tr}J^{(k)}\gamma _{N}^{(k)}\left(
t\right) =\limfunc{Tr}J^{(k)}\gamma ^{(k)}\left( t\right)
\end{equation*}%
and%
\begin{equation*}
\lim_{N\rightarrow \infty }I=\limfunc{Tr}J^{(k)}U^{(k)}(t)\gamma
^{(k)}\left( 0\right) \text{.}
\end{equation*}%
So the rest of the proof will only deal with $II,III,IV$.

We first notice that, terms like 
\begin{equation*}
\limfunc{Tr}J_{t-s}^{(k)}V_{N}\left( x_{j}-x_{k+1},x_{j}-x_{k+2}\right)
\gamma _{N}^{(k+2)},
\end{equation*}%
inside (\ref{hierarchy:Integral BBGKY with test function}), are at least
bounded using the method to handle (\ref{estimate:typical term estimate in
compactness}) in the proof of Theorem \ref{Theorem:CompactnessOfBBGKY} since 
$J_{t-s}^{(k)}$ is but another test functions like $J^{(k)}$. Hence,%
\begin{eqnarray*}
&&\frac{1}{N^{2}}\sum_{1\leqslant i<j<l\leqslant k}\left\vert II\right\vert +%
\frac{(N-k)}{N^{2}}\sum_{1\leqslant i<j\leqslant k}\left\vert III\right\vert
+\frac{\left( N-k\right) (N-k-1)-N^{2}}{N^{2}}\sum_{j=1}^{k}\left\vert
IV\right\vert \\
&\leqslant &C\left( \frac{k^{3}}{N^{2}}+\frac{\left( N-k\right) k^{2}}{N^{2}}%
+\frac{k\left( N-k\right) (N-k-1)-N^{2}}{N^{2}}\right) C_{J}C^{\frac{3}{2}}
\\
&\rightarrow &0\text{ as }N\rightarrow \infty \text{.}
\end{eqnarray*}

We are left to prove the key coupling term's convergence, that is, 
\begin{equation}
\limfunc{Tr}J_{t-s}^{(k)}V_{N}\left( x_{j}-x_{k+1},x_{j}-x_{k+2}\right)
\gamma _{N}^{(k+2)}(s)  \label{eqn:key convergence term}
\end{equation}%
converges to%
\begin{equation}
\limfunc{Tr}J_{t-s}^{(k)}b_{0}\delta (x_{j}-x_{k+1})\delta
(x_{j}-x_{k+2})\gamma ^{(k+2)}(s).  \label{eqn:key convergence target}
\end{equation}%
as $N\rightarrow \infty $. We prove (\ref{eqn:key convergence term})$%
\rightarrow $(\ref{eqn:key convergence target}) as $N\rightarrow \infty $
for $\beta <\frac{1}{12}$ in \S \ref{Sec:Convergence 1/12} first with a
partially frequency analysis and partially standard argument for an
incremental presentation. We then give the $\beta <\frac{1}{9}$ proof in \S %
\ref{Sec:Convergence 1/9}. At the moment, we first explain why the usual
method does not work in this energy-critical setting. If we proceed as usual,%
\begin{eqnarray*}
&&V_{N}\left( x_{j}-x_{k+1},x_{j}-x_{k+2}\right) \gamma
_{N}^{(k+2)}-b_{0}\delta (x_{j}-x_{k+1})\delta (x_{j}-x_{k+2})\gamma ^{(k+2)}
\\
&=&\left( V_{N}\left( x_{j}-x_{k+1},x_{j}-x_{k+2}\right) \gamma
_{N}^{(k+2)}-b_{0}\delta (x_{j}-x_{k+1})\delta (x_{j}-x_{k+2})\gamma
_{N}^{(k+2)}\right) \\
&&+b_{0}\left( \delta (x_{j}-x_{k+1})\delta (x_{j}-x_{k+2})\gamma
_{N}^{(k+2)}-\rho _{\alpha }\left( x_{j}-x_{k+1},x_{j}-x_{k+2}\right) \gamma
_{N}^{(k+2)}\right) \\
&&+b_{0}\left( \rho _{\alpha }\left( x_{j}-x_{k+1},x_{j}-x_{k+2}\right)
\left( \gamma _{N}^{(k+2)}-\gamma ^{(k+2)}\right) \right) \\
&&+b_{0}(\rho _{\alpha }\left( x_{j}-x_{k+1},x_{j}-x_{k+2}\right) \gamma
^{(k+2)}-\delta (x_{j}-x_{k+1})\delta (x_{j}-x_{k+2})\gamma ^{(k+2)}) \\
&\equiv &A+B+D+F.
\end{eqnarray*}%
where $\rho _{\alpha }=\frac{1}{\alpha ^{6}}\rho (\frac{x}{\alpha },\frac{y}{%
\alpha })$, which is defined like $V_{N}$, namely, first rescaled and then
extended periodically with $\rho $ being a compactly supported smooth
probability measure on $\mathbb{R}^{3}\times \mathbb{R}^{3}$, then we will
run into either the sharp Sobolev trace theorem (\ref{estimate:failed
sobolev 2}) which is not true or the quantity 
\begin{equation*}
\limfunc{Tr}S^{(1+\varepsilon ,3)}\gamma _{N}^{(k+2)}S^{(1+\varepsilon ,3)}
\end{equation*}%
which is not uniformly bounded in $N$, at the Sobolev endpoint when we
estimate $A$ or $B$ using the usual Poincar\'{e} type inequality in the
non-critical setting. One seemingly fix is to use the boundedness of $\frac{1%
}{N}\left\Vert S_{1}S^{(1,k-1)}\psi _{N}\right\Vert _{L_{x}^{2}}^{2}$ inside
(\ref{energy etimate: stability of matter}). But then one is forced to
select the $\alpha $ in $B$ to be dependent of $N$ to counter balance a $%
N^{\varepsilon }$, which comes from interpolating between $\left\Vert
S^{(k)}\psi _{N}\right\Vert _{L_{x}^{2}}^{2}$ and $\frac{1}{N}\left\Vert
S_{1}S^{(1,k-1)}\psi _{N}\right\Vert _{L_{x}^{2}}^{2}$. This will cause
trouble when dealing with $D$ since $\rho _{\alpha }\left(
x_{j}-x_{k+1},x_{j}-x_{k+2}\right) $ is no longer independent of $N$.

\subsection{Convergence of the Key Coupling Term for $\protect\beta <1/12$
(An Easier Case)\label{Sec:Convergence 1/12}}

Let $M$ be a dyadic frequency, let $P_{\leqslant M}^{j}$ ($P_{>M}^{j}$)
denote the Littlewood-Paley projector which projects the $x_{j}$ variable at
frequency $\leqslant M$ ($>M$), and write 
\begin{equation*}
P_{\leqslant M}^{(k)}=\dprod\limits_{j=1}^{k}P_{\leqslant M}^{j}\text{ (}%
P_{>M}^{(k)}=\dprod\limits_{j=1}^{k}P_{>M}^{j}\text{).}
\end{equation*}%
We will use the usual Bernstein inequalities from \cite{Tao}, which are
easily adapted to $\mathbb{T}^{d}$ via estimates on the periodic
Littlewood-Paley convolution kernels and widely used.\footnote{%
See, for example, \cite{KV}.} We include an example in Appendix \ref%
{Sec:AppendixBernstein}. We also need the following Theorem.

\begin{theorem}[\protect\cite{OriginalDeFinette,LewinFocusing}]
\footnote{%
To be precise, this version we are using is from \cite{LewinFocusing}.}\label%
{Lem:finite QdF}Assume $\left\{ \gamma _{N}^{(k)}\right\} _{k=1}^{N}$ is the
marginal density generated by a $N$-body wave function $\psi _{N}\in
L_{s}^{2}(\mathbb{T}^{3N})$. Then there is a family of positive Borel
measure $\left\{ d\mu _{N,M,k}\right\} $ supported on the unit sphere of $%
P_{\leqslant M}\left( L^{2}(\mathbb{T}^{3})\right) $ such that%
\begin{equation*}
\limfunc{Tr}\left\vert P_{\leqslant M}^{(k)}\gamma _{N}^{(k)}P_{\leqslant
M}^{(k)}-\tilde{\gamma}_{N,M}^{(k)}\right\vert \leqslant \frac{4kD_{M}}{N},
\end{equation*}%
if%
\begin{equation*}
\tilde{\gamma}_{N,M}^{(k)}=\int_{S(P_{\leqslant M}\left( L^{2}(\mathbb{T}%
^{3})\right) )}\left\vert \phi ^{\otimes k}\right\rangle \left\langle \phi
^{\otimes k}\right\vert d\mu _{N,k}(\phi )
\end{equation*}%
where $D_{M}$ is the dimension of $P_{\leqslant M}\left( L^{2}(\mathbb{T}%
^{3})\right) $ which is controlled by $CM^{3}$ and $d\mu _{N,M,k}$ has the
property that 
\begin{equation*}
\int_{S(P_{\leqslant M}\left( L^{2}(\mathbb{T}^{3})\right) )}d\mu
_{N,k}(\phi )=\limfunc{Tr}P_{\leqslant M}^{(k)}\gamma
_{N}^{(k)}(t)P_{\leqslant M}^{(k)}\leqslant 1.
\end{equation*}
\end{theorem}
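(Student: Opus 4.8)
\medskip

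\noindent\textbf{Proof plan.} This is the quantitative finite-dimensional quantum de Finetti theorem (Chiribella; Christandl--K\"onig--Mitchison--Renner), in the density-matrix formulation of \cite{LewinFocusing}, localized to the truncated one-particle space $\mathcal{H}_M:=P_{\leqslant M}\big(L^2(\mathbb{T}^3)\big)$. My plan is to (i) record the two elementary facts that place us in a finite-dimensional bosonic setting, and then (ii) construct the measure and prove the error bound via the coherent-state resolution of the identity on the symmetric subspace. For (i): $D_M=\dim\mathcal{H}_M$ is the number of $\xi\in\mathbb{Z}^3$ surviving the cutoff, so $D_M\leqslant CM^3$ by counting lattice points in a ball of radius $\sim M$; and since $\psi_N\in L_s^2(\mathbb{T}^{3N})$ is bosonic, each $\gamma_N^{(k)}$ is supported on $\mathrm{Sym}^kL^2(\mathbb{T}^3)$ --- writing $\gamma_N^{(k)}=\sum_\alpha|\chi_\alpha\rangle\langle\chi_\alpha|$ with $\chi_\alpha$ the partial contractions of $\psi_N$ against an orthonormal basis of the last $N-k$ variables, full symmetry of $\psi_N$ forces $\chi_\alpha\in\mathrm{Sym}^kL^2$. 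Hence $P_{\leqslant M}^{(k)}\gamma_N^{(k)}P_{\leqslant M}^{(k)}$ is a nonnegative trace-class operator on $\mathrm{Sym}^k\mathcal{H}_M$ of trace $\leqslant1$, and the task is to represent it as an average of the factorized states $|\phi^{\otimes k}\rangle\langle\phi^{\otimes k}|$, $\phi\in S\mathcal{H}_M$, with error of size $D_M/N$.

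For (ii) I would run the coherent-state machinery. The engine is the Schur-lemma identity
\begin{equation*}
\dim(\mathrm{Sym}^N\mathcal{H}_M)\int_{S\mathcal{H}_M}|u^{\otimes N}\rangle\langle u^{\otimes N}|\,du=\mathbb{1}_{\mathrm{Sym}^N\mathcal{H}_M},
\end{equation*}
$du$ the uniform probability measure on the sphere. I would take $\mu_{N,M,k}$ to be the ``upper symbol'' of the $N$-body data --- roughly, the measure with density proportional to $|\langle u^{\otimes N},\psi_N\rangle|^2$ against $du$, then rescaled so that its total mass is exactly $\operatorname{Tr}P_{\leqslant M}^{(k)}\gamma_N^{(k)}P_{\leqslant M}^{(k)}$ as the statement demands. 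Feeding the analogous resolutions of the identity for $k$ and for $N-k$ tensor factors into $\int|u^{\otimes k}\rangle\langle u^{\otimes k}|\,d\mu_{N,M,k}$ produces Chiribella's closed formula: the integral is the $k$-particle marginal of the truncated $N$-body state, averaged against the symmetrization projector of $\mathrm{Sym}^k\mathcal{H}_M\otimes\mathrm{Sym}^{N-k}\mathcal{H}_M$ onto $\mathrm{Sym}^N\mathcal{H}_M$, times a ratio of the dimensions $\dim\mathrm{Sym}^m\mathcal{H}_M=\binom{m+D_M-1}{D_M-1}$. Comparing that projector and that ratio with the identity, the deviation is $O(kD_M/N)$; combined with the bookkeeping that reconciles the truncated $N$-body marginal with $P_{\leqslant M}^{(k)}\gamma_N^{(k)}P_{\leqslant M}^{(k)}$ and fixes the mass of $\mu_{N,M,k}$, this yields the stated $\tfrac{4kD_M}{N}$, the constant $4$ in place of the textbook $2$ being the price of $P_{\leqslant M}$ not commuting with partial traces.

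The hard part will be step (ii): converting the binomial-coefficient comparison in Chiribella's formula into the sharp $O(kD_M/N)$ trace-norm bound, and doing so in a way that honours the frequency cutoff --- that is, producing a $\mu_{N,M,k}$ genuinely supported on $S\mathcal{H}_M$ and of total mass $\operatorname{Tr}P_{\leqslant M}^{(k)}\gamma_N^{(k)}P_{\leqslant M}^{(k)}$ rather than working with a bona fide finite-dimensional one-particle space. By contrast the lattice count for $D_M$ and the symmetric-subspace support are routine, and the coherent-state combinatorics is the classical finite de Finetti argument of \cite{OriginalDeFinette,LewinFocusing}, which I would simply invoke with $D_M\lesssim M^3$ in place of the ambient dimension.
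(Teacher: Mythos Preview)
The paper does not prove this theorem at all; it is quoted as a black-box result from \cite{OriginalDeFinette,LewinFocusing} (see the footnote attached to the statement). Your proof plan correctly outlines the standard Chiribella/Christandl--K\"onig--Mitchison--Renner argument via the coherent-state resolution of the identity on $\mathrm{Sym}^N\mathcal{H}_M$, which is exactly the content of those cited references, so there is nothing to compare against here.
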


\begin{corollary}
\label{Corollary:Regularity for Tensor product finite qde}As long as $%
M\leqslant N^{\frac{1}{6}}$, we have 
\begin{equation*}
\sup_{t\in \left[ 0,T\right] }\limfunc{Tr}S^{(1,3)}\tilde{\gamma}%
_{N,M}^{(k)}(t)S^{(1,3)}\leqslant C^{3}
\end{equation*}%
Moreover, when $M=N^{2\beta +\varepsilon }$ and $\beta <\frac{1}{12}$ or $%
M=N^{\beta }$ and $\beta <\frac{1}{9}$, $\tilde{\gamma}_{N,M}^{(k)}%
\rightarrow \gamma ^{(k)}$ weak* as trace class operators as well.
\end{corollary}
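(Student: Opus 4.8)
The plan is to transport both assertions from $\gamma _{N}^{(k)}$ to $\tilde{\gamma}_{N,M}^{(k)}$ through Theorem \ref{Lem:finite QdF}, the only genuinely new issue being how the derivative weights $S^{(1,3)}$ interact with the finite dimensional de Finetti error. First I would write
\[
\tilde{\gamma}_{N,M}^{(k)}=P_{\leqslant M}^{(k)}\gamma _{N}^{(k)}P_{\leqslant M}^{(k)}+R_{N,M}^{(k)},\qquad \operatorname{Tr}\big|R_{N,M}^{(k)}\big|\leqslant \frac{4kD_{M}}{N}\leqslant \frac{CkM^{3}}{N},
\]
and note that $\tilde{\gamma}_{N,M}^{(k)}$, being a convex combination of pure tensor states $|\phi ^{\otimes k}\rangle \langle \phi ^{\otimes k}|$ with $\phi \in P_{\leqslant M}(L^{2}(\mathbb{T}^{3}))$, is frequency localized to $\leqslant M$ in every variable, hence so is $R_{N,M}^{(k)}=P_{\leqslant M}^{(k)}R_{N,M}^{(k)}P_{\leqslant M}^{(k)}$; moreover, built from a single de Finetti measure, the $\tilde{\gamma}_{N,M}^{(k)}$ form a consistent family of marginals, so the three-particle marginal of $\tilde{\gamma}_{N,M}^{(k)}$ is $\tilde{\gamma}_{N,M}^{(3)}$. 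Together with the analogous consistency of the BBGKY marginals $\gamma _{N}^{(k)}$, this reduces the kinetic energy bound to showing $\operatorname{Tr}S^{(1,3)}\tilde{\gamma}_{N,M}^{(3)}S^{(1,3)}\leqslant C^{3}$.

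For that bound I would use cyclicity of the trace and $R_{N,M}^{(3)}=P_{\leqslant M}^{(3)}R_{N,M}^{(3)}$ to split
\[
\operatorname{Tr}S^{(1,3)}\tilde{\gamma}_{N,M}^{(3)}S^{(1,3)}\leqslant \operatorname{Tr}S^{(1,3)}P_{\leqslant M}^{(3)}\gamma _{N}^{(3)}P_{\leqslant M}^{(3)}S^{(1,3)}+\big\|(S^{(1,3)})^{2}P_{\leqslant M}^{(3)}\big\|_{\op}\operatorname{Tr}\big|R_{N,M}^{(3)}\big|.
\]
In the first term each $\big(S_{j}P_{\leqslant M}^{j}\big)^{2}=S_{j}^{2}P_{\leqslant M}^{j}\leqslant S_{j}^{2}$ as commuting nonnegative operators, so by Lemma \ref{LemmaInEnergyEstimate:commuteop} the first term is at most $\operatorname{Tr}S^{(1,3)}\gamma _{N}^{(3)}S^{(1,3)}\leqslant C^{3}$, which is the a priori estimate \eqref{estimate:key a-priori estimate}. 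In the second term $\big\|(S^{(1,3)})^{2}P_{\leqslant M}^{(3)}\big\|_{\op}\leqslant \langle M\rangle ^{6}$, so it is bounded by $C\langle M\rangle ^{6}D_{M}/N\lesssim M^{9}/N$, negligible once $M$ is small enough relative to $N$ — this is where the hypothesis $M\leqslant N^{1/6}$ is spent, and it is the only delicate point: inserting the three derivatives on each side of $R_{N,M}^{(3)}$ costs a power of $\langle M\rangle$, which has to be dominated by the de Finetti error $D_{M}/N$ (the one-derivative analogue $\operatorname{Tr}S_{1}^{2}\tilde{\gamma}_{N,M}^{(1)}S_{1}^{2}$ costs only $\langle M\rangle ^{2}$, so its balance is much looser). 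The supremum over $t$ is free since \eqref{estimate:key a-priori estimate} and the de Finetti error are uniform in $t$.

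For the weak$^{\ast }$ convergence I would fix $J^{(k)}\in \mathcal{K}_{k}$ and decompose
\[
\operatorname{Tr}J^{(k)}\big(\tilde{\gamma}_{N,M}^{(k)}-\gamma ^{(k)}\big)=\operatorname{Tr}J^{(k)}R_{N,M}^{(k)}+\operatorname{Tr}\big(P_{\leqslant M}^{(k)}J^{(k)}P_{\leqslant M}^{(k)}-J^{(k)}\big)\gamma _{N}^{(k)}+\operatorname{Tr}J^{(k)}\big(\gamma _{N}^{(k)}-\gamma ^{(k)}\big).
\]
The first term is $\leqslant \|J^{(k)}\|_{\op}\operatorname{Tr}|R_{N,M}^{(k)}|\lesssim kM^{3}/N\to 0$, since both $M=N^{2\beta +\varepsilon }$ with $\beta <\tfrac{1}{12}$ and $M=N^{\beta }$ with $\beta <\tfrac{1}{9}$ give $M=o(N^{1/3})$. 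The second term is $\leqslant \big\|P_{\leqslant M}^{(k)}J^{(k)}P_{\leqslant M}^{(k)}-J^{(k)}\big\|_{\op}\operatorname{Tr}|\gamma _{N}^{(k)}|$; since $M\to \infty$ forces $P_{\leqslant M}\to I$ strongly, compactness of $J^{(k)}$ gives $\big\|P_{\leqslant M}^{(k)}J^{(k)}P_{\leqslant M}^{(k)}-J^{(k)}\big\|_{\op}\to 0$, while $\operatorname{Tr}|\gamma _{N}^{(k)}|\leqslant 1$. The third term tends to $0$ by the convergence $\gamma _{N}^{(k)}\to \gamma ^{(k)}$ weak$^{\ast }$ already extracted (uniformly in $t\in \lbrack 0,T]$) in the proof of Theorem \ref{Theorem:Convergence}. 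Hence $\tilde{\gamma}_{N,M}^{(k)}\to \gamma ^{(k)}$ weak$^{\ast }$ as trace class operators, uniformly on $[0,T]$. The hard part of the whole argument is the kinetic energy estimate: controlling the de Finetti error after the $S^{(1,3)}$ weights have been inserted, and keeping the de Finetti measure consistent in $k$ so the bound stays $C^{3}$ rather than degrading to something $k$-dependent.
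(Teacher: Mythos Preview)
Your overall strategy is the paper's: compare $\tilde{\gamma}_{N,M}^{(k)}$ to $P_{\leqslant M}^{(k)}\gamma_{N}^{(k)}P_{\leqslant M}^{(k)}$ via the de Finetti error of Theorem~\ref{Lem:finite QdF}, pay a power of $M$ for the $S^{(1,3)}$ weights using frequency localization, and invoke \eqref{estimate:key a-priori estimate} for the main term; the weak$^{\ast}$ argument is the same triangle inequality (you split one of the paper's two pieces further, which is harmless).

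There is, however, a genuine gap in your reduction to $k=3$. You assert that the $\tilde{\gamma}_{N,M}^{(k)}$ are ``built from a single de Finetti measure'' and hence form a consistent family of marginals. Theorem~\ref{Lem:finite QdF} does not give this: it produces a \emph{family} of measures $d\mu_{N,M,k}$ indexed by $k$, and there is no claim that $\operatorname{Tr}_{k}\tilde{\gamma}_{N,M}^{(k)}=\tilde{\gamma}_{N,M}^{(k-1)}$. The good news is that this reduction is unnecessary. The paper works directly at level $k$: since $S^{(1,3)}$ acts only on variables $1,2,3$ and commutes with $P_{\leqslant M}^{(k)}$, one has
\[
\operatorname{Tr}S^{(1,3)}P_{\leqslant M}^{(k)}\gamma_{N}^{(k)}P_{\leqslant M}^{(k)}S^{(1,3)}\leqslant \operatorname{Tr}S^{(1,3)}\gamma_{N}^{(k)}S^{(1,3)}=\operatorname{Tr}S^{(1,3)}\gamma_{N}^{(3)}S^{(1,3)}\leqslant C^{3},
\]
where the middle equality uses the consistency of the $\gamma_{N}^{(k)}$ themselves (which \emph{is} available). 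The error term then picks up the fixed constant $k$ from Theorem~\ref{Lem:finite QdF}.

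A second, arithmetic point: you correctly compute the derivative-weighted error as $\langle M\rangle^{6}\cdot D_{M}/N\lesssim M^{9}/N$, but then say that $M\leqslant N^{1/6}$ suffices. It does not: at $M=N^{1/6}$ one gets $M^{9}/N=N^{1/2}$, which is unbounded. The paper records the error as $4kCM^{6}/N$, which does match the stated threshold $M\leqslant N^{1/6}$; either way, you should be aware of the discrepancy between your $M^{9}/N$ and the threshold you are invoking.
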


\begin{proof}
Directly from Theorem \ref{Lem:finite QdF}, we have 
\begin{eqnarray*}
&&\limfunc{Tr}\left\vert S^{(1,3)}P_{\leqslant M}^{(k)}\gamma
_{N}^{(k)}(t)P_{\leqslant M}^{(k)}S^{(1,3)}-S^{(1,3)}\tilde{\gamma}%
_{N,M}^{(k)}(t)S^{(1,3)}\right\vert \\
&\leqslant &M^{6}\limfunc{Tr}\left\vert P_{\leqslant M}^{(k)}\gamma
_{N}^{(k)}(t)P_{\leqslant M}^{(k)}-\tilde{\gamma}_{N,M}^{(k)}(t)\right\vert
\\
&\leqslant &\frac{4kCM^{6}}{N}
\end{eqnarray*}%
while we have, from (\ref{estimate:key a-priori estimate}), that%
\begin{equation*}
\sup_{t\in \left[ 0,T\right] }\limfunc{Tr}S^{(1,3)}P_{\leqslant
M}^{(k)}\gamma _{N}^{(k)}(t)P_{\leqslant M}^{(k)}S^{(1,3)}\leqslant
\sup_{t\in \left[ 0,T\right] }\limfunc{Tr}S^{(1,3)}\gamma
_{N}^{(k)}(t)S^{(1,3)}\leqslant C^{3}.
\end{equation*}%
The simple triangles inequality%
\begin{eqnarray*}
&&\left\vert \limfunc{Tr}J^{(k)}\left( \gamma ^{(k)}-\tilde{\gamma}%
_{N,M}^{(k)}\right) \right\vert \\
&\leqslant &\left\vert \limfunc{Tr}J^{(k)}\left( \gamma ^{(k)}-P_{\leqslant
M}^{(k)}\gamma _{N}^{(k)}P_{\leqslant M}^{(k)}\right) \right\vert
+\left\vert \limfunc{Tr}J^{(k)}\left( P_{\leqslant M}^{(k)}\gamma
_{N}^{(k)}P_{\leqslant M}^{(k)}-\tilde{\gamma}_{N,M}^{(k)}\right) \right\vert
\\
&\leqslant &\left\vert \limfunc{Tr}J^{(k)}\left( \gamma ^{(k)}-P_{\leqslant
M}^{(k)}\gamma _{N}^{(k)}P_{\leqslant M}^{(k)}\right) \right\vert
+\left\Vert J^{(k)}\right\Vert _{op}\limfunc{Tr}\left\vert P_{\leqslant
M}^{(k)}\gamma _{N}^{(k)}P_{\leqslant M}^{(k)}-\tilde{\gamma}%
_{N,M}^{(k)}\right\vert
\end{eqnarray*}%
shows that $\tilde{\gamma}_{N,M}^{(k)}\rightarrow \gamma ^{(k)}$ weak* as
trace class operators as well.
\end{proof}

We decompose%
\begin{eqnarray*}
&&V_{N}\left( x_{j}-x_{k+1},x_{j}-x_{k+2}\right) \gamma
_{N}^{(k+2)}-b_{0}\delta (x_{j}-x_{k+1})\delta (x_{j}-x_{k+2})\gamma ^{(k+2)}
\\
&=&V_{N}\left( x_{j}-x_{k+1},x_{j}-x_{k+2}\right) \left( \gamma
_{N}^{(k+2)}-P_{\leqslant M}^{(k+2)}\gamma _{N}^{(k+2)}P_{\leqslant
M}^{(k+2)}\right) \\
&&+V_{N}\left( x_{j}-x_{k+1},x_{j}-x_{k+2}\right) \left( P_{\leqslant
M}^{(k+2)}\gamma _{N}^{(k+2)}P_{\leqslant M}^{(k+2)}-\tilde{\gamma}%
_{N,M}^{(k+2)}\right) \\
&&+\left[ V_{N}\left( x_{j}-x_{k+1},x_{j}-x_{k+2}\right) -b_{0}\delta
(x_{j}-x_{k+1})\delta (x_{j}-x_{k+2})\right] \tilde{\gamma}_{N,M}^{(k+2)} \\
&&+b_{0}\left[ \delta (x_{j}-x_{k+1})\delta (x_{j}-x_{k+2})-\rho _{\alpha
}\left( x_{j}-x_{k+1},x_{j}-x_{k+2}\right) \right] \tilde{\gamma}%
_{N,M}^{(k+2)} \\
&&+b_{0}\left( \rho _{\alpha }\left( x_{j}-x_{k+1},x_{j}-x_{k+2}\right)
\left( \tilde{\gamma}_{N,M}^{(k+2)}-\gamma ^{(k+2)}\right) \right) \\
&&+b_{0}(\rho _{\alpha }\left( x_{j}-x_{k+1},x_{j}-x_{k+2}\right) \gamma
^{(k+2)}-\delta (x_{j}-x_{k+1})\delta (x_{j}-x_{k+2})\gamma ^{(k+2)}) \\
&\equiv &I+II+III+IV+VI+VII
\end{eqnarray*}%
and we choose 
\begin{equation}
M=N^{2\beta +\varepsilon }.  \label{eqn:choice of M in convergence}
\end{equation}%
We estimate $I,II,III,IV,$and $VII$ in detail. With our method, the estimate
for $VI$ follows from the standard argument \cite{Kirpatrick} since $\tilde{%
\gamma}_{N,M}^{(k)}\rightarrow \gamma ^{(k)}$ weak* as trace class
operators, and hence we skip it.

\subsubsection{Estimate for $I$}

When $k$ is large, the difference $\gamma _{N}^{(k+2)}(s)-P_{\leqslant
M}^{(k+2)}\gamma _{N}^{(k+2)}(s)P_{\leqslant M}^{(k+2)}$ is a big sum. We
group them into 
\begin{equation*}
\gamma _{N}^{(k+2)}(s)-P_{\leqslant M}^{(k+2)}\gamma
_{N}^{(k+2)}(s)P_{\leqslant M}^{(k+2)}=I_{0}(s)+I_{1}(s)+I_{2}(s)+I_{3}(s)
\end{equation*}%
depending on how many of $x_{j}$, $x_{k+1}$ and $x_{k+2}$ are at high. That
is, for $h=j,k+1$ and $k+2$ 
\begin{equation*}
P_{>M}^{h}I_{0}(s)=0;
\end{equation*}%
there is one of $h=j,h=k+1$, and $h=k+2$ such that%
\begin{equation*}
P_{>M}^{h}I_{1}(s)=I_{1}(s);
\end{equation*}%
there are two of $h=j,h=k+1$, and $h=k+2$ such that%
\begin{equation*}
P_{>M}^{h}I_{2}(s)=I_{2(s)};
\end{equation*}%
and%
\begin{equation*}
P_{>M}^{j}P_{>M}^{k+1}P_{>M}^{k+2}I_{3}(s)=I_{3}(s).
\end{equation*}

With this decomposition,%
\begin{eqnarray*}
\left\vert \limfunc{Tr}J_{t-s}^{(k)}I\right\vert &\leqslant &\left\vert 
\limfunc{Tr}J_{t-s}^{(k)}V_{N}\left( x_{j}-x_{k+1},x_{j}-x_{k+2}\right)
I_{0}(s)\right\vert \\
&&+\left\vert \limfunc{Tr}J_{t-s}^{(k)}V_{N}\left(
x_{j}-x_{k+1},x_{j}-x_{k+2}\right) I_{1}(s)\right\vert \\
&&+\left\vert \limfunc{Tr}J_{t-s}^{(k)}V_{N}\left(
x_{j}-x_{k+1},x_{j}-x_{k+2}\right) I_{2}(s)\right\vert \\
&&+\left\vert \limfunc{Tr}J_{t-s}^{(k)}V_{N}\left(
x_{j}-x_{k+1},x_{j}-x_{k+2}\right) I_{3}(s)\right\vert .
\end{eqnarray*}%
We estimate the best term $I_{3}$ and the worst term $I_{0}$. The main idea
is that when at least one variable is at frequency higher than $N^{2\beta }$%
, we can exchange the powers of $N^{\beta }$ in $V_{N}$ with derivatives on $%
\gamma _{N}^{(k+2)}$ to decrease the strength of singularity and hence
obtain a decay.

Without lose of generality, say $x_{j-1}$ is at high in a typical term
inside the worst group $I_{0}$, 
\begin{eqnarray*}
&&\left\vert \limfunc{Tr}J_{t-s}^{(k)}V_{N}\left(
x_{j}-x_{k+1},x_{j}-x_{k+2}\right) P_{>M}^{j-1}\gamma
_{N}^{(k+2)}(s)\right\vert \\
&=&\frac{1}{N^{\varepsilon }}|\limfunc{Tr}\left(
S_{k+1}^{-1}S_{k+2}^{-1}J_{t-s}^{(k)}S_{k+1}S_{k+2}\right) \\
&&\left( S_{k+1}^{-1}S_{k+2}^{-1}\frac{V_{N}\left(
x_{j}-x_{k+1},x_{j}-x_{k+2}\right) }{N^{2\beta }}S_{k+1}^{-1}S_{k+2}^{-1}%
\right) \\
&&\left( S_{k+1}S_{k+2}P_{>M}^{j-1}N^{2\beta +\varepsilon }\gamma
_{N}^{(k+2)}(s)S_{k+1}S_{k+2}\right) |
\end{eqnarray*}%
By the Sobolev inequality $\left\Vert S^{-1}f\right\Vert _{L^{6}}\leqslant
C\left\Vert f\right\Vert _{L^{2}}$, 
\begin{eqnarray*}
&&\left\Vert S_{k+1}^{-1}S_{k+2}^{-1}\frac{V_{N}\left(
x_{j}-x_{k+1},x_{j}-x_{k+2}\right) }{N^{2\beta }}S_{k+1}^{-1}S_{k+2}^{-1}%
\right\Vert _{op} \\
&\leqslant &\frac{C}{N^{2\beta }}\left\Vert V_{N}(x,y)\right\Vert _{L_{x}^{%
\frac{3}{2}}L_{y}^{\frac{3}{2}}}=C\left\Vert V\right\Vert _{L^{\frac{3}{2}}}%
\text{.}
\end{eqnarray*}%
Hence,%
\begin{eqnarray}
&&\left\vert \limfunc{Tr}J_{t-s}^{(k)}V_{N}\left(
x_{j}-x_{k+1},x_{j}-x_{k+2}\right) P_{>M}^{j-1}\gamma
_{N}^{(k+2)}(s)\right\vert
\label{estimate:a typical term in VI_0, convergence part} \\
&\leqslant &\frac{1}{N^{\varepsilon }}\left\Vert J_{t-s}^{(k)}\right\Vert
_{op}C\left\Vert V\right\Vert _{L^{\frac{3}{2}}}\limfunc{Tr}\left\vert
S_{k+1}S_{k+2}P_{>M}^{j-1}N^{2\beta +\varepsilon }\gamma
_{N}^{(k+2)}(s)S_{k+1}S_{k+2}\right\vert  \notag
\end{eqnarray}%
Cauchy-Schwarz inside $\gamma _{N}^{(k+2)}$, we have%
\begin{eqnarray*}
&&\limfunc{Tr}\left\vert S_{k+1}S_{k+2}P_{>M}^{j-1}N^{2\beta +\varepsilon
}\gamma _{N}^{(k+2)}(s)S_{k+1}S_{k+2}\right\vert \\
&\leqslant &\left( \limfunc{Tr}S_{k+1}S_{k+2}P_{>M}^{j-1}N^{2\beta
+\varepsilon }\gamma _{N}^{(k+2)}(s)S_{k+1}S_{k+2}P_{>M}^{j-1}N^{2\beta
+\varepsilon }\right) ^{\frac{1}{2}} \\
&&\times \left( \limfunc{Tr}S_{k+1}S_{k+2}\gamma
_{N}^{(k+2)}(s)S_{k+1}S_{k+2}\right) ^{\frac{1}{2}}.
\end{eqnarray*}%
Using the fact that $P_{>M}^{j-1}N^{2\beta }\leqslant S_{j-1}P_{>M}^{j-1}$
whenever $M=N^{2\beta +\varepsilon }$, we reach%
\begin{eqnarray*}
&&\limfunc{Tr}\left\vert S_{k+1}S_{k+2}P_{>M}^{j-1}N^{2\beta +\varepsilon
}\gamma _{N}^{(k+2)}(s)S_{k+1}S_{k+2}\right\vert \\
&\leqslant &\left( \limfunc{Tr}S_{k+1}S_{k+2}S_{j-1}P_{>M}^{j-1}\gamma
_{N}^{(k+2)}(s)S_{k+1}S_{k+2}S_{j-1}P_{>M}^{j-1}\right) ^{\frac{1}{2}} \\
&&\times \left( \limfunc{Tr}S_{k+1}S_{k+2}\gamma
_{N}^{(k+2)}(s)S_{k+1}S_{k+2}\right) ^{\frac{1}{2}} \\
&\leqslant &\left( \limfunc{Tr}S_{k+1}S_{k+2}S_{j-1}\gamma
_{N}^{(k+2)}(s)S_{k+1}S_{k+2}S_{j-1}\right) ^{\frac{1}{2}} \\
&&\times \left( \limfunc{Tr}S_{k+1}S_{k+2}\gamma
_{N}^{(k+2)}(s)S_{k+1}S_{k+2}\right) ^{\frac{1}{2}} \\
&\leqslant &C^{\frac{3}{2}}C^{\frac{2}{2}}\text{. }
\end{eqnarray*}%
Putting the above back into (\ref{estimate:a typical term in VI_0,
convergence part}), we have 
\begin{equation*}
\left\vert \limfunc{Tr}J_{t-s}^{(k)}V_{N}\left(
x_{j}-x_{k+1},x_{j}-x_{k+2}\right) P_{>M}^{j-1}\gamma
_{N}^{(k+2)}(s)\right\vert \leqslant \frac{C_{V,J,k}}{N^{\varepsilon }}.
\end{equation*}%
That is, 
\begin{equation*}
\left\vert \limfunc{Tr}J_{t-s}^{(k)}V_{N}\left(
x_{j}-x_{k+1},x_{j}-x_{k+2}\right) I_{0}(s)\right\vert \leqslant \frac{%
4^{k}C_{V,J,k}}{N^{\varepsilon }}=\frac{C_{V,J,k}}{N^{\varepsilon }}
\end{equation*}%
after counting terms.

For the best term $I_{3}$, apply similar method, we have 
\begin{eqnarray*}
&&\left\vert \limfunc{Tr}J_{t-s}^{(k)}V_{N}\left(
x_{j}-x_{k+1},x_{j}-x_{k+2}\right) I_{3}\right\vert \\
&=&\frac{1}{N^{3\varepsilon }}\left\vert \limfunc{Tr}J_{t-s}^{(k)}\frac{%
V_{N}\left( x_{j}-x_{k+1},x_{j}-x_{k+2}\right) }{N^{6\beta }}%
P_{>M}^{j}N^{2\beta +\varepsilon }P_{>M}^{k+1}N^{2\beta +\varepsilon
}P_{>M}^{k+2}N^{2\beta +\varepsilon }I_{3}(s)\right\vert \\
&\leqslant &\frac{1}{N^{3\varepsilon }}\left\Vert J_{t-s}^{(k)}\right\Vert
_{op}\left\Vert V\right\Vert _{L^{\infty }}\limfunc{Tr}\left\vert
S_{j}P_{>M}^{j}S_{k+1}P_{>M}^{k+1}S_{k+2}P_{>M}^{k+2}I_{3}(s)\right\vert \\
&\leqslant &\frac{C_{V,J,k}}{N^{3\varepsilon }}C^{\frac{3}{2}}1=\frac{%
C_{V,J,k}}{N^{3\varepsilon }}\text{.}
\end{eqnarray*}%
Therefore,%
\begin{equation*}
\left\vert \limfunc{Tr}J_{t-s}^{(k)}I\right\vert \leqslant \frac{C_{V,J,k}}{%
N^{\varepsilon }}\rightarrow 0\text{ as }N\rightarrow \infty \text{.}
\end{equation*}%
as long as $M=N^{2\beta +\varepsilon }$.

\subsubsection{Estimates for $II$, $III$, $IV$, and $VII$}

The estimate for $II$ directly follows from Theorem \ref{Lem:finite QdF}:%
\begin{eqnarray}
&&\left\vert \limfunc{Tr}J_{t-s}^{(k)}II\right\vert
\label{estimate:II in convergence} \\
&\leqslant &\left\Vert J_{t-s}^{(k)}\right\Vert _{op}\left\Vert V_{N}\left(
x_{j}-x_{k+1},x_{j}-x_{k+2}\right) \right\Vert _{op}\limfunc{Tr}\left\vert
P_{\leqslant M}^{(k)}\gamma _{N}^{(k+2)}(s)P_{\leqslant M}^{(k)}-\tilde{%
\gamma}_{N,M}^{(k)}(s)\right\vert  \notag \\
&\leqslant &\left\Vert J^{(k)}\right\Vert _{op}N^{6\beta }\left\Vert
V\right\Vert _{L^{\infty }}\frac{4kM^{3}}{N}  \notag
\end{eqnarray}%
Since $M=N^{2\beta +\varepsilon }$, 
\begin{equation*}
\left\vert \limfunc{Tr}J_{t-s}^{(k)}II\right\vert \leqslant \frac{%
C_{J,V,k}N^{12\beta +3\varepsilon }}{N}\rightarrow 0\text{ as }N\rightarrow
\infty \text{, }
\end{equation*}%
as long as $\beta <\frac{1}{12}$.

After the estimates of $I$ and $II$, we are now dealing with a a
superposition of tensor product $\left( \left\vert \phi \right\rangle
\left\langle \phi \right\vert \right) ^{\otimes k+2}$ in $III$, $IV$, and $%
VII$. Hence we can follow the standard argument because we can prove Lemma %
\ref{lem:approximation of identity for products} for the approximation of
identity without an $\varepsilon $ loss. Since Lemma \ref{lem:approximation
of identity for products} only applies to tensor products, its proof is
elementary. However, we do need this version of the $H^{1}\rightarrow L^{6}$
Sobolev estimate to hold in sharp form, thus we prove it in detail. The
proof we provide is more like \cite[Lemma 8.2]{E-S-Y2} with a real space
method, instead of the Fourier method after \cite[Lemma A.2]{Kirpatrick},
since we are proving a $L^{6}$ estimate.\footnote{%
Putting in the Littlewood-Paley square functions might help but we are not
sure.}

\begin{lemma}
\label{lem:approximation of identity for products}Let $\rho \in L^{1}\left( 
\mathbb{T}^{6}\right) $ be a probability measure and let $\rho _{\alpha
}\left( x,y\right) =\alpha ^{-6}\rho \left( \frac{x}{\alpha },\frac{y}{%
\alpha }\right) $. Then, there exists $C>0$ s.t. 
\begin{align*}
\hspace{0.3in}& \hspace{-0.3in}\left\vert \limfunc{Tr}J^{(k)}\left( \rho
_{\alpha }\left( x_{j}-x_{k+1},x_{j}-x_{k+2}\right) -\delta \left(
x_{j}-x_{k+1}\right) \delta \left( x_{j}-x_{k+2}\right) \right) \left(
\left\vert \phi \right\rangle \left\langle \phi \right\vert \right)
^{\otimes k+2}\right\vert \\
& \leqslant C_{\rho }\alpha ^{\frac{1}{2}}C_{J}\limfunc{Tr}%
S_{j}^{2}S_{k+1}^{2}S_{k+2}^{2}\left( \left\vert \phi \right\rangle
\left\langle \phi \right\vert \right) ^{\otimes k+2}.
\end{align*}%
This estimate is not true if one replaces the tensor product $\left(
\left\vert \phi \right\rangle \left\langle \phi \right\vert \right)
^{\otimes k+2}$ by a general symmetric nonnegative $\gamma ^{(k+2)}\in 
\mathcal{L}^{1}\left( L^{2}\left( \mathbb{T}^{3k+6}\right) \right) $.
\end{lemma}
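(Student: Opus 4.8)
Here is how I would approach it.

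My plan is to use the pure–tensor structure to convert the otherwise endpoint–singular trace into a genuine \emph{scalar} mollification estimate, in which the only analytic input is the \emph{sharp} Sobolev embedding $H^{1}(\mathbb{T}^{3})\hookrightarrow L^{6}(\mathbb{T}^{3})$ (and its fractional companion $\dot W^{1,2}\hookrightarrow\dot W^{1/2,3}$), each used with no slack. The point behind the remark that ``its proof is elementary'' is that for a tensor product the singular diagonal restriction of $\phi(x_{j})\phi(x_{k+1})\phi(x_{k+2})$ is simply $\phi^{3}$, whose $L^{2}$ norm is $\|\phi\|_{L^{6}}^{3}$: the endpoint Sobolev \emph{trace} inequality, which is false (cf.\ (\ref{estimate:failed sobolev 1})), is thereby replaced by the endpoint Sobolev \emph{embedding}, which is true.

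\textbf{Step 1 (collapse the trace).} Since $\rho_{\alpha}(x_{j}-x_{k+1},x_{j}-x_{k+2})$ and $\delta(x_{j}-x_{k+1})\delta(x_{j}-x_{k+2})$ are multiplication operators and $(|\phi\rangle\langle\phi|)^{\otimes(k+2)}$ is rank one, the partial trace over $x_{k+1},x_{k+2}$ is explicit: it turns the bracket into multiplication, in the $x_{j}$ slot only, by
$$F(x):=\iint\rho_{\alpha}(\sigma,\tau)\,|\phi(x-\sigma)|^{2}|\phi(x-\tau)|^{2}\,d\sigma\,d\tau-|\phi(x)|^{4}.$$
Thus the left side equals $\bigl|\operatorname{Tr} J^{(k)}M_{F,j}(|\phi\rangle\langle\phi|)^{\otimes k}\bigr|=\bigl|\langle\Phi_{k},J^{(k)}M_{F,j}\Phi_{k}\rangle\bigr|$ with $\Phi_{k}=\phi^{\otimes k}$. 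Moving one factor $\langle\nabla_{x_{j}}\rangle$ onto $J^{(k)}$ (this is what ``nice'' buys, producing the constant $C_{J}$), peeling off the $\|\phi\|_{L^{2}}^{2(k-1)}$ of the inert slots, and invoking $\|\langle\nabla\rangle^{-1}M_{F}\langle\nabla\rangle^{-1}\|_{\mathrm{op}}\lesssim\|F\|_{L^{3/2}(\mathbb{T}^{3})}$ — which is the operator form of $H^{1}\hookrightarrow L^{6}$ — reduces the lemma to the scalar claim $\|F\|_{L^{3/2}(\mathbb{T}^{3})}\lesssim_{\rho}\alpha^{1/2}\|\phi\|_{H^{1}}^{4}$. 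I would prove the $L^{6}$ embedding itself in the real–space style of \cite[Lemma~8.2]{E-S-Y2} (periodic Green's function plus a Hardy--Littlewood--Sobolev bound) rather than via the Fourier/Littlewood--Paley argument of \cite[Lemma~A.2]{Kirpatrick}, to keep the sharp exponent on the torus.

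\textbf{Steps 2--3 (telescope, then extract $\alpha^{1/2}$), and sharpness.} With $g=|\phi|^{2}$ and $\int\rho_{\alpha}=1$ one has $F=\iint\rho_{\alpha}(\sigma,\tau)\{[g(\cdot-\sigma)-g]g(\cdot-\tau)+g[g(\cdot-\tau)-g]\}\,d\sigma\,d\tau$, so by Minkowski it suffices to control the $L^{3/2}$ norms of the two bracketed products by $(|\sigma|^{1/2}+|\tau|^{1/2})\|\phi\|_{H^{1}}^{4}$ and integrate against $\rho_{\alpha}$, which is concentrated at scale $\alpha$. The engine is the translation estimate $\bigl\|\,|\phi(\cdot-w)|^{2}-|\phi|^{2}\,\bigr\|_{L^{2}(\mathbb{T}^{3})}\lesssim|w|^{1/2}\|\phi\|_{H^{1}}^{2}$, obtained by interpolating $\|g(\cdot-w)-g\|_{\dot H^{-1/2}}\lesssim|w|\,\|g\|_{\dot H^{1/2}}$ against the trivial $\|g(\cdot-w)-g\|_{\dot H^{1/2}}\le 2\|g\|_{\dot H^{1/2}}$, where $\||\phi|^{2}\|_{\dot H^{1/2}}\lesssim\|\langle\nabla\rangle^{1/2}\phi\|_{L^{3}}\|\phi\|_{L^{6}}\lesssim\|\phi\|_{H^{1}}^{2}$ is once more the sharp embedding $\dot W^{1,2}\hookrightarrow\dot W^{1/2,3}$; the exponent $\tfrac12$ in the lemma is exactly this interpolation exponent. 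Combining these translation bounds (and the cheaper $\|g(\cdot-w)-g\|_{L^{3/2}}\lesssim|w|\,\|\phi\|_{H^{1}}^{2}$ where that is the economical choice) with $\|g\|_{L^{3}}=\|\phi\|_{L^{6}}^{2}\lesssim\|\phi\|_{H^{1}}^{2}$ for the un-differenced factors, via Hölder, closes the estimate; the term counting is trivial since we work on a fixed slot. For the last assertion: already $|\operatorname{Tr} J^{(k)}\delta(x_{j}-x_{k+1})\delta(x_{j}-x_{k+2})\gamma^{(k+2)}|\lesssim\operatorname{Tr} S_{j}^{2}S_{k+1}^{2}S_{k+2}^{2}\gamma^{(k+2)}$ is the false endpoint (\ref{estimate:failed sobolev 1}) once $\gamma^{(k+2)}$ is an arbitrary symmetric nonnegative trace-class operator, and feeding a near-extremizing sequence for that inequality into the lemma with $\alpha$ held fixed makes the left side diverge while the right side stays bounded.

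\textbf{The main obstacle.} The delicate part is the Hölder/interpolation bookkeeping: $g=|\phi|^{2}$ sits in $L^{3}\cap\dot H^{1/2}$ with \emph{exactly} the exponents dictated by the critical Sobolev embedding, so there is essentially no freedom in how derivatives and Lebesgue exponents may be distributed among the factors of $\phi$. The computation works only because each telescoped difference can be parked in $L^{2}$ — trading one half-derivative for the factor $|w|^{1/2}$ — while every surviving factor is pinned at the sharp $L^{3}/L^{6}$ endpoint; verifying that this budget actually balances, rather than overshooting by an $\varepsilon$ (which here would be fatal, and is precisely why a Besov-loss version of the embedding is useless), is the heart of the argument.
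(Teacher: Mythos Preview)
Your proposal has a genuine gap in the H\"older/exponent bookkeeping, precisely at the point you flag as ``the main obstacle''.

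The operator-norm reduction in Step~1 is correct as stated, but it commits you to proving $\|F\|_{L^{3/2}(\mathbb{T}^3)}\lesssim\alpha^{1/2}\|\phi\|_{H^1}^4$, and this scalar claim is \emph{false}. Take $\phi_\epsilon=\epsilon^{-1/2}\psi(\cdot/\epsilon)$ for a fixed smooth bump $\psi$ and set $\alpha=\epsilon$; then $\|\phi_\epsilon\|_{H^1}\sim 1$, while a direct rescaling gives $\|F_\epsilon\|_{L^{3/2}}\sim 1$ (an $\epsilon$-independent constant), so the ratio blows up like $\epsilon^{-1/2}$. Equivalently, on $\mathbb{R}^3$ the quantity $\|F\|_{L^{3/2}}$ is scale-invariant under $\phi\mapsto\lambda^{-1/2}\phi(\cdot/\lambda)$ while $\alpha^{1/2}\|\phi\|_{\dot H^1}^4$ scales like $\lambda^{1/2}$. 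Your own Step~2--3 engine exhibits the mismatch: pairing the translation estimate $\|g(\cdot-w)-g\|_{L^2}\lesssim|w|^{1/2}\|\phi\|_{H^1}^2$ with $\|g\|_{L^3}\lesssim\|\phi\|_{H^1}^2$ via H\"older gives $\tfrac12+\tfrac13=\tfrac56$, i.e.\ control of $\|F\|_{L^{6/5}}$, not $\|F\|_{L^{3/2}}$. The budget does \emph{not} balance at $L^{3/2}$.

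The paper closes the estimate by reducing instead to $\|F\|_{L^{6/5}}$, which is the scale-critical exponent. This requires abandoning the symmetric operator bound $\|\langle\nabla\rangle^{-1}M_F\langle\nabla\rangle^{-1}\|_{\mathrm{op}}$ in favor of an \emph{asymmetric} treatment of the $x_j$-slot: one pairs $F$ directly against $\bar\psi\,\phi$ with $\psi=J^{(1)}\phi$, uses the kernel of $J^{(1)}$ to place $\psi\in L^\infty$ (via $\|J^{(1)}\|_{L^\infty_{x_1}L^{6/5}_{x_1'}}$) together with $\phi\in L^6$, so that $\bar\psi\,\phi\in L^6$ and by duality only $\|F\|_{L^{6/5}}$ is needed. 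The paper then extracts the $\alpha^{1/2}$ by a pointwise Poincar\'e bound and Young's inequality (the factor arises as $\|\chi_{B_\alpha}|x|^{-2}\|_{L^{6/5}}\sim\alpha^{1/2}$). Your $L^2$-translation estimate combined with $\|g\|_{L^3}$ would equally well prove the $L^{6/5}$ bound; the only repair needed is to replace Step~1 by this asymmetric reduction.
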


\begin{proof}
It suffices to prove the estimate for $k=1$. The trace, written out
explicitly, is%
\begin{eqnarray*}
&&\left\vert \limfunc{Tr}J^{(1)}\left( \rho _{\alpha }\left(
x_{1}-x_{2},x_{1}-x_{3}\right) -\delta \left( x_{1}-x_{2}\right) \delta
\left( x_{1}-x_{3}\right) \right) \left( \left\vert \phi \right\rangle
\left\langle \phi \right\vert \right) ^{\otimes 3}\right\vert \\
&=&\left\vert \left\langle \psi \otimes \phi ^{\otimes 2},\left( \rho
_{\alpha }\left( x_{1}-x_{2},x_{1}-x_{3}\right) -\delta \left(
x_{1}-x_{2}\right) \delta \left( x_{1}-x_{3}\right) \right) \phi ^{\otimes
3}\right\rangle \right\vert \\
&\leqslant &\int dx_{1}\left\vert \bar{\psi}(x_{1})\right\vert \left\vert
\phi (x_{1})\right\vert |\int \left[ \rho _{\alpha }\left(
x_{1}-x_{2},x_{1}-x_{3}\right) -\delta \left( x_{1}-x_{2}\right) \delta
\left( x_{1}-x_{3}\right) \right] \\
&&\times \left\vert \phi \right\vert ^{2}(x_{2})\left\vert \phi \right\vert
^{2}(x_{3})dx_{2}dx_{3}|
\end{eqnarray*}%
where $\psi =J^{(1)}\left( \phi \right) $. Since $\int \rho _{\alpha }=1$,
we rewrite%
\begin{eqnarray*}
&&|\int \left[ \rho _{\alpha }\left( x_{1}-x_{2},x_{1}-x_{3}\right) -\delta
\left( x_{1}-x_{2}\right) \delta \left( x_{1}-x_{3}\right) \right]
\left\vert \phi \right\vert ^{2}(x_{2})\left\vert \phi \right\vert
^{2}(x_{3})dx_{2}dx_{3}| \\
&=&|\int \rho _{\alpha }\left( x_{1}-x_{2},x_{1}-x_{3}\right) \left(
\left\vert \phi \right\vert ^{2}(x_{2})\left\vert \phi \right\vert
^{2}(x_{3})-\left\vert \phi \right\vert ^{2}(x_{1})\left\vert \phi
\right\vert ^{2}(x_{1})\right) dx_{2}dx_{3}| \\
&\leqslant &I(x_{1})+II(x_{1})
\end{eqnarray*}%
where%
\begin{eqnarray*}
I(x_{1}) &=&\int \rho _{\alpha }\left( x_{1}-x_{2},x_{1}-x_{3}\right)
\left\vert \left\vert \phi \right\vert ^{2}(x_{2})\left\vert \phi
\right\vert ^{2}(x_{3})-\left\vert \phi \right\vert ^{2}(x_{1})\left\vert
\phi \right\vert ^{2}(x_{3})\right\vert dx_{2}dx_{3}, \\
II(x_{1}) &=&\int \rho _{\alpha }\left( x_{1}-x_{2},x_{1}-x_{3}\right)
\left\vert \left\vert \phi \right\vert ^{2}(x_{1})\left\vert \phi
\right\vert ^{2}(x_{3})-\left\vert \phi \right\vert ^{2}(x_{1})\left\vert
\phi \right\vert ^{2}(x_{1})\right\vert dx_{2}dx_{3}.
\end{eqnarray*}%
Using the estimate that $\rho _{\alpha }(x,y)\leqslant \frac{C_{\rho }}{%
\left\vert B_{\alpha }\right\vert ^{2}}\chi _{B_{\alpha }}(x)\chi
_{B_{\alpha }}(y)$ where $B_{\alpha }=\left\{ x:\left\vert x\right\vert
\leqslant \alpha \right\} $, 
\begin{equation*}
I(x_{1})\leqslant \int \frac{C_{\rho }}{\left\vert B_{\alpha }\right\vert
^{2}}\chi _{B_{\alpha }}(x_{1}-x_{2})\chi _{B_{\alpha
}}(x_{1}-x_{3})\left\vert \phi \right\vert ^{2}(x_{3})\left\vert \left\vert
\phi \right\vert ^{2}(x_{2})-\left\vert \phi \right\vert
^{2}(x_{2})\right\vert dx_{2}dx_{3}
\end{equation*}%
Apply Poincar\'{e}'s inequality or just \cite[$\left( A.14\right) $]{E-S-Y2},%
\begin{eqnarray*}
I(x_{1}) &\leqslant &\int \frac{C_{\rho }}{\left\vert B_{\alpha }\right\vert 
}\chi _{B_{\alpha }}(x_{1}-x_{3})\left\vert \phi \right\vert ^{2}(x_{3})%
\frac{\chi _{B_{\alpha }}(x_{1}-x_{2})}{\left\vert x_{1}-x_{2}\right\vert
^{2}}\left\vert \nabla \left( \left\vert \phi \right\vert ^{2}\right)
(x_{2})\right\vert dx_{2}dx_{3} \\
&\leqslant &2\int \frac{C_{\rho }}{\left\vert B_{\alpha }\right\vert }\chi
_{B_{\alpha }}(x_{1}-x_{3})\left\vert \phi \right\vert ^{2}(x_{3})\frac{\chi
_{B_{\alpha }}(x_{1}-x_{2})}{\left\vert x_{1}-x_{2}\right\vert ^{2}}%
\left\vert \phi (x_{2})\nabla \phi (x_{2})\right\vert dx_{2}dx_{3}.
\end{eqnarray*}%
We can then estimate 
\begin{eqnarray*}
\left\Vert I\right\Vert _{L_{x_{1}}^{\frac{6}{5}}} &\leqslant &C_{\rho
}\left\Vert \int \frac{1}{\left\vert B_{\alpha }\right\vert }\chi
_{B_{\alpha }}(x_{1}-x_{3})\left\vert \phi \right\vert
^{2}(x_{3})dx_{3}\right\Vert _{L_{x_{1}}^{3}} \\
&&\left\Vert \int \frac{\chi _{B_{\alpha }}(x_{1}-x_{2})}{\left\vert
x_{1}-x_{2}\right\vert ^{2}}\left\vert \phi (x_{2})\nabla \phi
(x_{2})\right\vert dx_{2}\right\Vert _{L_{x_{1}}^{2}}.
\end{eqnarray*}%
Apply Young's inequality%
\begin{equation*}
\left\Vert I\right\Vert _{L_{x_{1}}^{\frac{6}{5}}}\leqslant C_{\rho
}\left\Vert \frac{1}{\left\vert B_{\alpha }\right\vert }\chi _{B_{\alpha
}}\right\Vert _{L^{1}}\left\Vert \phi \right\Vert
_{L_{x_{1}}^{6}}^{2}\left\Vert \frac{\chi _{B_{\alpha }}(x_{1})}{\left\vert
x_{1}\right\vert ^{2}}\right\Vert _{L_{x_{1}}^{\frac{6}{5}}}\left\Vert \phi
(x_{2})\nabla \phi (x_{2})\right\Vert _{L_{x_{2}}^{\frac{3}{2}}}.
\end{equation*}%
Use H\"{o}lder and Sobolev%
\begin{equation}
\left\Vert I\right\Vert _{L_{x_{1}}^{\frac{6}{5}}}\leqslant C_{\rho
}\left\Vert \phi \right\Vert _{L_{x_{1}}^{6}}^{2}\alpha ^{\frac{1}{2}%
}\left\Vert \phi \right\Vert _{L_{x_{1}}^{6}}\left\Vert \nabla \phi
\right\Vert _{L_{x_{2}}^{2}}\leqslant C_{\rho }\alpha ^{\frac{1}{2}%
}\left\Vert \nabla \phi \right\Vert _{L_{x_{1}}^{2}}^{4}.
\label{Estimate:I in Trace Lemma}
\end{equation}%
Similarly, for $II(x_{1})$, we have%
\begin{eqnarray*}
II(x_{1}) &\leqslant &\int \frac{C_{\rho }}{\left\vert B_{\alpha
}\right\vert ^{2}}\chi _{B_{\alpha }}(x_{1}-x_{2})\chi _{B_{\alpha
}}(x_{1}-x_{3})\left\vert \phi \right\vert ^{2}(x_{1})\left\vert \left\vert
\phi \right\vert ^{2}(x_{3})-\left\vert \phi \right\vert
^{2}(x_{1})\right\vert dx_{2}dx_{3} \\
&=&C_{\rho }\int \frac{1}{\left\vert B_{\alpha }\right\vert }\chi
_{B_{\alpha }}(x_{1}-x_{3})\left\vert \phi \right\vert ^{2}(x_{1})\left\vert
\left\vert \phi \right\vert ^{2}(x_{3})-\left\vert \phi \right\vert
^{2}(x_{1})\right\vert dx_{3} \\
&\leqslant &C_{\rho }\left\vert \phi \right\vert ^{2}(x_{1})\int \frac{\chi
_{B_{\alpha }}(x_{1}-x_{3})}{\left\vert x_{1}-x_{3}\right\vert ^{2}}%
\left\vert \phi (x_{3})\nabla \phi (x_{3})\right\vert dx_{3}
\end{eqnarray*}%
and hence%
\begin{eqnarray}
\left\Vert II\right\Vert _{L_{x_{1}}^{\frac{6}{5}}} &\leqslant &C_{\rho
}\left\Vert \left\vert \phi \right\vert ^{2}\right\Vert
_{L_{x_{1}}^{3}}\left\Vert \int \frac{\chi _{B_{\alpha }}(x_{1}-x_{3})}{%
\left\vert x_{1}-x_{3}\right\vert ^{2}}\left\vert \phi (x_{3})\nabla \phi
(x_{3})\right\vert dx_{3}\right\Vert _{L_{x_{1}}^{2}}
\label{Estimate:II in Trace Lemma} \\
&\leqslant &C_{\rho }\left\Vert \left\vert \phi \right\vert ^{2}\right\Vert
_{L_{x_{1}}^{3}}\left\Vert \frac{\chi _{B_{\alpha }}(x_{1})}{\left\vert
x_{1}\right\vert ^{2}}\right\Vert _{L_{x_{1}}^{\frac{6}{5}}}\left\Vert \phi
(x_{3})\nabla \phi (x_{3})\right\Vert _{L_{x_{3}}^{\frac{3}{2}}}  \notag \\
&\leqslant &C_{\rho }\alpha ^{\frac{1}{2}}\left\Vert \nabla \phi \right\Vert
_{L_{x_{1}}^{2}}^{4}.  \notag
\end{eqnarray}%
Finally, since%
\begin{eqnarray}
\left\Vert \phi J^{(1)}\left( \phi \right) \right\Vert _{L^{6}} &\leqslant
&\left\Vert \int J^{(1)}(x_{1},x_{1}^{\prime })\phi (x_{1}^{\prime
})dx_{1}^{\prime }\right\Vert _{L_{x_{1}}^{\infty }}\left\Vert \phi
\right\Vert _{L^{6}}  \label{Estimate:Leftover in Trace Lemma} \\
&\leqslant &\left\Vert J^{(1)}(x_{1},x_{1}^{\prime })\right\Vert
_{L_{x_{1}}^{\infty }L_{x_{1}^{\prime }}^{\frac{6}{5}}}\left\Vert \phi
\right\Vert _{L^{6}}^{2}  \notag \\
&\leqslant &C\left\Vert J^{(1)}(x_{1},x_{1}^{\prime })\right\Vert
_{L_{x_{1}}^{\infty }L_{x_{1}^{\prime }}^{\frac{6}{5}}}\left\Vert \nabla
\phi \right\Vert _{L^{2}}^{2}.  \notag
\end{eqnarray}%
Putting (\ref{Estimate:I in Trace Lemma}), (\ref{Estimate:II in Trace Lemma}%
), and (\ref{Estimate:Leftover in Trace Lemma}) together, we have%
\begin{eqnarray*}
&&\left\vert \limfunc{Tr}J^{(1)}\left( \rho _{\alpha }\left(
x_{1}-x_{2},x_{1}-x_{3}\right) -\delta \left( x_{1}-x_{2}\right) \delta
\left( x_{1}-x_{3}\right) \right) \left( \left\vert \phi \right\rangle
\left\langle \phi \right\vert \right) ^{\otimes 3}\right\vert \\
&\leqslant &CC_{\rho }\left\Vert J^{(1)}(x_{1},x_{1}^{\prime })\right\Vert
_{L_{x_{1}}^{\infty }L_{x_{1}^{\prime }}^{\frac{6}{5}}}\alpha ^{\frac{1}{2}%
}\left\Vert \nabla \phi \right\Vert _{L^{2}}^{6} \\
&=&CC_{\rho }\left\Vert J^{(1)}(x_{1},x_{1}^{\prime })\right\Vert
_{L_{x_{1}}^{\infty }L_{x_{1}^{\prime }}^{\frac{6}{5}}}\alpha ^{\frac{1}{2}}%
\limfunc{Tr}S_{1}^{2}S_{2}^{2}S_{3}^{2}\left( \left\vert \phi \right\rangle
\left\langle \phi \right\vert \right) ^{\otimes 3}
\end{eqnarray*}%
as claimed. So we have proved Lemma \ref{lem:approximation of identity for
products}.

We remark that, one cannot use Young's inequality to get (\ref{Estimate:I in
Trace Lemma}) or (\ref{Estimate:II in Trace Lemma}) if one replaces the
tensor product $\left( \left\vert \phi \right\rangle \left\langle \phi
\right\vert \right) ^{\otimes k+2}$ by a general symmetric nonnegative $%
\gamma ^{(k+2)}\in \mathcal{L}^{1}\left( L^{2}\left( \mathbb{T}%
^{3k+6}\right) \right) $. Hence, we are not violating the fact that the
sharp trace theorem is false.
\end{proof}

By Corollary \ref{Corollary:Regularity for Tensor product finite qde}, we
can use Lemma \ref{lem:approximation of identity for products} and conclude%
\begin{eqnarray*}
\left\vert \limfunc{Tr}J_{t-s}^{(k)}III\right\vert &\leqslant &b_{0}C_{\rho
}N^{-\frac{\beta }{2}}C_{J}\int_{S(P_{\leqslant M}\left( L_{s}^{2}(\mathbb{T}%
^{3})\right) )}\limfunc{Tr}S_{j}^{2}S_{k+1}^{2}S_{k+2}^{2}\left\vert \phi
^{\otimes k+2}\right\rangle \left\langle \phi ^{\otimes k+2}\right\vert d\mu
_{N,k,s}(\phi ) \\
&=&b_{0}C_{V}N^{-\frac{\beta }{2}}C_{J}\limfunc{Tr}%
S_{j}^{2}S_{k+1}^{2}S_{k+2}^{2}\tilde{\gamma}_{N,M}^{(k+2)} \\
&\leqslant &C_{J,V}N^{-\frac{\beta }{2}} \\
&\rightarrow &0\text{ as }N\rightarrow \infty \text{.}
\end{eqnarray*}%
and%
\begin{eqnarray*}
\left\vert \limfunc{Tr}J_{t-s}^{(k)}IV\right\vert &\leqslant &b_{0}C_{\rho
}\alpha ^{\frac{1}{2}}C_{J}\int_{S(P_{\leqslant M}\left( L_{s}^{2}(\mathbb{T}%
^{3})\right) )}\limfunc{Tr}S_{j}^{2}S_{k+1}^{2}S_{k+2}^{2}\left\vert \phi
^{\otimes k+2}\right\rangle \left\langle \phi ^{\otimes k+2}\right\vert d\mu
_{N,k,s}(\phi ) \\
&=&b_{0}C_{\rho }\alpha ^{\frac{1}{2}}C_{J}\limfunc{Tr}%
S_{j}^{2}S_{k+1}^{2}S_{k+2}^{2}\tilde{\gamma}_{N,M}^{(k+2)} \\
&\leqslant &C_{J,V}\alpha ^{\frac{1}{2}} \\
&\rightarrow &0\text{ as }\alpha \rightarrow 0\text{ uniform in }N\text{.}
\end{eqnarray*}%
For $VII$, we can use the ordinary quantum de Finetti theorem and represent $%
\gamma ^{(k+2)}=\int_{\mathbb{B(}L^{2}\mathbb{)}}\left( \left\vert \phi
\right\rangle \left\langle \phi \right\vert \right) ^{\otimes k+2}d\mu (\phi
)$.

\begin{definition}
\label{def:Admissible}Given a sequence of nonnegative symmetric trace class
operators $\Gamma =\left\{ \gamma ^{(k)}\right\} _{k=1}^{\infty }$. We say $%
\Gamma $ is weakly admissible if it is a weak* limit of a bosonic quantum $N$%
-body dynamic and strongly admissible if $\Gamma $ satisfies $\limfunc{Tr}%
\gamma ^{(k)}=1$ and $\limfunc{Tr}_{k+1}\gamma ^{(k+1)}=\gamma ^{(k)}$ for
all $k$.
\end{definition}

\begin{theorem}[\protect\cite{Lewin}]
\label{thm:qde}Given a weakly (respectively strongly)\ admissible sequence
of nonnegative symmetric trace class operators $\Gamma =\left\{ \gamma
^{(k)}\right\} _{k=1}^{\infty }$, there exists a probability measure $d\mu $
supported on $\mathbb{B(}L^{2}\mathbb{)}$ (respectively $\mathbb{S}(L^{2})$)
such that%
\begin{equation*}
\gamma ^{(k)}=\int_{\mathbb{B(}L^{2}\mathbb{)}}\left( \left\vert \phi
\right\rangle \left\langle \phi \right\vert \right) ^{\otimes k}d\mu (\phi )%
\text{ (respectively }\gamma ^{(k)}=\int_{\mathbb{S(}L^{2}\mathbb{)}}\left(
\left\vert \phi \right\rangle \left\langle \phi \right\vert \right)
^{\otimes k}d\mu (\phi )\text{)}
\end{equation*}
\end{theorem}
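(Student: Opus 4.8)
The plan is to derive Theorem~\ref{thm:qde} from the \emph{finite-dimensional} bosonic de Finetti estimate already recorded as Theorem~\ref{Lem:finite QdF}, by a compactness argument on the associated measures, and then to obtain the strong statement from the weak one by a mass-conservation argument. First I would treat the weak case. By definition of weak admissibility there is, for each $N$, a bosonic $\psi_N\in L^2_s(\mathbb{T}^{3N})$ with $\gamma_N^{(k)}\to\gamma^{(k)}$ weak${}^\ast$ in $\mathcal L^1_k$ for every $k$. Choose a truncation level $M=M(N)\to\infty$ slowly enough that $D_{M(N)}/N\to0$; since $D_M\le CM^3$, the choice $M(N)=N^{1/4}$ works. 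Theorem~\ref{Lem:finite QdF}, whose construction produces a single measure serving all $k$ simultaneously (alternatively one extracts, by a diagonal argument over $k$, limits that coincide thanks to the consistency $\limfunc{Tr}_{k+1}\tilde\gamma^{(k+1)}_{N,M}=\tilde\gamma^{(k)}_{N,M}$ of the approximants), yields positive Borel measures $\mu_N:=\mu_{N,M(N)}$ supported on the unit sphere of $P_{\le M(N)}(L^2(\mathbb{T}^3))\subset\mathbb{B}(L^2)$ with mass $\le1$, which after adding $\big(1-\|\mu_N\|\big)\delta_0$ may be taken to be probability measures (this does not change $\int|\phi^{\otimes k}\rangle\langle\phi^{\otimes k}|\,d\mu_N$), and such that $\big\|P^{(k)}_{\le M(N)}\gamma^{(k)}_N P^{(k)}_{\le M(N)}-\int|\phi^{\otimes k}\rangle\langle\phi^{\otimes k}|\,d\mu_N(\phi)\big\|_{\mathcal L^1_k}\le 4kD_{M(N)}/N\to0$ for each fixed $k$.

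Next I would pass to the limit. Since $L^2(\mathbb{T}^3)$ is separable, the closed unit ball $\mathbb{B}(L^2)$ equipped with the weak topology is a compact metrizable space, so by Prokhorov's theorem a subsequence $\mu_{N_j}$ converges weakly (narrowly) to a Borel probability measure $\mu$ on $\mathbb{B}(L^2)$. For any fixed compact operator $J^{(k)}$ on $L^2(\mathbb{T}^{3k})$ the map $\phi\mapsto\langle\phi^{\otimes k},J^{(k)}\phi^{\otimes k}\rangle$ is continuous on $\mathbb{B}(L^2)$ with the weak topology: if $\phi_n\rightharpoonup\phi$ then $\phi_n^{\otimes k}\rightharpoonup\phi^{\otimes k}$ in $L^2(\mathbb{T}^{3k})$, and compactness of $J^{(k)}$ upgrades this to $J^{(k)}\phi_n^{\otimes k}\to J^{(k)}\phi^{\otimes k}$ in norm, whence $\langle\phi_n^{\otimes k},J^{(k)}\phi_n^{\otimes k}\rangle\to\langle\phi^{\otimes k},J^{(k)}\phi^{\otimes k}\rangle$. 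Hence $\limfunc{Tr}J^{(k)}\!\int|\phi^{\otimes k}\rangle\langle\phi^{\otimes k}|\,d\mu_{N_j}\to\limfunc{Tr}J^{(k)}\!\int|\phi^{\otimes k}\rangle\langle\phi^{\otimes k}|\,d\mu$. On the other hand, the displayed $\mathcal L^1$-bound together with $\|P^{(k)}_{\le M}J^{(k)}P^{(k)}_{\le M}-J^{(k)}\|_{\op}\to0$ as $M\to\infty$ (again because $J^{(k)}$ is compact) gives $\limfunc{Tr}J^{(k)}\!\int|\phi^{\otimes k}\rangle\langle\phi^{\otimes k}|\,d\mu_{N_j}=\limfunc{Tr}\big(P^{(k)}_{\le M(N_j)}J^{(k)}P^{(k)}_{\le M(N_j)}\big)\gamma^{(k)}_{N_j}+o(1)\to\limfunc{Tr}J^{(k)}\gamma^{(k)}$, using the weak${}^\ast$ convergence $\gamma^{(k)}_{N_j}\to\gamma^{(k)}$. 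Comparing the two limits over all compact $J^{(k)}$ yields $\gamma^{(k)}=\int_{\mathbb{B}(L^2)}|\phi^{\otimes k}\rangle\langle\phi^{\otimes k}|\,d\mu(\phi)$ for every $k$, which is the weak statement.

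For the strong case, one first notes that any strongly admissible $\Gamma$ is itself a weak${}^\ast$ limit of $N$-body bosonic marginals (a standard realization, e.g.\ via superpositions of product/coherent states, or via the $n$-particle sectors of a Fock state), hence weakly admissible, so the above produces a probability measure $\mu$ on $\mathbb{B}(L^2)$ with $\gamma^{(k)}=\int|\phi^{\otimes k}\rangle\langle\phi^{\otimes k}|\,d\mu$. Now $1=\limfunc{Tr}\gamma^{(1)}=\int_{\mathbb{B}(L^2)}\|\phi\|_{L^2}^2\,d\mu(\phi)\le\int_{\mathbb{B}(L^2)}\,d\mu(\phi)=1$, which forces $\|\phi\|_{L^2}=1$ for $\mu$-a.e.\ $\phi$; thus $\mu$ is supported on $\mathbb{S}(L^2)$, proving the strong statement.

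I expect the main obstacle to be the passage to the limit in the weak case: the approximants $\mu_N$ live on the spheres of the growing finite-dimensional spaces $P_{\le M(N)}L^2$, which are \emph{not} weakly closed, so weak limits of unit vectors may have norm $<1$ and mass may leak toward $\phi=0$. Tracking this carefully is precisely why the limiting measure is only claimed on the closed ball $\mathbb{B}(L^2)$ in the weak case (and why the strong hypothesis $\limfunc{Tr}\gamma^{(1)}=1$ is needed to push the support back onto $\mathbb{S}(L^2)$), and it is why one must work throughout with the weak topology on $\mathbb{B}(L^2)$ --- under which the functionals $\phi\mapsto\langle\phi^{\otimes k},J^{(k)}\phi^{\otimes k}\rangle$ are continuous for $J^{(k)}$ compact but \emph{not} for general bounded $J^{(k)}$, which is consistent with the failure of the sharp trace theorem emphasized earlier. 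A secondary point is the interchange of the two limits $N\to\infty$ (in the de Finetti error) and $M\to\infty$ (in the truncation), which the single diagonal choice $M=M(N)$ handles cleanly.
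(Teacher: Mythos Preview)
The paper does not prove Theorem~\ref{thm:qde}; it is quoted from \cite{Lewin} and used as a black box. So there is no ``paper's proof'' to compare against.

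Your sketch for the weak case is essentially the Lewin--Nam--Rougerie route and is sound: Prokhorov on the weakly compact ball $\mathbb{B}(L^2)$, plus weak continuity of $\phi\mapsto\langle\phi^{\otimes k},J^{(k)}\phi^{\otimes k}\rangle$ for compact $J^{(k)}$, plus the diagonal choice $M=M(N)$ to kill the finite-dimensional de Finetti error, is exactly right. One small point: Theorem~\ref{Lem:finite QdF} as stated in the paper indexes the measure by $k$, but the Christandl--K\"onig--Mitchison--Renner construction indeed produces a single measure good for all $k\le N$; you correctly flag this.

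For the strong case your reduction has a genuine soft spot. You assert that a strongly admissible sequence is automatically weakly admissible, i.e.\ arises as a weak${}^\ast$ limit of \emph{pure-state} $N$-body marginals, and wave at ``coherent states / Fock sectors''. That claim is true but not free; it is essentially another form of the theorem you are trying to prove. A cleaner and self-contained fix, staying within your framework, is this: the finite-dimensional de Finetti estimate of \cite{OriginalDeFinette,LewinFocusing} actually holds for \emph{any} symmetric density matrix on $(\mathbb{T}^3)^N$, not just pure states (the paper's Theorem~\ref{Lem:finite QdF} is phrased for pure $\psi_N$ only because that is all the paper needs). Given strong admissibility, apply that mixed-state version directly to $\gamma^{(N)}$, whose $k$-marginals are exactly $\gamma^{(k)}$ by consistency, and run your compactness argument verbatim. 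Then your mass argument $1=\operatorname{Tr}\gamma^{(1)}=\int\|\phi\|^2\,d\mu$ correctly pushes the support onto $\mathbb{S}(L^2)$.
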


Then we can still use Lemma \ref{lem:approximation of identity for products}%
, and conclude that%
\begin{eqnarray*}
\left\vert \limfunc{Tr}J_{t-s}^{(k)}VII\right\vert &\leqslant &b_{0}C_{\rho
}\alpha ^{\frac{1}{2}}C_{J}\int_{\mathbb{B(}L^{2}\mathbb{)}}\limfunc{Tr}%
S_{j}^{2}S_{k+1}^{2}S_{k+2}^{2}\left\vert \phi ^{\otimes k+2}\right\rangle
\left\langle \phi ^{\otimes k+2}\right\vert d\mu _{s}(\phi ) \\
&=&b_{0}C_{\rho }\alpha ^{\frac{1}{2}}C_{J}\limfunc{Tr}%
S_{j}^{2}S_{k+1}^{2}S_{k+2}^{2}\gamma ^{(k+2)} \\
&\leqslant &C_{J,V}\alpha ^{\frac{1}{2}} \\
&\rightarrow &0\text{ as }\alpha \rightarrow 0\text{ uniform in }N\text{.}
\end{eqnarray*}%
We remind the readers that, with our method, the estimate for $VI$ follows
from the standard argument \cite{Kirpatrick}, and hence we skip it.
Therefore, we have proved (\ref{eqn:key convergence term})$\rightarrow $(\ref%
{eqn:key convergence target}) as $N\rightarrow \infty $ and thence Theorem %
\ref{Theorem:Convergence} for $\beta <\frac{1}{12}$.

\subsection{Convergence of the Key Coupling Term for $\protect\beta <1/9 
\label{Sec:Convergence 1/9}$}

Let us recall our target, we need to prove that, for a fixed smooth kernel $%
J^{(k)}\left( \mathbf{x}_{k},\mathbf{x}_{k}^{\prime }\right) $ and
corresponding operator $J^{(k)}$ 
\begin{equation}
\sup_{0\leq t\leq T}\left\vert \func{Tr}J^{(k)}\int_{0}^{t}[U^{(k)}(t-s)F(s,%
\bullet ;\bullet )](\mathbf{x}_{k};\mathbf{x}_{k}^{\prime })ds\right\vert
\rightarrow 0  \label{eqn:recall target in convergence}
\end{equation}%
and $N\rightarrow \infty $ where 
\begin{eqnarray*}
&&F(s,\mathbf{x}_{k};\mathbf{x}_{k}^{\prime }) \\
&=&\func{Tr}_{k+1,k+2}\left[ V_{N}(x_{j}-x_{k+1},x_{j}-x_{k+2}),\gamma
_{N}^{(k+2)}(s)\right] \\
&&-\func{Tr}_{k+1,k+2}\left[ b_{0}\delta (x_{j}-x_{k+1})\delta
(x_{j}-x_{k+2}),\gamma ^{(k+2)}(s)\right]
\end{eqnarray*}%
By writing out the trace explicitly, we can rewrite (\ref{eqn:recall target
in convergence}) as 
\begin{equation*}
\sup_{0\leq t\leq T}\left\vert \int_{0}^{t}\int_{\mathbf{x}_{k}}\int_{%
\mathbf{x}_{k}^{\prime }}J^{(k)}\left( \mathbf{x}_{k}^{\prime },\mathbf{x}%
_{k}\right) \left[ U^{(k)}(t-s)F(s,\bullet ;\bullet )\right] (\mathbf{x}_{k};%
\mathbf{x}_{k}^{\prime })\,d\mathbf{x}_{k}\,d\mathbf{x}_{k}^{\prime
}\,ds\right\vert \rightarrow 0
\end{equation*}%
Or%
\begin{equation*}
\sup_{0\leq t\leq T}\left\vert \int_{0}^{t}\int_{\mathbf{x}_{k}}\int_{%
\mathbf{x}_{k}^{\prime }}[U^{(k)}(s-t)J^{(k)}\left( \bullet ^{\prime
},\bullet \right) ](\mathbf{x}_{k};\mathbf{x}_{k}^{\prime })F(s,\mathbf{x}%
_{k};\mathbf{x}_{k}^{\prime })]\,d\mathbf{x}_{k}\,d\mathbf{x}_{k}^{\prime
}\,ds\right\vert \rightarrow 0
\end{equation*}%
if we put $U^{(k)}$ on $J^{(k)}$. Since $J^{(k)}$ is smooth, we have $\Vert
U^{(k)}J^{(k)}\Vert _{L^{\infty }}\leqslant C_{J}$ and $\Vert
U^{(k)}J^{(k)}\Vert _{L^{2}}\leqslant C_{J}$. Thus it suffices to show 
\begin{equation*}
\Vert F(t,\mathbf{x}_{k};\mathbf{x}_{k}^{\prime })\Vert _{L_{0\leq t\leq
T}^{1}L_{\hat{\mathbf{x}}_{k}\mathbf{x}_{k}^{\prime
}}^{2}L_{x_{j}}^{1}}\rightarrow 0
\end{equation*}%
where $\hat{\mathbf{x}}_{k}$ means $x_{j}$ omitted. We will in fact prove 
\begin{equation}
\Vert F(t,\mathbf{x}_{k};\mathbf{x}_{k}^{\prime })\Vert _{L_{0\leq t\leq
T}^{\infty }L_{\hat{\mathbf{x}}_{k}\mathbf{x}_{k}^{\prime
}}^{2}L_{x_{j}}^{1}}\rightarrow 0  \label{eqn:real target in convergence 1/9}
\end{equation}%
and thus we drop the $t$ dependence from the notation. Moreover, by
Parseval, we see that $J^{(k)}$ in fact carries a frequency localization to
effectively force $\left\vert \xi _{i}\right\vert ,\left\vert \xi
_{i}^{\prime }\right\vert \lesssim Q$, $\forall i\leqslant k$, for some
frequency $Q$ independent of $N$. Thus we will prove (\ref{eqn:real target
in convergence 1/9}) with $\left\vert \xi _{i}\right\vert ,\left\vert \xi
_{i}^{\prime }\right\vert \lesssim Q$, $\forall i\leqslant k$,

By triangle inequality, it suffices to treat one summand in the commutator,
and, without lose of generality, we may assume $j=k$ so that we can write
out the variables explicitly and avoid heavy notation. That is, 
\begin{eqnarray*}
F(\mathbf{x}_{k},\mathbf{x}_{k}^{\prime }) &=&[\int
dx_{k+1}dx_{k+2}V_{N}(x_{k}-x_{k+1},x_{k}-x_{k+2}) \\
&&\times \gamma _{N}^{(k+2)}(\mathbf{x}_{k},x_{k+1},x_{k+2};\mathbf{x}%
_{k},x_{k+1},x_{k+2})] \\
&&-\gamma ^{(k+2)}(\mathbf{x}_{k},x_{k},x_{k};\mathbf{x}_{k},x_{k},x_{k})
\end{eqnarray*}

We take two frequency thresholds $M$ and $R\footnote{%
Together with $Q$, we actually have three frequency thresholds here. But we
only mark down $M$ and $R$ because $Q$ is fixed.}$ this time where $M\gg Q$
and is independent of $N$ and $N^{\beta }\ll R\ll N^{(1-6\beta )/3}$. In
this setting, for sufficiently large $N$, we have 
\begin{equation*}
Q\ll M\ll N^{\beta }\ll R\ll N^{(1-6\beta )/3}.
\end{equation*}%
Since we are only working with kernels here, let us adopt the notation $%
P_{\leqslant M}^{j^{\prime }}$ ($P_{>M}^{j^{\prime }}$) which denotes the
Littlewood-Paley projector which projects the $x_{j}^{\prime }$ variable at
frequency $\leqslant M$ ($>M$), and write 
\begin{equation*}
P_{\leqslant M}^{(k^{\prime })}=\dprod\limits_{j=1}^{k}P_{\leqslant
M}^{j^{\prime }}\text{ (}P_{>M}^{(k^{\prime
})}=\dprod\limits_{j=1}^{k^{\prime }}P_{>M}^{j}\text{).}
\end{equation*}%
We decompose $F=F_{1}+F_{2}+F_{3}-F_{4}$, where%
\begin{eqnarray*}
&&F_{1}(\mathbf{x}_{k},\mathbf{x}_{k}^{\prime }) \\
&=&\int dx_{k+1}dx_{k+2}V_{N}(x_{k}-x_{k+1},x_{k}-x_{k+2}) \\
&&\times \left[ \left( 1-P_{\leqslant R}^{(k+2)}P_{\leqslant
R}^{(k+2^{\prime })}\right) \gamma _{N}^{(k+2)}\right] (\mathbf{x}%
_{k},x_{k+1},x_{k+2};\mathbf{x}_{k}^{\prime },x_{k+1},x_{k+2})
\end{eqnarray*}%
\begin{eqnarray*}
&&F_{2}(\mathbf{x}_{k},\mathbf{x}_{k}^{\prime }) \\
&=&\int dx_{k+1}dx_{k+2}V_{N}(x_{k}-x_{k+1},x_{k}-x_{k+2}) \\
&&\times \left[ \left( P_{\leqslant R}^{(k+2)}P_{\leqslant R}^{(k+2^{\prime
})}-P_{\leqslant M}^{(k+2)}P_{\leqslant M}^{(k+2^{\prime })}\right) \gamma
_{N}^{(k+2)}\right] (\mathbf{x}_{k},x_{k+1},x_{k+2};\mathbf{x}_{k}^{\prime
},x_{k+1},x_{k+2})
\end{eqnarray*}%
\begin{eqnarray*}
&&F_{3}(\mathbf{x}_{k},\mathbf{x}_{k}^{\prime }) \\
&=&\int dx_{k+1}dx_{k+2}V_{N}(x_{k}-x_{k+1},x_{k}-x_{k+2}) \\
&&\times \left( P_{\leqslant M}^{(k+2)}P_{\leqslant M}^{(k+2^{\prime
})}\gamma _{N}^{(k+2)}\right) (\mathbf{x}_{k},x_{k+1},x_{k+2};\mathbf{x}%
_{k}^{\prime },x_{k+1},x_{k+2}) \\
&&-b_{0}\left( P_{\leqslant M}^{(k+2)}P_{\leqslant M}^{(k+2^{\prime
})}\gamma ^{(k+2)}\right) (\mathbf{x}_{k},x_{k},x_{k};\mathbf{x}_{k}^{\prime
},x_{k},x_{k})
\end{eqnarray*}%
and 
\begin{equation*}
F_{4}=b_{0}\left[ \left( 1-P_{\leqslant M}^{(k+2)}P_{\leqslant
M}^{(k+2^{\prime })}\right) \gamma ^{(k+2)}\right] (\mathbf{x}%
_{k},x_{k},x_{k};\mathbf{x}_{k}^{\prime },x_{k},x_{k})
\end{equation*}

\subsubsection{Estimate for $F_{1}$}

$F_{1}$, like the $I$ term in \S \ref{Sec:Convergence 1/12}, is again the
most interesting term. We will prove that $\Vert F_{1}(t,\mathbf{x}_{k};%
\mathbf{x}_{k}^{\prime })\Vert _{L_{0\leq t\leq T}^{\infty }L_{\mathbf{x}%
_{k-1}\mathbf{x}_{k}^{\prime }}^{2}L_{x_{k}}^{1}}\rightarrow 0$ as $%
N\rightarrow \infty $, independent of $M$. To see the interaction between
the frequencies, let us at the moment\ switch to the partial Fourier side 
\begin{align*}
& \hat{F}_{1}(\mathbf{x}_{k-1},\xi _{k};\mathbf{x}_{k}^{\prime }) \\
=& \int d\xi _{k+1}\,d\xi _{k+2}\,d\xi _{k+1}^{\prime }\,d\xi _{k+2}^{\prime
}\hat{V}_{N}(\xi _{k+1}+\xi _{k+1}^{\prime };\xi _{k+2}+\xi _{k+2}^{\prime })%
\left[ \left( 1-P_{\leqslant R}^{(k+2)}P_{\leqslant R}^{(k+2^{\prime
})}\right) \gamma _{N}^{(k+2)}\right] ^{\symbol{94}}(\mathbf{x}_{k-1}, \\
& \quad \xi _{k}-\xi _{k+1}-\xi _{k+1}^{\prime }-\xi _{k+2}-\xi
_{k+2}^{\prime },\xi _{k+1},\xi _{k+2};\mathbf{x}_{k}^{\prime },\xi
_{k+1}^{\prime },\xi _{k+2}^{\prime })\,.
\end{align*}%
Due to the factor $\left( 1-P_{\leqslant R}^{(k+2)}P_{\leqslant
R}^{(k+2^{\prime })}\right) $, at least one of the following components is
at frequency $\geqslant R$

\begin{itemize}
\item $\left\vert \xi _{k+1}\right\vert >R$; in this case, $\left\vert \xi
_{k+1}+\xi _{k+1}^{\prime }\right\vert \lesssim N^{\beta }\ll R$
(effectively) due to the $\hat{V}_{N}$ term, and thus $\left\vert \xi
_{k+1}^{\prime }\right\vert \gtrsim R$.

\item $\left\vert \xi _{k+2}\right\vert >R$; in this case, $\left\vert \xi
_{k+2}+\xi _{k+2}^{\prime }\right\vert \lesssim N^{\beta }\ll R$
(effectively) due to the $\hat{V}_{N}$ term, and thus $\left\vert \xi
_{k+2}^{\prime }\right\vert \gtrsim R$.

\item $\left\vert \xi _{k+1}^{\prime }\right\vert >R$; in this case, $%
\left\vert \xi _{k+1}+\xi _{k+1}^{\prime }\right\vert \lesssim N^{\beta }\ll
R$ (effectively) due to the $\hat{V}_{N}$ term, and thus $\left\vert \xi
_{k+1}\right\vert \gtrsim R$.

\item $\left\vert \xi _{k+2}^{\prime }\right\vert >R$; in this case, $%
\left\vert \xi _{k+2}+\xi _{k+2}^{\prime }\right\vert \lesssim N^{\beta }\ll
R$ (effectively) due to the $\hat{V}_{N}$ term, and thus $\left\vert \xi
_{k+2}\right\vert \gtrsim R$.

\item the $k$ component; in this case $\left\vert \xi _{k}-\xi _{k+1}-\xi
_{k+1}^{\prime }-\xi _{k+2}-\xi _{k+2}^{\prime }\right\vert \geqslant R$,
and due to the presence of the $V_{N}$ term, we still have (effectively) $%
\left\vert \xi _{k+1}+\xi _{k+1}^{\prime }\right\vert \lesssim N^{\beta }\ll
R$ and $\left\vert \xi _{k+2}+\xi _{k+2}^{\prime }\right\vert \lesssim
N^{\beta }\ll R$, and thus $\left\vert \xi _{k}\right\vert \gtrsim R$. But
this case cannot arise since we only consider the case $\left\vert \xi
_{i}\right\vert ,\left\vert \xi _{i}^{\prime }\right\vert \lesssim Q$, $%
\forall i\leqslant k$ for some frequency $Q$ independent of $N$ because of
the effect of $J^{(k)}$. Same argument shows that $\xi _{i}$, $\xi
_{i}^{\prime }$ cannot be high, $\forall i\leqslant k$.
\end{itemize}

We notice that in all four possible cases, either the $(k+1)^{\prime }$ or $%
(k+2)^{\prime }$ component will be at high $\gtrsim R$ frequency. Say it is
the $(k+2)^{\prime }$ component being high, we can then write it out as%
\begin{eqnarray*}
&&\Vert F_{1}(t,\mathbf{x}_{k};\mathbf{x}_{k}^{\prime })\Vert
_{L_{x_{k}}^{1}} \\
&=&\int dx_{k}dx_{k+1}dx_{k+2}V_{N}(x_{k}-x_{k+1},x_{k}-x_{k+2}) \\
&&\left\vert \int \psi _{N}(\mathbf{x}_{k},x_{k+1},x_{k+2},\mathbf{x}%
_{N-k-2})\left( P_{>R}^{k+2^{\prime }}\bar{\psi}_{N}\right) (\mathbf{x}%
_{k}^{\prime },x_{k+1},x_{k+2},\mathbf{x}_{N-k-2})d\mathbf{x}%
_{N-k-2}\right\vert
\end{eqnarray*}%
then Cauchy-Schwarz to obtain%
\begin{eqnarray*}
&\leqslant &\left( \int V_{N}(x_{k}-x_{k+1},x_{k}-x_{k+2})\left\vert \psi
_{N}\right\vert ^{2}(\mathbf{x}_{k},x_{k+1},x_{k+2},\mathbf{x}_{N-k-2})d%
\mathbf{x}_{N-k+1}\right) ^{\frac{1}{2}} \\
&&[\int dx_{k}dx_{k+1}dx_{k+2}d\mathbf{x}%
_{N-k-2}V_{N}(x_{k}-x_{k+1},x_{k}-x_{k+2}) \\
&&\times \left\vert P_{>R}^{k+2^{\prime }}\psi _{N}\right\vert ^{2}(\mathbf{x%
}_{k}^{\prime },x_{k+1},x_{k+2},\mathbf{x}_{N-k-2})]^{\frac{1}{2}}
\end{eqnarray*}%
That is,%
\begin{eqnarray*}
&&\Vert F_{1}(t,\mathbf{x}_{k};\mathbf{x}_{k}^{\prime })\Vert _{L_{0\leq
t\leq T}^{\infty }L_{\mathbf{x}_{k-1}\mathbf{x}_{k}^{\prime
}}^{2}L_{x_{k}}^{1}} \\
&\leqslant &\left( \int V_{N}(x_{k}-x_{k+1},x_{k}-x_{k+2})\left\vert \psi
_{N}\right\vert ^{2}(\mathbf{x}_{k},x_{k+1},x_{k+2},\mathbf{x}_{N-k-2})d%
\mathbf{x}_{N}\right) ^{\frac{1}{2}} \\
&&\left( \int V_{N}(x_{k}-x_{k+1},x_{k}-x_{k+2})\left\vert
P_{>R}^{k+2^{\prime }}\psi _{N}\right\vert ^{2}(\mathbf{x}_{k}^{\prime
},x_{k+1},x_{k+2},\mathbf{x}_{N-k-2})dx_{k}d\mathbf{x}_{N-k}d\mathbf{x}%
_{k}^{\prime }\right) ^{\frac{1}{2}} \\
&=&I^{\frac{1}{2}}\times II
\end{eqnarray*}%
Recall that $I$ is part of the energy $\langle N^{-1}H_{N}\psi _{N},\psi
_{N}\rangle $ and we are dealing with a defocusing case, so 
\begin{equation*}
I\lesssim C^{\frac{1}{2}}
\end{equation*}%
For the second term, noticing that extra $dx_{k}$, we can use the trace
theorem at the $x_{k+1}$ or the $x_{k+2}$ variable to obtain 
\begin{equation*}
II\lesssim \Vert \langle \nabla _{x_{k+1}}\rangle ^{\frac{3}{4}+}\langle
\nabla _{x_{k+2}}\rangle ^{\frac{3}{4}+}P_{>R}^{k+2^{\prime }}\psi _{N}(%
\mathbf{x}_{k}^{\prime },x_{k+1},x_{k+2},\mathbf{x}_{N-k-2})\Vert _{L_{%
\mathbf{x}_{N-k},\mathbf{x}_{k}^{\prime }}^{2}}
\end{equation*}%
Since the $(k+2)^{\prime }$ component is at frequency $\gtrsim R$, we can
obtain the gain in passing to full derivatives 
\begin{equation*}
II\lesssim R^{-\frac{1}{4}+}\Vert \langle \nabla _{k+1}\rangle \langle
\nabla _{k+2}\rangle \psi _{N}\Vert _{L_{\mathbf{x}_{N}}^{2}}\leqslant CR^{-%
\frac{1}{4}+}
\end{equation*}%
Recall $N^{\beta }\ll R\ll N^{(1-6\beta )/3}$, we have obtained that%
\begin{equation*}
\Vert F_{1}(t,\mathbf{x}_{k};\mathbf{x}_{k}^{\prime })\Vert _{L_{0\leq t\leq
T}^{\infty }L_{\mathbf{x}_{k-1}\mathbf{x}_{k}^{\prime
}}^{2}L_{x_{k}}^{1}}\leqslant CR^{-\frac{1}{4}+}\rightarrow 0\text{ as }%
N\rightarrow \infty
\end{equation*}%
independent of $M$.

\subsubsection{Estimates for $F_{2}$, $F_{3}$, and $F_{4}$}

$F_{2}$ and $F_{4}$ are the same type of terms. We first estimate $F_{4}$.
Due to $\left( 1-P_{\leqslant M}^{(k+2)}P_{\leqslant M}^{(k+2^{\prime
})}\right) $ inside the definition of $F_{4}$, some frequency is $>M$. Since 
$\left\vert \xi _{i}\right\vert ,\left\vert \xi _{i}^{\prime }\right\vert
\lesssim Q$, $\forall i\leqslant k$ and $Q\ll M$, we can assume at least one
of $x_{k+1},x_{k+2},x_{k+1}^{\prime },x_{k+2}^{\prime }$ is at high $>M$.
Say $x_{k+1}$ is at high in a typical term in $F_{4}$, then, by Theorem \ref%
{thm:qde}, this typical term reads%
\begin{eqnarray*}
&&\left\Vert P_{>M}^{k+1}\gamma ^{(k+2)}(\mathbf{x}_{k-1},x_{k},x_{k},x_{k};%
\mathbf{x}_{k}^{\prime },x_{k},x_{k})\right\Vert _{L_{0\leq t\leq T}^{\infty
}L_{\mathbf{x}_{k-1}\mathbf{x}_{k}^{\prime }}^{2}L_{x_{k}}^{1}} \\
&=&\sup_{0\leq t\leq T}\int_{\mathbb{B(}L^{2}\mathbb{)}}\left\Vert \phi
\right\Vert _{L^{2}}^{k-1}\left\Vert \phi \right\Vert _{L^{2}}^{k}\left\Vert
\left( P_{>M}\phi \right) \bar{\phi}\phi \bar{\phi}\phi \right\Vert
_{L^{1}}d\mu _{t}(\phi ) \\
&\leqslant &\sup_{0\leq t\leq T}\int_{\mathbb{B(}L^{2}\mathbb{)}}\left\Vert
\phi \right\Vert _{L^{2}}^{2k-1}\left\Vert P_{>M}\phi \right\Vert
_{L^{5}}\left\Vert \phi \right\Vert _{L^{5}}^{4}d\mu _{t}(\phi )
\end{eqnarray*}%
By Sobolev, 
\begin{eqnarray*}
&\leqslant &C\sup_{0\leq t\leq T}\int_{\mathbb{B(}L^{2}\mathbb{)}}\left\Vert
\phi \right\Vert _{L^{2}}^{2k-1}\left\Vert \left\vert \nabla \right\vert ^{%
\frac{9}{10}}P_{>M}\phi \right\Vert _{L^{2}}\left\Vert \left\vert \nabla
\right\vert ^{\frac{9}{10}}\phi \right\Vert _{L^{2}}^{4}d\mu _{t}(\phi ) \\
&\leqslant &CM^{-\frac{1}{10}}\sup_{0\leq t\leq T}\int_{\mathbb{B(}L^{2}%
\mathbb{)}}\left\Vert \phi \right\Vert _{L^{2}}^{2k-1}\left\Vert \left\vert
\nabla \right\vert P_{>M}\phi \right\Vert _{L^{2}}\left\Vert \left\langle
\nabla \right\rangle \phi \right\Vert _{L^{2}}^{4}d\mu _{t}(\phi ) \\
&\leqslant &CM^{-\frac{1}{10}}\sup_{0\leq t\leq T}\int_{\mathbb{B(}L^{2}%
\mathbb{)}}\left\Vert \phi \right\Vert _{L^{2}}^{2k-2}\left\Vert
\left\langle \nabla \right\rangle \phi \right\Vert _{L^{2}}^{6}d\mu
_{t}(\phi ) \\
&=&CM^{-\frac{1}{10}}\sup_{0\leq t\leq T}\limfunc{Tr}%
S_{k}^{2}S_{k+1}^{2}S_{k+2}^{2}\gamma ^{(k+2)}\leqslant CM^{-\frac{1}{10}%
}C^{3}.
\end{eqnarray*}%
That is%
\begin{equation*}
\left\Vert F_{4}\right\Vert _{L_{0\leq t\leq T}^{\infty }L_{\mathbf{x}_{k-1}%
\mathbf{x}_{k}^{\prime }}^{2}L_{x_{k}}^{1}}\leqslant C_{k}M^{-\frac{1}{10}}.
\end{equation*}%
In other words, $F_{4}\rightarrow 0$ as $M\rightarrow \infty $ independent
of $N$.

Like the $II$ term in \S \ref{Sec:Convergence 1/12}, with Theorem \ref%
{Lem:finite QdF}, one can estimate $F_{2}$ in the same way as $F_{4}$ at the
price of a $CR^{3}N^{6\beta }N^{-1}$ error, because%
\begin{equation*}
\left( P_{\leqslant R}^{(k+2)}P_{\leqslant R}^{(k+2^{\prime })}-P_{\leqslant
M}^{(k+2)}P_{\leqslant M}^{(k+2^{\prime })}\right) =\left( 1-P_{\leqslant
M}^{(k+2)}P_{\leqslant M}^{(k+2^{\prime })}\right) P_{\leqslant
R}^{(k+2)}P_{\leqslant R}^{(k+2^{\prime })}.
\end{equation*}%
The final estimate of $F_{2}$ reads%
\begin{equation*}
\left\Vert F_{2}\right\Vert _{L_{0\leq t\leq T}^{\infty }L_{\mathbf{x}_{k-1}%
\mathbf{x}_{k}^{\prime }}^{2}L_{x_{k}}^{1}}\leqslant CR^{3}N^{6\beta
}N^{-1}+C_{k}M^{-\frac{1}{10}}\text{.}
\end{equation*}%
Recall $R\ll N^{(1-6\beta )/3}$ and $\beta <\frac{1}{9}$, therefore $%
F_{2}\rightarrow 0$ if we first let $N\rightarrow \infty $ then let $%
M\rightarrow \infty $.

$F_{3}$ is in fact the simplest term\footnote{%
One can even use the nonsharp Sobolev trace theorems to estimate $F_{3}$ at
the price of a $M^{+}$ since we only need $\lim_{M\rightarrow \infty
}\lim_{N\rightarrow \infty }F_{3}=0$.} to estimate due to the localization $%
P_{\leqslant M}^{(k+2)}P_{\leqslant M}^{(k+2^{\prime })}$ independent of $N$%
. In fact, if we denote the kernel of $P_{\leqslant M}$ by $\chi _{M}$, the
smoothing and compact operator kernel%
\begin{eqnarray*}
K_{N} &=&\int V_{N}(x_{j}-x_{k+1},x_{j}-x_{k+2})\chi
_{M}(x_{k+1}-y_{k+1})\chi _{M}(x_{k+2}-y_{k+2}) \\
&&\times \chi _{M}(x_{k+1}-y_{k+1}^{\prime })\chi
_{M}(x_{k+2}-y_{k+2}^{\prime })dx_{k+1}dx_{k+2}
\end{eqnarray*}%
tends to the smoothing and compact operator kernel 
\begin{equation*}
K=b_{0}\chi _{M}(x_{j}-y_{k+1})\chi _{M}(x_{j}-y_{k+2})\chi
_{M}(x_{j}-y_{k+1}^{\prime })\chi _{M}(x_{j}-y_{k+2}^{\prime })
\end{equation*}%
as $N\rightarrow \infty $ for every finite $M$. Thus a Cantor
diagonalization argument together with the weak* convergence show that $%
\lim_{N\rightarrow \infty }\Vert F_{3}(t,\mathbf{x}_{k};\mathbf{x}%
_{k}^{\prime })\Vert _{L_{0\leq t\leq T}^{\infty }L_{\mathbf{x}_{k-1}\mathbf{%
x}_{k}^{\prime }}^{2}L_{x_{k}}^{1}}=0$ for every $M$. We skip the details
here.

Up to this point, we have proved the convergence of the key coupling term
for $\beta <1/9$, and thence Theorem \ref{Theorem:Convergence} for $\beta <%
\frac{1}{9}$.

\section{Uniqueness for Large Solutions to the Energy-critical GP Hierarchy 
\label{sec:uniqueness}}

\begin{definition}[HUFL]
\label{def:HUFL}We say that $\Gamma (t)=\left\{ \gamma ^{(k)}(t)\right\}
_{k=1}^{\infty }\in \oplus _{k\geqslant 1}\mathcal{L}_{k}^{1}$ satisfies 
\emph{hierarchically uniform frequency localization} at time $t$ (HUFL) if $%
\forall \varepsilon >0$, $\exists M(t,\varepsilon )$ such that 
\begin{equation}
\limfunc{Tr}S^{(1,k)}P_{>M}^{(k)}\gamma
^{(k)}(t)P_{>M}^{(k)}S^{(1,k)}\leqslant \varepsilon ^{2k}\ \text{for all }k%
\text{.}  \label{condition:HUFL}
\end{equation}
\end{definition}

\begin{theorem}
\label{Thm:TotalUniqueness}Assume the initial datum $\left\{ \gamma
^{(k)}(0)\right\} _{k=1}^{\infty }$ to hierarchy (\ref{hierarchy:quintic GP
in differential form}), with $b_{0}\geqslant 0$, satisfies HUFL. Then there
is exactly one admissible\footnote{%
Here, "admissible" is as defined in Definition \ref{def:Admissible}.}
solution to hierarchy (\ref{hierarchy:quintic GP in differential form}) in $%
[0,+\infty )$ subject to this initial datum satisfying the condition that
there is a constant $C_{0}$ such that 
\begin{equation}
\sup_{t\in \left[ 0,T\right] }\limfunc{Tr}S^{(1,k)}\gamma
^{(k)}(t)S^{(1,k)}\leqslant C_{0}^{2k}\text{ for all }k.
\label{condition:kinetic energy}
\end{equation}
\end{theorem}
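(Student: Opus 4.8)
The plan is to bootstrap the local statement, Theorem~\ref{THM:UniquessAssumingUFL}, to all of $[0,+\infty)$ by a continuity-in-time argument for the HUFL property, which is exactly what Theorem~\ref{Thm:UTFLinTime} is designed to provide. Concretely, suppose $\Gamma_1(t)=\{\gamma_1^{(k)}(t)\}$ and $\Gamma_2(t)=\{\gamma_2^{(k)}(t)\}$ are two admissible solutions to \eqref{hierarchy:quintic GP in differential form} with the same initial datum, both satisfying \eqref{condition:kinetic energy} with the same constant $C_0$. Define
\begin{equation*}
T^\ast=\sup\{\,\tau\geqslant 0 : \Gamma_1(t)=\Gamma_2(t)\text{ for all }t\in[0,\tau]\,\}.
\end{equation*}
By the HUFL hypothesis on the initial datum together with Theorem~\ref{THM:UniquessAssumingUFL} (uniqueness for a short time starting from HUFL data), we have $T^\ast>0$. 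The goal is to show $T^\ast=+\infty$; I argue by contradiction, assuming $T^\ast<\infty$.

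The key point is that the common value $\Gamma_1(T^\ast)=\Gamma_2(T^\ast)=:\Gamma(T^\ast)$ must again satisfy HUFL, so that Theorem~\ref{THM:UniquessAssumingUFL} can be reapplied with $T^\ast$ as the new base point to force agreement on $[T^\ast,T^\ast+\delta)$ for some $\delta>0$, contradicting the definition of $T^\ast$. To see that $\Gamma(T^\ast)$ satisfies HUFL, I invoke Theorem~\ref{Thm:UTFLinTime}: every admissible $L_T^\infty H_x^1$ solution to \eqref{hierarchy:quintic GP in differential form} (with the bound \eqref{condition:kinetic energy}) satisfies HUFL uniformly on a time interval $[0,\tau_0]$ whose length $\tau_0$ depends only on $C_0$ and not on $k$. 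Applying this with the initial time shifted to $\max(0,T^\ast-\tau_0)$ — where the solution is still the common $\Gamma$, since $\max(0,T^\ast-\tau_0)<T^\ast$ and agreement holds on $[0,T^\ast)$ — propagates the HUFL bound forward through $T^\ast$, in particular giving HUFL at $T^\ast$. Continuity of $t\mapsto\gamma^{(k)}(t)$ in the relevant topology, which is part of admissibility together with \eqref{condition:kinetic energy}, guarantees that $\Gamma_i(T^\ast)=\lim_{t\uparrow T^\ast}\Gamma_i(t)$, so the two solutions do coincide at $T^\ast$ and the reapplication of Theorem~\ref{THM:UniquessAssumingUFL} is legitimate.

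Assembling the pieces: $T^\ast>0$ by the HUFL initial datum and Theorem~\ref{THM:UniquessAssumingUFL}; if $T^\ast<\infty$ then $\Gamma(T^\ast)$ satisfies HUFL by Theorem~\ref{Thm:UTFLinTime} applied from base time $\max(0,T^\ast-\tau_0)$, and then Theorem~\ref{THM:UniquessAssumingUFL} extends the agreement past $T^\ast$, a contradiction; hence $T^\ast=+\infty$, which is the asserted uniqueness. Existence of at least one admissible solution satisfying \eqref{condition:kinetic energy} is supplied by Theorems~\ref{Theorem:CompactnessOfBBGKY} and \ref{Theorem:Convergence} together with the known global well-posedness of \eqref{equation:TargetQuinticNLS} from \cite{CKSTT,IP} (the factorized solution $|\phi(t)\rangle\langle\phi(t)|^{\otimes k}$), so the theorem as stated — exactly one such solution — follows. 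I expect the main obstacle to lie not in this bootstrapping skeleton, which is soft, but in making sure the hypotheses of Theorem~\ref{Thm:UTFLinTime} and Theorem~\ref{THM:UniquessAssumingUFL} match at the shifted base times — in particular that the uniform-in-$k$ time length $\tau_0$ from Theorem~\ref{Thm:UTFLinTime} and the short existence time $\delta$ from Theorem~\ref{THM:UniquessAssumingUFL} both depend only on $C_0$ (and on the HUFL function $M(\cdot,\varepsilon)$) and not on the base time or on $k$, so that the induction closes; this is where the careful $k$-independent structuring of the proof of Theorem~\ref{Thm:UTFLinTime} is essential.
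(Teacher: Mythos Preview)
Your bootstrap skeleton is the same as the paper's, but the crucial step---showing that the common solution $\Gamma(T^\ast)$ satisfies HUFL---does not go through the way you propose. You want to apply Theorem~\ref{Thm:UTFLinTime} from a shifted base time $t_0=\max(0,T^\ast-\tau_0)$, but two things fail. First, you have not established that HUFL holds at $t_0$: Theorem~\ref{Thm:UTFLinTime} \emph{requires} HUFL at the base time as a hypothesis, and nothing you have said propagates HUFL from $t=0$ to an arbitrary $t_0<T^\ast$. Second, and more seriously, the time $\tau_0$ furnished by Theorem~\ref{Thm:UTFLinTime} is \emph{not} a function of $C_0$ alone; the Remark immediately after that theorem says explicitly that it depends on $\varepsilon$ and $M$, and from the proof (see \eqref{E:main-2}) one has $\tau_0\sim \varepsilon^2\langle M\rangle^{-2}$. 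Since HUFL requires, for each $\varepsilon$, a frequency $M(\varepsilon)$ which typically $\to\infty$ as $\varepsilon\to 0$, the available $\tau_0(\varepsilon)$ shrinks to $0$ and cannot be chosen to cover a fixed neighbourhood of $T^\ast$. Iterating the theorem does not help either: each application degrades the bound from $\varepsilon/4$ to $\varepsilon$, so after $n$ steps you only control the level $4^n\varepsilon$, which is useless for recovering full HUFL.

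The paper closes this gap by a different route: it uses the quantum de Finetti representation $\gamma^{(k)}(0)=\int(|\phi\rangle\langle\phi|)^{\otimes k}\,d\mu_0(\phi)$ together with the global NLS flow $S_t$ from \cite{CKSTT,IP} to exhibit the explicit solution $\gamma^{(k)}(t)=\int(|S_t\phi\rangle\langle S_t\phi|)^{\otimes k}\,d\mu_0(\phi)$, and then uniqueness on $[0,T^\ast]$ forces the common solution to equal this. HUFL at $T^\ast$ then follows from the \emph{NLS} uniform-in-time frequency localization result, Theorem~\ref{Cor:UTFLforNLS} in the Appendix, which---unlike Theorem~\ref{Thm:UTFLinTime}---holds on the entire finite interval $[0,T^\ast]$ via a compactness argument. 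In short, the missing idea in your argument is that HUFL at $T^\ast$ is obtained not by iterating the hierarchy-level Theorem~\ref{Thm:UTFLinTime}, but by first identifying the solution as NLS-factorized and then invoking the stronger, arbitrary-time NLS result.
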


\begin{remark}
\label{Remark:QtoGigliola}We are not sure if Theorem \ref%
{Thm:TotalUniqueness} should be called a GP conditional uniqueness theorem
or an GP unconditional uniqueness theorem. Theorem \ref{Thm:TotalUniqueness}
requires more than (\ref{condition:kinetic energy}), the usual assumption of
the GP unconditional uniqueness theorems (see \cite%
{E-S-Y2,TCNPdeFinitte,HoTaXi14,HTX,Sohinger3,C-PUniqueness,C-HFocusingIII},
for examples). However, it does not require a Strichartz type bound like the
GP conditional uniqueness theorems (see \cite%
{KlainermanAndMachedon,Kirpatrick,TChenAndNP,ChenAnisotropic,Sohinger,C-HFocusing, HerrSohinger}%
, for examples). If one applies Theorem \ref{Thm:TotalUniqueness} to
factorized datum $\gamma ^{(k)}(0)=\left\vert \phi _{0}\right\rangle
\left\langle \phi _{0}\right\vert ^{\otimes k}$, then the HUFL condition is
trivially satisfied and Theorem \ref{Thm:TotalUniqueness} implies the
unconditional uniqueness of (\ref{equation:TargetQuinticNLS}) at critical
regularity.
\end{remark}

\begin{corollary}
There is at most one $H^{1}$ solution to (\ref{equation:TargetQuinticNLS}).
\end{corollary}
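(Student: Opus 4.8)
The plan is to deduce this from the GP-hierarchy uniqueness theorem, Theorem \ref{Thm:TotalUniqueness}, by passing to factorized hierarchies --- this is exactly the reduction flagged in Remark \ref{Remark:QtoGigliola}. Since under the substitution $\phi\mapsto\|\phi_0\|_{L^2}^{-1}\phi$, $b_0\mapsto\|\phi_0\|_{L^2}^{4}b_0$ equation (\ref{equation:TargetQuinticNLS}) is carried to one of the same form with the (still nonnegative) coupling rescaled, I would first normalize $\|\phi_0\|_{L^2}=1$; the case $\phi_0=0$ is then vacuous by conservation of mass. Let $\phi_1,\phi_2\in C([0,T];H^1(\mathbb{T}^3))$ both solve (\ref{equation:TargetQuinticNLS}) on $[0,T]$ with $\phi_1(0)=\phi_2(0)=\phi_0$, and for $i=1,2$ set $\gamma_i^{(k)}(t)=|\phi_i(t)\rangle\langle\phi_i(t)|^{\otimes k}$. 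A direct computation from (\ref{equation:TargetQuinticNLS}) shows that each $\Gamma_i=\{\gamma_i^{(k)}\}_{k\geqslant1}$ solves the energy-critical GP hierarchy (\ref{hierarchy:quintic GP in differential form}) in the integral form (\ref{hierarchy:quintic GP in integral form}), and, because conservation of mass gives $\|\phi_i(t)\|_{L^2}=1$, each $\Gamma_i$ is strongly admissible in the sense of Definition \ref{def:Admissible} (i.e. $\operatorname{Tr}\gamma_i^{(k)}=1$ and $\operatorname{Tr}_{k+1}\gamma_i^{(k+1)}=\gamma_i^{(k)}$).

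Next I would check the two quantitative hypotheses of Theorem \ref{Thm:TotalUniqueness} for both $\Gamma_i$. For the kinetic energy bound (\ref{condition:kinetic energy}), one has $\operatorname{Tr}S^{(1,k)}\gamma_i^{(k)}(t)S^{(1,k)}=\|\langle\nabla\rangle\phi_i(t)\|_{L^2}^{2k}$, so it holds with $C_0=\max_{i=1,2}\sup_{t\in[0,T]}\|\phi_i(t)\|_{H^1}<\infty$. For the HUFL hypothesis on the common initial datum $\gamma_i^{(k)}(0)=|\phi_0\rangle\langle\phi_0|^{\otimes k}$ --- the one genuinely new point --- one computes $\operatorname{Tr}S^{(1,k)}P_{>M}^{(k)}|\phi_0\rangle\langle\phi_0|^{\otimes k}P_{>M}^{(k)}S^{(1,k)}=\|\langle\nabla\rangle P_{>M}\phi_0\|_{L^2}^{2k}$; given $\varepsilon>0$, since $\phi_0\in H^1$ one may fix $M=M(\varepsilon)$ with $\|\langle\nabla\rangle P_{>M}\phi_0\|_{L^2}\leqslant\varepsilon$, and then (\ref{condition:HUFL}) holds with this one $M$ for \emph{all} $k$ simultaneously. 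Thus $\Gamma_1$ and $\Gamma_2$ are admissible solutions of (\ref{hierarchy:quintic GP in differential form}) on $[0,T]$ with the same HUFL initial datum and both obeying (\ref{condition:kinetic energy}), so Theorem \ref{Thm:TotalUniqueness} forces $\Gamma_1\equiv\Gamma_2$; in particular $|\phi_1(t)\rangle\langle\phi_1(t)|=|\phi_2(t)\rangle\langle\phi_2(t)|$ for every $t\in[0,T]$.

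It then remains to recover $\phi_1=\phi_2$ from equality of the rank-one projections. Since $\|\phi_2(t)\|_{L^2}=1$, for each $t$ there is $\theta(t)\in\mathbb{R}$ with $\phi_1(t)=e^{i\theta(t)}\phi_2(t)$, and $t\mapsto e^{i\theta(t)}$ is $C^1$ (it is an $L^2$ inner product of $\phi_1(t)$ and $\phi_2(t)$, which lie in $C([0,T];H^1)\cap C^1([0,T];H^{-1})$ by the equation), so $\theta\in C^1([0,T])$ with $\theta(0)=0$. Substituting $\phi_1=e^{i\theta}\phi_2$ into (\ref{equation:TargetQuinticNLS}) and subtracting $e^{i\theta}$ times the equation for $\phi_2$ leaves $\theta'(t)\,\phi_2(t)=0$; since $\phi_2(t)\neq0$ in $L^2$, this gives $\theta'\equiv0$, hence $\theta\equiv0$ and $\phi_1\equiv\phi_2$ on $[0,T]$. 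I do not expect a real obstacle: all of the difficulty is packaged inside Theorem \ref{Thm:TotalUniqueness}, and the only things requiring care are the admissibility bookkeeping and the non-vanishing of $\phi_2(t)$ --- both supplied by conservation of mass --- together with the elementary but essential observation that a single factorized $H^1$ state automatically enjoys HUFL.
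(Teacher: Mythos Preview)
Your proposal is correct and follows exactly the approach the paper itself indicates: the paper does not give a separate proof of this corollary, but simply points out in Remark~\ref{Remark:QtoGigliola} that for factorized data $\gamma^{(k)}(0)=|\phi_0\rangle\langle\phi_0|^{\otimes k}$ the HUFL condition is trivially satisfied, so Theorem~\ref{Thm:TotalUniqueness} applies. You have supplied the details the paper omits---the $L^2$-normalization, the explicit verification of admissibility, of (\ref{condition:kinetic energy}), and of HUFL for a single $H^1$ profile, and the final phase-recovery step from equality of rank-one projections---all of which are routine and correctly carried out.
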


The proof of Theorem \ref{Thm:TotalUniqueness} is splitted into two main
parts, Theorems \ref{Thm:UTFLinTime} and \ref{THM:UniquessAssumingUFL}. We
prove Theorem \ref{Thm:UTFLinTime} in \S \ref{subsec:GPUTFL} and Theorem \ref%
{THM:UniquessAssumingUFL} in \S \ref{subsec:GPUTFLUniqueness}.

\subsection{Local-in-time HUFL Estimates\label{subsec:GPUTFL}}

\begin{theorem}
\label{Thm:UTFLinTime}Assume the initial datum $\left\{ \gamma
^{(k)}(0)\right\} _{k=1}^{\infty }$ to hierarchy (\ref{hierarchy:quintic GP
in differential form}) satisfies HUFL. Let $\varepsilon >0$ and $M>0$ and
suppose that 
\begin{equation}
\limfunc{Tr}S^{(1,k)}P_{>M}^{(k)}\gamma
^{(k)}(0)P_{>M}^{(k)}S^{(1,k)}\leqslant \left( \tfrac{\varepsilon }{4}%
\right) ^{2k}\ \text{for all }k\text{.}  \label{eqn:HUFL}
\end{equation}%
Then there exists $T>0$, depending on $\varepsilon >0$ and $M>0$, such that
any admissible solution $\Gamma =\left\{ \gamma ^{(k)}\right\}
_{k=1}^{\infty }$ $\in \oplus _{k\geqslant 1}C\left( \left[ 0,T\right] ,%
\mathcal{L}_{k}^{1}\right) $ to (\ref{hierarchy:quintic GP in differential
form}) in $\left[ 0,T\right] $, subject to this initial datum, satisfying (%
\ref{condition:kinetic energy}), must satisfy 
\begin{equation}
\sup_{t\in \left[ 0,T\right] }\limfunc{Tr}S^{(1,k)}P_{>M}^{(k)}\gamma
^{(k)}(t)P_{>M}^{(k)}S^{(1,k)}\leqslant \varepsilon ^{2k}\ \text{for all }k%
\text{.}  \label{HUFLinTime}
\end{equation}
\end{theorem}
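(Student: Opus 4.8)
\textbf{Proof proposal for Theorem \ref{Thm:UTFLinTime}.}

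The plan is to run a continuity/bootstrap argument on the quantity
$A_k(t) \defeq \limfunc{Tr}S^{(1,k)}P_{>M}^{(k)}\gamma^{(k)}(t)P_{>M}^{(k)}S^{(1,k)}$,
controlling its growth via the Duhamel/integral form of the GP hierarchy (\ref{hierarchy:quintic GP in differential form}) and the a-priori kinetic bound (\ref{condition:kinetic energy}). The key difficulty, flagged already in the Introduction, is that a naive Gronwall-type estimate on $A_k$ produces a time of validity $T$ that shrinks as $k\to\infty$: iterating the hierarchy couples $\gamma^{(k)}$ to $\gamma^{(k+2)}$, so a single Duhamel step on the high-frequency energy of $\gamma^{(k)}$ picks up $\sim k$ "Deja vu" copies of the NLS energy functional applied to $\gamma^{(k+2)}$, and the combinatorial factor $k$ in front degrades the time interval. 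So the heart of the argument is to organize the estimate so that the $k$-dependence is absorbed into the geometric weights $\varepsilon^{2k}$ rather than into $T$.

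First I would differentiate (or difference, using the integral equation) $A_k(t)$ in time. The kinetic term $\sum_j[-\Delta_{x_j},\gamma^{(k)}]$ commutes with $S^{(1,k)}$ and with $P_{>M}^{(k)}$, so it contributes nothing. The interaction term produces, for each $j\le k$, a contribution of the form
$b_0\limfunc{Tr}S^{(1,k)}P_{>M}^{(k)}\,\limfunc{Tr}_{k+1,k+2}\bigl[\delta(x_j-x_{k+1})\delta(x_j-x_{k+2}),\gamma^{(k+2)}\bigr]\,P_{>M}^{(k)}S^{(1,k)}$.
I would expand $1 = P_{\le M}^{k+1}P_{\le M}^{k+2} + (\text{high pieces})$ on the two contracted variables and also on the primed copies, getting on the order of $42$ terms (matching the count advertised in the outline). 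The terms where the two new variables and $x_j$ are all at low frequency can, after using the contraction $\delta(x_j - x_{k+1})\delta(x_j-x_{k+2})$, be dominated by $C M^{\text{(something)}}\,\limfunc{Tr}S^{(1,k)}P_{>M}^{(k-1)}\gamma^{(k+2)}\cdots$-type quantities plus $A_k$ itself; crucially, because $x_j$ is forced to low frequency here, this does not feed back into the high-frequency energy with the same $k$-power. The terms where at least one of $x_{k+1},x_{k+2}$ (or a primed copy) is high are controlled by the refined Sobolev inequalities (the "gain through the intermediate kinetic energy" lemma, Lemma \ref{lemma:refined sobolev for continuity}) together with the uniform bound $\limfunc{Tr}S^{(1,k+2)}\gamma^{(k+2)}S^{(1,k+2)}\le C_0^{2(k+2)}$ from (\ref{condition:kinetic energy}). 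The upshot I aim for is a differential inequality of the schematic shape
\[
\partial_t A_k(t) \;\lesssim\; C\,\bigl(A_{k}(t)^{1/2} + A_{k+2}(t)^{1/2}\bigr)\,C_0^{2k} \;+\; (\text{error } o_M(1)\, C_0^{2k}),
\]
where the constant $C$ is independent of $k$ and the structure is arranged (this is the "special way" the proof is structured) so that the coefficient multiplying $A_{k+2}$ carries an extra small factor — either a power of $M$ beaten by the high-frequency projector, or a factor that can be summed geometrically against $\varepsilon^{2k}$.

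Next I would set up the bootstrap. Let $B_k(t) \defeq \varepsilon^{-2k}A_k(t)$; the hypothesis (\ref{eqn:HUFL}) says $B_k(0)\le 4^{-2k}\le 1/16$, and the goal (\ref{HUFLinTime}) is $\sup_{[0,T]}B_k(t)\le 1$ for all $k$. Define $T^\ast$ as the supremum of times $t$ such that $B_k(s)\le 1$ for all $k$ and all $s\le t$; by continuity of $t\mapsto \gamma^{(k)}(t)$ in $\mathcal{L}_k^1$ (part of the admissibility/hypothesis $\Gamma\in\oplus C([0,T],\mathcal{L}_k^1)$) and by the HUFL bound at $t=0$, $T^\ast>0$. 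On $[0,T^\ast]$, feeding the bound $A_{k+2}\le \varepsilon^{2(k+2)}$ back into the differential inequality and rescaling gives
$\partial_t B_k(t) \le C'\,(C_0/\varepsilon)^{2k}\,\bigl(\varepsilon + \varepsilon^{-1}\cdot\varepsilon^{\text{(gain)}}\bigr) + o_M(1)\varepsilon^{-2k}C_0^{2k}$,
so $B_k(t)\le B_k(0) + t\cdot(\text{RHS})$. The point is to choose $T$ so that the right-hand side is $\le 1 - 1/16$ \emph{uniformly in $k$}; because the dangerous $(C_0/\varepsilon)^{2k}$ factor is multiplied by a small factor coming from the high-frequency projector $P_{>M}$ (which is what produces $\varepsilon$-smallness once $M$ is taken large relative to $\varepsilon$, $C_0$), and because the genuinely bad $k^2$-type combinatorics only appear attached to the low-frequency main term which does not participate in the feedback, the needed $T$ can be taken independent of $k$. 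Then $B_k(T^\ast)\le 1/16 + \text{small} < 1$ strictly, contradicting maximality unless $T^\ast\ge T$, which closes the bootstrap and gives (\ref{HUFLinTime}) on $[0,T]$.

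\textbf{Main obstacle.} The crux is the bookkeeping in the previous paragraph: making sure that after expanding the contracted interaction term into its $\sim 42$ pieces, \emph{every} term that carries the bad constant $C_0^{2k}$ or a $k$-dependent combinatorial factor also carries a compensating small quantity — an $M^{-}$ gain from a high-frequency projector, or an $\varepsilon$-power from the bootstrap hypothesis $A_{k+2}\le\varepsilon^{2(k+2)}$ — so that no term forces $T\to 0$ as $k\to\infty$. The refined Sobolev inequalities of Lemma \ref{lemma:refined sobolev for continuity} are exactly what is needed to convert "one variable at frequency $>M$" into a gain while only spending an \emph{intermediate} (not top-order) kinetic energy, which is what keeps the estimate closed under (\ref{condition:kinetic energy}); the genuinely delicate point is that at the energy-critical scaling there is no room to spare, so the contraction of the delta functions and the frequency gain must be balanced precisely. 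This is the 42-term computation referred to in the outline, and it is where the bulk of the work in \S\ref{Sec:Proof of Main High Energy Estimate}–\S\ref{Sec:Proof of High Kinetic Continuity} goes.
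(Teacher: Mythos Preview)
Your approach has a genuine gap. You propose to differentiate $A_k(t)=\operatorname{Tr}S^{(1,k)}P_{>M}^{(k)}\gamma^{(k)}(t)P_{>M}^{(k)}S^{(1,k)}$ directly, but this quantity cannot be estimated at the $H^1$ level --- the paper in fact remarks this explicitly. Using the de Finetti representation, $\partial_t A_k$ equals $k$ times $\int\|P_{>M}\nabla\phi\|_{L^2}^{2(k-1)}\cdot 2\,\Im\,\langle\nabla\phi_H,\nabla(|\phi|^4\phi)\rangle\,d\mu_t$. When you expand $|\phi|^4\phi$ into high/low pieces, the piece with five high factors produces, after one integration by parts, a term controlled only by $\int|\phi_H|^4|\nabla\phi_H|^2\,dx$. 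This quantity is \emph{not} bounded by any combination of $\|\phi\|_{H^1}$ and $\|P_{>M}\nabla\phi\|_{L^2}$: by H\"older it requires either $\|\nabla\phi_H\|_{L^6}$ or $\|\phi_H\|_{L^{12}}$, both strictly beyond $H^1$ in three dimensions. No Bernstein gain is available since every factor is already at high frequency. Your claim that ``every term carrying $C_0^{2k}$ also carries a compensating small quantity'' therefore fails exactly here, and the bootstrap cannot close.

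The paper circumvents this by \emph{not} differentiating the high kinetic energy. It introduces a hierarchical high \emph{total} energy $E_H^{(k)}=H_H^1H_H^4\cdots H_H^{3k-2}\gamma^{(3k)}$, where each $H_H^j$ packages the high kinetic piece together with all interaction pieces having $\geq 3$ high factors --- so the uncontrollable $\|\phi_H\|_{L^6}^6$ term lives \emph{inside} $E_H$ rather than appearing in its time derivative. Since the full hierarchy energy $E^{(k)}$ is conserved, one may replace $H_H^1$ by $H^1-H_L^1$ on symmetric objects and the $H^1$ contribution cancels identically; every surviving term carries $H_L^1$, hence at least four low-frequency projections, and Bernstein produces the $M^2$ factor in $|\partial_tE_H^{(k)}|\lesssim kM^2(E^{(k)})^{1/k}(E_H^{(k)})^{(k-1)/k}$. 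The $42$ terms are $7\times 6$: seven pieces (labeled A--G) of $H_L^1$ times six terms ($j=1,2,3$ kinetic plus $j=1,2,3$ interaction) in the evolution --- not a high/low split on the contracted variables as you suggest. Only \emph{after} bounding $E_H^{(k)}$ does the paper return to the high kinetic energy, comparing it to $E_H^{(k)}$ via the refined Sobolev inequalities (Lemma~\ref{lemma:refined sobolev for continuity}) and a separate continuity argument for the intermediate kinetic energy $K_{M\leq\bullet\leq R}(t)$; this is where those lemmas actually enter, not in the differential inequality for $A_k$.
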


\begin{remark}
We note that the length of the time interval $T$ in Theorem \ref%
{Thm:UTFLinTime} depends on $\varepsilon >0$ and $M>0$ (which in turn
depends on the specifics of the initial condition). We cannot, in fact,
claim the above result for arbitrarily long times, or even for short times
independent of $\varepsilon $ or the initial condition. Nevertheless,
Theorem \ref{Thm:UTFLinTime} is a type of uniform-time,
uniform-in-hierarchy, frequency localization estimate, and is sufficient to
prove Theorem \ref{THM:UniquessAssumingUFL} and hence Theorem \ref%
{Thm:TotalUniqueness}.
\end{remark}

By scaling, it is enough to show%
\begin{equation}
\sup_{t\in \left[ 0,T\right] }\limfunc{Tr}R^{(1,k)}P_{>M}^{(k)}\gamma
^{(k)}(t)P_{>M}^{(k)}R^{(1,k)}\leqslant \varepsilon ^{2k}
\label{estimate:HUFLinTime}
\end{equation}%
where $R^{(\alpha ,k)}=\dprod\limits_{j=}^{k}\left\vert \nabla
_{x_{j}}\right\vert ^{\alpha }$. Due to the hierachical structure of (\ref%
{hierarchy:quintic GP in differential form}), we cannot directly estimate%
\begin{equation*}
\left\vert \partial _{t}\limfunc{Tr}R^{(1,k)}P_{>M}^{(k)}\gamma
^{(k)}(t)P_{>M}^{(k)}R^{(1,k)}\right\vert .\footnote{%
One could try computing this quantity like in the proof of Lemma \ref%
{lem:intermediate kinetic energy is continuous} and see the difficulties.}
\end{equation*}%
We have to use the energy as our passage.

For shorter formulas, let us adopt the abbreviations $-\Delta
_{x_{j}}=-\Delta _{j}$, and 
\begin{eqnarray*}
B^{(k+2)}\gamma ^{(k+2)} &=&\sum_{j=1}^{k}B_{j;k+1,k+2}\gamma ^{(k+2)} \\
&=&\sum_{j=1}^{k}\left( B_{j;k+1,k+2}^{+}-B_{j;k+1,k+2}^{-}\right) \gamma
^{(k+2)} \\
&=&\sum_{j=1}^{k}\limfunc{Tr}\nolimits_{k+1,k+2}\left[ \delta
(x_{j}-x_{k+1})\delta (x_{j}-x_{k+2}),\gamma ^{(k+2)}\right] .
\end{eqnarray*}%
and let $b_{0}=1$. Then written in kernel form, (\ref{hierarchy:quintic GP
in differential form}) is 
\begin{equation}
i\partial _{t}\gamma ^{(k)}=-\Delta _{\mathbf{k}}\gamma ^{(k)}+\Delta _{%
\mathbf{k}^{\prime }}\gamma
^{(k)}+\sum_{j=1}^{k}(B_{j;k+1,k+2}^{+}-B_{j;k+1,k+2}^{-})\gamma ^{(k+2)}.
\label{E:GP}
\end{equation}

Let 
\begin{equation*}
H^{j}=\func{Tr}_{j\rightarrow j+2}\left( -\Delta _{j}\right) +\func{Tr}_{j}%
\frac{1}{3}B_{j;j+1,j+2}^{+}
\end{equation*}%
and define the $k$-th order GP energy by 
\begin{equation*}
E^{(k)}=H^{1}H^{4}\cdots H^{3k-2}\gamma ^{(3k)},
\end{equation*}%
then it is proved in \cite{TChenAndNP2} that $E^{(k)}$ is a conserved
quantity. Actually, if $\gamma ^{(k)}(t)$ is indeed our target $\left\vert
\phi (t)\right\rangle \left\langle \phi (t)\right\vert ^{\otimes k}$ with $%
\phi $ solving (\ref{equation:TargetQuinticNLS}), then $E^{(k)}$ reduces to 
\begin{equation*}
E^{(k)}=(E_{\text{NLS}})^{k}
\end{equation*}%
with $E_{\text{NLS}}(\phi )=\int |\nabla \phi |^{2}+\frac{1}{3}\int |\phi
|^{6}$.

We will define the high and low pieces, $H_{L}^{j}$ and $H_{H}^{j}$
respectively, of $H^{j}$, so that we can define the high piece of $E^{(k)}$
by 
\begin{equation*}
E_{H}^{(k)}=H_{H}^{1}H_{H}^{4}\cdots H_{H}^{3k-2}\gamma ^{(3k)}.
\end{equation*}%
Because $P_{\geqslant M}^{(k)}+$ $P_{<M}^{(k)}\neq 1$, it is not true that $%
E^{(k)}-E_{H}^{(k)}=H_{L}^{1}H_{L}^{4}\cdots H_{L}^{3k-2}\gamma ^{(3k)}$. To
help understanding the definition of $H_{L}^{j}$ and $H_{H}^{j}$, we first
write out what they are for the NLS (\ref{eqn:3d Quintic with 1 Coupling}).

For NLS, we can decompose $E_{\text{NLS}}$ into a low frequency component $%
E_{\text{NLS}}^{L}$ and a high frequency component $E_{\text{NLS}}^{H}$, as
follows: In the expansion of $|\phi _{L}+\phi _{H}|^{6}$, all of the terms
that have $0$, $1$, or $2$ of $\phi _{H}$ or $\bar{\phi}_{H}$ are placed
into $E_{\text{NLS}}^{L}$, and all of the terms that have $3$, $4$, $5$, or $%
6$ of $\phi _{H}$ or $\bar{\phi}_{H}$ go into $E_{\text{NLS}}^{H}$. The
expansion is 
\begin{equation*}
|\phi _{H}+\phi _{L}|^{6}=(\phi _{H}+\phi _{L})^{3}(\bar{\phi}_{H}+\bar{\phi}%
_{L})^{3}
\end{equation*}%
and thus the coefficients can be determined by counting the number of bar
and non bar H terms.

\begin{itemize}
\item zero H terms: $|\phi_L|^6$

\item one H terms: This breaks into two cases

\begin{itemize}
\item If the H term is non bar, then there are three possibilities, so $%
3\bar\phi_L^3 \phi_L^2\phi_H = 3|\phi_L|^2 \bar \phi_L \phi_H$.

\item If the H term is bar, then there are three possibilities, so $%
3\phi_L^3 \bar \phi_L ^2\bar \phi_H = 3 |\phi_L|^2 \phi_L \bar \phi_H$
\end{itemize}

\item two H terms: This breaks into three cases

\begin{itemize}
\item 2 H are non bar, zero H are bar. There are 3 ways to choose the non
bar L term, so $3\phi_H^2\phi_L \bar \phi_L^3 = 3\phi_H^2 \bar \phi_L^2
|\phi_L|^2 $

\item 2 H are bar, zero H are non bar. There are 3 ways to choose the bar L
term, so $3\bar \phi_H^2 \bar \phi_L \phi_L^3 = 3\bar \phi_H^2 \phi_L^2
|\phi_L|^2$

\item 1 H is bar, 1 H is non bar. There are 3 ways to choose the non bar H
term, and 3 ways to choose the bar H term, so 9 total possibilities, so $%
9\phi _{H}\bar{\phi}_{H}\phi _{L}^{2}\bar{\phi}_{L}^{2}=9|\phi
_{H}|^{2}|\phi _{L}|^{4}$.
\end{itemize}
\end{itemize}

In summary, we define 
\begin{eqnarray}
E_{\text{NLS}}^{L} &=&\int |\nabla \phi _{H}|^{2}+\frac{1}{3}\int (|\phi
_{L}|^{6}+3|\phi _{L}|^{2}\bar{\phi}_{L}\phi _{H}+3|\phi _{L}|^{2}\phi _{L}%
\bar{\phi}_{H}  \label{eqn:E_NLS^L} \\
&&+3\phi _{H}^{2}\bar{\phi}_{L}^{2}|\phi _{L}|^{2}+3\bar{\phi}_{H}^{2}\phi
_{L}^{2}|\phi _{L}|^{2}+9|\phi _{H}|^{2}|\phi _{L}|^{4})  \notag
\end{eqnarray}%
and 
\begin{equation*}
E_{\text{NLS}}^{H}=E_{\text{NLS}}-E_{\text{NLS}}^{L}
\end{equation*}

We can now define $H_{H}^{j}$ and $H_{L}^{j}$. From the definition of $E_{%
\text{NLS}}^{L}$ (\ref{eqn:E_NLS^L}), we see that we will have to write out
a large sum of high/low projections of many variables explicitly in the
interaction part when defining $H_{H}^{j}$ and $H_{L}^{j}$ and hence proving
Theorem \ref{Thm:UTFLinTime}. To shorten the formulas, for a fixed $M$, we
shorten the notation by 
\begin{equation*}
P_{H}^{j}=P_{>M}^{j},P_{L}^{j}=P_{\leqslant M}^{j},P_{H}^{j^{\prime
}}=P_{>M}^{j^{\prime }},\;P_{L}^{j^{\prime }}=P_{\leqslant M}^{j^{\prime }}
\end{equation*}

The $H_{L}^{j}$ consists of terms in which $0$, $1$, or $2$ variables in the
interaction lie at high frequency. The possiblities are

\begin{itemize}
\item 0 variable is at high: $P_{L}^{1}P_{L}^{2}P_{L}^{3}P_{L}^{1^{\prime
}}P_{L}^{2^{\prime }}P_{L}^{3^{\prime }}$

\item 1 variable is at high: This breaks into two cases

\begin{itemize}
\item 1 nonprime variable is at high, then there are three possibilities, so 
$3P_{H}^{1}P_{L}^{2}P_{L}^{3}P_{L}^{1^{\prime }}P_{L}^{2^{\prime
}}P_{L}^{3^{\prime }}.$

\item 1 prime variable is at high, then there are three possibilities, so $%
3P_{L}^{1}P_{L}^{2}P_{L}^{3}P_{H}^{1^{\prime }}P_{L}^{2^{\prime
}}P_{L}^{3^{\prime }}.$
\end{itemize}

\item Two variables are at high: This breaks into three cases

\begin{itemize}
\item 2 nonprime variables are at high and zero prime variable is at high.
There are 3 ways to choose the low nonprime variable, so $%
3P_{L}^{1}P_{H}^{2}P_{H}^{3}P_{L}^{1^{\prime }}P_{L}^{2^{\prime
}}P_{L}^{3^{\prime }}.$

\item 2 prime variables are at high and zero nonprime variable is at high.
There are 3 ways to choose the low prime variable, so $%
3P_{L}^{1}P_{L}^{2}P_{L}^{3}P_{L}^{1^{\prime }}P_{H}^{2^{\prime
}}P_{H}^{3^{\prime }}.$

\item 1 prime variables are at high and 1 nonprime variable is at high.
There are 3 ways to choose the high nonprime variable, and 3 ways to choose
the high prime variable, that is 9 total possibilities, so $%
9P_{H}^{1}P_{L}^{2}P_{L}^{3}P_{H}^{1^{\prime }}P_{L}^{2^{\prime
}}P_{L}^{3^{\prime }}.$
\end{itemize}
\end{itemize}

By the above analysis, we define 
\begin{align}
H_{L}^{j}& =\func{Tr}_{j\rightarrow j+2}\left( -\Delta _{j}P_{L}^{j}\right) +%
\frac{1}{3}\func{Tr}%
_{j}B_{j;j+1,j+2}^{+}[P_{L}^{j}P_{L}^{j+1}P_{L}^{j+2}P_{L}^{j^{\prime
}}P_{L}^{(j+1)^{\prime }}P_{L}^{(j+2)^{\prime }}  \label{eqn:H_L^j} \\
& \qquad +3P_{H}^{j}P_{L}^{j+1}P_{L}^{j+2}P_{L}^{j^{\prime
}}P_{L}^{(j+1)^{\prime }}P_{L}^{(j+2)^{\prime
}}+3P_{L}^{j}P_{L}^{j+1}P_{L}^{j+2}P_{H}^{j^{\prime }}P_{L}^{(j+1)^{\prime
}}P_{L}^{(j+2)^{\prime }}  \notag \\
& \qquad +3P_{L}^{j}P_{H}^{j+1}P_{H}^{j+2}P_{L}^{j^{\prime
}}P_{L}^{(j+1)^{\prime }}P_{L}^{(j+2)^{\prime
}}+3P_{L}^{j}P_{L}^{j+1}P_{L}^{j+2}P_{L}^{j^{\prime }}P_{H}^{(j+1)^{\prime
}}P_{H}^{(j+2)^{\prime }}  \notag \\
& \qquad +9P_{H}^{j}P_{L}^{j+1}P_{L}^{j+2}P_{H}^{j^{\prime
}}P_{L}^{(j+1)^{\prime }}P_{L}^{(j+2)^{\prime }}].  \notag
\end{align}%
We can then define $H_{H}^{j}$ so that 
\begin{equation*}
\func{Tr}_{1\rightarrow 3}(H^{1}-H_{L}^{1}-H_{H}^{1})\rho ^{(3)}=0
\end{equation*}%
for any symmetric $\rho ^{(3)}$, but importantly we remark that $H^{1}\neq
H_{L}^{1}+H_{H}^{1}$

The $H_{H}^{j}$ consists of terms in which $3$, $4$, $5$, or $6$ variables
in the interaction lie at high frequency. The possiblities are

\begin{itemize}
\item 3 variables are at high. This breaks into subclasses

\begin{itemize}
\item 3 nonprime variables at high, 0 prime variables at high. 1 possibility%
\newline
$P^j_H P^{j+1}_H P_H^{j+2} P_L^{j^{\prime }} P_L^{(j+1)^{\prime }}
P_L^{(j+2)^{\prime }}$

\item 2 nonprime variables at high (equivalently 1 nonprime at low), 1 prime
variable at high. There are $3\times 3=9$ ways to choose. $9 P^j_L P^{j+1}_H
P^{j+2}_H P_H^{j^{\prime }} P_L^{(j+1)^{\prime }} P_L^{(j+2)^{\prime }}$

\item 1 nonprime variables at high, 2 prime variables at high (equivalently
1 prime at low). There are $3\times 3=9$ waves to choose. $9 P^j_H P^{j+1}_L
P^{j+2}_L P_L^{j^{\prime }} P^{(j+1)^{\prime }}_H P^{(j+2)^{\prime }}_H$.

\item 0 nonprime variables at high, 3 prime variables at high. 1 possibility%
\newline
$P^j_L P^{j+1}_L P^{j+2}_L P^{j^{\prime }}_H P^{(j+1)^{\prime }}_H
P^{(j+2)^{\prime }}_H$.
\end{itemize}

\item 4 variable are at high, or equivalently 2 variables are at low. This
breaks into subclasses

\begin{itemize}
\item 0 nonprime variables at low, 2 prime variables at low (equivalently 1
prime at high). There are $3$ ways to choose. $3P^j_H P^{j+1}_H P^{j+2}_H
P^{j^{\prime }}_H P^{(j+1)^{\prime }}_L P^{(j+2)^{\prime }}_L$.

\item 1 nonprime variables at low, 1 prime variables at low. There are $%
3\times 3=9$ ways to choose. $9P^j_L P^{j+1}_H P^{j+2}_H P^{j^{\prime }}_L
P^{(j+1)^{\prime }}_H P^{(j+2)^{\prime }}_H$.

\item 2 nonprime variables at low (equivalently 1 prime at high), 0 prime
variables at low. There are $3$ ways to choose. $3P^j_H P^{j+1}_L P^{j+2}_L
P^{j^{\prime }}_H P^{(j+1)^{\prime }}_H P^{(j+2)^{\prime }}_H$.
\end{itemize}

\item 5 variable are at high, or equivalently 1 variables are at low. This
breaks into subclasses

\begin{itemize}
\item 0 nonprime variables at low, 1 prime variables at low. There are $3$
ways to choose. $3P^j_H P^{j+1}_H P^{j+2}_H P^{j^{\prime }}_L
P^{(j+1)^{\prime }}_H P^{(j+2)^{\prime }}_H$.

\item 1 nonprime variables at low, 0 prime variables at low. There are $3$
ways to choose. $3P^j_L P^{j+1}_H P^{j+2}_H P^{j^{\prime }}_H
P^{(j+1)^{\prime }}_H P^{(j+2)^{\prime }}_H$.
\end{itemize}

\item 6 variable are at high, or equivalently 0 variables are at low. There
is only one possibility. $P^j_H P^{j+1}_H P^{j+2}_H P^{j^{\prime }}_H
P^{(j+1)^{\prime }}_H P^{(j+2)^{\prime }}_H$.
\end{itemize}

By the above analysis, we define%
\begin{eqnarray}  \label{E:highdef}
H_{H}^{j} &=&\func{Tr}_{j\rightarrow j+2}\left( -\Delta _{j}P_{H}^{j}\right)
+\frac{1}{3}\func{Tr}%
_{j}B_{j;j+1,j+2}^{+}[P_{H}^{j}P_{H}^{j+1}P_{H}^{j+2}P_{L}^{j^{\prime
}}P_{L}^{(j+1)^{\prime }}P_{L}^{(j+2)^{\prime }}  \label{eqn:H_H^j} \\
&&+9P_{L}^{j}P_{H}^{j+1}P_{H}^{j+2}P_{H}^{j^{\prime }}P_{L}^{(j+1)^{\prime
}}P_{L}^{(j+2)^{\prime }}+9P_{H}^{j}P_{L}^{j+1}P_{L}^{j+2}P_{L}^{j^{\prime
}}P_{H}^{(j+1)^{\prime }}P_{H}^{(j+2)^{\prime }}  \notag \\
&&+P_{L}^{j}P_{L}^{j+1}P_{L}^{j+2}P_{H}^{j^{\prime }}P_{H}^{(j+1)^{\prime
}}P_{H}^{(j+2)^{\prime }}+3P_{H}^{j}P_{H}^{j+1}P_{H}^{j+2}P_{H}^{j^{\prime
}}P_{L}^{(j+1)^{\prime }}P_{L}^{(j+2)^{\prime }}  \notag \\
&&+9P_{L}^{j}P_{H}^{j+1}P_{H}^{j+2}P_{L}^{j^{\prime }}P_{H}^{(j+1)^{\prime
}}P_{H}^{(j+2)^{\prime }}+3P_{H}^{j}P_{L}^{j+1}P_{L}^{j+2}P_{H}^{j^{\prime
}}P_{H}^{(j+1)^{\prime }}P_{H}^{(j+2)^{\prime }}  \notag \\
&&+3P_{H}^{j}P_{H}^{j+1}P_{H}^{j+2}P_{L}^{j^{\prime }}P_{H}^{(j+1)^{\prime
}}P_{H}^{(j+2)^{\prime }}+3P_{L}^{j}P_{H}^{j+1}P_{H}^{j+2}P_{H}^{j^{\prime
}}P_{H}^{(j+1)^{\prime }}P_{H}^{(j+2)^{\prime }}  \notag \\
&&+P_{H}^{j}P_{H}^{j+1}P_{H}^{j+2}P_{H}^{j^{\prime }}P_{H}^{(j+1)^{\prime
}}P_{H}^{(j+2)^{\prime }}].  \notag
\end{eqnarray}

Now, let us recall $E_{H}^{(k)}=H_{H}^{1}H_{H}^{4}\cdots H_{H}^{3k-2}\gamma
^{(3k)}$. There are three main parts of the proof of Theorem \ref%
{Thm:UTFLinTime}. In the first part, \S \ref{Sec:Proof of Main High Energy
Estimate}, our goal is to establish 
\begin{equation}
\left\vert \partial _{t}E_{H}^{(k)}\right\vert \lesssim k\left\langle
M\right\rangle ^{2}\left( E^{(k)}\right) ^{1/k}\left( E_{H}^{(k)}\right)
^{(k-1)/k}  \label{E:main-1}
\end{equation}%
If we can achieve \eqref{E:main-1}\footnote{%
For our purpose here $E_{H}^{(k)}$ is always a nonnegative quantity, see \S %
\ref{Sec:Proof of High Kinetic Eats High Energy}.}, and we assume that there
is a $C>0$ such that, $E^{(k)}\leqslant C^{k}$ for all $k$, then (\ref%
{E:main-1}) implies%
\begin{equation}
\left\vert \partial _{t}\left[ \left( E_{H}^{(k)}\right) ^{1/k}\right]
\right\vert \lesssim C\langle M\rangle ^{2},
\label{stimate:E_high^1/k's final derivative estimate}
\end{equation}%
since 
\begin{equation*}
\partial _{t}\left[ \left( E_{H}^{(k)}\right) ^{1/k}\right] =\frac{1}{k}%
\left( E_{H}^{(k)}\right) ^{-(k-1)/k}\left( \partial _{t}E_{H}^{(k)}\right) .
\end{equation*}%
Integrate (\ref{stimate:E_high^1/k's final derivative estimate}) on $[0,T]$,
and using that \eqref{eqn:HUFL} implies $\left\vert
E_{H}^{(k)}(0)\right\vert \leqslant (\frac{1}{4}\varepsilon )^{2k}$, we
obtain 
\begin{equation}
\left( E_{H}^{(k)}\left( t\right) \right) ^{1/k}\leqslant (\tfrac{1}{4}%
\varepsilon )^{2}+C\langle M\rangle ^{2}T\text{, }\forall t\in \lbrack
0,T].\,  \label{E:main-2}
\end{equation}%
That is, if $T=\frac{1}{8}\left( C\langle M\rangle ^{2}\right)
^{-1}\varepsilon ^{2}$, we have%
\begin{equation}
\left( E_{H}^{(k)}\left( t\right) \right) ^{1/k}\leqslant \frac{\varepsilon
^{2}}{4}\text{, }\forall t\in \lbrack 0,T].
\label{eq conclusion of high energy growth}
\end{equation}

In the second part, \S \ref{Sec:Proof of High Kinetic Eats High Energy},
using (\ref{eq conclusion of high energy growth}), we argue by contradiction
that as long as $T\leqslant \frac{1}{8}\left( C\langle M\rangle ^{2}\right)
^{-1}\varepsilon ^{2}$, (\ref{estimate:HUFLinTime}) holds with the aid of
the refined Sobolev inequalities in Lemma \ref{lemma:refined sobolev for
continuity}, assuming that the quantity 
\begin{equation}
K_{M\leq \bullet \leq R}(t)=\lim_{k\rightarrow \infty }\left( \limfunc{Tr}%
R^{(1,k)}P_{<R}^{(k)}P_{>M}^{(k)}\gamma
^{(k)}(t_{b})P_{<R}^{(k)}P_{>M}^{(k)}R^{(1,k)}\right) ^{\frac{1}{k}},
\label{eqn:def of intermediate kinetic}
\end{equation}%
which can be understood as the intermediate kinetic energy, is a continuous
function in time $t$. Notice that, if we denote $\mu _{t}(\phi )$ the
quantum de Finetti measure, associated with $\gamma ^{(k)}(t)$ at any time $%
t $, $K_{M\leq \bullet \leq R}(t)$ actually becomes 
\begin{eqnarray*}
K_{M\leq \bullet \leq R}(t) &=&\lim_{k\rightarrow \infty }\left( \limfunc{Tr}%
R^{(1,k)}P_{<R}^{(k)}P_{>M}^{(k)}\gamma
^{(k)}(t_{b})P_{<R}^{(k)}P_{>M}^{(k)}R^{(1,k)}\right) ^{\frac{1}{k}} \\
&=&\lim_{k\rightarrow \infty }\left( \int_{\phi }\Vert \nabla \phi _{M\leq
\bullet \leq R}\Vert _{L_{x}^{2}}^{2k}d\mu _{t}(\phi )\right) ^{1/k} \\
&=&\lim_{k\rightarrow \infty }\Vert \Vert \nabla \phi _{M\leq \bullet \leq
R}\Vert _{L_{x}^{2}}^{2}\Vert _{L_{d\mu _{t}(\phi )}^{2k}} \\
&=&\Vert \Vert \nabla \phi _{M\leq \bullet \leq R}\Vert _{L^{2}}^{2}\Vert
_{L_{d\mu _{t}(\phi )}^{\infty }}.
\end{eqnarray*}%
Because of that $L_{d\mu _{t}(\phi )}^{\infty }$, the continuity of $%
K_{M\leq \bullet \leq R}(t)$ could be interpreted a strong continuity result
of the de Finetti measure.

Finally, in the third part, \S \ref{Sec:Proof of High Kinetic Continuity},
we prove, as Lemma \ref{lem:intermediate kinetic energy is continuous}, that 
$K_{M\leq \bullet \leq R}(t)$ the intermediate kinetic energy, is indeed a
continuous function in time $t$. \S \ref{Sec:Proof of Main High Energy
Estimate}-\ref{Sec:Proof of High Kinetic Continuity} proves Theorem \ref%
{Thm:UTFLinTime}.

We see that the proof of Theorem \ref{Thm:UTFLinTime} is very delicate. For
the NLS case, we compute in Appendix \ref{Sec:AppendixUTFL} that 
\begin{equation}
\left\vert \partial _{t}E_{\text{NLS}}^{H}\right\vert =\left\vert \partial
_{t}(E_{\text{NLS}}-E_{\text{NLS}}^{L})\right\vert =\left\vert \partial
_{t}E_{\text{NLS}}^{L}\right\vert \leqslant C\langle M\rangle ^{2}E_{\text{%
NLS}}  \label{eqn:key estimate for NLS UTFL in GP UTFL}
\end{equation}%
by exchanging derivatives off of the high frequency terms onto the low
frequency terms, and allowing for the $M^{2}$ penalty using Bernstein.
Though we have defined $H_{L}^{j}$, $H_{H}^{j}$ for (\ref{E:GP}) similarly,
the proof of the NLS case in Appendix \ref{Sec:AppendixUTFL} does not go
through for (\ref{E:GP}). If one uses the NLS argument for GP, even if one
assumes Theorem \ref{thm:qde} which at a glance, makes the NLS problem and
the GP problem look similar, one will end up with a $k-$depending $t$ in the
end and hence fails completely. This is certainly because special solutions (%
\ref{eqn:3d Quintic with 1 Coupling}) cannot imply the general case (\ref%
{E:GP}). In fact, notice that, when $k>1$, (\ref{E:main-1}) is more accurate
than (\ref{eqn:key estimate for NLS UTFL in GP UTFL}) even if one assumes a
pure tensor product solution to (\ref{E:GP}). On the one hand, derivatives
do not work very well with even a pure tensor product when it has an
unbounded amount of factors and hence making any uniform in $k$ estimates
difficult. On the other hand, as we have already pointed out earlier, 
\begin{equation*}
E^{(k)}-E_{H}^{(k)}\neq \func{Tr}H_{L}^{1}H_{L}^{4}\cdots H_{L}^{3k-2}\gamma
^{(3k)}
\end{equation*}%
because $P_{\geqslant M}^{(k)}+$ $P_{<M}^{(k)}\neq 1$, that $\partial _{t}E_{%
\text{NLS}}^{H}=\partial _{t}(E_{\text{NLS}}-E_{\text{NLS}}^{L})$ middle
step in (\ref{eqn:key estimate for NLS UTFL in GP UTFL}) which greatly helps
the argument, does not exist. Therefore, the proof of Theorem \ref%
{Thm:UTFLinTime} has to be much more accurate and subtle.

\subsubsection{Proof of (\protect\ref{E:main-1})\label{Sec:Proof of Main
High Energy Estimate}}

We start with 
\begin{equation*}
\partial _{t}E_{H}^{(k)}=H_{H}^{1}H_{H}^{4}\cdots H_{H}^{3k-2}\partial
_{t}\gamma ^{(3k)}
\end{equation*}%
Now plug in \eqref{E:GP} with $k$ replaced by $3k$ 
\begin{align*}
\partial _{t}E_{H}^{(k)}& =i\sum_{j=1}^{3k}(H_{H}^{1}H_{H}^{4}\cdots
H_{H}^{3k-2})(\Delta _{j}-\Delta _{j^{\prime }})\gamma ^{(3k)} \\
& \qquad -i\sum_{j=1}^{3k}(H_{H}^{1}H_{H}^{4}\cdots
H_{H}^{3k-2})(B_{j;3k+1,3k+2}^{+}-B_{j;3k+1,3k+2}^{-})\gamma ^{(3k+2)}
\end{align*}%
By symmetry 
\begin{equation*}
i\sum_{j=1}^{3k}(H_{H}^{1}H_{H}^{4}\cdots H_{H}^{3k-2})(\Delta _{j}-\Delta
_{j^{\prime }})\gamma ^{(3k)}=ik\sum_{j=1}^{3}(H_{H}^{1}H_{H}^{4}\cdots
H_{H}^{3k-2})(\Delta _{j}-\Delta _{j^{\prime }})\gamma ^{(3k)}
\end{equation*}%
Let 
\begin{equation}
\sigma ^{(3)}(x_{1},x_{2},x_{3};x_{1}^{\prime },x_{2}^{\prime
},x_{3}^{\prime })=H_{H}^{4}\cdots H_{H}^{3k-2}\gamma ^{(3k)}
\label{eqn:sigma_3}
\end{equation}%
Then since $\Delta _{j}-\Delta _{j^{\prime }}$ and $H_{H}^{4}\cdots
H_{H}^{3k-2}$ commute 
\begin{equation*}
i\sum_{j=1}^{3k}(H_{H}^{1}H_{H}^{4}\cdots H_{H}^{3k-2})(\Delta _{j}-\Delta
_{j^{\prime }})\gamma ^{(3k)}=ik\sum_{j=1}^{3}H_{H}^{1}(\Delta _{j}-\Delta
_{j^{\prime }})\sigma ^{(3)}
\end{equation*}%
By symmetry 
\begin{eqnarray}
&&i\sum_{j=1}^{3k}(H_{H}^{1}H_{H}^{4}\cdots
H_{H}^{3k-2})(B_{j;3k+1,3k+2}^{+}-B_{j;3k+1,3k+2}^{-})\gamma ^{(3k+2)}
\label{eqn:inhomogeneous term in energy hierarchy} \\
&=&ik\sum_{j=1}^{3}(H_{H}^{1}H_{H}^{4}\cdots
H_{H}^{3k-2})(B_{j;3k+1,3k+2}^{+}-B_{j;3k+1,3k+2}^{-})\gamma ^{(3k+2)} 
\notag \\
&=&ik%
\sum_{j=1}^{3}H_{H}^{1}(B_{j;3k+1,3k+2}^{+}-B_{j;3k+1,3k+2}^{-})H_{H}^{4}%
\cdots H_{H}^{3k-2}\gamma ^{(3k+2)}  \notag
\end{eqnarray}%
Let 
\begin{equation}
\sigma ^{(5)}(x_{1},x_{2},x_{3},x_{3k+1},x_{3k+2};x_{1}^{\prime
},x_{2}^{\prime },x_{3}^{\prime },x_{3k+1}^{\prime },x_{3k+2}^{\prime
})=H_{H}^{4}\cdots H_{H}^{3k-2}\gamma ^{(3k+2)}  \label{eqn:sigma_5}
\end{equation}%
Then 
\begin{equation*}
\text{(\ref{eqn:inhomogeneous term in energy hierarchy})}=ik%
\sum_{j=1}^{3}H_{H}^{1}(B_{j;4,5}^{+}-B_{j;4,5}^{-})\sigma ^{(5)}
\end{equation*}%
In summary, we have 
\begin{equation*}
\partial _{t}E_{H}^{(k)}=ik\sum_{j=1}^{3}H_{H}^{1}(\Delta _{j}-\Delta
_{j^{\prime }})\sigma
^{(3)}-ik\sum_{k=1}^{3}H_{H}^{1}(B_{j;4,5}^{+}-B_{j;4,5}^{-})\sigma ^{(5)}
\end{equation*}%
Since $\rho ^{(3)}=\sum_{j=1}^{3}(\Delta _{j}-\Delta _{j^{\prime }})\sigma
^{(3)}$ is symmetric and $\rho
^{(3)}=\sum_{j=1}^{3}(B_{j;4,5}^{+}-B_{j;4,5}^{-})\sigma ^{(5)}$ is
symmetric (carry the sums to the inside), we can replace $H_{H}^{1}$ by $%
H^{1}-H_{L}^{1}$. 
\begin{eqnarray*}
\partial _{t}E_{H}^{(k)} &=&ik\sum_{j=1}^{3}H^{1}(\Delta _{j}-\Delta
_{j^{\prime }})\sigma
^{(3)}-ik\sum_{k=1}^{3}H^{1}(B_{j;4,5}^{+}-B_{j;4,5}^{-})\sigma ^{(5)} \\
&&-ik\sum_{j=1}^{3}H_{L}^{1}(\Delta _{j}-\Delta _{j^{\prime }})\sigma
^{(3)}+ik\sum_{k=1}^{3}H_{L}^{1}(B_{j;4,5}^{+}-B_{j;4,5}^{-})\sigma ^{(5)}
\end{eqnarray*}%
The first two terms of $\partial _{t}E_{H}^{(k)}$ combines to zero by
exactly the same calculation as the energy conservation \cite[\S 4]%
{TChenAndNP2}. Thus%
\begin{eqnarray}
\frac{i}{k}\partial _{t}E_{H}^{(k)} &=&-\sum_{j=1}^{3}H_{L}^{1}(\Delta
_{j}-\Delta _{j^{\prime }})\sigma
^{(3)}+\sum_{k=1}^{3}H_{L}^{1}(B_{j;4,5}^{+}-B_{j;4,5}^{-})\sigma ^{(5)}
\label{E:main-3} \\
&=&1+...+6  \notag
\end{eqnarray}%
so that the first sum is terms $1$, $2$ and $3$, and the second sum is terms 
$4$, $5$, and $6$. Now let us enumerate the terms of $H_{L}^{1}$ defined in (%
\ref{eqn:H_L^j}) as follows 
\begin{align*}
H_{L}^{1}=& \func{Tr}_{1\rightarrow 3}\left( -\Delta _{1}P_{L}^{1}\right) & 
& \text{term A} \\
& +\frac{1}{3}\func{Tr}%
_{1}B_{1;2,3}^{+}P_{L}^{1}P_{L}^{2}P_{L}^{3}P_{L}^{1^{\prime
}}P_{L}^{2^{\prime }}P_{L}^{3^{\prime }} & & \text{term B} \\
& +\func{Tr}_{1}B_{1;2,3}^{+}P_{H}^{1}P_{L}^{2}P_{L}^{3}P_{L}^{1^{\prime
}}P_{L}^{2^{\prime }}P_{L}^{3^{\prime }} & & \text{term C} \\
& +\func{Tr}_{1}B_{1;2,3}^{+}P_{L}^{1}P_{L}^{2}P_{L}^{3}P_{H}^{1^{\prime
}}P_{L}^{2^{\prime }}P_{L}^{3^{\prime }} & & \text{term D} \\
& +\func{Tr}_{1}B_{1;2,3}^{+}P_{L}^{1}P_{H}^{2}P_{H}^{3}P_{L}^{1^{\prime
}}P_{L}^{2^{\prime }}P_{L}^{3^{\prime }} & & \text{term E} \\
& +\func{Tr}_{1}B_{1;2,3}^{+}P_{L}^{1}P_{L}^{2}P_{L}^{3}P_{L}^{1^{\prime
}}P_{H}^{2^{\prime }}P_{H}^{3^{\prime }} & & \text{term F} \\
& +\func{Tr}_{1}3B_{1;2,3}^{+}P_{H}^{1}P_{L}^{2}P_{L}^{3}P_{H}^{1^{\prime
}}P_{L}^{2^{\prime }}P_{L}^{3^{\prime }} & & \text{term G}
\end{align*}%
When these terms are substituted into \eqref{E:main-3}, we obtain terms $A1$
through $G6$, a total of 42 terms. We will investigate each of these below.
Some of them can be grouped together and share the same estimating methods.

In (\ref{eqn:sigma_3}) and (\ref{eqn:sigma_5}) we have introduced $\sigma
^{(3)}$ and $\sigma ^{(5)}$. By Theorem \ref{thm:qde}, $\sigma ^{(3)}$ and $%
\sigma ^{(5)}$ can be represented as 
\begin{equation*}
\sigma ^{(3)}(x_{1},x_{2},x_{3};x_{1}^{\prime },x_{2}^{\prime
},x_{3}^{\prime })=\int \phi (x_{1})\phi (x_{2})\phi (x_{3})\bar{\phi}%
(x_{1}^{\prime })\bar{\phi}(x_{2}^{\prime })\bar{\phi}(x_{3}^{\prime })E_{%
\text{NLS}}^{H}(\phi )^{k-1}\,d\mu (\phi )
\end{equation*}%
and 
\begin{eqnarray*}
&&\sigma ^{(5)}(x_{1},x_{2},x_{3},x_{3k+1},x_{3k+2};x_{1}^{\prime
},x_{2}^{\prime },x_{3}^{\prime },x_{3k+1}^{\prime },x_{3k+2}^{\prime }) \\
&=&\int \phi (x_{1})\phi (x_{2})\phi (x_{3})\phi (x_{3k+1})\phi (x_{3k+2})%
\bar{\phi}(x_{1}^{\prime })\bar{\phi}(x_{2}^{\prime })\bar{\phi}%
(x_{3}^{\prime })\bar{\phi}(x_{3k+1}^{\prime })\bar{\phi}(x_{3k+2}^{\prime
})E_{\text{NLS}}^{H}(\phi )^{k-1}\,d\mu (\phi )
\end{eqnarray*}%
We will also need the property of the de Finetti measure 
\begin{equation}
\mu \left( \{\phi \in L^{2}:\left\Vert S\phi \right\Vert
_{L_{x}^{2}}>C_{0}\}\right) =0  \label{eqn:property of measure 1}
\end{equation}%
proved in \cite[Lemma 4.5]{TCNPdeFinitte}, whenever (\ref{condition:kinetic
energy}) holds, and $E_{i}$ be the operator that acts on a function $%
f(x_{i},x_{i}^{\prime })$ and gives 
\begin{equation*}
(E_{i}f)(x_{i})=f(x_{i},x_{i})
\end{equation*}

\noindent \textbf{Term A1, A2, A3}. We use 
\begin{equation*}
E_{j}\Delta _{j}=-E_{j}\nabla _{j}\cdot \nabla _{j}^{\prime }+\nabla
_{j}\cdot E_{j}\nabla _{j}
\end{equation*}%
and 
\begin{equation*}
-E_{j}\Delta _{j}^{\prime }=E_{j}\nabla _{j}\cdot \nabla _{j}^{\prime
}-\nabla _{j}^{\prime }\cdot E_{j}\nabla _{j}^{\prime }
\end{equation*}%
applied to $\Delta _{1}P_{L}^{1}\sigma ^{(3)}$. This gives 
\begin{equation*}
\iiint_{x_{1},x_{2},x_{3}}(\Delta _{j}-\Delta _{j}^{\prime })\Delta
_{1}P_{L}^{1}\sigma ^{3}(x_{1},x_{2},x_{3};x_{1},x_{2},x_{3})\,d\mathbf{x}=0
\end{equation*}

\noindent \textbf{Term A4}. (the $B^{+}$ case only, the $B^{-}$ case is
similar) By the quantum deFinetti expansion and Fubini 
\begin{align*}
\text{A4}& =\func{Tr}_{1\rightarrow 3}(-\Delta
_{1}P_{L}^{1})(B_{1;4,5}^{+}-B_{1;4,5}^{-})\sigma ^{(5)} \\
& =-\int \func{Tr}_{1\rightarrow 3}\Delta _{1}P_{L}^{1}[\phi (x_{1})|\phi
(x_{1})|^{4}]\phi (x_{2})\phi (x_{3})\bar{\phi}(x_{1}^{\prime })\bar{\phi}%
(x_{2}^{\prime })\bar{\phi}(x_{3}^{\prime })E_{\text{NLS}}^{H}(\phi
)^{k-1}\,d\mu (\phi ) \\
& =-\int \int_{x_{1}}(\Delta P_{L}[\phi (x_{1})|\phi (x_{1})|^{4}])\,\bar{%
\phi}(x_{1})\,dx_{1}\left\Vert \phi \right\Vert _{L^{2}}^{4}E_{\text{NLS}%
}^{H}(\phi )^{k-1}\,d\mu (\phi )
\end{align*}%
Since $\Vert \phi \Vert _{L^{2}}\leqslant 1$, by H\"{o}lder in $x_{1}$, 
\begin{equation*}
\leqslant \int \Vert \Delta P_{L}[\phi (x_{1})|\phi (x_{1})|^{4}]\Vert
_{L_{x_{1}}^{6/5}}\Vert \phi \Vert _{L^{6}}E_{\text{NLS}}^{H}(\phi
)^{k-1}\,d\mu (\phi )
\end{equation*}%
By Bernstein, 
\begin{align*}
& \lesssim M^{2}\int \Vert \phi \Vert _{L^{6}}^{6}\,E_{\text{NLS}}^{H}(\phi
)^{k-1}\,d\mu (\phi ) \\
& \lesssim M^{2}\int E_{\text{NLS}}(\phi )\,E_{\text{NLS}}^{H}(\phi
)^{k-1}\,d\mu (\phi )
\end{align*}%
By H\"{o}lder in $\phi $ with exponents $k$ and $\frac{k}{k-1}$, we obtain 
\begin{align*}
& \lesssim M^{2}\left( \int E_{\text{NLS}}(\phi )^{k}\,d\mu (\phi )\right)
^{1/k}\left( \int E_{\text{NLS}}^{H}(\phi )^{k}\,d\mu (\phi )\right)
^{k/(k-1)} \\
& \lesssim M^{2}(E^{(k)})^{1/k}(E_{H}^{(k)})^{(k-1)/k}
\end{align*}

\noindent \textbf{Term A5, A6}. We write only the A5 term (so $j=2$), since
the A6 term is identical after permutation of index 2 and 3. We denote by
A5p the $B^{+}$ contribution and A5m the $B^{-}$ contribution, so that A$%
\text{5}=\text{A5p}-\text{A5m}$. By the quantum deFinetti expansion and
Fubini 
\begin{align*}
\text{A5p}& =\func{Tr}_{1\rightarrow 3}(-\Delta
_{1}P_{L}^{1})(B_{2;4,5}^{+})\sigma ^{(5)} \\
& =-\int \func{Tr}_{1\rightarrow 3}\Delta _{1}P_{L}^{1}\phi (x_{1})\phi
(x_{2})|\phi (x_{2})|^{4}\phi (x_{3})\bar{\phi}(x_{1}^{\prime })\bar{\phi}%
(x_{2}^{\prime })\bar{\phi}(x_{3}^{\prime })E_{\text{NLS}}^{H}(\phi
)^{k-1}\,d\mu (\phi ) \\
& =-\int \int_{x_{1}}(\Delta P_{L}\phi (x_{1}))\bar{\phi}(x_{1})\,dx_{1}%
\Vert \phi \Vert _{L^{6}}^{6}\left\Vert \phi \right\Vert _{L^{2}}^{2}E_{%
\text{NLS}}^{H}(\phi )^{k-1}\,d\mu (\phi )
\end{align*}%
Likewise 
\begin{equation*}
\text{A5m}=-\int \int_{x_{1}}\phi (x_{1})\,\overline{\Delta P_{L}\phi (x_{1})%
}\,dx_{1}\Vert \phi \Vert _{L^{6}}^{6}\left\Vert \phi \right\Vert
_{L^{2}}^{2}E_{\text{NLS}}^{H}(\phi )^{k-1}\,d\mu (\phi )
\end{equation*}%
Hence $\text{A5}=\text{A5p}-\text{A5m}=0$.

Now we proceed to the contracted terms B, C, D, E, F, and G. Recall that B
has no components projected to high frequency, C and D have exactly one term
projected to high frequency, and E, F, and G have exactly two terms
projected to high frequency.

\noindent \textbf{Term B1, B2, B3}. We have 
\begin{eqnarray*}
&&\text{B1, B2, B3} \\
&=&\frac{1}{3}\func{Tr}%
_{1}B_{1;2,3}^{+}P_{L}^{1}P_{L}^{2}P_{L}^{3}P_{L}^{1^{\prime
}}P_{L}^{2^{\prime }}P_{L}^{3^{\prime }}\left( \Delta _{j}-\Delta
_{j}^{\prime }\right) \sigma ^{(3)} \\
&=&\frac{1}{3}\int \int_{x}(P_{L}\Delta \phi (x)P_{L}\bar{\phi}(x)-P_{L}\phi
(x)\,P_{L}\Delta \bar{\phi}(x))\,|P_{L}\phi (x)|^{4}\,dx\,E_{\text{NLS}%
}^{H}(\phi )^{k-1}d\mu (\phi ) \\
&=&\frac{2}{3}i\func{Im}\int \int_{x}\left( P_{L}\Delta \phi (x)P_{L}\bar{%
\phi}(x)\,|P_{L}\phi (x)|^{4}\right) \,dx\,E_{\text{NLS}}^{H}(\phi
)^{k-1}d\mu (\phi )
\end{eqnarray*}%
Hence, by H\"{o}lder%
\begin{equation*}
\left\vert \text{B1}\right\vert \text{, }\left\vert \text{B2}\right\vert 
\text{, }\left\vert \text{B3}\right\vert \lesssim \int \left\Vert
P_{L}\Delta \phi \right\Vert _{L^{6}}\left\Vert P_{L}\bar{\phi}\,|P_{L}\phi
|^{4}\right\Vert _{L^{\frac{6}{5}}}\,E_{\text{NLS}}^{H}(\phi )^{k-1}d\mu
(\phi )
\end{equation*}%
By Bernstein%
\begin{equation*}
\lesssim M^{2}\int \left\Vert P_{L}\phi \right\Vert _{L^{6}}^{6}\,E_{\text{%
NLS}}^{H}(\phi )^{k-1}d\mu (\phi )\lesssim M^{2}\int E_{\text{NLS}}(\phi
)\,E_{\text{NLS}}^{H}(\phi )^{k-1}d\mu (\phi )
\end{equation*}%
Finally, By H\"{o}lder in $\phi $ with exponents $k$ and $\frac{k}{k-1}$,%
\begin{eqnarray*}
&&\left\vert \text{B1}\right\vert \text{, }\left\vert \text{B2}\right\vert 
\text{, }\left\vert \text{B3}\right\vert \\
&\lesssim &M^{2}\left( \int E_{\text{NLS}}(\phi )^{k}d\mu (\phi )\right)
^{1/k}\left( \int E_{\text{NLS}}^{H}(\phi )^{k}d\mu (\phi )\right) ^{(k-1)/k}
\\
&=&CM^{2}(E^{(k)})^{1/k}(E_{H}^{(k)})^{(k-1)/k}.
\end{eqnarray*}

\noindent \textbf{Term B4, B5, B6}. We will only write B4 since B5 and B6
are identical. We have 
\begin{align*}
\text{B4p}& =\frac{1}{3}\func{Tr}%
_{1}B_{1;2,3}^{+}P_{L}^{1}P_{L}^{2}P_{L}^{3}P_{L}^{1^{\prime
}}P_{L}^{2^{\prime }}P_{L}^{3^{\prime }}B_{1;4,5}^{+}\sigma ^{(5)} \\
& =\frac{1}{3}\int \int_{x}P_{L}(\phi (x)|\phi (x)|^{4})\,\overline{%
P_{L}\phi (x)}|P_{L}\phi (x)|^{4}\,dx\,E_{\text{NLS}}^{H}(\phi )^{k-1}d\mu
(\phi )
\end{align*}%
and 
\begin{align*}
\text{B4m}& =\frac{1}{3}\func{Tr}%
_{1}B_{1;2,3}^{+}P_{L}^{1}P_{L}^{2}P_{L}^{3}P_{L}^{1^{\prime
}}P_{L}^{2^{\prime }}P_{L}^{3^{\prime }}B_{1;4,5}^{-}\sigma ^{(5)} \\
& =\frac{1}{3}\int \int_{x}\overline{P_{L}(\phi (x)|\phi (x)|^{4})}%
\,P_{L}\phi (x)|P_{L}\phi (x)|^{4}\,dx\,E_{\text{NLS}}^{H}(\phi )^{k-1}d\mu
(\phi )
\end{align*}%
and hence 
\begin{align*}
\text{B4}& =\text{B4p}-\text{B4m} \\
& =\frac{2}{3}i\func{Im}\int \int_{x}P_{L}(\phi (x)|\phi (x)|^{4})\,%
\overline{P_{L}\phi (x)}|P_{L}\phi (x)|^{4}\,dx\,E_{\text{NLS}}^{H}(\phi
)^{k-1}d\mu (\phi )
\end{align*}%
By H\"{o}lder, 
\begin{equation*}
\left\vert \text{B4}\right\vert \lesssim \int \Vert P_{L}(\phi |\phi
|^{4})\Vert _{L^{6}}\Vert P_{L}\phi \Vert _{L^{6}}^{5}E_{\text{NLS}%
}^{H}(\phi )^{k-1}d\mu (\phi )
\end{equation*}%
We apply Bernstein in the form $\Vert P_{L}f\Vert _{L_{x}^{6}}\lesssim
M^{2}\Vert f\Vert _{L_{x}^{6/5}}$ with $f=\phi |\phi |^{4}$ to obtain 
\begin{equation*}
\left\vert \text{B4}\right\vert \lesssim M^{2}\int \Vert \phi \Vert
_{L^{6}}^{10}\,E_{\text{NLS}}^{H}(\phi )^{k-1}d\mu (\phi )\lesssim M^{2}\int
\Vert \phi \Vert _{L^{6}}^{6}\left\Vert \nabla \phi \right\Vert
_{L^{2}}^{4}\,E_{\text{NLS}}^{H}(\phi )^{k-1}d\mu (\phi )
\end{equation*}%
By (\ref{eqn:property of measure 1}),%
\begin{equation*}
\lesssim C_{0}^{4}M^{2}\int \Vert \phi \Vert _{L^{6}}^{6}\,E_{\text{NLS}%
}^{H}(\phi )^{k-1}d\mu (\phi )\lesssim M^{2}\int E_{\text{NLS}}(\phi )\,E_{%
\text{NLS}}^{H}(\phi )^{k-1}d\mu (\phi )
\end{equation*}%
By H\"{o}lder in $\phi $ with exponents $k$ and $\frac{k}{k-1}$, 
\begin{eqnarray*}
&\lesssim &M^{2}\left( \int E_{\text{NLS}}(\phi )^{k}d\mu (\phi )\right)
^{1/k}\left( \int E_{\text{NLS}}^{H}(\phi )^{k}d\mu (\phi )\right) ^{(k-1)/k}
\\
&=&CM^{2}(E^{(k)})^{1/k}(E_{H}^{(k)})^{(k-1)/k}.
\end{eqnarray*}%
\textbf{Terms C1, C2, C3, D1, D2, D3. }Both of C1 and D1 have $\Delta $ land
on the high term and is similar. We only deal with C1. 
\begin{eqnarray*}
\text{C1} &=&\func{Tr}_{1}B_{1;2,3}^{+}P_{H}^{1}P_{L}^{2}P_{L}^{3}P_{L}^{1^{%
\prime }}P_{L}^{2^{\prime }}P_{L}^{3^{\prime }}(\Delta _{1}-\Delta
_{1^{\prime }})\sigma ^{(3)} \\
&=&\int \left[ \left( \Delta P_{H}\phi \right) (x)\left( P_{L}\bar{\phi}%
\right) (x)-\left( P_{H}\phi \right) (x)\left( \Delta P_{L}\bar{\phi}\right)
(x)\right] \left\vert P_{L}\phi \right\vert ^{4}(x)dxE_{\text{NLS}}^{H}(\phi
)^{k-1}\,d\mu (\phi )
\end{eqnarray*}%
Intergating by parts in the 1st summand and then applying Holder in the 2nd
summand, we have 
\begin{eqnarray*}
\left\vert \text{C1}\right\vert &\leqslant &\left\vert \int \left( \nabla
P_{H}\phi \right) (x)\cdot \nabla \left( P_{L}\bar{\phi}\left\vert P_{L}\phi
\right\vert ^{4}\right) (x)dxE_{\text{NLS}}^{H}(\phi )^{k-1}\,d\mu (\phi
)\right\vert \\
&&+\int \left\Vert \Delta P_{L}\phi \right\Vert _{L^{6}}\left\Vert P_{H}\phi
\right\Vert _{L^{6}}\left\Vert P_{L}\phi \right\Vert _{L^{6}}^{4}dxE_{\text{%
NLS}}^{H}(\phi )^{k-1}\,d\mu (\phi )
\end{eqnarray*}%
Use Holder again in the 1st summand and Bernstein in the 2nd summand, 
\begin{eqnarray*}
&\lesssim &\int \left\Vert \nabla P_{H}\phi \right\Vert _{L^{2}}\left\Vert
\nabla P_{L}\phi \right\Vert _{L^{6}}\left\Vert P_{L}\phi \right\Vert
_{L^{12}}^{4}E_{\text{NLS}}^{H}(\phi )^{k-1}\,d\mu (\phi ) \\
&&+M^{2}\int \left\Vert P_{L}\phi \right\Vert _{L^{6}}\left\Vert P_{H}\phi
\right\Vert _{L^{6}}\left\Vert P_{L}\phi \right\Vert _{L^{6}}^{4}E_{\text{NLS%
}}^{H}(\phi )^{k-1}\,d\mu (\phi )
\end{eqnarray*}%
Applying Bernstein in the 1st summand, then we can apply Holder to get to%
\begin{eqnarray*}
\left\vert \text{C1}\right\vert &\lesssim &\int \left\Vert \nabla \phi
\right\Vert _{L^{2}}^{\frac{2}{3}}\left\Vert \nabla P_{H}\phi \right\Vert
_{L^{2}}^{1-\frac{2}{3}}M\left\Vert P_{L}\phi \right\Vert _{L^{6}}\left( M^{%
\frac{1}{4}}\left\Vert P_{L}\phi \right\Vert _{L^{6}}\right) ^{4}E_{\text{NLS%
}}^{H}(\phi )^{k-1}\,d\mu (\phi ) \\
&&+M^{2}\int \left\Vert \phi \right\Vert _{L^{6}}^{6}E_{\text{NLS}}^{H}(\phi
)^{k-1}\,d\mu (\phi ) \\
&\lesssim &C_{0}^{\frac{2}{3}}M^{2}\int E_{\text{NLS}}(\phi )\,E_{\text{NLS}%
}^{H}(\phi )^{k-1}d\mu (\phi ) \\
&\lesssim &M^{2}(E^{(k)})^{1/k}(E_{H}^{(k)})^{(k-1)/k}.
\end{eqnarray*}%
C2, C3, D2, D3 are similar and they do not have $\Delta $ land on the high
term. We only deal with C2.%
\begin{eqnarray*}
\text{C2} &=&\func{Tr}_{1}B_{1;2,3}^{+}P_{H}^{1}P_{L}^{2}P_{L}^{3}P_{L}^{1^{%
\prime }}P_{L}^{2^{\prime }}P_{L}^{3^{\prime }}(\Delta _{2}-\Delta
_{2^{\prime }})\sigma ^{(3)} \\
&=&\int \left[ \left( \Delta P_{L}\phi \right) (x)\left( P_{L}\bar{\phi}%
\right) (x)-\left( P_{L}\phi \right) (x)\left( \Delta P_{L}\bar{\phi}\right)
(x)\right] \\
&&\times \left( P_{H}\phi \right) (x)\left\vert P_{L}\phi \right\vert
^{2}(x)\left( P_{L}\bar{\phi}\right) (x)dxE_{\text{NLS}}^{H}(\phi
)^{k-1}\,d\mu (\phi )
\end{eqnarray*}%
Put everything in $L^{6}$%
\begin{eqnarray*}
\left\vert \text{C2}\right\vert &\lesssim &\int \left\Vert \Delta P_{L}\phi
\right\Vert _{L^{6}}\left\Vert P_{H}\phi \right\Vert _{L^{6}}\left\Vert
P_{L}\phi \right\Vert _{L^{6}}^{4}E_{\text{NLS}}^{H}(\phi )^{k-1}\,d\mu
(\phi ) \\
&\lesssim &M^{2}\int E_{\text{NLS}}(\phi )\,E_{\text{NLS}}^{H}(\phi
)^{k-1}d\mu (\phi ) \\
&\lesssim &M^{2}(E^{(k)})^{1/k}(E_{H}^{(k)})^{(k-1)/k}.
\end{eqnarray*}%
\textbf{Terms C4, C5, C6, D4, D5, D6. }The terms C4, C5, C6, D4, D5, D6 are
similar. We will only deal with C4 in detail.%
\begin{eqnarray*}
\text{C4p} &=&\func{Tr}%
_{1}B_{1;2,3}^{+}P_{H}^{1}P_{L}^{2}P_{L}^{3}P_{L}^{1^{\prime
}}P_{L}^{2^{\prime }}P_{L}^{3^{\prime }}B_{1;4,5}^{+}\sigma ^{(5)} \\
&=&\int \left[ P_{H}\left( \left\vert \phi \right\vert ^{4}\phi \right) %
\right] (x)\left\vert \left( P_{L}\phi \right) (x)\right\vert ^{4}\left(
P_{L}\bar{\phi}\right) (x)dxE_{\text{NLS}}^{H}(\phi )^{k-1}\,d\mu (\phi )
\end{eqnarray*}%
\begin{eqnarray*}
\text{C4m} &=&\func{Tr}%
_{1}B_{1;2,3}^{+}P_{H}^{1}P_{L}^{2}P_{L}^{3}P_{L}^{1^{\prime
}}P_{L}^{2^{\prime }}P_{L}^{3^{\prime }}B_{1;4,5}^{-}\sigma ^{(5)} \\
&=&\int \left( P_{H}\phi \left\vert P_{L}\phi \right\vert ^{4}\right)
(x)P_{L}\left( \left\vert \phi \right\vert ^{4}\bar{\phi}\right) (x)dxE_{%
\text{NLS}}^{H}(\phi )^{k-1}\,d\mu (\phi )
\end{eqnarray*}%
Thus%
\begin{eqnarray*}
\left\vert \text{C4p}\right\vert &\lesssim &\int \left\Vert P_{H}\left(
\left\vert \phi \right\vert ^{4}\phi \right) \right\Vert _{L^{\frac{6}{5}%
}}\left\Vert \left\vert \left( P_{L}\phi \right) \right\vert ^{4}P_{L}\bar{%
\phi}\right\Vert _{L^{6}}E_{\text{NLS}}^{H}(\phi )^{k-1}\,d\mu (\phi ) \\
&\lesssim &\int \left\Vert \phi \right\Vert _{L^{6}}^{5}\left\Vert P_{L}\phi
\right\Vert _{L^{30}}^{5}E_{\text{NLS}}^{H}(\phi )^{k-1}\,d\mu (\phi ) \\
&\lesssim &\int \left\Vert \nabla \phi \right\Vert _{L^{2}}^{4}\left\Vert
\phi \right\Vert _{L^{6}}^{1}\left\Vert \left\vert \nabla \right\vert ^{%
\frac{2}{5}}P_{L}\phi \right\Vert _{L^{6}}^{5}E_{\text{NLS}}^{H}(\phi
)^{k-1}\,d\mu (\phi )
\end{eqnarray*}%
Use Bernstein,%
\begin{eqnarray*}
\left\vert \text{C4p}\right\vert &\lesssim &C_{0}^{4}M^{2}\int \left\Vert
\phi \right\Vert _{L^{6}}^{6}E_{\text{NLS}}^{H}(\phi )^{k-1}\,d\mu (\phi ) \\
&\lesssim &M^{2}\int E_{\text{NLS}}(\phi )\,E_{\text{NLS}}^{H}(\phi
)^{k-1}d\mu (\phi ) \\
&\lesssim &M^{2}(E^{(k)})^{1/k}(E_{H}^{(k)})^{(k-1)/k}.
\end{eqnarray*}%
On the other hand, for C4m, we again apply Bernstein in the form $\Vert
P_{L}f\Vert _{L_{x}^{6}}\lesssim M^{2}\Vert f\Vert _{L_{x}^{6/5}}$ with $%
f=\phi |\phi |^{4}$%
\begin{eqnarray*}
\left\vert \text{C4m}\right\vert &\lesssim &\int \left\Vert P_{H}\phi
\left\vert P_{L}\phi \right\vert ^{4}\right\Vert _{L^{\frac{6}{5}%
}}\left\Vert P_{L}\left( \left\vert \phi \right\vert ^{4}\bar{\phi}\right)
\right\Vert _{L^{6}}E_{\text{NLS}}^{H}(\phi )^{k-1}\,d\mu (\phi ) \\
&\lesssim &M^{2}\int \left\Vert \phi \right\Vert _{L^{6}}^{5}\left\Vert
\left\vert \phi \right\vert ^{4}\bar{\phi}\right\Vert _{L^{\frac{6}{5}}}E_{%
\text{NLS}}^{H}(\phi )^{k-1}\,d\mu (\phi ) \\
&\lesssim &C_{0}^{4}M^{2}\int E_{\text{NLS}}(\phi )\,E_{\text{NLS}}^{H}(\phi
)^{k-1}d\mu (\phi ) \\
&\lesssim &M^{2}(E^{(k)})^{1/k}(E_{H}^{(k)})^{(k-1)/k}.
\end{eqnarray*}%
That is%
\begin{equation*}
\left\vert \text{C4}\right\vert \lesssim \left\vert \text{C4p}\right\vert
+\left\vert \text{C4m}\right\vert \lesssim
M^{2}(E^{(k)})^{1/k}(E_{H}^{(k)})^{(k-1)/k}.
\end{equation*}%
\textbf{Term E1, E2, E3, F1, F2, F3. }E1 and F1 are similar, while E2, E3,
F2, and F3 are similar. We only deal with E1 and E2 in detail. On the one
hand,%
\begin{eqnarray*}
\text{E1} &=&\func{Tr}_{1}B_{1;2,3}^{+}P_{L}^{1}P_{H}^{2}P_{H}^{3}P_{L}^{1^{%
\prime }}P_{L}^{2^{\prime }}P_{L}^{3^{\prime }}(\Delta _{1}-\Delta
_{1^{\prime }})\sigma ^{(3)} \\
&=&\int \left[ \left( P_{L}\Delta \phi \right) (x)\left( P_{L}\bar{\phi}%
\right) (x)-\left( P_{L}\phi \right) (x)\left( P_{L}\Delta \bar{\phi}\right)
(x)\right] \\
&&\times \left( P_{H}\phi \right) (x)\left( P_{H}\phi \right) (x)\left( P_{L}%
\bar{\phi}\right) (x)\left( P_{L}\bar{\phi}\right) (x)dxE_{\text{NLS}%
}^{H}(\phi )^{k-1}\,d\mu (\phi ).
\end{eqnarray*}%
Put everything in $L^{6}$,%
\begin{eqnarray*}
\left\vert \text{E1}\right\vert &\leqslant &\int \left\Vert P_{L}\Delta \phi
\right\Vert _{L^{6}}\left\Vert P_{L}\Delta \phi \right\Vert
_{L^{6}}^{3}\left\Vert P_{H}\Delta \phi \right\Vert _{L^{6}}^{2}E_{\text{NLS}%
}^{H}(\phi )^{k-1}\,d\mu (\phi ) \\
&\lesssim &M^{2}\int \left\Vert \phi \right\Vert _{L^{6}}^{6}E_{\text{NLS}%
}^{H}(\phi )^{k-1}\,d\mu (\phi ) \\
&\lesssim &M^{2}\int E_{\text{NLS}}(\phi )\,E_{\text{NLS}}^{H}(\phi
)^{k-1}\,d\mu (\phi ) \\
&\lesssim &M^{2}(E^{(k)})^{1/k}(E_{H}^{(k)})^{(k-1)/k}.
\end{eqnarray*}%
On the other hand,%
\begin{eqnarray*}
\text{E2} &=&\func{Tr}_{1}B_{1;2,3}^{+}P_{L}^{1}P_{H}^{2}P_{H}^{3}P_{L}^{1^{%
\prime }}P_{L}^{2^{\prime }}P_{L}^{3^{\prime }}(\Delta _{2}-\Delta
_{2^{\prime }})\sigma ^{(3)} \\
&=&\int \left[ \left( \Delta P_{H}\phi \right) (x)\left( P_{L}\bar{\phi}%
\right) (x)-\left( P_{H}\phi \right) (x)\left( \Delta P_{L}\bar{\phi}\right)
(x)\right] \\
&&\times \left( P_{H}\phi \right) (x)\left\vert P_{L}\phi \right\vert
^{2}(x)\left( P_{L}\bar{\phi}\right) (x)dxE_{\text{NLS}}^{H}(\phi
)^{k-1}\,d\mu (\phi )
\end{eqnarray*}%
Integrating by parts in the 1st summand and putting everything in $L^{6}$ in
the 2nd summand, we get to%
\begin{eqnarray*}
\left\vert \text{E2}\right\vert &\lesssim &\left\vert \int \left( \nabla
P_{H}\phi \right) (x)\left( P_{L}\bar{\phi}\right) (x)\left( \nabla
P_{H}\phi \right) (x)\left\vert P_{L}\phi \right\vert ^{2}(x)\left( P_{L}%
\bar{\phi}\right) (x)dxE_{\text{NLS}}^{H}(\phi )^{k-1}\,d\mu (\phi
)\right\vert \\
&&+\left\vert \int \left( \nabla P_{H}\phi \right) (x)\left( \nabla P_{L}%
\bar{\phi}\right) (x)\left( P_{H}\phi \right) (x)\left\vert P_{L}\phi
\right\vert ^{2}(x)\left( P_{L}\bar{\phi}\right) (x)dxE_{\text{NLS}%
}^{H}(\phi )^{k-1}\,d\mu (\phi )\right\vert \\
&&+\int \left\Vert P_{H}\phi \right\Vert _{L^{6}}^{2}\left\Vert \Delta
P_{L}\phi \right\Vert _{L^{6}}\left\Vert P_{L}\phi \right\Vert
_{L^{6}}^{3}E_{\text{NLS}}^{H}(\phi )^{k-1}\,d\mu (\phi )
\end{eqnarray*}%
Apply Holder in the first two term as well,%
\begin{eqnarray*}
\left\vert \text{E2}\right\vert &\lesssim &\int \left\Vert \nabla P_{H}\phi
\right\Vert _{L^{2}}^{2}\left\Vert P_{L}\phi \right\Vert _{L^{\infty
}}^{4}E_{\text{NLS}}^{H}(\phi )^{k-1}\,d\mu (\phi ) \\
&&+\int \left\Vert \nabla P_{H}\phi \right\Vert _{L^{2}}\left\Vert P_{H}\phi
\right\Vert _{L^{6}}\left\Vert \nabla P_{L}\bar{\phi}\right\Vert
_{L^{3}}\left\Vert P_{L}\phi \right\Vert _{L^{\infty }}^{3}E_{\text{NLS}%
}^{H}(\phi )^{k-1}\,d\mu (\phi ) \\
&&+M^{2}\int \left\Vert \phi \right\Vert _{L^{6}}^{6}E_{\text{NLS}}^{H}(\phi
)^{k-1}\,d\mu (\phi )
\end{eqnarray*}%
Use the Bernstein inequalities $\left\Vert P_{L}\phi \right\Vert _{L^{\infty
}}\lesssim M^{\frac{1}{2}}\left\Vert \nabla P_{L}\phi \right\Vert _{L^{2}}$
and $\left\Vert \nabla P_{L}\phi \right\Vert _{L^{3}}\lesssim M^{\frac{1}{2}%
}\left\Vert \nabla \phi \right\Vert _{L^{2}}$ for the first two terms, we
then have 
\begin{eqnarray*}
\left\vert \text{E2}\right\vert &\lesssim &M^{2}\int \left\Vert \nabla \phi
\right\Vert _{L^{2}}^{4}\left\Vert \nabla \phi \right\Vert _{L^{2}}^{2}E_{%
\text{NLS}}^{H}(\phi )^{k-1}\,d\mu (\phi ) \\
&&+M^{2}\int \left\Vert \phi \right\Vert _{L^{6}}^{6}E_{\text{NLS}}^{H}(\phi
)^{k-1}\,d\mu (\phi )
\end{eqnarray*}%
By (\ref{eqn:property of measure 1}), 
\begin{eqnarray*}
\left\vert \text{E2}\right\vert &\lesssim &C_{0}^{4}M^{2}\int \left\Vert
\nabla \phi \right\Vert _{L^{2}}^{2}E_{\text{NLS}}^{H}(\phi )^{k-1}\,d\mu
(\phi ) \\
&&+M^{2}\int E_{\text{NLS}}(\phi )E_{\text{NLS}}^{H}(\phi )^{k-1}\,d\mu
(\phi ) \\
&\lesssim &M^{2}\int E_{\text{NLS}}(\phi )E_{\text{NLS}}^{H}(\phi
)^{k-1}\,d\mu (\phi ) \\
&\lesssim &M^{2}(E^{(k)})^{1/k}(E_{H}^{(k)})^{(k-1)/k}.
\end{eqnarray*}%
\textbf{Term G1, G2, G3. }We start with G1.%
\begin{eqnarray*}
\text{G1} &=&\func{Tr}%
_{1}3B_{1;2,3}^{+}P_{H}^{1}P_{L}^{2}P_{L}^{3}P_{H}^{1^{\prime
}}P_{L}^{2^{\prime }}P_{L}^{3^{\prime }}(\Delta _{1}-\Delta _{1^{\prime
}})\sigma ^{(3)} \\
&=&3\int \left[ \left( \Delta P_{H}\phi \right) (x)\left( P_{H}\bar{\phi}%
\right) (x)-\left( \Delta P_{H}\phi \right) (x)\left( \Delta P_{H}\bar{\phi}%
\right) (x)\right] \\
&&\times \left\vert P_{L}\phi \right\vert ^{4}(x)E_{\text{NLS}}^{H}(\phi
)^{k-1}\,d\mu (\phi ) \\
&=&6i\func{Im}\int \left( \Delta P_{H}\phi \right) (x)\left( P_{H}\bar{\phi}%
\right) (x)\left\vert P_{L}\phi \right\vert ^{4}(x)dxE_{\text{NLS}}^{H}(\phi
)^{k-1}\,d\mu (\phi )
\end{eqnarray*}%
Integrating by parts gives%
\begin{eqnarray*}
\left\vert \text{G1}\right\vert &\lesssim &\left\vert \int \left( \nabla
P_{H}\phi \right) (x)\left( \nabla P_{H}\bar{\phi}\right) (x)\left\vert
P_{L}\phi \right\vert ^{4}(x)dxE_{\text{NLS}}^{H}(\phi )^{k-1}\,d\mu (\phi
)\right\vert \\
&&+\left\vert \int \left( \nabla P_{H}\phi \right) (x)\left( P_{H}\bar{\phi}%
\right) (x)\left\vert P_{L}\phi \right\vert ^{2}(x)\left( \nabla P_{L}\phi
\right) (x)\left( P_{L}\bar{\phi}\right) (x)dxE_{\text{NLS}}^{H}(\phi
)^{k-1}\,d\mu (\phi )\right\vert
\end{eqnarray*}%
which can be estimated like E2, so we omit the details here and conclude%
\begin{equation*}
\left\vert \text{G1}\right\vert \lesssim
M^{2}(E^{(k)})^{1/k}(E_{H}^{(k)})^{(k-1)/k}.
\end{equation*}%
G2 and G3 are similar, so we only deal with G2.%
\begin{eqnarray*}
\text{G2} &=&\func{Tr}%
_{1}3B_{1;2,3}^{+}P_{H}^{1}P_{L}^{2}P_{L}^{3}P_{H}^{1^{\prime
}}P_{L}^{2^{\prime }}P_{L}^{3^{\prime }}(\Delta _{2}-\Delta _{2^{\prime
}})\sigma ^{(3)} \\
&=&3\int \left[ \left( \Delta P_{L}\phi \right) (x)\left( P_{L}\bar{\phi}%
\right) (x)-\left( P_{L}\phi \right) (x)\left( \Delta P_{L}\bar{\phi}\right)
(x)\right] \\
&&\times \left\vert P_{H}\phi \right\vert ^{2}(x)\left\vert P_{L}\phi
\right\vert ^{2}(x)dxE_{\text{NLS}}^{H}(\phi )^{k-1}\,d\mu (\phi ) \\
&=&6\func{Im}\int \left( \Delta P_{L}\phi \right) (x)\left( P_{L}\bar{\phi}%
\right) (x)\left\vert P_{H}\phi \right\vert ^{2}(x)\left\vert P_{L}\phi
\right\vert ^{2}(x)dxE_{\text{NLS}}^{H}(\phi )^{k-1}\,d\mu (\phi )
\end{eqnarray*}%
Putting everything in $L^{6}$, we have%
\begin{eqnarray*}
\left\vert \text{G2}\right\vert &\lesssim &\int \left\Vert \Delta P_{L}\phi
\right\Vert _{L^{6}}\left\Vert P_{H}\phi \right\Vert _{L^{6}}^{2}\left\Vert
P_{L}\phi \right\Vert _{L^{6}}^{3}E_{\text{NLS}}^{H}(\phi )^{k-1}\,d\mu
(\phi ) \\
&\lesssim &M^{2}\int \left\Vert \phi \right\Vert _{L^{6}}^{6}E_{\text{NLS}%
}^{H}(\phi )^{k-1}\,d\mu (\phi ) \\
&\lesssim &M^{2}\int E_{\text{NLS}}(\phi )E_{\text{NLS}}^{H}(\phi
)^{k-1}\,d\mu (\phi ) \\
&\lesssim &M^{2}(E^{(k)})^{1/k}(E_{H}^{(k)})^{(k-1)/k}.
\end{eqnarray*}%
\textbf{Term E4, E5, E6, F4, F5, F6}. E4 and F4 are similar while E5, E6, F5
and F6 are similar. So we only deal with E4 and E5. On the one hand,%
\begin{eqnarray*}
\text{E4} &=&\func{Tr}_{1}B_{1;2,3}^{+}P_{L}^{1}P_{H}^{2}P_{H}^{3}P_{L}^{1^{%
\prime }}P_{L}^{2^{\prime }}P_{L}^{3^{\prime
}}(B_{1;4,5}^{+}-B_{1;4,5}^{-})\sigma ^{(5)} \\
&=&\int \left[ P_{L}\left( \left\vert \phi \right\vert ^{4}\phi \right)
(x)\left( P_{L}\bar{\phi}\right) (x)-\left( P_{L}\phi \right) (x)P_{L}\left(
\left\vert \phi \right\vert ^{4}\bar{\phi}\right) (x)\right] \\
&&\times \left( P_{H}\phi \right) (x)\left( P_{H}\phi \right) (x)\left( P_{L}%
\bar{\phi}\right) (x)\left( P_{L}\bar{\phi}\right) (x)dxE_{\text{NLS}%
}^{H}(\phi )^{k-1}\,d\mu (\phi ) \\
&=&2i\func{Im}\int P_{L}\left( \left\vert \phi \right\vert ^{4}\phi \right)
(x)\left( P_{L}\bar{\phi}\right) (x) \\
&&\times \left( P_{H}\phi \right) (x)\left( P_{H}\phi \right) (x)\left( P_{L}%
\bar{\phi}\right) (x)\left( P_{L}\bar{\phi}\right) (x)dxE_{\text{NLS}%
}^{H}(\phi )^{k-1}\,d\mu (\phi )
\end{eqnarray*}%
Again, putting everything in $L^{6}$ and the Bernstein $\Vert P_{L}f\Vert
_{L_{x}^{6}}\lesssim M^{2}\Vert f\Vert _{L_{x}^{6/5}}$ give%
\begin{eqnarray*}
\left\vert \text{E4}\right\vert &\lesssim &\int \left\Vert P_{L}\left(
\left\vert \phi \right\vert ^{4}\phi \right) \right\Vert _{L^{6}}\left\Vert
P_{L}\phi \right\Vert _{L^{6}}^{3}\left\Vert P_{H}\phi \right\Vert
_{L^{6}}^{2}E_{\text{NLS}}^{H}(\phi )^{k-1}\,d\mu (\phi ) \\
&\lesssim &M^{2}\int \left\Vert \left\vert \phi \right\vert ^{4}\phi
\right\Vert _{L^{\frac{6}{5}}}\left\Vert \phi \right\Vert _{L^{6}}^{5}E_{%
\text{NLS}}^{H}(\phi )^{k-1}\,d\mu (\phi ) \\
&\lesssim &M^{2}\int \left\Vert \phi \right\Vert _{L^{6}}^{6}\left\Vert
\nabla \phi \right\Vert _{L^{2}}^{4}E_{\text{NLS}}^{H}(\phi )^{k-1}\,d\mu
(\phi ) \\
&\lesssim &C_{0}^{4}M^{2}\int E_{\text{NLS}}(\phi )E_{\text{NLS}}^{H}(\phi
)^{k-1}\,d\mu (\phi ) \\
&\lesssim &M^{2}(E^{(k)})^{1/k}(E_{H}^{(k)})^{(k-1)/k}.
\end{eqnarray*}%
On the other hand,%
\begin{eqnarray*}
\text{E5} &=&\func{Tr}_{1}B_{1;2,3}^{+}P_{L}^{1}P_{H}^{2}P_{H}^{3}P_{L}^{1^{%
\prime }}P_{L}^{2^{\prime }}P_{L}^{3^{\prime
}}(B_{2;4,5}^{+}-B_{2;4,5}^{-})\sigma ^{(5)} \\
&=&\int \left[ P_{H}\left( \left\vert \phi \right\vert ^{4}\phi \right)
(x)\left( P_{L}\bar{\phi}\right) (x)-P_{H}\left( \left\vert \phi \right\vert
^{4}\bar{\phi}\right) (x)\left( P_{L}\phi \right) (x)\right] \\
&&\times \left( P_{H}\phi \right) (x)\left\vert P_{L}\phi \right\vert
^{2}(x)\left( P_{L}\bar{\phi}\right) (x)dxE_{\text{NLS}}^{H}(\phi
)^{k-1}\,d\mu (\phi ) \\
&=&2i\func{Im}\int P_{H}\left( \left\vert \phi \right\vert ^{4}\phi \right)
(x)\left( P_{L}\bar{\phi}\right) (x) \\
&&\times \left( P_{H}\phi \right) (x)\left\vert P_{L}\phi \right\vert
^{2}(x)\left( P_{L}\bar{\phi}\right) (x)dxE_{\text{NLS}}^{H}(\phi
)^{k-1}\,d\mu (\phi )
\end{eqnarray*}%
That is%
\begin{eqnarray*}
\left\vert \text{E5}\right\vert &\lesssim &\int \left\Vert P_{H}\left(
\left\vert \phi \right\vert ^{4}\phi \right) \right\Vert _{L^{\frac{6}{5}%
}}\left\Vert P_{L}\phi \right\Vert _{L^{\infty }}^{4}\left\Vert P_{H}\phi
\right\Vert _{L^{6}}E_{\text{NLS}}^{H}(\phi )^{k-1}\,d\mu (\phi ) \\
&\lesssim &M^{2}\int \left\Vert \phi \right\Vert _{L^{6}}^{6}\left\Vert
\nabla P_{L}\phi \right\Vert _{L^{2}}^{4}E_{\text{NLS}}^{H}(\phi
)^{k-1}\,d\mu (\phi ) \\
&\lesssim &C_{0}^{4}M^{2}\int \left\Vert \phi \right\Vert _{L^{6}}^{6}E_{%
\text{NLS}}^{H}(\phi )^{k-1}\,d\mu (\phi ) \\
&\lesssim &M^{2}\int E_{\text{NLS}}(\phi )E_{\text{NLS}}^{H}(\phi
)^{k-1}\,d\mu (\phi ) \\
&\lesssim &M^{2}(E^{(k)})^{1/k}(E_{H}^{(k)})^{(k-1)/k}.
\end{eqnarray*}%
\textbf{Term G4, G5, G6.}%
\begin{eqnarray*}
\text{G4} &=&\func{Tr}%
_{1}3B_{1;2,3}^{+}P_{H}^{1}P_{L}^{2}P_{L}^{3}P_{H}^{1^{\prime
}}P_{L}^{2^{\prime }}P_{L}^{3^{\prime }}(B_{1;4,5}^{+}-B_{1;4,5}^{-})\sigma
^{(5)} \\
&=&3\int \left[ P_{H}\left( \left\vert \phi \right\vert ^{4}\phi \right)
(x)\left( P_{H}\bar{\phi}\right) (x)-\left( P_{H}\phi \right) (x)P_{H}\left(
\left\vert \phi \right\vert ^{4}\bar{\phi}\right) (x)\right] \\
&&\times \left\vert P_{L}\phi \right\vert ^{4}(x)dxE_{\text{NLS}}^{H}(\phi
)^{k-1}\,d\mu (\phi )
\end{eqnarray*}%
which can be estimated in the same way as E5, we skip the details and reach%
\begin{equation*}
\left\vert \text{G4}\right\vert \lesssim
M^{2}(E^{(k)})^{1/k}(E_{H}^{(k)})^{(k-1)/k}.
\end{equation*}%
G5 and G6 are similar, so we only deal with G5. In fact,%
\begin{eqnarray*}
\text{G5} &=&\func{Tr}%
_{1}3B_{1;2,3}^{+}P_{H}^{1}P_{L}^{2}P_{L}^{3}P_{H}^{1^{\prime
}}P_{L}^{2^{\prime }}P_{L}^{3^{\prime }}(B_{2;4,5}^{+}-B_{2;4,5}^{-})\sigma
^{(5)} \\
&=&\int \left[ P_{L}\left( \left\vert \phi \right\vert ^{4}\phi \right)
\left( P_{L}\bar{\phi}\right) (x)-\left( P_{L}\phi \right) (x)P_{L}\left(
\left\vert \phi \right\vert ^{4}\bar{\phi}\right) (x)\right] \\
&&\times \left\vert P_{H}\phi \right\vert ^{2}(x)\left\vert P_{L}\phi
\right\vert ^{2}dxE_{\text{NLS}}^{H}(\phi )^{k-1}\,d\mu (\phi ) \\
&=&6\func{Im}\int P_{L}\left( \left\vert \phi \right\vert ^{4}\phi \right)
\left( P_{L}\bar{\phi}\right) (x)\left\vert P_{H}\phi \right\vert ^{2}(x) \\
&&\times \left\vert P_{L}\phi \right\vert ^{2}\left( P_{L}\bar{\phi}\right)
(x)dxE_{\text{NLS}}^{H}(\phi )^{k-1}\,d\mu (\phi )
\end{eqnarray*}%
Hence,%
\begin{equation*}
\left\vert \text{G5}\right\vert \lesssim \int \left\Vert P_{L}\left(
\left\vert \phi \right\vert ^{4}\phi \right) \right\Vert _{L^{6}}\left\Vert
P_{L}\phi \right\Vert _{L^{6}}^{3}\left\Vert P_{H}\phi \right\Vert
_{L^{6}}^{2}E_{\text{NLS}}^{H}(\phi )^{k-1}\,d\mu (\phi )
\end{equation*}%
again, by Bernstein $\Vert P_{L}f\Vert _{L_{x}^{6}}\lesssim M^{2}\Vert
f\Vert _{L_{x}^{6/5}}$,%
\begin{eqnarray*}
\left\vert \text{G5}\right\vert &\lesssim &M^{2}\int \left\Vert \left\vert
\phi \right\vert ^{4}\phi \right\Vert _{L^{\frac{6}{5}}}\left\Vert \phi
\right\Vert _{L^{6}}^{5}E_{\text{NLS}}^{H}(\phi )^{k-1}\,d\mu (\phi ) \\
&\lesssim &C_{0}^{4}M^{2}\int \left\Vert \phi \right\Vert _{L^{6}}^{6}E_{%
\text{NLS}}^{H}(\phi )^{k-1}\,d\mu (\phi ) \\
&\lesssim &M^{2}\int E_{\text{NLS}}(\phi )E_{\text{NLS}}^{H}(\phi
)^{k-1}\,d\mu (\phi ) \\
&\lesssim &M^{2}(E^{(k)})^{1/k}(E_{H}^{(k)})^{(k-1)/k}.
\end{eqnarray*}%
So far, we have estimated A1 to G6, and proved that 
\begin{equation*}
\left\vert \frac{1}{k}\partial _{t}E_{H}^{(k)}\right\vert \lesssim
M^{2}(E^{(k)})^{1/k}(E_{H}^{(k)})^{(k-1)/k}
\end{equation*}%
which is exactly (\ref{E:main-1}).

\subsubsection{Concluding the Proof of Theorem \protect\ref{Thm:UTFLinTime} 
\label{Sec:Proof of High Kinetic Eats High Energy}}

Recall $E_{H}^{(k)}(t),$ in terms of the de Finetti measure, we can rewrite
it as%
\begin{equation*}
E_{H}^{(k)}(t)=\int \left( E_{NLS}^{H}(\phi )\right) ^{k}d\mu _{t}(\phi ).
\end{equation*}%
Since the complex conjugates in $E_{NLS}^{H}$ do not effect the estimates,
we put%
\begin{eqnarray*}
E_{NLS}^{H}(\phi ) &=&\Vert P_{H}\nabla \phi \Vert _{L^{2}}^{2}+a_{3}\int
\left( P_{H}\phi \right) ^{3}\left( P_{L}\phi \right) ^{3}dx \\
&&+a_{4}\int \left( P_{H}\phi \right) ^{4}\left( P_{L}\phi \right) ^{2}dx \\
&&+a_{5}\int \left( P_{H}\phi \right) ^{4}\left( P_{L}\phi \right) ^{2}dx \\
&&+\int \left\vert P_{H}\phi \right\vert ^{6}dx
\end{eqnarray*}%
where $a_{3},a_{4},a_{5}$ are some binomial coefficients. Applying the
refined Sobolev estimates (\ref{sobolev:refine1})-(\ref{sobolev:refine3}) to
the high-low terms in $E_{NLS}^{H}(\phi )$, we attain a lower bound\footnote{%
Estimate (\ref{bound:lower high NLS energy bound}) does not hold for a
focusing problem in which the nonpositive summand $-\left\Vert P_{H}\nabla
\phi \right\Vert _{L^{6}}^{6}$ cannot be dropped.} for $E_{NLS}^{H}(\phi )$: 
\begin{eqnarray}
&&E_{NLS}^{H}(\phi )  \label{bound:lower high NLS energy bound} \\
&\geqslant &\Vert P_{H}\nabla \phi \Vert _{L^{2}}^{2}-C\Vert \nabla \phi
\Vert _{L^{2}}(\Vert \nabla \phi _{M<\bullet <R}\Vert _{L^{2}}^{2}\Vert
\nabla P_{H}\phi \Vert _{L^{2}}^{3}+\left( \frac{M}{R}\right) ^{\frac{1}{2}%
}\Vert \nabla P_{H}\phi \Vert _{L^{2}}^{5})  \notag \\
&\geqslant &\Vert P_{H}\nabla \phi \Vert _{L^{2}}^{2}-C\left[ \Vert \nabla
\phi \Vert _{L^{2}}^{2}\Vert \nabla \phi _{M<\bullet <R}\Vert
_{L^{2}}^{2}+C\left( \frac{M}{R}\right) ^{\frac{1}{2}}\Vert \nabla \phi
\Vert _{L^{2}}^{4}\right] \Vert \nabla P_{H}\phi \Vert _{L^{2}}^{2}).  \notag
\end{eqnarray}%
In the support of $d\mu _{t}(\phi )$, if we take $R=100CC_{0}^{4}M,$ (\ref%
{bound:lower high NLS energy bound}) simplifies to 
\begin{eqnarray}
E_{NLS}^{H}(\phi ) &\geqslant &\left[ 1-CC_{0}^{4}\left( \Vert \nabla \phi
_{M<\bullet <R}\Vert _{L^{2}}^{2}+\left( \frac{M}{R}\right) ^{\frac{1}{2}%
}\right) \right] \Vert P_{H}\nabla \phi \Vert _{L^{2}}^{2}
\label{bound:lower high NLS energy bound under measure} \\
&\geqslant &\left[ \frac{2}{3}-CC_{0}^{4}\Vert \nabla \phi _{M<\bullet
<R}\Vert _{L^{2}}^{2}\right] \Vert P_{H}\nabla \phi \Vert _{L^{2}}^{2} 
\notag \\
&\geqslant &\left( \frac{2}{3}-CK_{M\leq \bullet \leq R}(t)\right) \Vert
P_{H}\nabla \phi \Vert _{L^{2}}^{2}  \notag
\end{eqnarray}%
where $K_{M\leq \bullet \leq R}(t)$ is defined in (\ref{eqn:def of
intermediate kinetic}).

Recall (\ref{eqn:HUFL}) which implies that $K_{M\leq \bullet \leq
R}(0)\leqslant \frac{\varepsilon }{4}$. Our goal here is to prove that $%
K_{M\leq \bullet \leq R}(t)$ remains small for all $t\in \left( 0,\frac{1}{8}%
\left( C\langle M\rangle ^{2}\right) ^{-1}\varepsilon ^{2}\right) $ so that
we can put (\ref{bound:lower high NLS energy bound under measure}) into $%
E_{H}^{(k)}(t)$ and get 
\begin{eqnarray*}
E_{H}^{(k)}(t) &=&\int \left( E_{NLS}^{H}(\phi )\right) ^{k}d\mu _{t}(\phi )
\\
&\geqslant &\left( \frac{1}{2}\right) ^{k}\int \Vert P_{H}\nabla \phi \Vert
_{L^{2}}^{2k}d\mu _{t}(\phi )
\end{eqnarray*}%
Once we have proved the above, due to (\ref{eq conclusion of high energy
growth}), we would have%
\begin{equation*}
\left( \frac{\varepsilon }{2}\right) ^{k}\geqslant \left( \frac{1}{2}\right)
^{k}\int \Vert P_{H}\nabla \phi \Vert _{L^{2}}^{2k}d\mu _{t}(\phi )
\end{equation*}%
which is indeed%
\begin{equation*}
\limfunc{Tr}R^{(1,k)}P_{>M}^{(k)}\gamma
^{(k)}(t)P_{>M}^{(k)}R^{(1,k)}\leqslant \varepsilon ^{2k}
\end{equation*}%
for $t\in \left[ 0,\frac{1}{8}\left( C\langle M\rangle ^{2}\right)
^{-1}\varepsilon ^{2}\right] $ and hence have finished the proof of Theorem %
\ref{Thm:UTFLinTime}.

Assume for the moment that, $K_{M\leq \bullet \leq R}(t)$ is continuous in $%
t $, a result which we will prove in \S \ref{Sec:Proof of High Kinetic
Continuity} as Lemma \ref{lem:intermediate kinetic energy is continuous}, we
can let $t_{\ast }$ be the infimum of all times $t\in \left( 0,\frac{1}{8}%
\left( C\langle M\rangle ^{2}\right) ^{-1}\varepsilon \right) $ for which $%
K_{M\leq \bullet \leq R}(t)\geqslant 2\varepsilon $, then we infer\footnote{%
This is why Lemma \ref{lem:intermediate kinetic energy is continuous} is
crucial.}from the continuity of $K_{M\leq \bullet \leq R}(t)$ that $K_{M\leq
\bullet \leq R}(t_{\ast })=2\varepsilon $.

Putting (\ref{bound:lower high NLS energy bound under measure}) into $%
E_{H}^{(k)}(t_{\ast })$, we have%
\begin{equation*}
\frac{1}{2^{k}}\left( \int \Vert P_{H}\nabla \phi \Vert _{L^{2}}^{2k}d\mu
_{t_{\ast }}(\phi )\right) \leqslant \left( E_{H}^{(k)}(t_{\ast })\right)
\leqslant \left( \frac{\varepsilon }{2}\right) ^{k}
\end{equation*}%
Taking the $k$-th root and letting $k\rightarrow \infty $, we have%
\begin{equation*}
\frac{1}{2}K_{M\leqslant \bullet \leqslant R}(t_{\ast })\leqslant \frac{1}{2}%
\left( \int \Vert P_{\geqslant M}\nabla \phi \Vert _{L^{2}}^{2k}d\mu
_{t_{\ast }}(\phi )\right) ^{\frac{1}{k}}\leqslant \left(
E_{H}^{(k)}(t_{\ast })\right) ^{\frac{1}{k}}\leqslant \frac{\varepsilon }{2}
\end{equation*}%
Thus,%
\begin{equation*}
K_{M\leq \bullet \leq R}(t_{\ast })\leqslant \varepsilon
\end{equation*}%
which contradicts the assumption that $K_{M\leq \bullet \leq R}(t_{\ast
})=2\varepsilon $. That is, such a $t_{\ast }$ does not exist in $\left( 0,%
\frac{1}{8}\left( C\langle M\rangle ^{2}\right) ^{-1}\varepsilon \right) $.
Hence, we have proved that $K_{M\leq \bullet \leq R}(t)$ remains small for
all $t\in \left[ 0,\frac{1}{8}\left( C\langle M\rangle ^{2}\right)
^{-1}\varepsilon ^{2}\right] $, and established Theorem \ref{Thm:UTFLinTime}
assuming the continuity of $K_{M\leq \bullet \leq R}(t)$.

\begin{lemma}[Refined Sobolev]
\label{lemma:refined sobolev for continuity}For any dyadic level $R\geq M$,
let $\phi _{M<\bullet <R}=P_{<R}P_{>M}\phi $, we have the following refined
Sobolev estimates: 
\begin{equation}
\left\vert \int \left( P_{H}\phi \right) ^{3}\left( P_{L}\phi \right)
^{3}dx\right\vert \lesssim \Vert \nabla \phi \Vert _{L^{2}}^{3}(\Vert \nabla
\phi _{M<\bullet <R}\Vert _{L^{2}}^{2}\Vert \nabla P_{H}\phi \Vert _{L^{2}}+(%
\frac{M}{R})^{3/2}\Vert \nabla P_{H}\phi \Vert _{L^{2}}^{3})
\label{sobolev:refine1}
\end{equation}%
\begin{equation}
\left\vert \int \left( P_{H}\phi \right) ^{4}\left( P_{L}\phi \right)
^{2}dx\right\vert \lesssim \Vert \nabla \phi \Vert _{L^{2}}^{2}(\Vert \nabla
\phi _{M<\bullet <R}\Vert _{L^{2}}^{2}\Vert \nabla P_{H}\phi \Vert
_{L^{2}}^{2}+\left( \frac{M}{R}\right) \Vert \nabla P_{H}\phi \Vert
_{L^{2}}^{4})  \label{sobolev:refine2}
\end{equation}%
and%
\begin{equation}
\left\vert \int \left( P_{H}\phi \right) ^{5}\left( P_{L}\phi \right)
dx\right\vert \lesssim \Vert \nabla \phi \Vert _{L^{2}}(\Vert \nabla \phi
_{M<\bullet <R}\Vert _{L^{2}}^{2}\Vert \nabla P_{H}\phi \Vert
_{L^{2}}^{3}+\left( \frac{M}{R}\right) ^{\frac{1}{2}}\Vert \nabla P_{H}\phi
\Vert _{L^{2}}^{5})  \label{sobolev:refine3}
\end{equation}%
Similar estimates hold where any $P_{H}\phi $ is replaced by $\overline{%
P_{H}\phi }$ and any $P_{L}\phi $ is replaced by $\overline{P_{L}\phi }$, as
will be apparent from the proof.
\end{lemma}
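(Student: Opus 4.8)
The plan is to prove all three inequalities by a Littlewood--Paley decomposition of the six factors, using that the left-hand side is a full contraction over $\mathbb{T}^{3}$, so the six frequencies must sum to zero. Since the $6-j$ low factors sit at frequency $\le M$, the two largest of the six frequencies must be carried by two of the $j$ high factors and must be comparable; write $N$ for their common dyadic size, and note the range $N\lesssim M$ is disposed of at once from Sobolev's inequality $\|u\|_{L^{6}}\lesssim\|\nabla u\|_{L^{2}}$ on $\mathbb{T}^{3}$. The essential dichotomy is $N\le R$ versus $N>R$. For $j=3$ it is convenient to first absorb the three low factors into $\|\nabla\phi\|_{L^{2}}^{3}$ by Sobolev, reducing to an estimate for $\|P_{\lesssim M}((P_{H}\phi)^{3})\|_{L^{2}}$; for $j=4,5$ the low factors must be kept explicit so that their Bernstein gains in $M$ can compound.

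In the range $N\le R$, both top factors are pieces of $\phi_{M<\bullet<R}$. I would peel these two into $L^{2}$, trading $\|P_{N_{i}}\phi\|_{L^{2}}\sim N_{i}^{-1}\|\nabla P_{N_{i}}\phi\|_{L^{2}}$, and distribute the remaining $j-2$ high and $6-j$ low factors over Lebesgue spaces by H\"older, choosing exponents so that every non-top high factor retains a small negative power of its own frequency (so the dyadic sums converge geometrically, with no logarithmic loss) while the low factors absorb the residual positive powers of $M$ via Bernstein. Summing reconstructs $\|\nabla\phi\|_{L^{2}}^{6-j}\,\|\nabla\phi_{M<\bullet<R}\|_{L^{2}}^{2}\,\|\nabla P_{H}\phi\|_{L^{2}}^{j-2}$, which is the first term on the right.

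In the range $N>R$, the two top factors lie at frequency $\gtrsim R$, and --- since the total frequency is zero while the low factors contribute only $\lesssim M$ --- these two, together with any further high factors of comparable size, must point in nearly opposite directions. This near-cancellation is precisely what is invisible to the naive Hölder--Sobolev bound $\lesssim\|\nabla P_{H}\phi\|_{L^{2}}^{j}\|\nabla\phi\|_{L^{2}}^{6-j}$; quantifying it --- through the depletion of the $L^{2}$-mass of the high-frequency product once its output frequency is restricted to $\lesssim M\ll R$, combined with a Bernstein gain in $M$ from each of the $6-j$ low factors --- produces the factor $(M/R)^{(6-j)/2}$ after the dyadic sums, provided one organises the computation into sub-cases according to how many of the non-top factors are themselves above $R$, so that the surviving power of $M/R$ is exactly $(6-j)/2$. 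The conjugated versions claimed at the end of the lemma then follow verbatim, since replacing $P_{H}\phi$ by $\overline{P_{H}\phi}$ or $P_{L}\phi$ by $\overline{P_{L}\phi}$ only reflects the relevant frequency supports through the origin, changing no $L^{p}$-norm and no frequency-cancellation relation.

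The delicate step --- and the reason this goes beyond the elementary ``move a derivative, pay $M^{2}$ by Bernstein'' scheme used for the NLS energy in the appendix --- is extracting the \emph{sharp} power $(6-j)/2$ of $M/R$ in the range $N>R$: a crude Cauchy--Schwarz count of the lattice points carrying the frequency constraint is lossy by precisely this factor, so one has to keep track of how the $L^{2}$-mass of a product of high-frequency functions is redistributed after low-pass filtering to scale $M$, and simultaneously route the geometric decay through every non-top frequency so that the full $j$-fold dyadic sum converges without logarithms. Verifying the Bernstein inequalities in the form needed on $\mathbb{T}^{3}$ (via the periodic Littlewood--Paley kernels, as referenced earlier) is routine but must be carried out.
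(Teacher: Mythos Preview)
Your overall architecture --- Littlewood--Paley decompose, observe that the two largest frequencies among the high factors must be comparable (call their common dyadic scale $M_{1}$), and split according to whether $M_{1}\le R$ or $M_{1}>R$ --- matches the paper's proof. Your treatment of the $M_{1}\le R$ range is also essentially what the paper does.

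Where you diverge is the $M_{1}>R$ range, and here you have overcomplicated matters and misidentified the difficulty. You write that ``a crude Cauchy--Schwarz count of the lattice points carrying the frequency constraint is lossy by precisely this factor'' and that one must instead quantify the ``depletion of the $L^{2}$-mass of the high-frequency product once its output frequency is restricted,'' organising into sub-cases by how many non-top factors lie above $R$. None of this is needed. The paper uses \emph{the same} H\"older/Bernstein estimate for every $M_{1}$: put the two top factors in $L^{2}$ (gaining $M_{1}^{-2}$ when converting to $\dot H^{1}$), put every remaining factor --- high or low --- in $L^{\infty}$ via Bernstein $\|\phi_{K}\|_{L^{\infty}}\lesssim K^{1/2}\|\nabla\phi_{K}\|_{L^{2}}$. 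Summing the non-top high frequencies up to $M_{1}$ turns their $K^{1/2}$ powers into $M_{1}^{(j-2)/2}$; the $6-j$ low factors contribute $M^{(6-j)/2}$. The net prefactor in $M_{1}$ is $M^{(6-j)/2}M_{1}^{-(6-j)/2}$, multiplied by $\|\nabla\phi_{M_{1}}\|_{L^{2}}^{2}$. Only \emph{now} does one split the $M_{1}$-sum: for $M\le M_{1}\le R$ drop the prefactor (it is $\le 1$) and sum $\|\nabla\phi_{M_{1}}\|_{L^{2}}^{2}$ to $\|\nabla\phi_{M<\bullet<R}\|_{L^{2}}^{2}$; for $M_{1}\ge R$ bound $\|\nabla\phi_{M_{1}}\|_{L^{2}}^{2}\le\|\nabla P_{H}\phi\|_{L^{2}}^{2}$ and sum the geometric tail $M^{(6-j)/2}\sum_{M_{1}\ge R}M_{1}^{-(6-j)/2}\sim(M/R)^{(6-j)/2}$.

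So the sharp power $(M/R)^{(6-j)/2}$ falls out of a geometric tail sum, not from any cancellation or bilinear-restriction mechanism; there is no delicate step. Your proposed detour through ``near-cancellation'' and multiple sub-cases would at best reproduce this bound with more work, and your separate reduction for $j=3$ to $\|P_{\lesssim M}((P_{H}\phi)^{3})\|_{L^{2}}$ is unnecessary --- the argument is uniform in $j$.
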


\begin{proof}
We prove only (\ref{sobolev:refine3}) since the others are similar.
Represent the product $\left( P_{H}\phi \right) ^{5}\left( P_{L}\phi \right) 
$ as $\phi _{1}\phi _{2}\phi _{3}\phi _{4}\phi _{5}\phi _{6}$. On the
Fourier side of the integral, we have $\xi _{1}+\cdots +\xi _{6}=0$ and $%
\left\vert \xi _{6}\right\vert \leqslant M$. For $1\leqslant j\leqslant 5$,
break $\phi _{j}$ into $\sum_{M_{j}}\phi _{M_{j}}$ where each $M_{j}>M$.
Then we have to control 
\begin{equation*}
\sum_{M_{1},M_{2},M_{3},M_{4},M_{5}}\int \phi _{M_{1}}\phi _{M_{2}}\phi
_{M_{3}}\phi _{M_{4}}\phi _{M_{5}}\phi _{6}\,dx
\end{equation*}%
At the expense of a factor of $2^{5}$, we may assume the sizes of the
frequencies are ordered so that $M_{1}\geqslant M_{2}\geqslant
M_{3}\geqslant M_{4}\geqslant M_{5}>M$. Now we have that $M_{1}\sim M_{2}$
(since otherwise, if $M_{2}\leq \frac{1}{8}M_{1}$, then we cannot have $%
\left\vert \xi _{1}+...+\xi _{5}\right\vert \leqslant M$ which is forced by
the restriction $\left\vert \xi _{6}\right\vert \leqslant M$). Hence, at the
expense of a factor of $3$, we can take $M_{1}=M_{2}$. Hence the sum reduces
to 
\begin{equation*}
\sum_{M_{1},M_{3},M_{4},M_{5}>M}\int \phi _{M_{1}}^{2}\phi _{M_{3}}\phi
_{M_{4}}\phi _{M_{5}}\phi _{6}\,dx
\end{equation*}%
Now estimate as 
\begin{equation*}
\sum_{M_{1},M_{3},M_{4},M_{5}>M}\Vert \phi _{M_{1}}\Vert _{L^{2}}^{2}\Vert
\phi _{M_{3}}\Vert _{L^{\infty }}\Vert \phi _{M_{4}}\Vert _{L^{\infty
}}\Vert \phi _{M_{5}}\Vert _{L^{\infty }}\Vert \phi _{6}\Vert _{L^{\infty }}
\end{equation*}%
By Bernstein, $\Vert \phi _{j}\Vert _{L^{\infty }}\lesssim M_{j}^{1/2}\Vert
\nabla \phi _{j}\Vert _{L^{2}}$ for $3\leqslant j\leqslant 5$ and $\Vert
\phi _{6}\Vert _{L^{\infty }}\lesssim M^{\frac{1}{2}}\Vert \nabla \phi
_{6}\Vert _{L^{2}}$, and this gives the bound 
\begin{equation*}
M^{\frac{1}{2}%
}\sum_{M_{1},M_{3},M_{4},M_{5}>M}M_{1}^{-2}M_{3}^{1/2}M_{4}^{1/2}M_{5}^{1/2}%
\Vert \nabla \phi _{M_{1}}\Vert _{L^{2}}^{2}\Vert \nabla \phi _{M_{3}}\Vert
_{L^{2}}\Vert \nabla \phi _{M_{4}}\Vert _{L^{2}}\Vert \nabla \phi
_{M_{5}}\Vert _{L^{2}}\Vert \nabla \phi _{6}\Vert _{L^{2}}
\end{equation*}%
Bounding $\Vert \nabla \phi _{M_{j}}\Vert _{L^{2}}\leqslant \Vert
P_{H}\nabla \phi \Vert _{L^{2}}$ for each $3\leqslant j\leqslant 5$, and
then carrying out sum in $M_{3},M_{4},M_{5}$ using $\sum_{M_{j}\leq
M_{1}}M_{j}^{1/2}=M_{1}^{1/2}$, we obtain 
\begin{equation*}
M^{\frac{1}{2}}\sum_{M_{1}\geqslant M}M_{1}^{-1/2}\Vert \nabla \phi
_{M_{1}}\Vert _{L^{2}}^{2}\Vert P_{H}\nabla \phi \Vert _{L^{2}}^{3}\Vert
P_{L}\nabla \phi \Vert _{L^{2}}
\end{equation*}%
Split this sum into two pieces, one for $M_{1}\leqslant R$ and the other for 
$M_{1}\geqslant R$, we then have%
\begin{eqnarray*}
&&M^{\frac{1}{2}}\sum_{M_{1}\geqslant M}M_{1}^{-1/2}\Vert \nabla \phi
_{M_{1}}\Vert _{L^{2}}^{2}\Vert P_{H}\nabla \phi \Vert _{L^{2}}^{3}\Vert
P_{L}\nabla \phi \Vert _{L^{2}} \\
&\leqslant &\Vert P_{H}\nabla \phi \Vert _{L^{2}}^{3}\Vert P_{L}\nabla \phi
\Vert _{L^{2}}\sum_{R\geqslant M_{1}\geqslant M}\Vert \nabla \phi
_{M_{1}}\Vert _{L^{2}}^{2}+\Vert P_{H}\nabla \phi \Vert _{L^{2}}^{5}\Vert
P_{L}\nabla \phi \Vert _{L^{2}}M^{\frac{1}{2}}\sum_{M_{1}\geqslant
R}M_{1}^{-1/2} \\
&\lesssim &\Vert P_{L}\nabla \phi \Vert _{L^{2}}(\Vert \nabla \phi
_{M<\bullet <R}\Vert _{L^{2}}^{2}\Vert \nabla P_{H}\phi \Vert
_{L^{2}}^{3}+\left( \frac{M}{R}\right) ^{\frac{1}{2}}\Vert \nabla P_{H}\phi
\Vert _{L^{2}}^{5})
\end{eqnarray*}%
which implies (\ref{sobolev:refine3}).
\end{proof}

\subsubsection{The Continuity of the Intermediate Kinetic Energy\label%
{Sec:Proof of High Kinetic Continuity}}

\begin{lemma}
\label{lem:intermediate kinetic energy is continuous}$K_{M\leq \bullet \leq
R}(t)$, as defined in (\ref{eqn:def of intermediate kinetic}), is continuous
in $t$, in fact Lipshitz, with constant depending on $R$.
\end{lemma}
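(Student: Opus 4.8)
The plan is to collapse the statement into a single scalar quantity and then run the same kind of differential-inequality manipulation that produced (\ref{E:main-2}). Fix a dyadic level $R\ge M$ and set
\[
\mathcal{J}=\left( P_{<R}^{(k)}P_{>M}^{(k)}\right) ^{2}\left( R^{(1,k)}\right) ^{2}=\bigotimes_{i=1}^{k}\mathcal{J}_{1},\qquad \mathcal{J}_{1}=\left( P_{<R}P_{>M}\left\vert \nabla \right\vert \right) ^{2},
\]
a nonnegative, self-adjoint, finite-rank Fourier multiplier on $L^{2}(\mathbb{T}^{3k})$, and put $a_{k}(t)=\operatorname{Tr}\mathcal{J}\gamma ^{(k)}(t)$. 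Since $\gamma ^{(k)}\in C([0,T],\mathcal{L}_{k}^{1})$ and $\mathcal{J}$ is bounded, $a_{k}$ is continuous, and, as recorded right after (\ref{eqn:def of intermediate kinetic}), $K_{M\le \bullet \le R}(t)=\lim_{k\to \infty }a_{k}(t)^{1/k}$. So it suffices to show that $a_{k}(\cdot )^{1/k}$ is Lipschitz on $[0,T]$ with a constant independent of $k$ (depending only on $R$, $M$ and the fixed constant $C_{0}$).

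First I would insert the integral form (\ref{hierarchy:quintic GP in integral form}) of (\ref{E:GP}) (with $b_{0}=1$) into $a_{k}$. Because $\mathcal{J}$ commutes with each one-particle free propagator, cyclicity of the trace gives $\operatorname{Tr}\mathcal{J}U^{(k)}(\sigma )\rho =\operatorname{Tr}\mathcal{J}\rho $ for every trace-class $\rho $; hence the free term is $t$-independent and
\[
a_{k}(t)-a_{k}(s)=-i\sum_{j=1}^{k}\int_{s}^{t}\operatorname{Tr}\mathcal{J}\left( B_{j;k+1,k+2}^{+}-B_{j;k+1,k+2}^{-}\right) \gamma ^{(k+2)}(\tau )\,d\tau .
\]
At each fixed $\tau $ the sequence $\Gamma (\tau )$ is admissible, so Theorem \ref{thm:qde} gives $\gamma ^{(k+2)}(\tau )=\int \left\vert \phi ^{\otimes (k+2)}\right\rangle \left\langle \phi ^{\otimes (k+2)}\right\vert \,d\mu _{\tau }(\phi )$. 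Contracting the two deltas yields $B_{j;k+1,k+2}^{+}\left\vert \phi ^{\otimes (k+2)}\right\rangle \left\langle \phi ^{\otimes (k+2)}\right\vert =\left\vert u_{j}(\phi )\right\rangle \left\langle \phi ^{\otimes k}\right\vert $ with $u_{j}(\phi )=\phi ^{\otimes (j-1)}\otimes (|\phi |^{4}\phi )\otimes \phi ^{\otimes (k-j)}$, and using the tensor form of $\mathcal{J}$ this gives the clean identity
\[
\operatorname{Tr}\mathcal{J}B_{j;k+1,k+2}^{+}\left\vert \phi ^{\otimes (k+2)}\right\rangle \left\langle \phi ^{\otimes (k+2)}\right\vert =f(\phi )^{k-1}h(\phi ),
\]
where $f(\phi )=\Vert \nabla \phi _{M<\bullet <R}\Vert _{L^{2}}^{2}$ and $h(\phi )=\left\langle P_{<R}P_{>M}\left\vert \nabla \right\vert \phi ,\,P_{<R}P_{>M}\left\vert \nabla \right\vert (|\phi |^{4}\phi )\right\rangle $; the $B^{-}$ term contributes $f(\phi )^{k-1}\overline{h(\phi )}$, so the commutator integrand is $2i\int f(\phi )^{k-1}\operatorname{Im}h(\phi )\,d\mu _{\tau }(\phi )$. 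In particular $a_{k}(t)=\int f(\phi )^{k}\,d\mu _{t}(\phi )$.

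The remaining work is a deliberately crude bound on $h(\phi )$ that exploits that everything inside it is band-limited below $R$: Cauchy--Schwarz gives $|h(\phi )|\le f(\phi )^{1/2}\,\Vert P_{<R}\left\vert \nabla \right\vert (|\phi |^{4}\phi )\Vert _{L^{2}}$, and two applications of Bernstein, namely $\Vert \left\vert \nabla \right\vert P_{<R}g\Vert _{L^{2}}\le R\Vert P_{<R}g\Vert _{L^{2}}$ and $\Vert P_{<R}g\Vert _{L^{2}}\lesssim R^{3/2}\Vert g\Vert _{L^{1}}$ with $g=|\phi |^{4}\phi $, together with $\Vert |\phi |^{4}\phi \Vert _{L^{1}}=\Vert \phi \Vert _{L^{5}}^{5}\lesssim \Vert \phi \Vert _{L^{6}}^{5}\lesssim (\Vert \phi \Vert _{L^{2}}+\Vert \nabla \phi \Vert _{L^{2}})^{5}$ and the $\mu _{\tau }$-a.e.\ support bound $\Vert \langle \nabla \rangle \phi \Vert _{L^{2}}\le C_{0}$ from (\ref{eqn:property of measure 1}) (a consequence of (\ref{condition:kinetic energy})), give $|h(\phi )|\le C_{R}\,f(\phi )^{1/2}$ on $\operatorname{supp}\mu _{\tau }$, with $C_{R}$ depending only on $R$ and $C_{0}$. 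Hence $\left\vert \operatorname{Tr}\mathcal{J}(B_{j;k+1,k+2}^{+}-B_{j;k+1,k+2}^{-})\gamma ^{(k+2)}(\tau )\right\vert \le 2C_{R}\int f^{k-1/2}\,d\mu _{\tau }\le 2C_{R}\,a_{k}(\tau )^{(2k-1)/(2k)}$ by H\"older (using that $\mu _{\tau }$ has mass $\le 1$), so $a_{k}$ is absolutely continuous with $|\partial _{\tau }a_{k}|\le 2kC_{R}\,a_{k}^{\,1-1/(2k)}$, which forces $|\partial _{\tau }(a_{k}^{\,1/(2k)})|\le C_{R}$. Since moreover $f\le \Vert \nabla \phi \Vert _{L^{2}}^{2}\le C_{0}^{2}$ a.e.\ gives $a_{k}\le C_{0}^{2k}$, hence $a_{k}^{\,1/(2k)}\le C_{0}$, the identity $a_{k}^{\,1/k}=(a_{k}^{\,1/(2k)})^{2}$ yields $|a_{k}(t)^{1/k}-a_{k}(s)^{1/k}|\le 2C_{0}C_{R}|t-s|$ uniformly in $k$; letting $k\to \infty $ proves the Lemma. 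I expect the main obstacle to be exactly this step: extracting the de Finetti factorization $f^{k-1}h$ together with an $R$-dependent but $k$-independent bound $|h|\lesssim _{R}f^{1/2}$, since it is this structure that lets H\"older absorb the nonlinear loss without reintroducing a dependence on $k$. The only genuinely delicate technical point is the passage from the differential inequality to uniform-in-$k$ Lipschitz continuity at times where $a_{k}$ vanishes, which is handled exactly as in the argument leading to (\ref{E:main-2}): $a_{k}^{\,1/(2k)}$ vanishes wherever $a_{k}$ does and is therefore still $C_{R}$-Lipschitz there.
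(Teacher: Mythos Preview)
Your proof is correct and follows essentially the same strategy as the paper: use the de Finetti representation to factor $a_{k}(\tau)=\int f(\phi)^{k}\,d\mu_{\tau}$, bound the single nonlinear factor $h(\phi)$ by Bernstein exploiting the frequency cutoff at $R$, apply H\"older in $\phi$ to produce a differential inequality $|\partial_{\tau}a_{k}|\lesssim k\,C_{R}\,a_{k}^{\,1-O(1/k)}$, and deduce a $k$-uniform Lipschitz bound on $a_{k}^{1/k}$ before sending $k\to\infty$. Your use of the integral form is a mild streamlining --- it makes the free contribution manifestly constant, whereas the paper works from the differential form and separately estimates the kinetic commutator term $I$ (which in fact vanishes by cyclicity since $\mathcal{J}$ and $\Delta_{j}$ commute); your Cauchy--Schwarz/Bernstein pairing yields $|h|\lesssim R^{5/2}C_{0}^{5}f^{1/2}$ and hence the exponent $(2k-1)/(2k)$, while the paper's $L^{3/2}$--$L^{3}$ pairing gives $|h|\lesssim R^{2}C_{0}^{6}$ and exponent $(k-1)/k$, but this difference is immaterial for the conclusion.
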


\begin{proof}
By (\ref{E:GP}), we compute%
\begin{eqnarray*}
&&\partial _{t}\left( \limfunc{Tr}P_{M<\bullet <R}^{(k)}R^{(1,k)}\gamma
^{(k)}P_{M<\bullet <R}^{(k)}R^{(1,k)}\right) \\
&=&\frac{1}{i}\sum_{j=1}^{k}\limfunc{Tr}\left( P_{M<\bullet
<R}^{(k)}R^{(1,k)}\left[ -\Delta _{j},\gamma ^{(k)}\right] P_{M<\bullet
<R}^{(k)}R^{(1,k)}\right) \\
&&+\frac{1}{i}\sum_{j=1}^{k}\limfunc{Tr}\left( P_{M<\bullet
<R}^{(k)}R^{(1,k)}(B_{j;k+1,k+2}^{+}-B_{j;k+1,k+2}^{-})\gamma
^{(k+2)}P_{M<\bullet <R}^{(k)}R^{(1,k)}\right) \\
&=&I+II.
\end{eqnarray*}

On the one hand, a typical term in $I$ reads,%
\begin{eqnarray*}
\left\vert \limfunc{Tr}P_{M<\bullet <R}^{(k)}R^{(1,k)}\Delta _{1}\gamma
^{(k)}P_{M<\bullet <R}^{(k)}R^{(1,k)}\right\vert &=&|\int \left( \int \left(
P_{M<\bullet <R}\Delta \nabla \phi \right) \left( P_{M<\bullet <R}\nabla 
\bar{\phi}\right) dx_{1}\right) \\
&&\left( \dprod\limits_{j=2}^{k}\int \left( P_{M<\bullet <R}\nabla \phi
\right) \left( P_{M<\bullet <R}\nabla \bar{\phi}\right) dx_{j}\right) d\mu
(\phi )| \\
&\leqslant &AB
\end{eqnarray*}%
where%
\begin{eqnarray*}
A &=&\left( \int \left\vert \int \left( P_{M<\bullet <R}\Delta \nabla \phi
\right) \left( P_{M<\bullet <R}\nabla \bar{\phi}\right) dx_{1}\right\vert
^{k}d\mu (\phi )\right) ^{\frac{1}{k}} \\
&\leqslant &R^{2}\left( \int \left\vert \int \left\vert P_{M<\bullet
<R}\nabla \phi \right\vert ^{2}dx_{1}\right\vert ^{k}d\mu (\phi )\right) ^{%
\frac{1}{k}}\leqslant C_{0}^{2}R^{2}
\end{eqnarray*}%
and%
\begin{eqnarray*}
B &=&\left( \int \left( \dprod\limits_{j=2}^{k}\int \left( P_{M<\bullet
<R}\nabla \phi \right) \left( P_{M<\bullet <R}\nabla \bar{\phi}\right)
dx_{j}\right) ^{\frac{k}{k-1}}d\mu (\phi )\right) ^{\frac{k-1}{k}} \\
&=&\left( \func{tr}P_{M<\bullet <R}^{(k)}R^{(1,k)}\gamma ^{(k)}P_{M<\bullet
<R}^{(k)}R^{(1,k)}\right) ^{\frac{k-1}{k}}.
\end{eqnarray*}

On the other hand, a typical term in $II$ can be estimated by%
\begin{eqnarray*}
&&\left\vert \limfunc{Tr}P_{M<\bullet
<R}^{(k)}R^{(1,k)}(B_{1;k+1,k+2}^{+})\gamma ^{(k+2)}P_{M<\bullet
<R}^{(k)}R^{(1,k)}\right\vert \\
&=&|\int (\int (P_{M<\bullet <R}\nabla (\left\vert \phi \right\vert ^{4}\phi
))(P_{M<\bullet <R}\nabla \bar{\phi})dx_{1})(\dprod\limits_{j=2}^{k}\int
(P_{M<\bullet <R}\nabla \phi )(P_{M<\bullet <R}\nabla \bar{\phi})dx_{j})d\mu
(\phi )| \\
&\leqslant &CD
\end{eqnarray*}%
where%
\begin{equation*}
C=\left( \int \left\vert \int \left( P_{M<\bullet <R}\nabla \left(
\left\vert \phi \right\vert ^{4}\phi \right) \right) \left( P_{M<\bullet
<R}\nabla \bar{\phi}\right) dx_{1}\right\vert ^{k}d\mu (\phi )\right) ^{%
\frac{1}{k}},
\end{equation*}%
and%
\begin{eqnarray*}
D &=&\left( \int \left( \dprod\limits_{j=2}^{k}\int \left( P_{M<\bullet
<R}\nabla \phi \right) \left( P_{M<\bullet <R}\nabla \bar{\phi}\right)
dx_{j}\right) ^{\frac{k}{k-1}}d\mu (\phi )\right) ^{\frac{k-1}{k}} \\
&=&\left( \limfunc{Tr}P_{M<\bullet <R}^{(k)}R^{(1,k)}\gamma
^{(k)}P_{M<\bullet <R}^{(k)}R^{(1,k)}\right) ^{\frac{k-1}{k}}.
\end{eqnarray*}%
We can use Bernstein $\left\Vert P_{M<\bullet <R}\nabla f\right\Vert _{L^{%
\frac{3}{2}}}\lesssim R^{\frac{3}{2}}\left\Vert f\right\Vert _{L^{\frac{6}{5}%
}}$ and $\left\Vert P_{M<\bullet <R}f\right\Vert _{L^{3}}\lesssim R^{\frac{1%
}{2}}\left\Vert f\right\Vert _{L^{2}}$ in $C$ and reach 
\begin{equation*}
C\lesssim \left( \int \left\vert R^{\frac{3}{2}}\left\Vert \left\vert \phi
\right\vert ^{4}\phi \right\Vert _{L^{\frac{6}{5}}}R^{\frac{1}{2}}\left\Vert
\nabla \phi \right\Vert _{L^{2}}\right\vert ^{k}d\mu (\phi )\right) ^{\frac{1%
}{k}}\lesssim C_{0}^{6}R^{2}
\end{equation*}

Putting the estimate for $I$ and $II$ together, we have%
\begin{equation*}
\left\vert \partial _{t}\limfunc{Tr}\left( P_{M<\bullet
<R}^{(k)}R^{(1,k)}\gamma ^{(k)}P_{M<\bullet <R}^{(k)}R^{(1,k)}\right)
\right\vert \lesssim kC_{0}^{6}R^{2}\left( \limfunc{Tr}P_{M<\bullet
<R}^{(k)}R^{(1,k)}\gamma ^{(k)}P_{M<\bullet <R}^{(k)}R^{(1,k)}\right) ^{%
\frac{k-1}{k}}
\end{equation*}%
Then it follows that 
\begin{equation*}
\left\vert \partial _{t}\left( \limfunc{Tr}P_{M<\bullet
<R}^{(k)}R^{(1,k)}\gamma ^{(k)}P_{M<\bullet <R}^{(k)}R^{(1,k)}\right)
^{1/k}\right\vert \lesssim R^{2}
\end{equation*}%
Integrate in time to obtain 
\begin{eqnarray*}
&&\left\vert \left( \limfunc{Tr}P_{M<\bullet <R}^{(k)}R^{(1,k)}\gamma
^{(k)}P_{M<\bullet <R}^{(k)}R^{(1,k)}\right) ^{1/k}(t_{2})-\left( \limfunc{Tr%
}P_{M<\bullet <R}^{(k)}R^{(1,k)}\gamma ^{(k)}P_{M<\bullet
<R}^{(k)}R^{(1,k)}\right) ^{1/k}(t_{1})\right\vert \\
&\lesssim &R^{2}\left\vert t_{2}-t_{1}\right\vert
\end{eqnarray*}%
with the right hand side independent of $k$.

We can then send $k\rightarrow \infty $ to obtain 
\begin{equation*}
\left\vert K_{M\leq \bullet \leq R}(t_{2})-K_{M\leq \bullet \leq
R}(t_{1})\right\vert \lesssim R^{2}\left\vert t_{2}-t_{1}\right\vert
\end{equation*}%
which is the desired continuity.
\end{proof}

Since we have now proved Lemma \ref{lem:intermediate kinetic energy is
continuous}, the argument in \S \ref{Sec:Proof of High Kinetic Eats High
Energy} is now completed and we have established Theorem \ref{Thm:UTFLinTime}%
.

\subsection{Uniqueness through HUFL\label{subsec:GPUTFLUniqueness}}

\begin{theorem}
\label{THM:UniquessAssumingUFL}For initial data satisfying HUFL, there
exists $T>0$ such that there is exactly one admissible solution to (\ref%
{hierarchy:quintic GP in differential form}) in $[0,T]$ satisfying (\ref%
{condition:kinetic energy}).
\end{theorem}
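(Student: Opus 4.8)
The plan is to establish uniqueness by a Duhamel--Born iteration applied to the difference of two solutions, with Theorem~\ref{Thm:UTFLinTime} playing the pivotal role: it promotes the hypothesis that the common initial datum satisfies HUFL into a \emph{uniform-in-$k$, uniform-in-time} frequency-localization bound on a short interval, and that quantitative smallness is exactly what will be fed into the iteration to defeat the combinatorial growth of the Born series. Concretely, I would first fix $\varepsilon>0$ (small, to be chosen at the end), let $M=M(\varepsilon)$ be the frequency threshold supplied by \eqref{eqn:HUFL} for this datum, and let $T=T(\varepsilon,M)>0$ be as in Theorem~\ref{Thm:UTFLinTime}, so that any admissible solution $\Gamma\in\oplus_{k\geqslant1}C([0,T],\mathcal{L}^{1}_{k})$ with this datum and obeying \eqref{condition:kinetic energy} satisfies $\operatorname{Tr}S^{(1,k)}P_{>M}^{(k)}\gamma^{(k)}(t)P_{>M}^{(k)}S^{(1,k)}\leqslant\varepsilon^{2k}$ throughout $[0,T]$.

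Given two such solutions $\{\gamma^{(k)}\}$ and $\{\tilde\gamma^{(k)}\}$, set $\eta^{(k)}=\gamma^{(k)}-\tilde\gamma^{(k)}$; the initial data cancel, so iterating the integral form \eqref{hierarchy:quintic GP in integral form} $n$ times expresses $\operatorname{Tr}J^{(1)}\eta^{(1)}(t)$ as a finite sum of time-ordered integrals, each assembled from the free propagators $U^{(j)}$, the collision operators $B_{\ell;j+1,j+2}$, and a tail $\eta^{(1+2n)}$. A Klainerman--Machedon / T.~Chen--Pavlovic style combinatorial reorganization reduces the number of such terms from factorial to at most $C^{n}$, and \eqref{condition:kinetic energy} bounds the tail contribution by $C_{0}^{O(n)}$. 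The crux is then the estimation of a single reorganized term.

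Following Definition~\ref{def:type of coupling}, I would classify each of the $n$ collisions in a given term as \emph{congested} --- when it strikes one of the two variables created by the immediately preceding collision, so that the associated factor is a nonlinear expression built entirely out of nonlinear expressions and offers no free Schr\"odinger leg to localize --- or as \emph{unclogged} otherwise. For an unclogged collision, the genuine propagator on the struck variable together with the uniform HUFL bound lets me split $1=P_{>M}+P_{\leqslant M}$ and invoke the refined multilinear estimate of Lemma~\ref{Lem:MultilinearWithFreqLocal} (which rests on the scale-invariant $\mathbb{T}^{3}$ Strichartz and bilinear Strichartz estimates of Bourgain--Demeter, Killip--Visan and Zhang) to gain a power of $\varepsilon$. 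A counting lemma shows that at least $\tfrac{4}{5}$ of the $n$ collisions in any surviving term are unclogged, so that term is $\lesssim C^{n}\varepsilon^{4n/5}C_{0}^{O(n)}$. Summing over the $\leqslant C^{n}$ terms, then choosing $\varepsilon$ small enough that $C\varepsilon^{4/5}C_{0}^{O(1)}<1$ (which also pins down $T$ via Theorem~\ref{Thm:UTFLinTime}), and letting $n\to\infty$ forces $\operatorname{Tr}J^{(1)}\eta^{(1)}(t)=0$ for every test operator $J^{(1)}$, hence $\eta^{(1)}\equiv0$ on $[0,T]$. Running the same argument with the expansion started at level $k$ instead of $1$ yields $\eta^{(k)}\equiv0$ for all $k$, which is the asserted uniqueness.

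I expect the genuine difficulties to be exactly the two ingredients of the last paragraph: first, formulating and proving the ``at least $\tfrac45$ unclogged'' combinatorial statement in a form robust under the reorganization (the congested couplings are precisely the terms HUFL cannot touch, so the fraction of unclogged ones must be controlled a priori); and second, Lemma~\ref{Lem:MultilinearWithFreqLocal} itself, i.e. extracting an honest $\varepsilon$-gain from each unclogged collision at critical regularity, where the $\mathbb{T}^{3}$ Strichartz estimates carry an unavoidable loss and there is no slack to absorb it. The remaining pieces --- the reorganization, the bootstrap from $\eta^{(1)}$ to $\eta^{(k)}$, and the energy bookkeeping via \eqref{condition:kinetic energy} --- are technical but proceed along established lines.
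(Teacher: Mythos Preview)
Your outline is correct and matches the paper's proof in structure: difference of two solutions, iterated Duhamel, Klainerman--Machedon/Chen--Pavlovi\'{c} board-game reduction to $\leqslant 2^{3k-1}\cdot 2^{k}$ terms, the unclogged/congested dichotomy of Definition~\ref{def:type of coupling}, Lemma~\ref{Lem:unclogged couplings} for the $4/5$ count, and Lemma~\ref{Lem:MultilinearWithFreqLocal} to extract a small factor at each unclogged coupling.

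Two points should be made explicit. First, Lemma~\ref{Lem:MultilinearWithFreqLocal} is stated for products $\prod_{j}e^{\pm it\Delta}f_{j}$, not for a generic $\gamma^{(2k+1)}$; the paper therefore inserts the quantum de Finetti representation (Theorem~\ref{thm:qde}) at the innermost level, writing $\gamma^{(2k+1)}(t_{2k+1})=\int|\phi\rangle\langle\phi|^{\otimes(2k+1)}\,d\mu_{t_{2k+1}}(\phi)$, so that each collision produces an honest quintic in $U\phi$ and the HUFL bound \eqref{HUFLinTime} becomes the support condition \eqref{measure condition:UTFL}. This step is what makes the algorithm in \S\ref{sec:proof of prop uniqueness} run. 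Second, the gain at an unclogged coupling is not a bare $\varepsilon$: the $P_{\leqslant M}/P_{>M}$ split in \eqref{estimate:MultiLinearWithFreqLoc1}--\eqref{estimate:MultiLinearWithFreqLoc2} yields $\bigl(T^{1/4}M^{1/2}\bigr)^{10/11}C_{0}+C\varepsilon$, the first term coming from the low-frequency piece (a time power, not HUFL smallness). Hence after fixing $\varepsilon$ small one must also take $T$ small against $M$, exactly as in Proposition~\ref{Prop:estimate for a typical term in uniqueness} and the parameter choice following \eqref{estimate:final estimate for gamma_1}. Your phrase ``which also pins down $T$ via Theorem~\ref{Thm:UTFLinTime}'' undersells this: the $T$ from Theorem~\ref{Thm:UTFLinTime} may need to be shrunk further.
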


Once we have proved Theorem \ref{THM:UniquessAssumingUFL}, Theorem \ref%
{Thm:TotalUniqueness} then follows. In fact, we can write the initial datum
as 
\begin{equation*}
\gamma ^{(k)}(0)=\int \left( \left\vert \phi \right\rangle \left\langle \phi
\right\vert \right) ^{\otimes k}d\mu _{0}(\phi )
\end{equation*}%
by Theorem \ref{thm:qde}. If $\left\{ \gamma ^{(k)}(0)\right\}
_{k=1}^{\infty }$ satisfies HUFL at $t=0$, then by the same Chebyshev
argument as in \cite[Lemma 4.5]{TCNPdeFinitte}, $d\mu _{0}$ is supported in
the set 
\begin{equation*}
\{\phi \in \mathbb{B}\left( L^{2}(\mathbb{T}^{3})\right) :\left\Vert
P_{>M}\left\langle \nabla \right\rangle \phi \right\Vert _{L^{2}}\leqslant
\varepsilon \}.
\end{equation*}%
Let $S_{t}$ be the solution operator of the energy critical NLS (\ref{eqn:3d
Quintic with 1 Coupling}), then by \cite{CKSTT,IP} in which $S_{t}\phi $ is
proven to exists for all $t<+\infty $, there is a global solution to (\ref%
{hierarchy:quintic GP in differential form}) given by 
\begin{equation*}
\gamma ^{(k)}(t)=\int \left( \left\vert S_{t}\phi \right\rangle \left\langle
S_{t}\phi \right\vert \right) ^{\otimes k}d\mu _{0}(\phi )\text{.}
\end{equation*}%
Assume the maximal time of uniqueness is $T^{\ast }<+\infty $, we infer by
uniqueness at $T^{\ast }$ that, 
\begin{equation*}
\gamma ^{(k)}(T^{\ast })=\int \left( \left\vert S_{T^{\ast }}\phi
\right\rangle \left\langle S_{T^{\ast }}\phi \right\vert \right) ^{\otimes
k}d\mu _{0}(\phi ).
\end{equation*}%
Then, at $T^{\ast }$, since $d\mu _{0}$ is supported in $\{\left\Vert
P_{>M}\left\langle \nabla \right\rangle \phi \right\Vert _{L^{2}}\leqslant
\varepsilon \}$ and we have Theorem \ref{Cor:UTFLforNLS} for NLS (\ref%
{eqn:3d Quintic with 1 Coupling}), we know $\gamma ^{(k)}(T^{\ast })$
satisfies HUFL at $T^{\ast }$ and is good enough to bootstrap with Theorem %
\ref{THM:UniquessAssumingUFL} for uniqueness again. Hence, this contradicts
that the maximal time of uniqueness is $T^{\ast }<+\infty $. Thus $T^{\ast
}=+\infty $ and we have proved Theorem \ref{Thm:TotalUniqueness}.

We now start the proof of Theorem \ref{THM:UniquessAssumingUFL}. Because
hierarchy (\ref{hierarchy:quintic GP in differential form}) is linear, we
will prove that $\gamma ^{(k)}(t)=0$ if $\gamma ^{(k)}(t)=\gamma
_{1}^{(k)}(t)-\gamma _{2}^{(k)}(t)$, where $\gamma _{1}^{(k)}(t)$ and $%
\gamma _{2}^{(k)}(t)$ are two admissible solutions which satisfy (\ref%
{condition:kinetic energy}) and HUFT and subject to the same initial datum.
Without lose of generality, we prove $\gamma ^{(1)}(t)=0$.

We use (\ref{hierarchy:quintic GP in integral form}), the integral form of (%
\ref{hierarchy:quintic GP in differential form}). From now on, we will also
omit the $-ib_{0}$ in front of the coupling term in (\ref{hierarchy:quintic
GP in integral form}) so that we do not need to track its exact power. With
this notation change, (\ref{hierarchy:quintic GP in integral form}) reads 
\begin{equation}
\gamma ^{(k)}(t)=U^{(k)}(t)\gamma
^{(k)}(0)+\int_{0}^{t}U^{(k)}(t-s)B^{(k+2)}\gamma ^{(k+2)}(s)ds
\label{hierarchy:GPwithCouplingI}
\end{equation}%
We will denote $U(t)=e^{it\Delta }$ without explicitly writing out the
spatial variable. Usually, the space variable is clear from the context.

Let us define 
\begin{eqnarray*}
&&J^{(2k+1)}(\underline{t}_{2k+1})(f^{(2k+1)})(t_{1}) \\
&=&U^{(1)}(t_{1}-t_{3})B^{(3)}U^{(3)}(t_{3}-t_{5})B^{(5)}...U^{(2k-1)}(t_{2k-1}-t_{2k+1})B^{(2k+1)}f^{(2k+1)}(t_{2k+1})
\end{eqnarray*}%
with $\underline{t}_{2k+1}=\left( t_{3},t_{5},...,t_{2k-1},t_{2k+1}\right) $%
. We can then shorten%
\begin{equation*}
\gamma
^{(1)}(t_{1})=\int_{0}^{t_{1}}\int_{0}^{t_{3}}...%
\int_{0}^{t_{2k-1}}J^{(2k+1)}(\gamma ^{(2k+1)})(\underline{t}_{2k+1})d%
\underline{t}_{2k+1}\text{.}
\end{equation*}%
Before delving into the analysis, we notice that there are $(2k+1)!!2^{k}$
summands inside $\gamma ^{(1)}(t_{1})$. We reduce the total number of
summands by the following theorem which was proved by T. Chen and Pavlovic 
\cite{TChenAndNP}\ and combines the terms via the Klainerman-Machedon board
game argument \cite{KlainermanAndMachedon}.

\begin{lemma}[\protect\cite{TChenAndNP}]
\label{Lem:KLBoardGame}One can rewrite 
\begin{equation*}
\int_{0}^{t_{1}}\int_{0}^{t_{3}}...\int_{0}^{t_{2k-1}}J^{(2k+1)}(\underline{t%
}_{2k+1})(f^{(2k+1)})d\underline{t}_{2k+1}
\end{equation*}%
as a sum of at most $2^{3k-1}$ terms of the form%
\begin{equation*}
\int_{D}J^{(2k+1)}(\underline{t}_{2k+1};\mu )(f^{(2k+1)})d\underline{t}%
_{2k+1}
\end{equation*}%
where%
\begin{eqnarray*}
&&J^{(2k+1)}(\underline{t}_{2k+1};\mu )(f^{(2k+1)})(t_{1}) \\
&=&U^{(1)}(t_{1}-t_{3})B_{\mu (2);2,3}...U^{(2k-1)}(t_{2k-1}-t_{2k+1})B_{\mu
(2k);2k,2k+1}f^{(2k+1)}(t_{2k+1})
\end{eqnarray*}%
and $D\subset \left[ 0,t_{1}\right] ^{k}$, $\mu $ is a map from $%
\{2,3,...,2k\}$ to $\{1,2,3,...,2k-1\}$ such that $\mu (2)=1$ and $\mu (j)<j$
for all $j$.

That is,%
\begin{equation}
\gamma ^{(1)}(t_{1})=\sum_{\mu \in m}\int_{D}J^{(2k+1)}(\underline{t}%
_{2k+1};\mu )(\gamma ^{(2k+1)})(t_{1})d\underline{t}_{2k+1}
\label{eqn:term reduction for uniqueness}
\end{equation}%
with $\left\vert m\right\vert \leqslant 2^{3k-1}$ if we denote the set of
all such $\mu $ by $m$.
\end{lemma}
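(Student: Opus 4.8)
The claim is the Klainerman--Machedon ``board game'' reduction of \cite{KlainermanAndMachedon}, performed for the three-body coupling operators $B^{(2j+1)}=\sum_{\ell=1}^{2j-1}B_{\ell;2j,2j+1}$ by T. Chen and Pavlovic in \cite{TChenAndNP}; the plan is to recall and, where needed, reproduce that argument. First I would fully expand the iterated Duhamel integral: substituting each $B^{(2j+1)}=\sum_{\ell=1}^{2j-1}B_{\ell;2j,2j+1}$ into $J^{(2k+1)}$ and writing the $k$-fold time integral over the ordered simplex $\{t_1\geqslant t_3\geqslant\cdots\geqslant t_{2k+1}\geqslant 0\}$ expresses the left-hand side as a sum of $(2k-1)!!$ terms, one for every contraction sequence $(\ell_1,\dots,\ell_k)$ with $\ell_j\in\{1,\dots,2j-1\}$, each term being $\int U^{(1)}(t_1-t_3)B_{\ell_1;2,3}U^{(3)}(t_3-t_5)B_{\ell_2;4,5}\cdots B_{\ell_k;2k,2k+1}f^{(2k+1)}(t_{2k+1})\,d\underline t_{2k+1}$ over that simplex.

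Next I would introduce the acceptable moves and the resulting equivalence relation. Whenever two consecutive couplings $B_{\ell_j;2j,2j+1}$ and $B_{\ell_{j+1};2j+2,2j+3}$ act on disjoint sets of variables (that is, $\ell_{j+1}\notin\{2j,2j+1\}$), they commute, so one may exchange them while simultaneously exchanging the two adjacent time variables $t_{2j+1}$ and $t_{2j+3}$; this is a measure-preserving rearrangement of the iterated integral, and the union over the two time-orderings $t_{2j+1}\geqslant t_{2j+3}$ and $t_{2j+3}\geqslant t_{2j+1}$ of the corresponding subsimplices is, up to a null set, a single region inside the cube $[0,t_1]^k$. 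Declaring two contraction sequences equivalent when they are joined by a chain of such moves partitions the $(2k-1)!!$ terms into classes; summing the members of a class recombines them into one integral $\int_D J^{(2k+1)}(\underline t_{2k+1};\mu)(f^{(2k+1)})\,d\underline t_{2k+1}$ over a domain $D\subset[0,t_1]^k$ obtained by gluing the relevant subsimplices along the hyperplanes $t_{2i+1}=t_{2j+1}$, where $\mu$ is the distinguished ``upper echelon'' representative of the class, characterized by $\mu(2)=1$ and $\mu(j)<j$.

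Finally I would estimate the number of classes. Reading off the echelon map $\mu$ level by level, the three-body structure---each $B$ contributing two fresh particle labels---leaves only a bounded number of admissible possibilities at each of the $k$ steps once $\mu(j)<j$ is imposed, and tallying them gives $|m|\leqslant 2^{3k-1}$; this is precisely the count carried out in \cite{TChenAndNP}, which I would reproduce, along with the check that every term produced by the original expansion belongs to exactly one class, so that (\ref{eqn:term reduction for uniqueness}) holds with no double counting.

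The main obstacle is combinatorial bookkeeping. Because each application of $B$ introduces two new labels rather than one, the ``boards'' are richer than in the cubic case of \cite{KlainermanAndMachedon}, so one must verify both that the acceptable moves suffice to normalize every term to echelon form and that the class count stays at $2^{3k-1}$ rather than blowing up. A secondary, more routine, point is confirming that the glued domains $D$ are genuine measurable (up to null sets) subsets of $[0,t_1]^k$, which follows by tracking how each swap cuts and reassembles the simplex.
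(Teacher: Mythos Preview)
The paper does not prove this lemma itself; it is stated with attribution to \cite{TChenAndNP} and used as a black box. Your proposal correctly outlines the argument that appears there: expand the iterated Duhamel expression into $(2k-1)!!$ contraction terms, group them into equivalence classes via the Klainerman--Machedon board-game moves (commuting adjacent couplings that act on disjoint variable sets while swapping the corresponding time variables), choose a canonical echelon representative for each class, and count echelon forms to get the bound $2^{3k-1}$. This is exactly the approach of \cite{TChenAndNP}, which extends \cite{KlainermanAndMachedon} from pair to three-body couplings.

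One small correction of exposition: the conditions $\mu(2)=1$ and $\mu(j)<j$ do not \emph{characterize} the echelon representative---they are satisfied by every admissible contraction map arising in the expansion. The echelon (or ``special upper echelon'') form is singled out by an additional monotonicity condition on $\mu$, roughly that if $\mu(2i)=\mu(2j)$ with $i<j$ then the corresponding time-ordering constraints are nested, and this is what cuts the count down from $(2k-1)!!$ to $2^{3k-1}$. You will need that extra condition when you actually carry out the tally in your final paragraph. Otherwise the plan is sound and matches the reference.
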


Thus, by inserting a factor of $2^{3k-1}$ into the final estimate, it
suffice to estimate a typical summand $\int_{D}J^{(2k+1)}(\underline{t}%
_{2k+1};\mu )(\gamma ^{(2k+1)})d\underline{t}_{2k+1}$. By paying an extra
factor of $2^{k}$, we can split all $B_{\mu (2l);2l,2l+1}$ inside $%
J^{(2k+1)}(\underline{t}_{2k+1};\mu )(\gamma ^{(2k+1)})$ into $B_{\mu
(2l);2l,2l+1}^{+}$ and $B_{\mu (2l);2l,2l+1}^{-}$ and focus on the estimate
of 
\begin{eqnarray}
&&J_{\pm }^{(2k+1)}(\underline{t}_{2k+1};\mu )(\gamma ^{(2k+1)})(t_{1})
\label{eqn:J+-forUniqueness} \\
&=&U^{(1)}(t_{1}-t_{3})B_{\mu (2);2,3}^{\pm
}...U^{(2k-1)}(t_{2k-1}-t_{2k+1})B_{\mu (2k);2k,2k+1}^{\pm }\gamma
^{(2k+1)}(t_{2k+1})  \notag
\end{eqnarray}%
where "$\pm $" simply means it could be a "$+$" or a "$-$".

The good thing about splitting $B_{\mu (2l);2l,2l+1}$, instead of estimating 
$J^{(2k+1)}(\underline{t}_{2k+1};\mu )(\gamma ^{(2k+1)})(t_{1})$ directly,
is that $J_{\pm }^{(2k+1)}(\underline{t}_{2k+1};\mu )(\gamma
^{(2k+1)})(t_{1})$ is indeed only one term instead of just another sum and
hence make its analysis more transparent. We will prove the following
proposition about estimating $J_{\pm }^{(2k+1)}$.

\begin{proposition}
\label{Prop:estimate for a typical term in uniqueness}For initial data
satisfying HUFL, there exists $C>0$ such that $\forall \varepsilon >0$, $%
\exists M(\varepsilon )$, we have, 
\begin{equation*}
\limfunc{Tr}\left\vert S^{(-1,1)}\int_{D}J_{\pm }^{(2k+1)}(\underline{t}%
_{2k+1};\mu )(\gamma ^{(2k+1)})(t_{1})d\underline{t}_{2k+1}\right\vert
\leqslant 2t_{1}\left( CC_{0}\right) ^{\frac{16}{5}k+2}\left( \left( t_{1}^{%
\frac{1}{4}}M^{\frac{1}{2}}\right) ^{\frac{10}{11}}CC_{0}+C\varepsilon
\right) ^{\frac{4}{5}k}
\end{equation*}%
for all $k$, provided that $t_{1}$ is smaller than the $T$ concluded in
Theorem \ref{Thm:UTFLinTime}.
\end{proposition}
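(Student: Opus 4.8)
The idea is to run the Duhamel--Born expansion \eqref{eqn:term reduction for uniqueness} through the quantum de Finetti representation and then estimate the resulting scalar multilinear expression coupling by coupling, exploiting uniform-in-time HUFL on the unclogged couplings and a counting bound on the number of congested ones. First I would fix $\varepsilon>0$, take $M=M(\varepsilon)$ and $T=T(\varepsilon,M)$ as in Theorem \ref{Thm:UTFLinTime}, and assume $t_{1}\leqslant T$. By Theorem \ref{Thm:UTFLinTime}, every admissible solution obeying \eqref{condition:kinetic energy} satisfies HUFL uniformly on $[0,t_{1}]$, i.e. $\limfunc{Tr}S^{(1,j)}P_{>M}^{(j)}\gamma^{(j)}(t)P_{>M}^{(j)}S^{(1,j)}\leqslant\varepsilon^{2j}$ for all $j$ and all $t\in[0,t_{1}]$. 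I would then use Theorem \ref{thm:qde} to write $\gamma^{(2k+1)}(s)=\int(\left\vert\phi\right\rangle\left\langle\phi\right\vert)^{\otimes(2k+1)}\,d\mu_{s}(\phi)$; combining this with \eqref{condition:kinetic energy} and the uniform HUFL bound via the Chebyshev argument of \cite[Lemma 4.5]{TCNPdeFinitte}, the measure $\mu_{s}$ is supported, for every $s\in[0,t_{1}]$, on $\{\phi:\Vert\langle\nabla\rangle\phi\Vert_{L^{2}}\leqslant C_{0},\ \Vert P_{>M}\langle\nabla\rangle\phi\Vert_{L^{2}}\leqslant\varepsilon\}$. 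Because the $(2k+1)$-particle factor under the integral is a pure tensor power, expanding all contractions $B_{\mu(2l);2l,2l+1}^{\pm}$ on the de Finetti side converts the operator-valued term into an integral over $\mu_{s}$ of a scalar multilinear expression in $\phi,\bar\phi$ and the free flows $U$, with one factor of $\phi$ or $\bar\phi$ surviving per variable.

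Next I would invoke the classification of couplings. By Lemma \ref{Lem:KLBoardGame} the map $\mu$ lies in the board-game class $m$, so $\mu(2)=1$ and $\mu(j)<j$ for all $j$, which endows the $k$ contractions $B_{\mu(2l);2l,2l+1}^{\pm}$, $l=1,\dots,k$, with a tree structure. Each coupling is unclogged or congested in the sense of Definition \ref{def:type of coupling}: a congested coupling is one whose output variable $x_{\mu(2l)}$ was itself produced by an earlier contraction, so the corresponding subexpression is structurally a nonlinear term built entirely out of nonlinear terms and admits no frequency gain. The combinatorial heart of \S\ref{subsec:GPUTFLUniqueness} is the claim that, for every $\mu\in m$, at least $\tfrac{4}{5}k$ of the $k$ couplings are unclogged; I would prove this by counting on the tree, showing that each congested coupling must be fed by a fixed number of distinct earlier vertices, which caps the congested ones at $\tfrac{1}{5}k$.

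Finally I would carry out the per-coupling estimates and assemble. For each of the $\geqslant\tfrac{4}{5}k$ unclogged couplings I would split the incoming factor into its low ($P_{\leqslant M}$) and high ($P_{>M}$) pieces. On the low piece Lemma \ref{Lem:MultilinearWithFreqLocal}, which rests on the scale-invariant $\mathbb{T}^{3}$ Strichartz and bilinear Strichartz estimates of \cite{BD,KV,Z}, supplies a gain of size $(t_{1}^{1/4}M^{1/2})^{10/11}CC_{0}$ (the $t_{1}^{1/4}$ from the short time interval, the $M^{1/2}$ the Bernstein penalty for the low-frequency restriction, the exponent $\tfrac{10}{11}$ absorbing the loss in the endpoint $\mathbb{T}^{3}$ Strichartz estimates); on the high piece the uniform HUFL bound gives $C\varepsilon$. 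Hence each unclogged coupling contributes $\big((t_{1}^{1/4}M^{1/2})^{10/11}CC_{0}+C\varepsilon\big)$. For each of the $\leqslant\tfrac{1}{5}k$ congested couplings I would bound the contraction by the kinetic/Sobolev estimate, paying a constant $CC_{0}$. The surviving terminal factor and all multilinear and kinetic constants from Lemma \ref{Lem:MultilinearWithFreqLocal} assemble into $(CC_{0})^{\frac{16}{5}k+2}$, the propagators $U$ cost nothing, and exactly one time integration over $D$ remains unabsorbed, producing the prefactor $2t_{1}$. Multiplying the $\tfrac{4}{5}k$ unclogged gains and collecting constants yields
\begin{equation*}
\limfunc{Tr}\left\vert S^{(-1,1)}\int_{D}J_{\pm }^{(2k+1)}(\underline{t}_{2k+1};\mu )(\gamma ^{(2k+1)})(t_{1})\,d\underline{t}_{2k+1}\right\vert \leqslant 2t_{1}\left( CC_{0}\right) ^{\frac{16}{5}k+2}\left( \left( t_{1}^{\frac{1}{4}}M^{\frac{1}{2}}\right) ^{\frac{10}{11}}CC_{0}+C\varepsilon \right) ^{\frac{4}{5}k},
\end{equation*}
as claimed.

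The main obstacle is twofold and intertwined: establishing the combinatorial lower bound $\tfrac{4}{5}k$ on the number of unclogged couplings uniformly over $\mu\in m$, and verifying that Lemma \ref{Lem:MultilinearWithFreqLocal} genuinely converts the frequency localization into the stated gain with no $k$-dependent loss. Since we are at the energy-critical endpoint, the $\mathbb{T}^{3}$ Strichartz estimates carry a loss, so the frequency thresholds, the exponent $\tfrac{10}{11}$, and the bookkeeping exponent $\tfrac{16}{5}$ must be balanced extremely carefully; any slack growing with $k$ would destroy the product over the $\tfrac{4}{5}k$ unclogged couplings and hence the uniqueness argument.
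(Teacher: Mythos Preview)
Your proposal is correct and follows essentially the same approach as the paper: quantum de Finetti representation, the unclogged/congested classification of couplings, the $\tfrac{4}{5}k$ lower bound on unclogged couplings, and the iterative application of Lemma~\ref{Lem:MultilinearWithFreqLocal} with frequency splitting on the unclogged ones. The combinatorial bound you flag as a main obstacle is in fact the easy part (Lemma~\ref{Lem:unclogged couplings}): the paper's argument is a one-line global count of how many fresh $U\phi$ factors the unclogged couplings must absorb ($4k-4\leqslant 5(k-1-j)$ forces $j\leqslant k/5$), not a per-vertex tree argument; your informal description of ``congested'' is also slightly off---it means the quintic term created at step $l$ contains no fresh $U\phi$ factor, not a condition on the output variable $x_{\mu(2l)}$---but this does not affect the structure of your plan.
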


We postpone the proof of Proposition \ref{Prop:estimate for a typical term
in uniqueness} to \S \ref{sec:proof of prop uniqueness}. We remark that, for
a specific $\pm $ combination, the power $\frac{4}{5}k$ could certainly be
higher, but cannot be lower.

With Proposition \ref{Prop:estimate for a typical term in uniqueness}, we
summarize the estimate for $\gamma ^{(1)}$. First, using the triangle
inequality twice, we have 
\begin{eqnarray*}
&&\sup_{t_{1}\in \lbrack 0,t]}\limfunc{Tr}\left\vert S^{(-1,1)}\gamma
^{(1)}(t_{1})\right\vert \\
&\leqslant &\sup_{t_{1}\in \lbrack 0,t]}\sum_{\mu \in m}\limfunc{Tr}%
\left\vert \int_{D}S^{(-1,1)}J^{(2k+1)}(\underline{t}_{2k+1};\mu )(\gamma
^{(2k+1)})(t_{1})d\underline{t}_{2k+1}\right\vert \\
&\leqslant &\sup_{t_{1}\in \lbrack 0,t]}\sum_{\mu \in m}\dsum\limits_{\pm }%
\limfunc{Tr}\left\vert \int_{D}S^{(-1,1)}J_{\pm }^{(2k+1)}(\underline{t}%
_{2k+1};\mu )(\gamma ^{(2k+1)})(t_{1})d\underline{t}_{2k+1}\right\vert
\end{eqnarray*}%
Plugging in Proposition \ref{Prop:estimate for a typical term in uniqueness}
gets us to%
\begin{eqnarray}
&&\sup_{t_{1}\in \lbrack 0,t]}\limfunc{Tr}\left\vert S^{(-1,1)}\gamma
^{(1)}(t_{1})\right\vert  \label{estimate:final estimate for gamma_1} \\
&\leqslant &\sup_{t_{1}\in \lbrack 0,t]}2t_{1}2^{3k-1}2^{k}\left(
CC_{0}\right) ^{\frac{16}{5}k+2}\left( \left( t_{1}^{\frac{1}{4}}M^{\frac{1}{%
2}}\right) ^{\frac{10}{11}}CC_{0}+C\varepsilon \right) ^{\frac{4}{5}k} 
\notag \\
&\leqslant &t2^{4k}\left( CC_{0}\right) ^{\frac{16}{5}k+2}\left( \left(
t_{1}^{\frac{1}{4}}M^{\frac{1}{2}}\right) ^{\frac{10}{11}}CC_{0}+C%
\varepsilon \right) ^{\frac{4}{5}k}  \notag \\
&\leqslant &\frac{t}{CC_{0}}\left( \left( t_{1}^{\frac{1}{4}}M^{\frac{1}{2}%
}\right) ^{\frac{10}{11}}CC_{0}^{5}+CC_{0}^{4}\varepsilon \right) ^{\frac{4}{%
5}k}  \notag
\end{eqnarray}%
for all $k$. Now, first choose $\varepsilon $, in $\left( \ref%
{estimate:final estimate for gamma_1}\right) $, such that $%
CC_{0}^{4}\varepsilon <\frac{1}{4}$, then choose $t<1$ such that $\left(
t_{1}^{\frac{1}{4}}M^{\frac{1}{2}}\right) ^{\frac{10}{11}}CC_{0}^{5}<\frac{1%
}{4}$, we thus have for some $t$ small enough,%
\begin{equation*}
\sup_{t_{1}\in \lbrack 0,t]}\limfunc{Tr}\left\vert S^{(-1,1)}\gamma
^{(1)}(t_{1})\right\vert \leqslant \frac{1}{CC_{0}}\left( \frac{1}{2}\right)
^{\frac{4}{5}k}\rightarrow 0\text{ as }k\rightarrow \infty \text{.}
\end{equation*}%
Bootstrapping this argument, we have 
\begin{equation*}
\sup_{t_{1}\in \lbrack 0,T]}\limfunc{Tr}\left\vert S^{(-1,1)}\gamma
^{(1)}(t_{1})\right\vert =0
\end{equation*}%
which proves Theorem \ref{THM:UniquessAssumingUFL}.

Before going into the proof of Proposition \ref{Prop:estimate for a typical
term in uniqueness}, we remark that the HUFL assumption in Theorem \ref%
{THM:UniquessAssumingUFL} can be weakened. We actually proved the following
theorem.

\begin{theorem}
Given a kinetic energy size $C_{0}$ of the admissible solution $\left\{
\gamma ^{(k)}\right\} $ to (\ref{hierarchy:quintic GP in differential form})$%
,$there is a threshold $\eta (C_{0})>0$. If there is a time $T>0$ and a
frequency $M>0$ such that%
\begin{equation*}
\sup_{t\in \lbrack 0,T]}\limfunc{Tr}S^{(1,k)}P_{>M}^{(k)}\gamma
^{(k)}(t)P_{>M}^{(k)}S^{(1,k)}\leqslant \eta ^{2k},
\end{equation*}%
then the solution is unique in $[0,T]$.
\end{theorem}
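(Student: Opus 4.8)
The plan is to recognize this statement as a direct corollary of the machinery already built for Theorem \ref{THM:UniquessAssumingUFL}: the only place the hypothesis ``HUFL at $t=0$'' entered that proof was through Theorem \ref{Thm:UTFLinTime}, whose role was to upgrade it to the uniform-in-time localization bound $\sup_{t\in[0,T']}\tr S^{(1,k)}P_{>M}^{(k)}\gamma^{(k)}(t)P_{>M}^{(k)}S^{(1,k)}\leqslant \varepsilon^{2k}$ on a short interval. Here we are handed exactly such a bound, with $\varepsilon$ renamed $\eta$, already holding on all of $[0,T]$; so Theorem \ref{Thm:UTFLinTime} is simply not needed, and everything reduces to re-running the Duhamel--Born analysis of \S\ref{subsec:GPUTFLUniqueness} with $(\varepsilon, M(\varepsilon))$ replaced throughout by the given pair $(\eta, M)$.

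First I would use linearity of (\ref{hierarchy:quintic GP in differential form}) to reduce to proving $\gamma^{(k_0)}(t)=0$ for each fixed $k_0$, where $\gamma^{(k)}=\gamma_1^{(k)}-\gamma_2^{(k)}$ is the difference of two admissible solutions of kinetic energy size $C_0$, both obeying the assumed frequency bound with the same $M$ and $\eta$, and agreeing at the left endpoint of the interval under consideration. Next, expanding $\gamma^{(k_0)}$ by the integral hierarchy (\ref{hierarchy:GPwithCouplingI}), collapsing the $(2k+1)!!2^{k}$ summands into at most $2^{3k-1}$ board-game terms via Lemma \ref{Lem:KLBoardGame}, splitting each $B_{\mu(2l);2l,2l+1}$ into $B^{\pm}$ at the cost of a factor $2^k$, and applying Proposition \ref{Prop:estimate for a typical term in uniqueness} --- whose proof uses the uniform-in-time HUFL property only as an input, which we now supply directly on $[0,T]$ --- I obtain exactly the chain (\ref{estimate:final estimate for gamma_1}), i.e.
\[
\sup_{t_1\in[0,t]}\tr\left|S^{(-1,1)}\gamma^{(1)}(t_1)\right|\leqslant \frac{t}{CC_0}\left(\left(t_1^{1/4}M^{1/2}\right)^{10/11}CC_0^5+CC_0^4\eta\right)^{\frac{4}{5}k}\quad\text{for all }k,
\]
with the same proof working verbatim for any $\gamma^{(k_0)}$ in place of $\gamma^{(1)}$.

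Then I would fix parameters to close the loop. Set the threshold $\eta(C_0)>0$ by requiring $CC_0^4\,\eta(C_0)<\tfrac18$, so that $\eta(C_0)$ depends only on $C_0$ and the absolute constant $C$; the hypothesis then kills the second term in the bracket. Choosing a time step $\tau=\tau(C_0,M)>0$ with $\left(\tau^{1/4}M^{1/2}\right)^{10/11}CC_0^5<\tfrac18$ makes the bracket $<\tfrac14$ on any subinterval of length $\leqslant\tau$, so the right-hand side $\to 0$ as $k\to\infty$ and $\gamma^{(k_0)}\equiv 0$ on that subinterval for every $k_0$. Finally I would bootstrap over $[0,T]$: because the assumed localization bound holds on the whole of $[0,T]$ (not just near $0$) and $\tau$ is independent of the base point, knowing $\gamma^{(k)}\equiv 0$ on $[0,j\tau]\cap[0,T]$ for all $k$ lets me re-expand each $\gamma^{(k)}$ by Duhamel starting from time $j\tau$ (where the data vanishes) and repeat the estimate on $[j\tau,(j+1)\tau]\cap[0,T]$; after $\lceil T/\tau\rceil$ steps this gives $\gamma^{(k)}\equiv 0$ on $[0,T]$, hence uniqueness.

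I expect the substantive difficulty to lie not in this deduction, which merely re-packages the proof of Theorem \ref{Thm:TotalUniqueness}, but in the inputs it invokes: the frequency-interaction proof of Lemma \ref{Lem:MultilinearWithFreqLocal} and the unclogged/congested coupling dichotomy behind Proposition \ref{Prop:estimate for a typical term in uniqueness}, which together produce the decisive gain encoded in the factor $(CC_0^4\eta+\cdots)^{4k/5}$. The one genuinely new verification is the uniformity of the time step $\tau$ in the bootstrap; this holds precisely because $M$ is a fixed frequency and the frequency-localization bound is postulated uniformly across all of $[0,T]$, so the localization scale does not degrade from one step to the next and no $k$- or step-dependent shrinking of the time interval occurs.
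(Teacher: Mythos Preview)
Your proposal is correct and matches the paper's approach: the paper states this theorem precisely as the observation that the proof of Theorem \ref{THM:UniquessAssumingUFL} already establishes it, since the HUFL-at-$t=0$ assumption was used only through Theorem \ref{Thm:UTFLinTime} to produce the uniform-in-time bound you are now given directly. Your identification of the threshold via $CC_0^4\eta<\tfrac18$ (the paper uses $\tfrac14$, an immaterial difference), the uniform-in-$k$ and base-point-independent time step $\tau$, and the subsequent bootstrap over $[0,T]$ are exactly the mechanism the paper invokes with its ``Bootstrapping this argument'' line.
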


\subsubsection{Proof of Proposition \protect\ref{Prop:estimate for a typical
term in uniqueness}\label{sec:proof of prop uniqueness}}

Let us assume from now on that the two solutions $\gamma _{1}^{(k)}(t)$ and $%
\gamma _{2}^{(k)}(t)$ are weakly admissible to avoid redundancy in the
proof. By Theorem \ref{thm:qde}, there exists $d\mu _{1,t}$ and $d\mu _{2,t}$
supported on $\mathbb{B(}L^{2}\mathbb{)}$ such that%
\begin{equation}
\gamma ^{(k)}(t)=\gamma _{1}^{(k)}(t)-\gamma _{2}^{(k)}(t)=\int_{\mathbb{B(}%
L^{2}\mathbb{)}}\left( \left\vert \phi \right\rangle \left\langle \phi
\right\vert \right) ^{\otimes k}d\mu _{t}(\phi )
\label{eqn:qde rep for uniqueness}
\end{equation}%
if we define $d\mu _{t}=d\left( \mu _{1,t}-\mu _{2,t}\right) $. As stated
before, condition (\ref{HUFLinTime}) is equivalent to 
\begin{equation}
\mu _{i,t}\left( \{\phi \in L^{2}:\left\Vert SP_{\geqslant M}\phi
\right\Vert _{L_{x}^{2}}>\varepsilon \}\right) =0\text{ for }i=1,2.
\label{measure condition:UTFL}
\end{equation}%
and condition (\ref{condition:kinetic energy}) reads%
\begin{equation}
\mu _{i,t}\left( \{\phi \in L^{2}:\left\Vert S\phi \right\Vert
_{L_{x}^{2}}>C_{0}\}\right) =0\text{ for }i=1,2.
\label{measure condition:kinetic energy}
\end{equation}%
Certainly, $d\mu _{t}$ also, by definition, carries properties (\ref{measure
condition:UTFL}) and (\ref{measure condition:kinetic energy}).

We are now ready to prove Proposition \ref{Prop:estimate for a typical term
in uniqueness}. The proof to Proposition \ref{Prop:estimate for a typical
term in uniqueness} is more like an algorithm, let us first illustrate the
idea by giving an example.

\begin{example}
\label{example:uniquness example}Let us estimate 
\begin{equation*}
\limfunc{Tr}\left\vert
\int_{D}S^{(-1,1)}U_{t_{1},t_{3}}^{(1)}B_{1;2,3}^{+}U_{t_{3},t_{5}}^{(3)}B_{2;4,5}^{-}U_{t_{5},t_{7}}^{(5)}B_{3;6,7}^{+}\gamma ^{(7)}(t_{7})d%
\underline{t}_{7}\right\vert
\end{equation*}%
where we shortened $U^{(l)}(t_{l}-t_{l+2})$ as $U_{t_{l},t_{l+2}}^{(l)}$. We
will refer the $B_{\mu (2l);2l,2l+1}$ in the expression as the $l$-th
coupling.

Part I: (The Marking Part)

For the inner most coupling which is the 3rd coupling here, plugging in (\ref%
{eqn:qde rep for uniqueness}), we have%
\begin{eqnarray*}
&&B_{3;6,7}^{+}\gamma ^{(7)}(t_{7}) \\
&=&\int \phi (x_{1})\phi (x_{2})\left( \left\vert \phi \right\vert ^{4}\phi
\right) (x_{3})\phi (x_{4})\phi (x_{5})\bar{\phi}(x_{1}^{\prime })\bar{\phi}%
(x_{2}^{\prime })\bar{\phi}(x_{3}^{\prime })\bar{\phi}(x_{4}^{\prime })\bar{%
\phi}(x_{5}^{\prime })d\mu _{t_{7}}(\phi ) \\
&\equiv &\int \phi (x_{1})\phi (x_{2})Q_{R}^{(7)}(x_{3})\phi (x_{4})\phi
(x_{5})\bar{\phi}(x_{1}^{\prime })\bar{\phi}(x_{2}^{\prime })\bar{\phi}%
(x_{3}^{\prime })\bar{\phi}(x_{4}^{\prime })\bar{\phi}(x_{5}^{\prime })d\mu
_{t_{7}}(\phi ).
\end{eqnarray*}%
We denote by $Q_{R}^{(2k+1)}$, which is $Q_{R}^{(7)}$ in this example, the 
\emph{roughest quintic} term created in the most inner k-th coupling, which
is the 3rd coupling. This factor should be identified immediately in any
cases.

Then at the 2nd coupling, we first apply $U_{t_{5},t_{7}}^{(5)}$, 
\begin{eqnarray*}
&&B_{2;4,5}^{-}U_{t_{5},t_{7}}^{(5)}B_{3;6,7}^{+}\gamma ^{(7)}(t_{7}) \\
&=&B_{2;4,5}^{-}\int \left( U_{t_{5},t_{7}}\phi \right) (x_{1})\left(
U_{t_{5},t_{7}}\phi \right) (x_{2})\left( U_{t_{5},t_{7}}Q_{R}^{(7)}\right)
(x_{3})\left( U_{t_{5},t_{7}}\phi \right) (x_{4})\left( U_{t_{5},t_{7}}\phi
\right) (x_{5}) \\
&&\times \overline{\left( U_{t_{5},t_{7}}\phi \right) }(x_{1}^{\prime })%
\overline{\left( U_{t_{5},t_{7}}\phi \right) }(x_{2}^{\prime })\overline{%
\left( U_{t_{5},t_{7}}\phi \right) }(x_{3}^{\prime })\overline{\left(
U_{t_{5},t_{7}}\phi \right) }(x_{4}^{\prime })\overline{\left(
U_{t_{5},t_{7}}\phi \right) }(x_{5}^{\prime })d\mu _{t_{7}}(\phi )
\end{eqnarray*}%
then $B_{2;4,5}^{-}$ 
\begin{eqnarray*}
&&B_{2;4,5}^{-}U_{t_{5},t_{7}}^{(5)}B_{3;6,7}^{+}\gamma ^{(7)}(t_{7}) \\
&=&\int \left( U_{t_{5},t_{7}}\phi \right) (x_{1})\left( U_{t_{5},t_{7}}\phi
\right) (x_{2})\left( U_{t_{5},t_{7}}Q_{R}^{(7)}\right) (x_{3}) \\
&&\times \overline{\left( U_{t_{5},t_{7}}\phi \right) }(x_{1}^{\prime
})\left( \left\vert \left( U_{t_{5},t_{7}}\phi \right) \right\vert ^{4}%
\overline{\left( U_{t_{5},t_{7}}\phi \right) }\right) (x_{2}^{\prime })%
\overline{\left( U_{t_{5},t_{7}}\phi \right) }(x_{3}^{\prime })d\mu
_{t_{7}}(\phi ) \\
&\equiv &\int \left( U_{t_{5},t_{7}}\phi \right) (x_{1})\left(
U_{t_{5},t_{7}}\phi \right) (x_{2})\left( U_{t_{5},t_{7}}Q_{R}^{(7)}\right)
(x_{3}) \\
&&\times \overline{\left( U_{t_{5},t_{7}}\phi \right) }(x_{1}^{\prime
})\left( Q_{\phi }^{(5)}\right) (x_{2}^{\prime })\overline{\left(
U_{t_{5},t_{7}}\phi \right) }(x_{3}^{\prime })d\mu _{t_{7}}(\phi )
\end{eqnarray*}%
For the l-th coupling, where $l<k$, we denote the quintic term created in
the l-th coupling by $Q_{\phi }^{(2l+1)}$ if at least one of the factors
inside this quintic term is a $U_{t_{2l+1},t_{j}}\phi $ and $Q^{(2l+1)}$ if
none of the factors inside this quintic term is a $U_{t_{2l+1},t_{j}}\phi $
where $j$ is an index larger or equal to $2l+3$. Moreover, if one of the
factors\footnote{%
There is at most one since we splitted $B$ into $B^{+}$ and $B^{-}$.} inside
this $l$-th quintic term contains $Q_{R}^{(2k+1)}$, we put an extra $R$
subscript at the notation, that is $Q_{\phi ,R}^{(2l+1)}$ or $Q_{R}^{(2l+1)}$%
. We remark that, every one of the five factors inside $Q_{\phi }^{(2l+1)}$
or $Q^{(2l+1)}$ carries the propagator $U$ which smooths out each factor.
For the 2nd coupling in the example we are working on here, there is a
factor $U_{t_{5},t_{7}}\phi $ inside and no $Q_{R}^{(7)}$ inside, so we put
down $Q_{\phi }^{(5)}$.

Finally, at the 1st coupling,%
\begin{eqnarray*}
&&B_{1;2,3}^{+}U_{t_{3},t_{5}}^{(3)}B_{2;4,5}^{-}U_{t_{5},t_{7}}^{(5)}B_{3;6,7}^{+}\gamma ^{(7)}(t_{7})
\\
&=&B_{1;2,3}^{+}\int \left( U_{t_{3},t_{7}}\phi \right) (x_{1})\left(
U_{t_{3},t_{7}}\phi \right) (x_{2})\left( U_{t_{3},t_{7}}Q_{R}^{(7)}\right)
(x_{3}) \\
&&\times \overline{\left( U_{t_{3},t_{7}}\phi \right) }(x_{1}^{\prime
})\left( U_{t_{3},t_{5}}^{(3)}\left( Q_{\phi }^{(5)}\right) \right)
(x_{2}^{\prime })\overline{\left( U_{t_{3},t_{7}}\phi \right) }%
(x_{3}^{\prime })d\mu _{t_{7}}(\phi ) \\
&=&\int \left[ \left( U_{t_{3},t_{7}}\phi \right) (x_{1})\left(
U_{t_{3},t_{7}}\phi \right) (x_{1})\left( U_{t_{3},t_{7}}Q_{R}^{(7)}\right)
(x_{1})\left( U_{t_{3},t_{5}}^{(3)}\left( Q_{\phi }^{(5)}\right) \right)
(x_{1})\overline{\left( U_{t_{3},t_{7}}\phi \right) }(x_{1})\right] \\
&&\times \overline{\left( U_{t_{3},t_{7}}\phi \right) }(x_{1}^{\prime })d\mu
_{t_{7}}(\phi ) \\
&=&\int Q_{\phi ,R}^{(3)}(x_{1})\overline{\left( U_{t_{3},t_{7}}\phi \right) 
}(x_{1}^{\prime })d\mu _{t_{7}}(\phi ).
\end{eqnarray*}%
We used the notation $Q_{\phi ,R}^{(3)}$ because this 1st quintic term has a 
$U_{t_{2l+1},t_{j}}\phi $ factor and one of its factors contain $Q_{R}^{(7)}$%
. We have marked all the quintic terms in each coupling, we can start the
estimate part.

Part II: (The Estimate Part) First, we use Minkowski%
\begin{eqnarray*}
&&\limfunc{Tr}\left\vert
\int_{D}S^{(-1,1)}U_{t_{1},t_{3}}^{(1)}B_{1;2,3}^{+}U_{t_{3},t_{5}}^{(3)}B_{2;4,5}^{-}U_{t_{5},t_{7}}^{(5)}B_{3;6,7}^{+}\gamma ^{(7)}(t_{7})d%
\underline{t}_{7}\right\vert \\
&\leqslant &\int_{D}\limfunc{Tr}\left\vert
S^{(-1,1)}U_{t_{1},t_{3}}^{(1)}B_{1;2,3}^{+}U_{t_{3},t_{5}}^{(3)}B_{2;4,5}^{-}U_{t_{5},t_{7}}^{(5)}B_{3;6,7}^{+}\gamma ^{(7)}(t_{7})\right\vert d%
\underline{t}_{7}
\end{eqnarray*}%
Recall from Lemma \ref{Lem:KLBoardGame} that $D\subset \lbrack 0,t_{1}]^{k}$
and $k=3$ here, 
\begin{equation*}
\leqslant \int_{\lbrack 0,t_{1}]^{3}}\limfunc{Tr}\left\vert \int
S^{(-1,1)}U_{t_{1},t_{3}}^{(1)}Q_{\phi ,R}^{(3)}(x_{1})\overline{\left(
U_{t_{3},t_{7}}\phi \right) }(x_{1}^{\prime })d\mu _{t_{7}}(\phi
)\right\vert d\underline{t}_{7}
\end{equation*}%
Apply Minkowski again and Cauchy-Schwarz,%
\begin{eqnarray*}
&\leqslant &\int_{[0,t_{1}]^{3}}\int \limfunc{Tr}\left\vert
S^{(-1,1)}U_{t_{1},t_{3}}^{(1)}Q_{\phi ,R}^{(3)}(x_{1})\overline{\left(
U_{t_{3},t_{7}}\phi \right) }(x_{1}^{\prime })\right\vert \left\vert d\mu
_{t_{7}}\right\vert (\phi )d\underline{t}_{7} \\
&\leqslant &\int_{[0,t_{1}]^{3}}\int \left\Vert S^{-1}U_{t_{1},t_{3}}Q_{\phi
,R}^{(3)}\right\Vert _{L_{x}^{2}}\left\Vert S^{-1}\overline{\left(
U_{t_{1},t_{3}}U_{t_{3},t_{7}}\phi \right) }\right\Vert
_{L_{x}^{2}}\left\vert d\mu _{t_{7}}\right\vert (\phi )d\underline{t}_{7}
\end{eqnarray*}%
where $\left\vert d\mu _{t_{7}}\right\vert \leqslant d\mu _{1,t_{7}}+$ $d\mu
_{2,t_{7}}$. Because $e^{it\Delta }$ is unitary, 
\begin{eqnarray*}
&&\limfunc{Tr}\left\vert
\int_{D}S^{(-1,1)}U_{t_{1},t_{3}}^{(1)}B_{1;2,3}^{+}U_{t_{3},t_{5}}^{(3)}B_{2;4,5}^{-}U_{t_{5},t_{7}}^{(5)}B_{3;6,7}^{+}\gamma ^{(7)}(t_{7})d%
\underline{t}_{7}\right\vert \\
&\leqslant &\int_{0}^{t_{1}}dt_{7}\int \left\vert d\mu _{t_{7}}\right\vert
(\phi )\int_{[0,t_{1}]^{2}}dt_{3}dt_{5}\left\Vert S^{-1}Q_{\phi
,R}^{(3)}\right\Vert _{L_{x}^{2}}\left\Vert S^{-1}\overline{%
U_{t_{3},t_{7}}\phi }\right\Vert _{L_{x}^{2}}
\end{eqnarray*}

To carry out the $dt_{3}$ integral, we look at the quintic term generated in
the 1st coupling, which is $Q_{\phi ,R}^{(3)}$. Because it carries $\phi $
and $R$, we use (\ref{estimate:MultiLinearWithFreqLoc1}) and reach%
\begin{eqnarray*}
&\leqslant &\int_{0}^{t_{1}}dt_{7}\int \left\vert d\mu _{t_{7}}\right\vert
(\phi )\int_{[0,t_{1}]}dt_{5}\left\Vert S^{-1}Q_{R}^{(7)}\right\Vert
_{L_{x}^{2}}\left\Vert SQ_{\phi }^{(5)}\right\Vert _{L_{x}^{2}}\left\Vert
S\phi \right\Vert _{L_{x}^{2}}^{2}\left\Vert S^{-1}\phi \right\Vert
_{L_{x}^{2}} \\
&&\times \left( C\left( t_{1}^{\frac{1}{4}}M^{\frac{1}{2}}\right) ^{\frac{10%
}{11}}\left\Vert \phi \right\Vert _{H_{x}^{1}}+C\left\Vert P_{>M}S\phi
\right\Vert _{L_{x}^{2}}\right)
\end{eqnarray*}%
where we have put the factor carrying $Q_{R}^{(7)}$ in $H^{-1}$ and the
factor $\phi $ in the frequency splitting.

To carry out the $dt_{5}$ integral, we look at the quintic term generated in
the 2nd coupling, which is $Q_{\phi }^{(5)}$. Because it carries $\phi $ but
not $R$, we use (\ref{estimate:MultiLinearWithFreqLoc2}) and get to%
\begin{eqnarray*}
&\leqslant &\int_{0}^{t_{1}}dt_{7}\int \left\vert d\mu _{t_{7}}\right\vert
(\phi )\left\Vert S^{-1}Q_{R}^{(7)}\right\Vert _{L_{x}^{2}}\left\Vert S\phi
\right\Vert _{L_{x}^{2}}^{6}\left\Vert S^{-1}\phi \right\Vert _{L_{x}^{2}} \\
&&\times \left( C\left( t_{1}^{\frac{1}{4}}M^{\frac{1}{2}}\right) ^{\frac{10%
}{11}}\left\Vert \phi \right\Vert _{H_{x}^{1}}+C\left\Vert P_{>M}S\phi
\right\Vert _{L_{x}^{2}}\right) ^{2}
\end{eqnarray*}%
where we have put a $\phi $ factor in the frequency splitting.

Now, use the Sobolev inequality%
\begin{equation}
\left\Vert S^{-1}\left( \left\vert \phi \right\vert ^{4}\phi \right)
\right\Vert _{L_{x}^{2}}\leqslant C\left\Vert \phi \right\Vert
_{H_{x}^{1}}^{5}\text{,}  \label{eqn:sobolev for naked term}
\end{equation}%
we have%
\begin{eqnarray*}
&&\limfunc{Tr}\left\vert
\int_{D}S^{(-1,1)}U_{t_{1},t_{3}}^{(1)}B_{1;2,3}^{+}U_{t_{3},t_{5}}^{(3)}B_{2;4,5}^{-}U_{t_{5},t_{7}}^{(5)}B_{3;6,7}^{+}\gamma ^{(7)}(t_{7})d%
\underline{t}_{7}\right\vert \\
&\leqslant &\int_{0}^{t_{1}}dt_{7}\int \left\vert d\mu _{t_{7}}\right\vert
(\phi )C\left\Vert \phi \right\Vert _{H^{1}}^{11}\left\Vert S^{-1}\phi
\right\Vert _{L_{x}^{2}} \\
&&\times \left( C\left( t_{1}^{\frac{1}{4}}M^{\frac{1}{2}}\right) ^{\frac{10%
}{11}}\left\Vert \phi \right\Vert _{H_{x}^{1}}+C\left\Vert P_{>M}S\phi
\right\Vert _{L_{x}^{2}}\right) ^{2}
\end{eqnarray*}%
Plugging in properties (\ref{measure condition:UTFL}) and (\ref{measure
condition:kinetic energy}), we have%
\begin{eqnarray*}
&\leqslant &CC_{0}^{12}\left( C\left( t_{1}^{\frac{1}{4}}M^{\frac{1}{2}%
}\right) ^{\frac{10}{11}}C_{0}+C\varepsilon \right)
^{2}\int_{0}^{t_{1}}dt_{7}\int \left\vert d\mu _{t_{7}}\right\vert (\phi ) \\
&\leqslant &CC_{0}^{12}\left( C\left( t_{1}^{\frac{1}{4}}M^{\frac{1}{2}%
}\right) ^{\frac{10}{11}}C_{0}+C\varepsilon \right) ^{2}\times t_{1}\times 2
\end{eqnarray*}%
which is Proposition \ref{Prop:estimate for a typical term in uniqueness} in
the case of $k=3$. Hence we have ended Example \ref{example:uniquness
example}.
\end{example}

From Example \ref{example:uniquness example}, one can immediately tell that
the number of $Q_{\phi }$ couplings in the total of $k$ couplings is exactly
the decay power one puts in the end. Let us first make sure such a decay
power is at least proportional to $k$. We make the following definition to
help the presentation.

\begin{definition}
\label{def:type of coupling}For $l<k$, we say the l-th coupling is an
unclogged coupling, if the corresponding quintic term $Q^{(2l+1)}$ contains
at least one $U_{t_{2l+1},t_{j}}\phi $ factor where $j$ is an index larger
or equal to $2l+3$. If the $l$-th coupling is not unclogged, we will call it
a congested coupling.
\end{definition}

\begin{lemma}
\label{Lem:unclogged couplings}For large $k$, there are at least $\frac{4}{5}%
k$ unclogged couplings in the total of $k$ couplings when one plugs (\ref%
{eqn:qde rep for uniqueness}) into (\ref{eqn:J+-forUniqueness}).
\end{lemma}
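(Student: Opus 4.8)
The plan is to argue purely combinatorially, turning the ``marking'' bookkeeping of Part I of Example \ref{example:uniquness example} into a systematic procedure and then counting. After the board‑game reduction (Lemma \ref{Lem:KLBoardGame}) and the $\pm$‑splitting, a typical term is (\ref{eqn:J+-forUniqueness}), and $\gamma^{(2k+1)}(t_{2k+1})$ is replaced by $\int\left\vert\phi^{\otimes(2k+1)}\right\rangle\left\langle\phi^{\otimes(2k+1)}\right\vert\,d\mu_{t_{2k+1}}(\phi)$ via Theorem \ref{thm:qde}, so each of the $2k+1$ indices $i$ initially carries a bare $\phi(x_i)$ and a bare $\bar\phi(x_i')$, all at time $t_{2k+1}$. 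Processing the couplings from the innermost $B^{\pm}_{\mu(2k);2k,2k+1}$ outward, the $l$‑th coupling evaluates, at index $\mu(2l)$, the (possibly already propagated and clumped) contents of indices $2l$ and $2l+1$ together with their primed copies and multiplies them onto the content at $\mu(2l)$; the outcome $Q^{(2l+1)}$ \emph{always has exactly five direct factors}, one from each of the five distinct slot‑positions $\mu(2l)^{\epsilon_l},(2l)^{+},(2l)^{-},(2l+1)^{+},(2l+1)^{-}$ (the superscript records non‑primed versus primed, $\epsilon_l$ is the sign of the $l$‑th $B$, and $\mu(2l)<2l$ forces the first position to be distinct from the other four). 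Just before coupling $l$ fires, each such factor is either (a) a bare factor merely propagated by the free flow, hence of the form $U_{t_{2l+1},t_{2k+1}}\phi$ with $2k+1\ge 2l+3$ because $l<k$ — one such factor makes coupling $l$ \emph{unclogged} in the sense of Definition \ref{def:type of coupling} — or (b) the output quintic $Q^{(2m+1)}$ of a strictly deeper coupling $m>l$ that had written into that position (only a $B^{+}$ coupling can write into a non‑primed position, only a $B^{-}$ into a primed one), again possibly propagated. Hence coupling $l$ is \emph{congested} exactly when all five of its factors are of type (b), and since the five positions are distinct and each deeper coupling outputs into exactly one position, these factors come from five \emph{distinct} couplings $m_1,\dots,m_5$, all $>l$.

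The crux is then a ``burial'' observation, which I would verify by following the history of a single slot‑position: once the $m$‑th coupling deposits $Q^{(2m+1)}$ at its position $p=\mu(2m)^{\epsilon_m}$, the \emph{first} later event touching $p$ — either a shallower coupling with the same target index and sign that absorbs it, or the coupling $\lfloor\mu(2m)/2\rfloor$ that reads the content of $\mu(2m)$ as one of its own five factors — sees the bare $Q^{(2m+1)}$, while every subsequent event at $p$ only ever sees a strictly larger clump inside which $Q^{(2m+1)}$ is buried and is no longer a direct factor; so each output quintic is a direct factor of \emph{at most one} quintic. Therefore the five‑element sets $\{m_1,\dots,m_5\}$ attached to distinct congested couplings are pairwise disjoint subsets of $\{2,\dots,k\}$, giving $5\cdot\#\{\,l<k:\ l\ \text{congested}\,\}\le k-1$. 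Thus at most a fifth of the couplings are congested; since the $k$‑th coupling produces $Q^{(2k+1)}_R$, all five of whose factors are un‑propagated $\phi$'s (so it is never unclogged), the number of unclogged couplings is at least $\tfrac{4}{5}(k-1)$, which is the bound $\tfrac{4}{5}k$ asserted in the lemma up to a bounded additive error — precisely the reason for the hypothesis ``for large $k$'' (indeed, for $k=3$ Example \ref{example:uniquness example} has only $2$ unclogged couplings, matching $\tfrac45(k-1)$ but not $\tfrac45 k$).

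The real work, I expect, lies in nailing down the two structural claims rather than in the counting. For (i) — that $Q^{(2l+1)}$ genuinely reduces to exactly five direct factors at five distinct slot‑positions — one must carefully compute the action of $B^{+}_{j;2l,2l+1}$ and $B^{-}_{j;2l,2l+1}$ on a tensor‑product kernel whose individual slots may already carry clumps, checking that $B^{+}$ deposits the quintic at the non‑primed target while leaving the primed slot alone (and symmetrically for $B^{-}$), so that exactly one of $\{(\,\cdot\,)^{+},(\,\cdot\,)^{-}\}$ is written at each affected index. For (ii) — the burial claim — one must confirm that absorption by a $B$ of the same sign and consumption by the index $\lfloor\mu(2m)/2\rfloor$ are the only ways a slot‑position can be touched, that the free propagators $U^{(\cdot)}$ never ``un‑clump'' a factor, and that the composition of these propagators across couplings does land a never‑written bare factor at index $i$ in the claimed form $U_{t_{2l+1},t_{2k+1}}\phi$. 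Once (i) and (ii) are established, disjointness of the five‑element sets, and hence the lemma, follows immediately.
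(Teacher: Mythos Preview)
Your argument is correct and reaches the same bound $5j \le k-1$ on the number $j$ of congested couplings, but it takes a genuinely different---and considerably longer---route than the paper's proof. The paper counts \emph{inputs}: after the $k$-th coupling fires there are exactly $4k-3$ bare $U\phi$ factors among the surviving slots, and after the first coupling at most one of them can survive outside a quintic, so at least $4k-4$ bare factors are absorbed by couplings $1,\dots,k-1$; since a congested coupling absorbs none and an unclogged one at most five, one gets $4k-4\le 5(k-1-j)$, hence $5j\le k-1$, in three lines. You instead count \emph{outputs}: each congested coupling must absorb five quintics from five distinct deeper couplings, and your burial observation shows each output quintic serves as a direct factor at most once, forcing the five-element sets to be pairwise disjoint in $\{2,\dots,k\}$. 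Both arguments are valid and dual to one another, but the paper's input count is a short pigeonhole, whereas your output count requires establishing the slot-position bookkeeping (your claims (i) and (ii)), which you correctly identify as the real work but which the paper sidesteps entirely. Your approach does make the combinatorial structure more explicit; for the present purpose, however, the paper's argument is decisively simpler.
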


\begin{proof}
Assume there are $j$ congested couplings, then there are $(k-1-j)$ unclogged
couplings. Before the $(k-1)$-th coupling, there are $4k-3$ copies of $U\phi 
$ available. After the 1st coupling, all of these $4k-3$ copies of $U\phi $,
except one, must be inside some quintic term. Since the $j$ congested
couplings do not consume any $U\phi $, to consume all $4k-4$ copies of $%
U\phi $, we have to have%
\begin{equation}
4k-4\leqslant 5(k-1-j)  \label{eqn:total phi vs unclogged}
\end{equation}%
because a unclogged coupling can, at most, consume $5$ copies of $U\phi $.
Inequality $(\ref{eqn:total phi vs unclogged})$ certainly cannot hold for
large $k$ if $j\geqslant \frac{k}{5}$. Hence, there are at least $\frac{4k}{5%
}$ unclogged couplings.
\end{proof}

We are now ready to present the algorithm to estimate a general $J_{\pm
}^{(2k+1)}(\underline{t}_{2k+1};\mu )(\gamma ^{(2k+1)})(t_{1})$.

\begin{itemize}
\item[Step 1] Plug (\ref{eqn:qde rep for uniqueness}) into $J_{\pm
}^{(2k+1)}(\underline{t}_{2k+1};\mu )(\gamma ^{(2k+1)})(t_{1})$ and clearly
mark down all the quintic terms generated in each coupling as described in
Example \ref{example:uniquness example}. One then reaches%
\begin{eqnarray*}
&&\limfunc{Tr}\left\vert \int_{D}\int
S^{(-1,1)}U_{t_{1},t_{3}}^{(1)}f^{(1)}(x_{1})\overline{g^{(1)}}%
(x_{1}^{\prime })d\mu _{t_{2k+1}}(\phi )d\underline{t}_{2k+1}\right\vert \\
&\leqslant &\int_{0}^{t_{1}}dt_{2k+1}\int \left\vert d\mu
_{t_{2k+1}}\right\vert (\phi
)\int_{[0,t_{1}]^{k-1}}dt_{3}dt_{5}...dt_{2k-1}\left\Vert
S^{-1}f^{(1)}\right\Vert _{L_{x}^{2}}\left\Vert S^{-1}\overline{g^{(1)}}%
\right\Vert _{L_{x}^{2}}
\end{eqnarray*}%
where one and only one of $f^{(1)}$ and $g^{(1)}$ contains the quintic term
generated at the 1st coupling. Go to Step 2.

\item[Step 2] Set counter $l=1$. Go to Step 3.

\item[Step 3] If the quintic term generated in the $l$-th coupling is a $%
Q_{\phi ,R}^{(2l+1)}$ and if we are estimating $\int dt_{2l+1}\left\Vert
S^{-1}Q_{\phi ,R}^{(2l+1)}\right\Vert _{L_{x}^{2}}$, then apply (\ref%
{estimate:MultiLinearWithFreqLoc1}), with the factor containing $%
Q_{R}^{(2k+1)}$ put in $H^{-1}$ and one $\phi $ factor spitted in frequency,
and go to Step 7. If not, go to Step 4.

\item[Step 4] If the quintic term generated in the $l$-th coupling is a $%
Q_{\phi }^{(2l+1)}$ and if we are estimating $\int dt_{2l+1}\left\Vert
S^{-1}Q_{\phi }^{(2l+1)}\right\Vert _{L_{x}^{2}}$, then use $\int
dt_{2l+1}\left\Vert S^{-1}Q_{\phi }^{(2l+1)}\right\Vert
_{L_{x}^{2}}\leqslant \int dt_{2l+1}\left\Vert SQ_{\phi
}^{(2l+1)}\right\Vert _{L_{x}^{2}}$ and apply (\ref%
{estimate:MultiLinearWithFreqLoc2}), with one $\phi $ factor spitted in
frequency, and go to Step 7. If the quintic term generated in the $l$-th
coupling is a $Q_{\phi }^{(2l+1)}$ and if we are estimating $\int
dt_{2l+1}\left\Vert SQ_{\phi }^{(2l+1)}\right\Vert _{L_{x}^{2}}$, then apply
(\ref{estimate:MultiLinearWithFreqLoc2}), with one $\phi $ factor spitted in
frequency, and go to Step 7.If not, go to Step 5.

\item[Step 5] If the quintic term generated in the $l$-th coupling is a $%
Q_{R}^{(2l+1)}$ and if we are estimating $\int dt_{2l+1}\left\Vert
S^{-1}Q_{R}^{(2l+1)}\right\Vert _{L_{x}^{2}}$, then apply (\ref%
{estimate:YounghunOld1}), with the factor containing $Q_{R}^{(2k+1)}$ put in 
$H^{-1}$, and go to Step 7. If not, go to Step 6.

\item[Step 6] If the quintic term generated in the $l$-th coupling is a $%
Q^{(2l+1)}$, and if we are estimating $\int dt_{2l+1}\left\Vert
S^{-1}Q^{(2l+1)}\right\Vert _{L_{x}^{2}}$, then use $\int
dt_{2l+1}\left\Vert S^{-1}Q^{(2l+1)}\right\Vert _{L_{x}^{2}}\leqslant \int
dt_{2l+1}\left\Vert SQ^{(2l+1)}\right\Vert _{L_{x}^{2}}$ and apply (\ref%
{estimate:YounghunOld2}) and go to Step 7. If the quintic term generated in
the $l$-th coupling is a $Q^{(2l+1)}$, and if we are estimating $\int
dt_{2l+1}\left\Vert SQ^{(2l+1)}\right\Vert _{L_{x}^{2}}$, then apply (\ref%
{estimate:YounghunOld2}) and go to Step 7.

\item[Step 7] Set counter $l=l+1$. If $l<k,$ go to Step 3. If $l=k$, go to
Step 8.

\item[Step 8] Now we are at the $k$th coupling. By Lemma \ref{Lem:unclogged
couplings}, we have used estimates (\ref{estimate:MultiLinearWithFreqLoc1})
or (\ref{estimate:MultiLinearWithFreqLoc2}) at least $\frac{4k}{5}$ times
and estimates (\ref{estimate:YounghunOld1}) or (\ref{estimate:YounghunOld2})
at most $\frac{k}{5}$ times. So in the least decaying case, we have ended up
with%
\begin{eqnarray*}
&&\limfunc{Tr}\left\vert \int_{D}\int
S^{(-1,1)}U_{t_{1},t_{3}}^{(1)}f^{(1)}(x_{1})\overline{g^{(1)}}%
(x_{1}^{\prime })d\mu _{t_{2k+1}}(\phi )d\underline{t}_{2k+1}\right\vert \\
&\leqslant &\int_{0}^{t_{1}}dt_{2k+1}\int \left\vert d\mu
_{t_{2k+1}}\right\vert (\phi )\left\Vert S^{-1}Q_{R}^{(2k+1)}\right\Vert
_{L_{x}^{2}}\left( C\left\Vert S\phi \right\Vert _{L_{x}^{2}}\right) ^{\frac{%
16}{5}k-3} \\
&&\times \left( C\left( t_{1}^{\frac{1}{4}}M^{\frac{1}{2}}\right) ^{\frac{10%
}{11}}\left\Vert \phi \right\Vert _{H_{x}^{1}}+C\left\Vert P_{>M}S\phi
\right\Vert _{L_{x}^{2}}\right) ^{\frac{4}{5}k}\text{.}
\end{eqnarray*}%
Use (\ref{eqn:sobolev for naked term}) for $\left\Vert
S^{-1}Q_{R}^{(2k+1)}\right\Vert _{L_{x}^{2}}$, 
\begin{equation*}
\leqslant \int_{0}^{t_{1}}dt_{2k+1}\int \left\vert d\mu
_{t_{2k+1}}\right\vert \left( C\left\Vert S\phi \right\Vert
_{L_{x}^{2}}\right) ^{\frac{16}{5}k+2}\left( C\left( t_{1}^{\frac{1}{4}}M^{%
\frac{1}{2}}\right) ^{\frac{10}{11}}\left\Vert \phi \right\Vert
_{H_{x}^{1}}+C\left\Vert P_{>M}S\phi \right\Vert _{L_{x}^{2}}\right) ^{\frac{%
4}{5}k}
\end{equation*}%
Put in properties (\ref{measure condition:UTFL}) and (\ref{measure
condition:kinetic energy}),%
\begin{eqnarray*}
&\leqslant &\left( CC_{0}\right) ^{\frac{16}{5}k+2}\left( C\left( t_{1}^{%
\frac{1}{4}}M^{\frac{1}{2}}\right) ^{\frac{10}{11}}C_{0}+C\varepsilon
\right) ^{\frac{4}{5}k}\int_{0}^{t_{1}}dt_{2k+1}\int \left\vert d\mu
_{t_{2k+1}}\right\vert \\
&\leqslant &\left( CC_{0}\right) ^{\frac{16}{5}k+2}\left( C\left( t_{1}^{%
\frac{1}{4}}M^{\frac{1}{2}}\right) ^{\frac{10}{11}}C_{0}+C\varepsilon
\right) ^{\frac{4}{5}k}\times t_{1}\times 2
\end{eqnarray*}%
which is Proposition \ref{Prop:estimate for a typical term in uniqueness}.
\end{itemize}

That is, assuming the multilinear estimates (\ref%
{estimate:MultiLinearWithFreqLoc1})-(\ref{estimate:YounghunOld2}) in Lemma %
\ref{Lem:MultilinearWithFreqLocal}, we have proved Theorem \ref%
{THM:UniquessAssumingUFL} and hence the large solution uniqueness Theorem %
\ref{Thm:TotalUniqueness}. We prove these multilinear estimates in \S \ref%
{sec:UniquenessMultilinearEstimate}.

\subsection{Multilinear Estimates on $\mathbb{T}^{3}$\label%
{sec:UniquenessMultilinearEstimate}}

\begin{lemma}
\label{Lem:MultilinearWithFreqLocal}\footnote{%
The factor $\left( T^{\frac{1}{4}}M_{0}^{\frac{1}{2}}\right) ^{\frac{10}{11}%
} $ can be improved to $\left( T^{\frac{1}{4}}M_{0}^{\frac{1}{2}}\right)
^{1-\varepsilon }.$We chose a concrete power here to make the proof easier
to understand.}Given any frequency $M_{0}\geqslant 0$, we have the refined
multilinear estimates%
\begin{eqnarray}
&&\left\Vert e^{\pm it\Delta }f_{1}e^{\pm it\Delta }f_{2}e^{\pm it\Delta
}f_{3}e^{\pm it\Delta }f_{4}e^{\pm it\Delta }f_{5}\right\Vert
_{L_{T}^{1}H_{x}^{-1}}  \label{estimate:MultiLinearWithFreqLoc1} \\
&\leqslant &C\left\Vert f_{1}\right\Vert _{H_{x}^{-1}}\left( T^{\frac{5}{22}%
}M_{0}^{\frac{5}{11}}\left\Vert f_{2}\right\Vert _{H^{1}}+\left\Vert
P_{>M_{0}}S_{x}f_{2}\right\Vert _{L_{x}^{2}}\right)
\dprod\limits_{j=3}^{5}\left\Vert f_{j}\right\Vert _{H^{1}},  \notag \\
&&\left\Vert e^{\pm it\Delta }f_{1}e^{\pm it\Delta }f_{2}e^{\pm it\Delta
}f_{3}e^{\pm it\Delta }f_{4}e^{\pm it\Delta }f_{5}\right\Vert
_{L_{T}^{1}H_{x}^{1}}  \label{estimate:MultiLinearWithFreqLoc2} \\
&\leqslant &C\left( T^{\frac{5}{22}}M_{0}^{\frac{5}{11}}\left\Vert
f_{1}\right\Vert _{H_{x}^{1}}+\left\Vert P_{>M_{0}}S_{x}f_{1}\right\Vert
_{L_{x}^{2}}\right) (\dprod\limits_{j=2}^{5}\left\Vert f_{j}\right\Vert
_{H_{x}^{1}}).  \notag
\end{eqnarray}%
In particular, if we set $M_{0}=0$, we have%
\begin{eqnarray}
\left\Vert e^{\pm it\Delta }f_{1}e^{\pm it\Delta }f_{2}e^{\pm it\Delta
}f_{3}e^{\pm it\Delta }f_{4}e^{\pm it\Delta }f_{5}\right\Vert
_{L_{T}^{1}H_{x}^{-1}} &\leqslant &C\left\Vert f_{1}\right\Vert
_{H_{x}^{-1}}\dprod\limits_{j=2}^{5}\left\Vert f_{j}\right\Vert _{H_{x}^{1}}
\label{estimate:YounghunOld1} \\
\left\Vert e^{\pm it\Delta }f_{1}e^{\pm it\Delta }f_{2}e^{\pm it\Delta
}f_{3}e^{\pm it\Delta }f_{4}e^{\pm it\Delta }f_{5}\right\Vert
_{L_{T}^{1}H_{x}^{1}} &\leqslant &C\dprod\limits_{j=1}^{5}\left\Vert
f_{j}\right\Vert _{H_{x}^{1}}  \label{estimate:YounghunOld2}
\end{eqnarray}
\end{lemma}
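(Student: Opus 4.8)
The plan is to prove all four inequalities of Lemma \ref{Lem:MultilinearWithFreqLocal} by a Littlewood--Paley decomposition combined with the scale-invariant $\mathbb{T}^{3}$ Strichartz estimates of Bourgain--Demeter \cite{BD} and Killip--Visan \cite{KV} and the $\mathbb{T}^{3}$ bilinear Strichartz estimates of Killip--Visan \cite{KV} and Zhang \cite{Z}. I would organize it in two steps: first establish the base estimates (\ref{estimate:YounghunOld1})--(\ref{estimate:YounghunOld2}) (the case $M_{0}=0$) directly, then deduce the refined estimates (\ref{estimate:MultiLinearWithFreqLoc1})--(\ref{estimate:MultiLinearWithFreqLoc2}) by splitting the designated factor at frequency $M_{0}$ and treating its low-frequency part with a short-time gain. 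The mixed signs $e^{\pm it\Delta}$ are harmless throughout, since all the linear and bilinear Strichartz inputs are insensitive to the sign of the propagator (for the bilinear estimate the usual transversality/resonance analysis works for any sign combination), so I would suppress them.

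\textbf{Base estimates.} Decompose each $f_{j}=\sum_{N_{j}}P_{N_{j}}f_{j}$ and, by relabeling, reduce to a single frequency-ordered block $N_{1}\geqslant N_{2}\geqslant N_{3}\geqslant N_{4}\geqslant N_{5}$. The two workhorse inputs on $\mathbb{T}^{3}\times\lbrack0,T]$ are: the loss-free, super-critical linear Strichartz bound $\Vert e^{it\Delta}P_{N}g\Vert_{L^{p}_{t,x}}\lesssim N^{3/2-5/p}\Vert P_{N}g\Vert_{L^{2}_{x}}$ for a concrete $p>10/3$ (so that there is no $N^{0+}$ loss), and the bilinear Strichartz estimate for $M\leqslant N$, which carries a genuine dyadic gain $(M/N)^{\theta}$, $\theta>0$, once the two frequencies separate. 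I would place $P_{N_{1}}f_{1}$ and $P_{N_{2}}f_{2}$ into the bilinear estimate and $P_{N_{3}}f_{3},P_{N_{4}}f_{4},P_{N_{5}}f_{5}$ into linear $L^{p}_{t,x}$ norms, then combine by H\"older in $(t,x)$, distributing the four available derivatives onto $f_{2},\dots,f_{5}$; in the $H^{-1}$ output case (\ref{estimate:YounghunOld1}) one extra derivative is supplied for free, because the output frequency is $\lesssim N_{1}$, so $\Vert P_{\lesssim N_{1}}(\cdot)\Vert_{H^{-1}}\lesssim N_{1}^{-1}\Vert\cdot\Vert_{L^{2}}$, which is exactly the weight paired against $\Vert f_{1}\Vert_{H^{-1}}$. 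Summing the geometric series in $N_{2},\dots,N_{5}\leqslant N_{1}$ (convergence coming from the bilinear gain and the extra spatial integrability of $L^{p}$, $p>10/3$) and then in $N_{1}$ yields (\ref{estimate:YounghunOld2}) and (\ref{estimate:YounghunOld1}). The tight configuration is the fully resonant block $N_{1}\sim\cdots\sim N_{5}$, where there is no frequency gain; it closes precisely because the problem is $H^{1}$-critical (five factors, four derivatives, and one free $L^{\infty}_{t}L^{2}_{x}$ slot from unitarity). This is the point where the bilinear-Strichartz route replaces the Klainerman--Machedon $X_{s,b}$ argument \cite{KlainermanAndMachedon}, which would cost extra derivatives we cannot spend.

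\textbf{Refined estimates.} For (\ref{estimate:MultiLinearWithFreqLoc1}) write $f_{2}=P_{\leqslant M_{0}}f_{2}+P_{>M_{0}}f_{2}$. The contribution of $P_{>M_{0}}f_{2}$ is controlled by the base estimate (\ref{estimate:YounghunOld1}) with $f_{2}$ replaced by $P_{>M_{0}}f_{2}$, giving the term $\Vert P_{>M_{0}}S_{x}f_{2}\Vert_{L^{2}_{x}}$ on the right. For the contribution of $P_{\leqslant M_{0}}f_{2}$ I would rerun the dyadic analysis above, now with the frequency of that distinguished factor $\leqslant M_{0}$ in every block: interpolating the scale-invariant $L^{10/3}_{t,x}$ Strichartz estimate against the crude short-time bound $\Vert e^{it\Delta}P_{\leqslant M_{0}}g\Vert_{L^{q}_{t,x}([0,T])}\lesssim T^{1/q}M_{0}^{3(1/2-1/q)}\Vert g\Vert_{L^{2}_{x}}$ (itself from H\"older in $t$ and Bernstein in $x$), and choosing the exponents concretely, extracts the gain $(T^{1/4}M_{0}^{1/2})^{10/11}$ on this factor while leaving the other four in their $H^{1}$ norms; this produces the first term of (\ref{estimate:MultiLinearWithFreqLoc1}). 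Estimate (\ref{estimate:MultiLinearWithFreqLoc2}) is proved the same way, splitting $f_{1}$ instead; in the $H^{1}$ output case the output derivative can always be routed onto one of the four ``energy'' factors, so the split factor never carries more than a low power of its own frequency, which is what makes the same $(T^{1/4}M_{0}^{1/2})^{10/11}$ bookkeeping go through. The ``in particular'' inequalities (\ref{estimate:YounghunOld1})--(\ref{estimate:YounghunOld2}) are then literally the $M_{0}=0$ specializations, using $P_{>0}S_{x}f=S_{x}f$ and $(T^{1/4}\cdot0^{1/2})^{10/11}=0$.

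\textbf{Main obstacle.} The genuine difficulty is that the $\mathbb{T}^{3}$ Strichartz estimates are only $\varepsilon$-lossy at the scaling-critical exponent $L^{10/3}_{t,x}$, while in the uniqueness argument Lemma \ref{Lem:MultilinearWithFreqLocal} is iterated over an unbounded number of couplings, so no $\varepsilon$-loss per application can be tolerated. The resolution --- and the reason the exponents look unusual --- is to deliberately work with non-endpoint $L^{p}_{t,x}$, $p$ a concrete value just above $10/3$, so that every Strichartz input is loss-free, and to absorb the resulting small regularity/time cost into the bilinear dyadic gain and into the explicit factor $(T^{1/4}M_{0}^{1/2})^{10/11}$ (letting $p\downarrow10/3$ would upgrade this to $(T^{1/4}M_{0}^{1/2})^{1-\varepsilon}$, as remarked in the footnote, but is not needed). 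Carrying out the case analysis of the frequency configurations and checking in each case that the bilinear gain strictly dominates the dyadic sum --- in particular in the $H^{-1}$ output case, where one must be sure a small output frequency does not spoil summability, and in the near-resonant regimes where the bilinear gain degenerates --- is the part that requires the most care.
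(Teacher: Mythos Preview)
Your overall strategy—Littlewood--Paley decomposition, the scale-invariant $L^p_{t,x}$ Strichartz estimates for $p>10/3$, and the $\mathbb{T}^3$ bilinear Strichartz estimate—is the right toolkit, and your plan to derive (\ref{estimate:MultiLinearWithFreqLoc1})--(\ref{estimate:MultiLinearWithFreqLoc2}) from the base estimates by splitting at $M_0$ is exactly what the paper does. But your base-estimate argument has a genuine gap in the frequency configuration that the paper calls Case~1: $N_1\sim N_2\geq N_3\geq N_4\geq N_5$ with the output frequency $M\leq N_2$.

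Concretely, if you pair $P_{N_1}f_1$ and $P_{N_2}f_2$ in the bilinear estimate as you propose, you get no off-diagonal gain because $N_1\sim N_2$; the bound is just $N_1^{1/2}\|P_{N_1}f_1\|_{L^2}\|P_{N_2}f_2\|_{L^2}$. Placing $u_3,u_4,u_5$ in $L^p_{t,x}$ and the output in $H^{-1}$ (your claim that ``the output frequency is $\lesssim N_1$, so $\|P_{\lesssim N_1}(\cdot)\|_{H^{-1}}\lesssim N_1^{-1}\|\cdot\|_{L^2}$'' is false—the output can sit at frequency $M\ll N_1$, and then $H^{-1}$ gives you only $M^{-1}$, not $N_1^{-1}$) leaves an unsummable positive power of $N_1$ after all conversions. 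If instead you pair $u_1$ with a genuinely lower frequency (say $u_3$) to get the bilinear gain $(N_3/N_1)^\delta$, then $u_2$ is left alone at high frequency $N_2\sim N_1$, and linear Strichartz on $u_2$ at $L^p$, $p>10/3$, costs $N_1^{3/2-5/p}>0$, again producing a divergent $N_1$-sum.

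The missing device is the \emph{noncentered subcube decomposition}: partition the $N_1$- and $N_2$-shells into cubes $Q,Q_c$ of the smaller size $N_3$ (or $M$), use the convolution constraint $\xi_1+\cdots+\xi_5+\xi=0$ to see that for each $Q$ only $O(1)$ cubes $Q_c$ contribute, apply bilinear Strichartz to $P_Q u_1\cdot u_3$ (now with genuine gain) and the Galilean-shifted Strichartz of Corollary~\ref{cor:str with noncentered cube} to $P_{Q_c}u_2$ (paying only $N_3^{3/2-5/p}$, not $N_1^{3/2-5/p}$), and then Cauchy--Schwarz over $Q$ to reassemble $\|P_{N_1}f_1\|_{L^2}\|P_{N_2}f_2\|_{L^2}$. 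This almost-orthogonality step is what makes the $N_1$-sum close, and it is precisely the ``meticulous frequency interaction argument'' alluded to in the introduction. Your proposal also omits the duality reduction (testing against $g\in L^\infty_tH^1_x$), which is what makes the output-frequency bookkeeping tractable and is how the paper organizes all of the above.
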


We prove Lemma \ref{Lem:MultilinearWithFreqLocal} in \S \ref{Sec:Proof of
Multilinear estimate}. The key estimate in Lemma \ref%
{Lem:MultilinearWithFreqLocal} is (\ref{estimate:YounghunOld1}) in the sense
that (\ref{estimate:MultiLinearWithFreqLoc1}), (\ref%
{estimate:MultiLinearWithFreqLoc2}), and (\ref{estimate:YounghunOld2})
follows from the proof of (\ref{estimate:YounghunOld1}) with lighter
techniques. Estimates similar to (\ref{estimate:YounghunOld1}) - (\ref%
{estimate:YounghunOld2}) used in uniqueness arguments in other settings can
be found in \cite{TCNPdeFinitte,HoTaXi14,HTX,Sohinger3,C-PUniqueness}. A
less concrete version of estimates of this type is the collapsing type
estimates in \cite%
{KlainermanAndMachedon,Kirpatrick,TChenAndNP,ChenDie,ChenAnisotropic,C-HFocusing,Sohinger,HerrSohinger}%
. Another $\mathbb{T}^{3}$ quintilinear estimate in different spaces
regarding different regularity was obtained in \cite[\S 3]{IP}. For
comparison purpose, the following is (\ref{estimate:YounghunOld1}) in the $%
U-V$ space format.

\begin{remark}
With the usual settings \cite{IP} of the $U$ and $V$ spaces, define 
\begin{equation*}
\Vert u\Vert _{X^{s}(I)}=\Vert u\Vert _{U_{\Delta
}^{2}(I;H_{x}^{s})}\,,\qquad \Vert v\Vert _{Y^{s}(I)}=\Vert v\Vert
_{DU_{\Delta }^{2}(I;H_{x}^{s})}
\end{equation*}%
for the time interval $I=[0,T]$. It is well-known that there are the
embeddings 
\begin{equation*}
\Vert u\Vert _{L_{I}^{\infty }H_{x}^{s}}\lesssim \Vert u\Vert
_{X^{s}(I)}\,,\qquad \Vert v\Vert _{Y^{s}(I)}\lesssim \Vert v\Vert
_{L_{I}^{1}H_{x}^{s}}
\end{equation*}%
Thus, using the definition of $U$ as an atomic space, (\ref%
{estimate:YounghunOld1}) implies%
\begin{equation}
\Vert u_{1}\cdots u_{5}\Vert _{Y^{-1}(I)}\lesssim \Vert u_{1}\Vert
_{X^{-1}(I)}\Vert u_{2}\Vert _{X^{1}(I)}\cdots \Vert u_{5}\Vert _{X^{1}(I)}.
\label{E:UV-version}
\end{equation}%
Since we are not using (\ref{E:UV-version}) anywhere in this paper, we omit
the details of the proof.
\end{remark}

\subsubsection{Tools to Prove Lemma \protect\ref%
{Lem:MultilinearWithFreqLocal}}

We need the following results to prove Lemma \ref%
{Lem:MultilinearWithFreqLocal}.

\begin{lemma}[Strichartz estimate on $\mathbb{T}^{3}$ \protect\cite{BD,KV}]
\label{estimate:T^3 strichartz}For $p>\frac{10}{3}$, 
\begin{equation}
\Vert P_{\leqslant M}e^{it\Delta }f\Vert _{L_{t,x}^{p}}\lesssim M^{\frac{3}{2%
}-\frac{5}{p}}\Vert P_{\leqslant M}f\Vert _{L_{x}^{2}}  \label{E:Str1}
\end{equation}
\end{lemma}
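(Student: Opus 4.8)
The plan is to deduce the estimate directly from the $\ell^2$-decoupling theorem of Bourgain--Demeter, which is exactly the tool that produces scale-invariant Strichartz bounds on tori without an $\varepsilon$-loss. First I would recall the statement of $\ell^2$-decoupling for the paraboloid in $\mathbb{R}^{d+1}$ with $d=3$: for $p \geqslant \tfrac{2(d+2)}{d} = \tfrac{10}{3}$ and a function $g$ with Fourier support in a unit-scale neighborhood of the truncated paraboloid, the extension operator $Eg$ satisfies $\Vert Eg \Vert_{L^p(B_{M^2})} \lesssim_\varepsilon M^\varepsilon \big(\sum_\theta \Vert Eg_\theta \Vert_{L^p(w_{B_{M^2}})}^2\big)^{1/2}$, where $\theta$ ranges over $M^{-1}$-caps. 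Specializing $g$ to a sum of Dirac masses at lattice points $\xi/M \in \mathbb{Z}^3/M$ with $|\xi|\leqslant M$ converts the extension operator into (a rescaling of) $P_{\leqslant M} e^{it\Delta} f$ on the torus, each cap $\theta$ contains exactly one lattice point, and periodicity lets one pass from the ball $B_{M^2}$ to the full period $\mathbb{T}\times\mathbb{T}^3$ at scale $M^2$. This yields, after undoing the parabolic rescaling, $\Vert P_{\leqslant M} e^{it\Delta} f \Vert_{L^p_{t,x}} \lesssim_\varepsilon M^{\varepsilon + \frac{3}{2} - \frac{5}{p}} \Vert P_{\leqslant M} f\Vert_{L^2_x}$ for $p \geqslant \tfrac{10}{3}$.

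Next I would remove the $M^\varepsilon$ loss in the strict range $p > \tfrac{10}{3}$, which is the content of the improvement recorded in Killip--Visan (and Zhang). The mechanism is a standard interpolation/summation argument: from the $\varepsilon$-loss endpoint estimate at $p = \tfrac{10}{3}$ and the trivial $L^\infty_{t,x}$ or $L^4_{t,x}$ bounds (the latter from the classical periodic Strichartz estimate of Bourgain with its own loss, or from a crude Bernstein bound), one interpolates; alternatively, and more cleanly, one runs the Bourgain--Demeter argument with a slightly sub-critical exponent so the geometric-series sum over dyadic scales converges and absorbs the $M^\varepsilon$. Either way, for every fixed $p > \tfrac{10}{3}$ one gets the clean bound $\Vert P_{\leqslant M} e^{it\Delta} f \Vert_{L^p_{t,x}} \lesssim M^{\frac{3}{2} - \frac{5}{p}} \Vert P_{\leqslant M} f\Vert_{L^2_x}$ with an implicit constant depending only on $p$ (and the torus), which is precisely \eqref{E:Str1}. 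I would cite \cite{BD} for the decoupling input and \cite{KV} (and \cite{Z}) for the removal of the loss, since these are quoted as the sources in the lemma statement; the write-up would then be essentially a one-paragraph reduction rather than a reproof.

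The main obstacle here is not conceptual but one of bookkeeping: one must be careful that the Littlewood--Paley projector $P_{\leqslant M}$ on $\mathbb{T}^3$ (defined via a smooth cutoff of the Fourier coefficients) is compatible with the frequency-cap decomposition used in decoupling, and that the periodic Littlewood--Paley kernels obey the same Bernstein-type and almost-orthogonality estimates as on $\mathbb{R}^3$ --- a point the paper already flags as "easily adapted to $\mathbb{T}^d$" and for which an example is promised in Appendix~\ref{Sec:AppendixBernstein}. A second, minor subtlety is the passage from the compact time-space ball $B_{M^2}$ appearing in decoupling to the global-in-time (or $[0,T]$-in-time) statement: on the torus the solution is $M^2$-periodic in $t$ up to the Galilean structure of $e^{it\Delta}$, so one tiles $[0,T]$ by $O(1 + T M^{-2})$ such balls and sums, which costs nothing in the regime of interest. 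Since both of these are routine once the decoupling theorem is granted, I expect the proof to be short; the real work of Lemma~\ref{Lem:MultilinearWithFreqLocal} lies downstream, in combining \eqref{E:Str1} with bilinear refinements to handle the quintilinear interaction at the endpoint.
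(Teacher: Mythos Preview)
The paper does not prove this lemma at all; it simply states the estimate and attributes it to \cite{BD,KV}, treating it as a black box input for the multilinear analysis in \S\ref{sec:UniquenessMultilinearEstimate}. So there is no ``paper's own proof'' to compare against---your proposal is not competing with anything in the text, and in fact your sketch is precisely the content of the cited references: Bourgain--Demeter decoupling gives \eqref{E:Str1} with an $M^\varepsilon$ loss for $p\geqslant \tfrac{10}{3}$, and Killip--Visan remove that loss in the open range $p>\tfrac{10}{3}$ by an interpolation/summation argument.

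One small inaccuracy worth flagging: your remark that ``on the torus the solution is $M^{2}$-periodic in $t$'' is not correct as stated. On the standard rational torus $\mathbb{T}^3$ the flow $e^{it\Delta}$ is periodic in $t$ with a fixed period (e.g., $2\pi$), independent of $M$, since the frequencies $|\xi|^2$ are integers. What is true, and what you presumably mean, is that after parabolic rescaling the unit time interval corresponds to a spacetime ball of radius $\sim M^2$ in the decoupling picture, so a single application of decoupling covers $[0,1]\times\mathbb{T}^3$; no tiling by $O(1+TM^{-2})$ balls is needed for bounded $T$. This does not affect the validity of your outline, but the wording should be corrected if you write it up.
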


\begin{corollary}[Strichartz estimates on $\mathbb{T}^{3}$ with noncentered
frequency localization]
\label{cor:str with noncentered cube}Let $M$ be a dyadic value and let $Q$
be a (possibly) noncentered $M$-cube in Fourier space 
\begin{equation*}
Q=\left\{ \xi _{0}+\eta :\left\vert \eta \right\vert \leq M\right\} \,.
\end{equation*}%
Let $P_{Q}$ be the corresponding Littlewood-Paley projection, then by the
Galilean invariance, we have 
\begin{equation}
\Vert P_{Q}e^{it\Delta }f\Vert _{L_{t,x}^{p}}\lesssim M^{\frac{3}{2}-\frac{5%
}{p}}\Vert P_{Q}f\Vert _{L_{x}^{2}}.  \label{E:Str2}
\end{equation}%
for $p>\frac{10}{3}$. The net effect of this observation is that we pay a
factor of only $M^{\frac{3}{2}-\frac{5}{p}}$, when applying (\ref{E:Str1}).
\end{corollary}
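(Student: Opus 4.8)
The plan is to reduce the noncentered-cube estimate to the centered-ball estimate (\ref{E:Str1}) via a Galilean (modulation) transformation. First I would record the algebraic identity underlying the reduction: for $\xi_0 \in \mathbb{Z}^3$ and a function $g$ with $\widehat{g}$ supported in $\{|\eta| \leq M\}$, one has
\begin{equation*}
\left(e^{it\Delta} (e^{i x \cdot \xi_0} g)\right)(x) = e^{-it|\xi_0|^2} e^{i x\cdot \xi_0}\left(e^{it\Delta} g\right)(x - 2t\xi_0)\,,
\end{equation*}
which follows by expanding both sides on the Fourier side and completing the square in $|\eta + \xi_0|^2 = |\eta|^2 + 2\eta\cdot\xi_0 + |\xi_0|^2$. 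The point is that the left side is, up to a unimodular factor $e^{-it|\xi_0|^2}$, a spatial modulation by $e^{ix\cdot\xi_0}$ composed with a spatial translation by $2t\xi_0$ applied to $e^{it\Delta}g$.

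Next I would apply this with $g = P_{\leqslant M}(e^{-ix\cdot\xi_0}f)$, so that $e^{ix\cdot\xi_0} g = P_Q f$ (here I am using that multiplication by $e^{ix\cdot\xi_0}$ intertwines the projection onto the centered ball of radius $M$ with the projection onto the cube $Q$ centered at $\xi_0$; on $\mathbb{T}^3$ this is exact since $\xi_0 \in \mathbb{Z}^3$, and one only needs a cube rather than a ball, which is harmless). Then I would take the $L^p_{t,x}$ norm of both sides over $[0,T]\times\mathbb{T}^3$. Since $|e^{-it|\xi_0|^2} e^{ix\cdot\xi_0}| = 1$ pointwise and spatial translation $x \mapsto x - 2t\xi_0$ preserves the $L^p_x(\mathbb{T}^3)$ norm for each fixed $t$ (periodicity is essential here — the translate stays on the torus), we get
\begin{equation*}
\Vert P_Q e^{it\Delta} f\Vert_{L^p_{t,x}} = \Vert e^{it\Delta} g\Vert_{L^p_{t,x}}\,.
\end{equation*}
Now $g$ has Fourier support in the centered ball of radius $M$, so (\ref{E:Str1}) applies and yields $\Vert e^{it\Delta} g\Vert_{L^p_{t,x}} \lesssim M^{3/2 - 5/p}\Vert g\Vert_{L^2_x}$; finally $\Vert g\Vert_{L^2_x} = \Vert P_{\leqslant M}(e^{-ix\cdot\xi_0}f)\Vert_{L^2_x} \leqslant \Vert e^{-ix\cdot\xi_0} P_Q f\Vert_{L^2_x} = \Vert P_Q f\Vert_{L^2_x}$ by Plancherel and the modulation/projection intertwining, giving (\ref{E:Str2}).

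There is essentially no serious obstacle here — the estimate is a standard Galilean-invariance observation and the only things to be careful about are (i) keeping the modulation and translation bookkeeping consistent (the cross term $2\eta\cdot\xi_0$ must be exactly absorbed by the translation $2t\xi_0$, and the leftover $|\xi_0|^2$ produces only a harmless unimodular phase), and (ii) noting that the argument is genuinely a torus argument: the spatial translate of a function on $\mathbb{T}^3$ is again a function on $\mathbb{T}^3$ with the same $L^p$ norm, and $\xi_0$ must be a lattice point for the modulation $e^{ix\cdot\xi_0}$ to be well-defined on $\mathbb{T}^3$, which is automatic since all frequencies live in $\mathbb{Z}^3$. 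If one wanted to be slightly more careful about passing from a ball to a cube in (\ref{E:Str1}), one can simply cover $Q$ by $O(1)$ translated balls of radius $\sim M$ and sum, which costs only a constant factor and does not affect the exponent.
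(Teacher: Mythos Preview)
Your proposal is correct and follows essentially the same approach as the paper: reduce the noncentered projection to the centered one via the Galilean invariance identity, use that modulation and spatial translation are $L^p$ isometries on $\mathbb{T}^3$, apply the centered Strichartz estimate (\ref{E:Str1}), and finish with Plancherel. The only differences are cosmetic --- you write the Galilean identity in the equivalent ``forward'' form $e^{it\Delta}(e^{ix\cdot\xi_0}g)=\ldots$ rather than the paper's ``backward'' form $e^{-ix\cdot\xi_0}e^{it\Delta}f=\ldots$, and your remarks on the ball-versus-cube issue and the lattice constraint $\xi_0\in\mathbb{Z}^3$ make explicit points the paper leaves implicit.
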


\begin{proof}
The proof is completely standard but we include it for completeness. We will
need the following Galilean invariance property: for $\xi _{0}\in 2\pi 
\mathbb{Z}^{d}$, we have 
\begin{equation}
e^{-ix\cdot \xi _{0}}(e^{it\Delta }f)(x,t)=e^{it|\xi
_{0}|^{2}/2}[e^{it\Delta }(e^{-i\bullet \cdot \xi _{0}}f)](x-2\xi _{0}t,t).
\label{E:Gal1}
\end{equation}%
In Fourier space, let 
\begin{equation*}
Q=\left\{ \xi _{0}+\eta :\left\vert \eta \right\vert \leqslant M\right\} ,
\end{equation*}%
be a noncentered $M$-cube, and let $\tilde{Q}$ be the corresponding centered 
$M$-cube 
\begin{equation*}
\tilde{Q}=\left\{ \xi :\left\vert \xi \right\vert \leqslant M\right\} .
\end{equation*}%
Let $P_{Q}$ and $P_{\tilde{Q}}$ be the corresponding Littlewood-Paley
projections. Then since $\widehat{e^{it\Delta }f}(\xi _{0}+\eta )=\widehat{%
e^{-i\bullet \cdot \xi _{0}}e^{it\Delta }f}(\eta )$, it follows that 
\begin{equation}
\Vert P_{Q}e^{it\Delta }f\Vert _{L_{t,x}^{p}}=\Vert P_{\tilde{Q}%
}(e^{-i\bullet \cdot \xi _{0}}e^{it\Delta }f)\Vert _{L_{t,x}^{p}}
\label{E:Gal2}
\end{equation}%
Since $[f(\bullet +v)]^{\widehat{\quad }}(\xi )=e^{-i\xi \cdot v}\hat{f}(\xi
)$, we have $P_{\tilde{Q}}[f(\bullet +v)](x)=(P_{\tilde{Q}}f)(x+v)$. In
other words, $P_{\tilde{Q}}$ commutes with translations. Thus by %
\eqref{E:Gal1}, 
\begin{equation}
P_{\tilde{Q}}[e^{-i\bullet \cdot \xi _{0}}(e^{it\Delta }f)](x,t)=e^{it|\xi
_{0}|^{2}/2}[P_{\tilde{Q}}e^{it\Delta }(e^{-i\bullet \cdot \xi
_{0}}f)](x-2\xi _{0}t,t)  \label{E:Gal1b}
\end{equation}%
Plugging \eqref{E:Gal1b} into \eqref{E:Gal2}, 
\begin{equation*}
\Vert P_{Q}e^{it\Delta }f\Vert _{L_{t,x}^{p}}=\Vert P_{\tilde{Q}}e^{it\Delta
}(e^{-i\bullet \cdot \xi _{0}}f)\Vert _{L_{t,x}^{p}}
\end{equation*}%
Thus we can apply \eqref{E:Str1} with $P_{\leq M}=P_{\tilde{Q}}$ to obtain 
\begin{equation*}
\Vert P_{Q}e^{it\Delta }f\Vert _{L_{t,x}^{p}}\lesssim M^{\frac{3}{2}-\frac{%
3+2}{p}}\Vert P_{\tilde{Q}}(e^{-i\bullet \cdot \xi _{0}}f)\Vert
_{L_{x}^{2}}=M^{\frac{3}{2}-\frac{3+2}{p}}\Vert P_{Q}f\Vert _{L_{x}^{2}}
\end{equation*}
\end{proof}

\begin{lemma}[Bernstein with noncentered frequency projection]
\label{lem:benstein with noncentered cube}Let $Q$ and $M$ be as in Corollary %
\ref{cor:str with noncentered cube}, then for $1\leq p\leq q\leq \infty $ 
\begin{equation*}
\Vert P_{Q}f\Vert _{L_{x}^{q}}\lesssim M^{3(\frac{1}{p}-\frac{1}{q})}\Vert
P_{Q}f\Vert _{L_{x}^{p}}
\end{equation*}
\end{lemma}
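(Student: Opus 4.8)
The plan is to reduce to the centered case by the same modulation (Galilean) trick used in the proof of Corollary \ref{cor:str with noncentered cube}, exploiting that multiplication by a unit-modulus character is an isometry on every $L^p(\mathbb{T}^3)$ and that the standard torus Bernstein inequality (the one recalled from \cite{Tao} and illustrated in Appendix \ref{Sec:AppendixBernstein}) is already available for the \emph{centered} cube $\tilde Q=\{\xi:|\xi|\leqslant M\}$.

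First I would record the key algebraic identity. Writing $Q=\{\xi_0+\eta:|\eta|\leqslant M\}$ with $\xi_0\in 2\pi\mathbb{Z}^3$, one has $\chi_Q(\xi_0+\eta)=\chi_{\tilde Q}(\eta)$, and since $\widehat{e^{-i\bullet\cdot\xi_0}f}(\eta)=\hat f(\eta+\xi_0)$, it follows that
\begin{equation*}
\widehat{P_Qf}(\xi_0+\eta)=\chi_{\tilde Q}(\eta)\hat f(\xi_0+\eta)=\widehat{P_{\tilde Q}(e^{-i\bullet\cdot\xi_0}f)}(\eta),
\end{equation*}
hence the pointwise identity $P_Qf=e^{i\bullet\cdot\xi_0}\,P_{\tilde Q}(e^{-i\bullet\cdot\xi_0}f)$ on $\mathbb{T}^3$. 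This is exactly the analogue of \eqref{E:Gal2}, only without any dispersion involved.

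Next I would chain three observations: (i) $\|P_Qf\|_{L_x^q}=\|P_{\tilde Q}(e^{-i\bullet\cdot\xi_0}f)\|_{L_x^q}$ because $|e^{i x\cdot\xi_0}|=1$; (ii) the centered Bernstein inequality on $\mathbb{T}^3$, valid for cubes as well as balls/annuli via the standard bounds on the periodic Littlewood-Paley convolution kernels, gives $\|P_{\tilde Q}g\|_{L_x^q}\lesssim M^{3(\frac1p-\frac1q)}\|P_{\tilde Q}g\|_{L_x^p}$ for $1\leqslant p\leqslant q\leqslant\infty$, applied with $g=e^{-i\bullet\cdot\xi_0}f$; (iii) $\|P_{\tilde Q}(e^{-i\bullet\cdot\xi_0}f)\|_{L_x^p}=\|P_Qf\|_{L_x^p}$ again by the identity above and unimodularity. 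Composing these yields
\begin{equation*}
\|P_Qf\|_{L_x^q}\lesssim M^{3(\frac1p-\frac1q)}\|P_Qf\|_{L_x^p},
\end{equation*}
as claimed.

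There is essentially no genuine obstacle here; the only point deserving a line of justification is that the "standard" $\mathbb{T}^d$ Bernstein inequality we invoke in step (ii) indeed applies to a cube projector (and not merely to an annular Littlewood-Paley piece), which is immediate from the decay and $L^1$ bounds on the associated periodic kernels exactly as in Appendix \ref{Sec:AppendixBernstein}. If one prefers to avoid even that, the centered cube estimate can be obtained directly by Young's inequality after comparing with the full-space Bernstein inequality through the periodization of a smooth bump adapted to $\tilde Q$. Either way the argument is short and parallels the noncentered Strichartz corollary verbatim, so I would present it in just a few lines.
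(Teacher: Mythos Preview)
Your proof is correct and is essentially identical to the paper's: both modulate by $e^{-i\bullet\cdot\xi_0}$ to reduce $P_Q$ to the centered projector $P_{\leq M}$, apply the standard Bernstein inequality, and then undo the modulation using that $|e^{ix\cdot\xi_0}|=1$. The paper's version is simply the one-line chain $\Vert P_{Q}f\Vert _{L^{q}}=\Vert P_{\leq M}(e^{-i\bullet \cdot \xi_{0}}f)\Vert _{L^{q}}\lesssim M^{3(\frac{1}{p}-\frac{1}{q})}\Vert P_{\leq M}(e^{-i\bullet \cdot \xi_{0}}f)\Vert _{L^{p}}=M^{3(\frac{1}{p}-\frac{1}{q})}\Vert P_{Q}f\Vert _{L^{p}}$, which is exactly your steps (i)--(iii) compressed.
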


\begin{proof}
We have 
\begin{equation*}
\Vert P_{Q}f\Vert _{L^{q}}=\Vert P_{\leq M}(e^{-i\bullet \cdot \xi
_{0}}f)\Vert _{L^{q}}\lesssim M^{3(\frac{1}{p}-\frac{1}{q})}\Vert P_{\leq
M}(e^{-i\bullet \cdot \xi _{0}}f)\Vert _{L^{p}}=M^{3(\frac{1}{p}-\frac{1}{q}%
)}\Vert P_{Q}f\Vert _{L^{p}}
\end{equation*}%
where we have applied the usual formulation of the Bernstein inequality in
the middle.
\end{proof}

\begin{lemma}[Bilinear Strichartz on $\mathbb{T}^{3}$ \protect\cite{KV,Z}]
\label{estimate:T^3 bilinear strichartz}There exists $\delta >0$ such that 
\begin{equation}
\Vert P_{M_{1}}e^{it\Delta }f_{1}\cdot P_{M_{2}}e^{it\Delta }f_{2}\Vert
_{L_{t,x}^{2}}\lesssim M_{2}^{1/2}(\frac{M_{2}}{M_{1}}+\frac{1}{M_{2}}%
)^{\delta }\Vert P_{M_{1}}f_{1}\Vert _{L_{x}^{2}}\Vert P_{M_{2}}f_{2}\Vert
_{L_{x}^{2}}  \label{E:Str3}
\end{equation}%
for any $M_{2}\leqslant M_{1}$.
\end{lemma}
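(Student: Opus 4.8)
Since Lemma \ref{estimate:T^3 bilinear strichartz} is quoted verbatim from Killip--Visan \cite{KV} and Zhang \cite{Z}, the plan is simply to recall their argument, reducing it to the scale-invariant Strichartz estimate of Lemma \ref{estimate:T^3 strichartz} (due to Bourgain--Demeter \cite{BD}) via an orthogonality decomposition keyed to the Galilean covariance recorded in Corollary \ref{cor:str with noncentered cube}. Write $u_j = e^{it\Delta}P_{M_j}f_j$ for $j=1,2$ with $M_2 \le M_1$. The governing heuristic is that the characteristic paraboloids carrying the frequency supports of $u_1$ and $u_2$ are transverse when $M_2 \ll M_1$, so their product should gain over the trivial bound obtained from the $L^{10/3+}$ Strichartz estimate and H\"older; on the torus this transversality gain degrades to a mere $\delta$-power because the periodic dispersion relation forces one to count solutions of a Diophantine system rather than to perform a genuine change of variables, and this lattice-point estimate is exactly the input supplied by \cite{KV,Z}.

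First I would tile the annulus $\{|\xi|\sim M_1\}$ by cubes $Q$ of side length $\sim M_2$ and write $P_{M_1}f_1 = \sum_Q P_Q f_1$. For fixed $Q$, the spatial Fourier support of $(P_Q e^{it\Delta}f_1)(P_{M_2}e^{it\Delta}f_2)$ lies in a fattened copy of $Q$, so the products coming from non-adjacent cubes are orthogonal in $L^2_{t,x}$ and
\[
\Vert u_1u_2\Vert_{L^2_{t,x}}^2 \lesssim \sum_Q \big\Vert (P_Q e^{it\Delta}f_1)(P_{M_2}e^{it\Delta}f_2)\big\Vert_{L^2_{t,x}}^2 .
\]
Then I would establish the single-cube bilinear estimate
\[
\big\Vert (P_Q e^{it\Delta}f_1)(P_{M_2}e^{it\Delta}f_2)\big\Vert_{L^2_{t,x}} \lesssim M_2^{1/2}\Big(\frac{M_2}{M_1}+\frac1{M_2}\Big)^{\delta}\Vert P_Q f_1\Vert_{L^2}\Vert P_{M_2}f_2\Vert_{L^2},
\]
applying Corollary \ref{cor:str with noncentered cube} to normalize $Q$ to a centered $M_2$-cube (the $\sim M_1$ frequency offset being retained as a modulation), and then invoking the decoupling-based Strichartz and bilinear bounds of \cite{KV,Z}: the factor $(M_2/M_1)^{\delta}$ is the residual transversality gain in the regime $M_2 \ll M_1$, while the factor $M_2^{-\delta}$ covers the regime $M_2 \sim M_1$, where one simply uses Lemma \ref{estimate:T^3 strichartz} at an exponent slightly above $10/3$ together with Bernstein (Lemma \ref{lem:benstein with noncentered cube}) on each factor and H\"older. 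Squaring, summing over $Q$ using $\sum_Q \Vert P_Q f_1\Vert_{L^2}^2 = \Vert P_{M_1}f_1\Vert_{L^2}^2$, and taking square roots yields the claim.

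The main obstacle is the single-cube bilinear estimate with its $\delta$-gain: unlike the Euclidean case it cannot be obtained by soft transversality, and one is forced into the number-theoretic counting (ultimately $\ell^2$-decoupling for the paraboloid) carried out in \cite{KV,Z}. Since that is precisely what those references prove, we take Lemma \ref{estimate:T^3 bilinear strichartz} as given and do not reproduce the counting argument here.
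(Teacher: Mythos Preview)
Your proposal is appropriate: the paper does not prove this lemma at all but simply cites it from \cite{KV,Z} as an input, and you correctly identify this and give a faithful sketch of the Killip--Visan/Zhang argument (orthogonal decomposition into $M_2$-cubes, Galilean normalization, and the decoupling-based single-cube bound). There is nothing to compare against in the paper's text, and your decision to take the lemma as given matches exactly what the paper does.
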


With the above results, we now start proving Lemma \ref%
{Lem:MultilinearWithFreqLocal}.

\subsubsection{Proof of Lemma \protect\ref{Lem:MultilinearWithFreqLocal} 
\label{Sec:Proof of Multilinear estimate}}

We prove (\ref{estimate:MultiLinearWithFreqLoc1}) in detail. (\ref%
{estimate:MultiLinearWithFreqLoc2}) follows from the same analysis by making
some small changes in the argument.\footnote{%
See the end of this section.} By duality, we prove (\ref%
{estimate:MultiLinearWithFreqLoc1}) with the following two estimates: the
low frequency estimate%
\begin{align}
\hspace{0.3in}& \hspace{-0.3in}\iint_{x,t}e^{it\Delta }f_{1}\left(
P_{\leqslant M_{0}}e^{it\Delta }f_{2}\right) e^{it\Delta }f_{3}e^{it\Delta
}f_{4}e^{it\Delta }f_{5}gdxdt  \label{estimate:MultiLinearWithFreqLoc1-low}
\\
& \lesssim T^{\frac{5}{22}}M_{0}^{\frac{5}{11}}\Vert f_{1}\Vert
_{H_{x}^{-1}}\Vert f_{2}\Vert _{H_{x}^{1}}\Vert f_{3}\Vert _{H_{x}^{1}}\Vert
f_{4}\Vert _{H_{x}^{1}}\Vert f_{5}\Vert _{H_{x}^{1}}\Vert g\Vert
_{L_{t}^{\infty }H_{x}^{1}},  \notag
\end{align}%
and the high frequency estimate 
\begin{align}
\hspace{0.3in}& \hspace{-0.3in}\iint_{x,t}e^{it\Delta }f_{1}\left(
P_{>M_{0}}e^{it\Delta }f_{2}\right) e^{it\Delta }f_{3}e^{it\Delta
}f_{4}e^{it\Delta }f_{5}gdxdt  \label{estimate:MultiLinearWithFreqLoc1-high}
\\
& \lesssim \Vert f_{1}\Vert _{H_{x}^{-1}}\Vert P_{>M_{0}}f_{2}\Vert
_{H_{x}^{1}}\Vert f_{3}\Vert _{H_{x}^{1}}\Vert f_{4}\Vert _{H_{x}^{1}}\Vert
f_{5}\Vert _{H_{x}^{1}}\Vert g\Vert _{L_{t}^{\infty }H_{x}^{1}}.  \notag
\end{align}%
We prove the high frequency estimate (\ref%
{estimate:MultiLinearWithFreqLoc1-high}) which is at the same time (\ref%
{estimate:YounghunOld1}) first because it is in fact the key estimate, with
least room to play, in Lemma \ref{Lem:MultilinearWithFreqLocal}.

\paragraph{Proof of the high frequency estimate (\protect\ref%
{estimate:MultiLinearWithFreqLoc1-high})}

Putting $f_{2}=P_{>M_{0}}f_{2}$, (\ref{estimate:MultiLinearWithFreqLoc1-high}%
) is equivalent to 
\begin{align}
\hspace{0.3in}& \hspace{-0.3in}\iint_{x,t}e^{it\Delta }f_{1}e^{it\Delta
}f_{2}e^{it\Delta }f_{3}e^{it\Delta }f_{4}e^{it\Delta }f_{5}gdxdt
\label{estimate:MultiLinearWithFreqLoc1-high-realone} \\
& \lesssim \Vert f_{1}\Vert _{H_{x}^{-1}}\Vert f_{2}\Vert _{H_{x}^{1}}\Vert
f_{3}\Vert _{H_{x}^{1}}\Vert f_{4}\Vert _{H_{x}^{1}}\Vert f_{5}\Vert
_{H_{x}^{1}}\Vert g\Vert _{L_{t}^{\infty }H_{x}^{1}}  \notag
\end{align}%
Let $I$ denote the integral in (\ref%
{estimate:MultiLinearWithFreqLoc1-high-realone}). We insert a
Littlewood-Paley decomposition on each of the 6 factors so that 
\begin{equation*}
I=\sum_{M_{1},M_{2},M_{3},M_{4},M_{5},M}I_{M_{1},M_{2},M_{3},M_{4},M_{5},M}
\end{equation*}%
where 
\begin{equation*}
I_{M_{1},M_{2},M_{3},M_{4},M_{5},M}=\iint_{x,t}u_{1}u_{2}u_{3}u_{4}u_{5}vdxdt
\end{equation*}%
with $u_{j}=P_{M_{j}}e^{it\Delta }f_{j}$ and $v=P_{M}g$.

We have that the sum of all six frequencies is zero, so the top two
frequencies are comparable in size. Thus by symmetry we might as well assume
without loss that there are two (overlapping) cases

\noindent \emph{Case 1}. $M_{1}\sim M_{2}\geq M_{3}\geq M_{4}\geq M_{5}$ and 
$M_{1}\sim M_{2}\geq M$

\noindent \emph{Case 2}. $M_{1}\sim M\geq M_{2}\geq M_{3}\geq M_{4}\geq
M_{5} $

Note that Case 1 breaks into two cases

\noindent \emph{Case 1A}. $M_{1}\sim M_{2}\geq M_{3}\geq M_{4}\geq M_{5}$
and $M_{3}\geq M$

\noindent \emph{Case 1B}. $M_{1}\sim M_{2}\geq M\geq M_{3}\geq M_{4}\geq
M_{5}$

Let $I_{j}$ denote the integral restricted to the corresponding case. We can
also assume that the $H_{x}^{-1}$ norm goes on the $f_{1}$ term, as opposed
to $f_{j}$ for $2\leq j\leq 5$, since this is the hardest case due to our
ordering $M_{1}\geq M_{2}\geq M_{3}\geq M_{4}\geq M_{5}$. Now we begin the
proofs of each case.

\subparagraph{\noindent \textit{Case 1A of the high frequency estimate (%
\protect\ref{estimate:MultiLinearWithFreqLoc1-high})}}

Let us consider fixed $M_{1}$, $M_{2}$, $M_{3}$, $M_{4}$, $M_{5}$, and $M$
subject to the condition 
\begin{equation*}
M_{1}\sim M_{2}\geq M_{3}\geq M_{4}\geq M_{5}\text{ and }M\leq M_{3}
\end{equation*}%
Divide the $M_{1}$-dyadic space and the $M_{2}$-dyadic space into subcubes
of size $M_{3}$. Due to the frequency constraint $\xi _{2}=-(\xi _{1}+\xi
_{3}+\xi _{4}+\xi _{5}+\xi )$, for each choice $Q$ of an $M_{3}$-cube within
the $\xi _{1}$ space, the variable $\xi _{2}$ is constrained to at most $%
5^{3}$ of $M_{3}$-cubes dividing the $M_{2}$ dyadic space. For expository
convenience, we will refer to these $5^{3}$ cubes as a single cube $Q_{c}$
that corresponds to $Q$. 
\begin{align*}
I_{M_{1},\cdots ,M_{5},M}& \leq \sum_{Q}\Vert
P_{Q}u_{1}\,P_{Q_{c}}u_{2}\,u_{3}\,u_{4}\,u_{5}\,v\Vert _{L_{tx}^{1}} \\
& \lesssim \sum_{Q}\Vert P_{Q}u_{1}\,u_{3}\Vert _{L_{t,x}^{2}}\Vert
P_{Q_{c}}u_{2}\Vert _{L_{t,x}^{4}}\Vert u_{4}\Vert
_{L_{t}^{8}L_{x}^{16}}\Vert u_{5}\Vert _{L_{t}^{8}L_{x}^{16}}\Vert v\Vert
_{L_{t}^{\infty }L_{x}^{8}}
\end{align*}%
By Bernstein, 
\begin{equation*}
\lesssim M_{4}^{3/16}M_{5}^{3/16}M^{9/8}\sum_{Q}\Vert P_{Q}u_{1}\,u_{3}\Vert
_{L_{t,x}^{2}}\Vert P_{Q_{c}}u_{2}\Vert _{L_{t,x}^{4}}\Vert u_{4}\Vert
_{L_{tx}^{8}}\Vert u_{5}\Vert _{L_{tx}^{8}}\Vert v\Vert _{L_{t}^{\infty
}L_{x}^{2}}
\end{equation*}%
By bilinear Strichartz and Strichartz (\ref{E:Str1}), (\ref{E:Str2}), and (%
\ref{E:Str3}), where we note that the factor is $M_{3}^{1/4}$ instead of $%
M_{2}^{1/4}$, 
\begin{align*}
& \lesssim M_{4}^{3/16}M_{5}^{3/16}M^{9/8}M_{3}^{1/2}(\frac{M_{3}}{M_{1}}+%
\frac{1}{M_{3}})^{\delta }M_{3}^{1/4}M_{4}^{7/8}M_{5}^{7/8} \\
& \qquad \sum_{Q}\Vert P_{Q}P_{M_{1}}f_{1}\Vert _{L_{x}^{2}}\Vert
P_{M_{3}}f_{3}\Vert _{L_{x}^{2}}\Vert P_{Q_{c}}P_{M_{2}}f_{2}\Vert
_{L_{x}^{2}}\Vert P_{M_{4}}f_{4}\Vert _{L_{x}^{2}}\Vert P_{M_{5}}f_{5}\Vert
_{L_{x}^{2}}\Vert P_{M}g\Vert _{L_{t}^{\infty }L_{x}^{2}} \\
& \lesssim (\frac{M_{3}}{M_{1}}+\frac{1}{M_{3}})^{\delta
}M_{3}^{-1/4}M^{1/8}M_{4}^{1/16}M_{5}^{1/16} \\
& \qquad \sum_{Q}\Vert P_{Q}P_{M_{1}}f_{1}\Vert _{L_{x}^{2}}\Vert
P_{Q_{c}}P_{M_{2}}f_{2}\Vert _{L_{x}^{2}}\Vert P_{M_{3}}f_{3}\Vert
_{H_{x}^{1}}\Vert P_{M_{4}}f_{4}\Vert _{H_{x}^{1}}\Vert P_{M_{5}}f_{5}\Vert
_{H_{x}^{1}}\Vert P_{M}g\Vert _{L_{t}^{\infty }H_{x}^{1}}
\end{align*}%
Carrying out the sum in $Q$ via Cauchy-Schwarz, 
\begin{align*}
& \lesssim (\frac{M_{3}}{M_{1}}+\frac{1}{M_{3}})^{\delta
}M_{3}^{-1/4}M^{1/8}M_{4}^{1/16}M_{5}^{1/16} \\
& \qquad \Vert P_{M_{1}}f_{1}\Vert _{L_{x}^{2}}\Vert P_{M_{2}}f_{2}\Vert
_{L_{x}^{2}}\Vert P_{M_{3}}f_{3}\Vert _{H_{x}^{1}}\Vert P_{M_{4}}f_{4}\Vert
_{H_{x}^{1}}\Vert P_{M_{5}}f_{5}\Vert _{H_{x}^{1}}\Vert P_{M}g\Vert
_{L_{t}^{\infty }H_{x}^{1}}
\end{align*}%
Swapping an $M_{1}$ and $M_{2}$ factor (since $M_{1}\sim M_{2}$) 
\begin{align*}
& \lesssim (\frac{M_{3}}{M_{1}}+\frac{1}{M_{3}})^{\delta
}M_{3}^{-1/4}M^{1/8}M_{4}^{1/16}M_{5}^{1/16} \\
& \qquad \Vert P_{M_{1}}f_{1}\Vert _{H_{x}^{-1}}\Vert P_{M_{2}}f_{2}\Vert
_{H_{x}^{1}}\Vert P_{M_{3}}f_{3}\Vert _{H_{x}^{1}}\Vert P_{M_{4}}f_{4}\Vert
_{H_{x}^{1}}\Vert P_{M_{5}}f_{5}\Vert _{H_{x}^{1}}\Vert P_{M}g\Vert
_{L_{t}^{\infty }H_{x}^{1}}
\end{align*}%
Carrying out the sum in this case, we obtain 
\begin{align*}
I_{1A}& \lesssim \sup_{M_{3}}\Vert P_{M_{3}}f_{3}\Vert
_{H_{x}^{1}}\sup_{M_{4}}\Vert P_{M_{4}}f_{4}\Vert
_{H_{x}^{1}}\sup_{M_{5}}\Vert P_{M_{5}}f_{5}\Vert _{H_{x}^{1}}\sup_{M}\Vert
P_{M}g\Vert _{L_{t}^{\infty }H_{x}^{1}} \\
& \qquad \sum_{\substack{ M_{1}  \\ M_{1}\sim M_{2}}}\sigma (M_{1})\Vert
P_{M_{1}}f_{1}\Vert _{H_{x}^{-1}}\Vert P_{M_{2}}f_{2}\Vert _{H_{x}^{1}}
\end{align*}%
where%
\begin{eqnarray*}
\sigma (M_{1}) &=&\sum_{\substack{ M_{3}  \\ M_{3}\leq M_{1}}}(\frac{M_{3}}{%
M_{1}}+\frac{1}{M_{3}})^{\delta }M_{3}^{-1/4}\left( \sum_{\substack{ %
M_{4},M_{5}  \\ M_{5}\leq M_{4}\leq M_{3}}}M_{4}^{1/16}M_{5}^{1/16}\sum 
_{\substack{ M  \\ M\leq M_{3}}}M^{1/8}\right) \\
&=&\sum_{\substack{ M_{3}  \\ M_{3}\leq M_{1}}}(\frac{M_{3}}{M_{1}}+\frac{1}{%
M_{3}})^{\delta }=O(1).
\end{eqnarray*}%
Thus we can complete the proof of Case 1A of the high frequency case by
Cauchy-Schwarz, summing in $M_{1}$.

\paragraph{\textit{\noindent Case 1B of the high frequency estimate (\protect
\ref{estimate:MultiLinearWithFreqLoc1-high})}}

Recall in \noindent \textit{Case 1B}, $M_{1}$, $M_{2}$, $M_{3}$, $M_{4}$, $%
M_{5}$, and $M$ are subject to the condition%
\begin{equation*}
M_{1}\sim M_{2}\geq M\geq M_{3}\geq M_{4}\geq M_{5}
\end{equation*}%
We start like Case 1A but divide the $M_{1}$-dyadic space and the $M_{2}$%
-dyadic space into subcubes of size $M$ this time. 
\begin{align*}
I_{M_{1},\cdots ,M_{5},M}& \leq \sum_{Q}\Vert
P_{Q}u_{1}\,P_{Q_{c}}u_{2}\,u_{3}\,u_{4}\,u_{5}\,v\Vert _{L_{tx}^{1}} \\
& \lesssim \sum_{Q}\Vert P_{Q}u_{1}\,u_{3}\Vert _{L_{t,x}^{2}}\Vert
P_{Q_{c}}u_{2}\Vert _{L_{t,x}^{11/3}}\Vert u_{4}\Vert
_{L_{t}^{44/5}L_{x}^{\infty }}\Vert u_{5}\Vert _{L_{t}^{44/5}L_{x}^{\infty
}}\Vert v\Vert _{L_{t}^{\infty }L_{x}^{22/5}}
\end{align*}%
By Bernstein, 
\begin{equation*}
\lesssim M_{4}^{15/44}M_{5}^{15/44}M^{9/11}\sum_{Q}\Vert
P_{Q}u_{1}\,u_{3}\Vert _{L_{t,x}^{2}}\Vert P_{Q_{c}}u_{2}\Vert
_{L_{t,x}^{11/3}}\Vert u_{4}\Vert _{L_{t,x}^{44/5}}\Vert u_{5}\Vert
_{L_{t,x}^{44/5}}\Vert v\Vert _{L_{t}^{\infty }L_{x}^{2}}
\end{equation*}%
By bilinear Strichartz and Strichartz (\ref{E:Str1}), (\ref{E:Str2}), and (%
\ref{E:Str3}),\footnote{%
We used the uncommon exponent $p=\frac{11}{3}$ in (\ref{E:Str2}) in this
proof solely because it is the midpoint of $\frac{10}{3}$, where (\ref%
{E:Str1}) fails, and $4$, where this proof fails.} where we note that the
factor is $M^{3/22}$ instead of $M_{2}^{3/22}$,%
\begin{eqnarray*}
&\lesssim &M_{4}^{15/44}M_{5}^{15/44}M^{9/11}M_{3}^{1/2}(\frac{M_{3}}{M_{1}}+%
\frac{1}{M_{3}})^{\delta }M^{3/22}M_{4}^{41/44}M_{5}^{41/44} \\
&&\sum_{Q}\Vert P_{Q}P_{M_{1}}f_{1}\Vert _{L_{x}^{2}}\Vert
P_{M_{3}}f_{3}\Vert _{L_{x}^{2}}\Vert P_{Q_{c}}P_{M_{2}}f_{2}\Vert
_{L_{x}^{2}}\Vert P_{M_{4}}f_{4}\Vert _{L_{x}^{2}}\Vert P_{M_{5}}f_{5}\Vert
_{L_{x}^{2}}\Vert P_{M}g\Vert _{L_{t}^{\infty }L_{x}^{2}} \\
&\lesssim &(\frac{M_{3}}{M_{1}}+\frac{1}{M_{3}})^{\delta
}M_{3}^{-1/2}M^{-1/22}M_{4}^{3/11}M_{5}^{3/11} \\
&&\sum_{Q}\Vert P_{Q}P_{M_{1}}f_{1}\Vert _{L_{x}^{2}}\Vert
P_{Q_{c}}P_{M_{2}}f_{2}\Vert _{L_{x}^{2}}\Vert P_{M_{3}}f_{3}\Vert
_{H_{x}^{1}}\Vert P_{M_{4}}f_{4}\Vert _{H_{x}^{1}}\Vert P_{M_{5}}f_{5}\Vert
_{H_{x}^{1}}\Vert P_{M}g\Vert _{L_{t}^{\infty }H_{x}^{1}}
\end{eqnarray*}%
Carrying out the sum in $Q$ via Cauchy-Schwarz, 
\begin{align*}
& \lesssim (\frac{M_{3}}{M_{1}}+\frac{1}{M_{3}})^{\delta
}M_{3}^{-1/2}M^{-1/22}M_{4}^{3/11}M_{5}^{3/11} \\
& \qquad \Vert P_{M_{1}}f_{1}\Vert _{L_{x}^{2}}\Vert P_{M_{2}}f_{2}\Vert
_{L_{x}^{2}}\Vert P_{M_{3}}f_{3}\Vert _{H_{x}^{1}}\Vert P_{M_{4}}f_{4}\Vert
_{H_{x}^{1}}\Vert P_{M_{5}}f_{5}\Vert _{H_{x}^{1}}\Vert P_{M}g\Vert
_{L_{t}^{\infty }H_{x}^{1}}
\end{align*}%
Swapping an $M_{1}$ and $M_{2}$ factor (since $M_{1}\sim M_{2}$) 
\begin{align*}
& \lesssim (\frac{M_{3}}{M_{1}}+\frac{1}{M_{3}})^{\delta
}M_{3}^{-1/2}M^{-1/22}M_{4}^{3/11}M_{5}^{3/11} \\
& \qquad \Vert P_{M_{1}}f_{1}\Vert _{H_{x}^{-1}}\Vert P_{M_{2}}f_{2}\Vert
_{H_{x}^{1}}\Vert P_{M_{3}}f_{3}\Vert _{H_{x}^{1}}\Vert P_{M_{4}}f_{4}\Vert
_{H_{x}^{1}}\Vert P_{M_{5}}f_{5}\Vert _{H_{x}^{1}}\Vert P_{M}g\Vert
_{L_{t}^{\infty }H_{x}^{1}}
\end{align*}%
Carrying out the sum in this case, we obtain (taking without loss $\delta <%
\frac{1}{22}$) 
\begin{align*}
I_{1B}& \lesssim \sup_{M_{3}}\Vert P_{M_{3}}f_{3}\Vert
_{H_{x}^{1}}\sup_{M_{4}}\Vert P_{M_{4}}f_{4}\Vert
_{H_{x}^{1}}\sup_{M_{5}}\Vert P_{M_{5}}f_{5}\Vert _{H_{x}^{1}}\sup_{M}\Vert
P_{M}g\Vert _{L_{t}^{\infty }H_{x}^{1}} \\
& \qquad \sum_{\substack{ M_{1}  \\ M_{1}\sim M_{2}}}\sigma (M_{1})\Vert
P_{M_{1}}f_{1}\Vert _{H_{x}^{-1}}\Vert P_{M_{2}}f_{2}\Vert _{H_{x}^{1}}
\end{align*}%
where%
\begin{eqnarray*}
\sigma (M_{1}) &=&\sum_{\substack{ M,M_{3},M_{4},M_{5}  \\ M_{1}\sim
M_{2}\geq M\geq M_{3}\geq M_{4}\geq M_{5}}}(\frac{M_{3}}{M_{1}}+\frac{1}{%
M_{3}})^{\delta }M_{3}^{-1/2}M^{-1/22}M_{4}^{3/11}M_{5}^{3/11} \\
&=&\sum_{\substack{ M,M_{3}  \\ M_{1}\sim M_{2}\geq M\geq M_{3}}}(\frac{M_{3}%
}{M_{1}}+\frac{1}{M_{3}})^{\delta }M_{3}^{1/22}M^{-1/22}
\end{eqnarray*}%
Decomposing $\sigma (M_{1})$ into the two cases $\frac{M_{3}}{M_{1}}>\frac{1%
}{M_{3}}$ and $\frac{M_{3}}{M_{1}}<\frac{1}{M_{3}}$ yields two $O(1)$ sums,
that is, $\sigma (M_{1})$ is $O(1)$ and we can complete the proof of Case 1B
of the high frequency case by Cauchy-Schwarz, summing in $M_{1}$.

\subparagraph{\noindent \textit{Case 2 of the high frequency estimate (%
\protect\ref{estimate:MultiLinearWithFreqLoc1-high})}}

Recall that 
\begin{equation*}
M_{1}\sim M>M_{2}\geq M_{3}\geq M_{4}\geq M_{5}
\end{equation*}%
in Case 2. We will assume $v=P_{M_{1}}g$ for convenience.\footnote{%
To realize this, one could just double the size of the cubes in the $M$
decomposition.}We start with 
\begin{equation*}
I_{2}\lesssim \sum_{\substack{ M_{1},M_{2},M_{3},M_{4},M_{5}  \\ %
M_{1}>M_{2}\geq M_{3}\geq M_{4}\geq M_{5}}}\Vert (\langle \nabla \rangle
^{-1}u_{1})\,u_{2}\Vert _{L_{t,x}^{2}}\Vert u_{3}u_{4}u_{5}\langle \nabla
\rangle v\Vert _{L_{t,x}^{2}}
\end{equation*}%
By Cauchy-Schwarz in the $M_{1}$ sum 
\begin{equation*}
I_{2}\lesssim AB
\end{equation*}%
where $A$ and $B$ are, for fixed $M_{2},M_{3},M_{4},M_{5}$ given by 
\begin{equation*}
A=\left( \sum_{\substack{ M_{1}  \\ M_{1}\geq M_{2}}}\Vert \langle \nabla
\rangle ^{-1}u_{1}\,u_{2}\Vert _{L_{x,t}^{2}}^{2}\right) ^{1/2}\,,\qquad
B=\left( \sum_{\substack{ M_{1}  \\ M_{1}\geq M_{2}}}\Vert
u_{3}u_{4}u_{5}\langle \nabla \rangle v\Vert _{L_{x,t}^{2}}^{2}\right) ^{1/2}
\end{equation*}%
For $A$, we apply the bilinear Strichartz (\ref{E:Str3}), 
\begin{equation*}
A\lesssim M_{2}^{1/2}\Vert P_{M_{2}}f_{2}\Vert _{L^{2}}\left( \sum 
_{\substack{ M_{1}  \\ M_{1}\geq M_{2}}}\left( \frac{M_{2}}{M_{1}}+\frac{1}{%
M_{2}}\right) ^{2\delta }\Vert P_{M_{1}}f_{1}\Vert _{H^{-1}}^{2}\right)
^{1/2}
\end{equation*}%
We divide into two cases, Case 2A where $\frac{M_{2}}{M_{1}}\leq \frac{1}{%
M_{2}}$, and Case 2B, where $\frac{M_{2}}{M_{1}}\geq \frac{1}{M_{2}}$. In
Case 2A, we have 
\begin{equation*}
A_{2A}\lesssim M_{2}^{-\frac{1}{2}-\delta }\Vert P_{M_{2}}f_{2}\Vert
_{H^{1}}\Vert f_{1}\Vert _{H^{-1}}
\end{equation*}%
In Case 2B, we have $M_{1}^{1/2}\leq M_{2}\leq M_{1}$ equivalently $%
M_{2}\leq M_{1}\leq M_{2}^{2}$ as constraints. 
\begin{equation*}
A_{2B}\lesssim M_{2}^{-\frac{1}{2}+\delta }\Vert P_{M_{2}}f_{2}\Vert
_{H^{1}}\left( \sum_{\substack{ M_{1}  \\ M_{2}\leq M_{1}\leq M_{2}^{2}}}%
M_{1}^{-2\delta }\Vert P_{M_{1}}f_{1}\Vert _{H^{-1}}^{2}\right) ^{1/2}
\end{equation*}%
so that 
\begin{equation*}
A_{2B}\lesssim M_{2}^{-\frac{1}{2}+\delta }\Vert P_{M_{2}}f_{2}\Vert
_{H^{1}}\Vert P_{M_{2}\leq \bullet \leq M_{2}^{2}}f_{1}\Vert _{H^{-1-\delta
}}
\end{equation*}%
For $B$, we first write out the integral 
\begin{equation*}
B^{2}\lesssim \sum_{\substack{ M_{1}  \\ M_{1}\geq M_{2}}}%
\iint_{x,t}|u_{3}|^{2}|u_{4}|^{2}|u_{5}|^{2}|\langle \nabla \rangle
v|^{2}\,dx\,dt
\end{equation*}%
Then we sup out the $u_{3}$, $u_{4}$, and $u_{5}$ terms out of the $x$%
-integral and bring the $M_{1}$-sum inside the $t$ integral 
\begin{equation*}
B^{2}\lesssim \int_{t}\Vert u_{3}\Vert _{L_{x}^{\infty }}^{2}\Vert
u_{4}\Vert _{L_{x}^{\infty }}^{2}\Vert u_{5}\Vert _{L_{x}^{\infty }}^{2}\sum 
_{\substack{ M_{1}  \\ M_{1}\geq M_{2}}}\int_{x}|\langle \nabla \rangle
v|^{2}\,dx\,dt
\end{equation*}%
which then simplifies to 
\begin{equation*}
B\lesssim \left( \int_{t}\Vert u_{3}\Vert _{L_{x}^{\infty }}^{2}\Vert
u_{4}\Vert _{L_{x}^{\infty }}^{2}\Vert u_{5}\Vert _{L_{x}^{\infty
}}^{2}\Vert g\Vert _{H^{1}}^{2}\,dt\right) ^{1/2}.
\end{equation*}%
Now we can sup the $\Vert g\Vert _{H_{x}^{1}}$ term$\footnote{%
We could have kept $P_{\geq M_{2}}$ on $g$ but this will not help us later.}$
out of the $t$ integral and apply H\"{o}lder in $t$ to the remaining terms 
\begin{equation*}
B\lesssim \Vert u_{3}\Vert _{L_{t}^{6}L_{x}^{\infty }}\Vert u_{4}\Vert
_{L_{t}^{6}L_{x}^{\infty }}\Vert u_{5}\Vert _{L_{t}^{6}L_{x}^{\infty }}\Vert
g\Vert _{L_{t}^{\infty }H_{x}^{1}}
\end{equation*}%
By Bernstein and Strichartz (\ref{E:Str1}), and absorbing one derivative
into the $f_{3}$, $f_{4}$ and $f_{5}$ terms 
\begin{equation*}
B\lesssim M_{3}^{1/6}M_{4}^{1/6}M_{5}^{1/6}\Vert P_{M_{3}}f_{3}\Vert
_{H_{x}^{1}}\Vert P_{M_{4}}f_{4}\Vert _{H_{x}^{1}}\Vert P_{M_{5}}f_{5}\Vert
_{H_{x}^{1}}\Vert g\Vert _{L_{t}^{\infty }H_{x}^{1}}
\end{equation*}%
Putting it all together (since the $M_{1}$ sum has already been carried out) 
\begin{align*}
I_{2}& \lesssim \sum_{\substack{ M_{2},M_{3},M_{4},M_{5}  \\ M_{2}\geq
M_{3}\geq M_{4}\geq M_{5}}}M_{2}^{-1/2}M_{3}^{1/6}M_{4}^{1/6}M_{5}^{1/6}%
\Vert P_{M_{3}}f_{3}\Vert _{H_{x}^{1}}\Vert P_{M_{4}}f_{4}\Vert
_{H_{x}^{1}}\Vert P_{M_{5}}f_{5}\Vert _{H_{x}^{1}}\Vert g\Vert
_{L_{t}^{\infty }H_{x}^{1}} \\
& \qquad (M_{2}^{-\delta }\Vert f_{1}\Vert _{H^{-1}}\Vert
P_{M_{2}}f_{2}\Vert _{H^{1}}+M_{2}^{\delta }\Vert P_{M_{2}\leq \bullet \leq
M_{2}^{2}}\phi _{1}\Vert _{H^{-1-\delta }}\Vert P_{M_{2}}f_{2}\Vert _{H^{1}})
\end{align*}%
Suping out the $f_{3}$, $f_{4}$ and $f_{5}$ terms in $M_{3}$, $M_{4}$, and $%
M_{5}$ respectively 
\begin{align*}
I_{2}& \lesssim \Vert f_{3}\Vert _{H_{x}^{1}}\Vert f_{4}\Vert
_{H_{x}^{1}}\Vert f_{5}\Vert _{H_{x}^{1}}\Vert g\Vert _{L_{t}^{\infty
}H_{x}^{1}} \\
& \qquad \sum_{\substack{ M_{2},M_{3},M_{4},M_{5}  \\ M_{2}\geq M_{3}\geq
M_{4}\geq M_{5}}}M_{2}^{-1/2}M_{3}^{1/6}M_{4}^{1/6}M_{5}^{1/6} \\
& \qquad \left( M_{2}^{-\delta }\Vert f_{1}\Vert _{H^{-1}}\Vert
P_{M_{2}}f_{2}\Vert _{H^{1}}+M_{2}^{\delta }\Vert P_{M_{2}\leq \bullet \leq
M_{2}^{2}}f_{1}\Vert _{H^{-1-\delta }}\Vert P_{M_{2}}f_{2}\Vert
_{H^{1}}\right)
\end{align*}%
Carry out the $M_{3}$, $M_{4}$ and $M_{5}$ sums 
\begin{align*}
I_{2}& \lesssim \Vert f_{3}\Vert _{H_{x}^{1}}\Vert f_{4}\Vert
_{H_{x}^{1}}\Vert f_{5}\Vert _{H_{x}^{1}}\Vert g\Vert _{L_{t}^{\infty
}H_{x}^{1}} \\
& \qquad \sum_{M_{2}}(M_{2}^{-\delta }\Vert f_{1}\Vert _{H^{-1}}\Vert
P_{M_{2}}f_{2}\Vert _{H^{1}}+M_{2}^{\delta }\Vert P_{M_{2}\leq \bullet \leq
M_{2}^{2}}f_{1}\Vert _{H^{-1-\delta }}\Vert P_{M_{2}}f_{2}\Vert _{H^{1}})
\end{align*}%
This leaves two $M_{2}$ sums to carry out, namely 
\begin{equation*}
D=\sum_{M_{2}}M_{2}^{-\delta }\Vert f_{1}\Vert _{H^{-1}}\Vert
P_{M_{2}}f_{2}\Vert _{H^{1}}
\end{equation*}%
and 
\begin{equation*}
E=\sum_{M_{2}}M_{2}^{\delta }\Vert P_{M_{2}\leq \bullet \leq
M_{2}^{2}}f_{1}\Vert _{H^{-1-\delta }}\Vert P_{M_{2}}f_{2}\Vert _{H^{1}}
\end{equation*}%
For $D$, we sup out in the $f_{2}$ term and just carry out the $M_{2}$ sum 
\begin{equation*}
D\leq \Vert f_{1}\Vert _{H^{-1}}\Vert f_{2}\Vert
_{H^{1}}\sum_{M_{2}}M_{2}^{-\delta }\lesssim \Vert f_{1}\Vert _{H^{-1}}\Vert
f_{2}\Vert _{H^{1}}
\end{equation*}%
For $E$, we apply Cauchy-Schwarz in $M_{2}$ 
\begin{equation*}
E^{2}\leq \sum_{M_{2}}M_{2}^{2\delta }\Vert P_{M_{2}\leq \bullet \leq
M_{2}^{2}}f_{1}\Vert _{H^{-1-\delta }}^{2}\sum_{M_{2}}\Vert
P_{M_{2}}f_{2}\Vert _{H^{1}}^{2}
\end{equation*}%
so that 
\begin{equation*}
E^{2}=\Vert f_{2}\Vert _{H^{1}}^{2}\sum_{M_{2}}M_{2}^{2\delta }\Vert
P_{M_{2}\leq \bullet \leq M_{2}^{2}}f_{1}\Vert _{H^{-1-\delta }}^{2}
\end{equation*}%
Now in the sum, decompose the frequency region $M_{2}\leq \bullet \leq
M_{2}^{2}$ into dyadic pieces (labeled again by $M_{1}$) 
\begin{equation*}
E^{2}=\Vert f_{2}\Vert _{H^{1}}^{2}\sum_{\substack{ M_{1},M_{2}  \\ %
M_{2}\leq M_{1}\leq M_{2}^{2}}}M_{2}^{2\delta }M_{1}^{-2\delta }\Vert
P_{M_{1}}f_{1}\Vert _{H^{-1}}^{2}
\end{equation*}%
Then bring the $M_{2}$ sum to the inside, (in which the sum is constrained
to $M_{2}\leq M_{1}$) in order to get 
\begin{equation*}
E^{2}=\Vert f_{2}\Vert _{H^{1}}^{2}\Vert f_{1}\Vert _{H^{-1}}^{2}
\end{equation*}%
which completes the estimate of \textit{Case 2 of (\ref%
{estimate:MultiLinearWithFreqLoc1-high-realone})}. So far, we have obtained 
\textit{(\ref{estimate:MultiLinearWithFreqLoc1-high-realone}), the high
frequency part of (\ref{estimate:MultiLinearWithFreqLoc1})}

\bigskip

\paragraph{Proof of the low frequency estimate (\protect\ref%
{estimate:MultiLinearWithFreqLoc1-low})}

\textit{We can now start the proof of (\ref%
{estimate:MultiLinearWithFreqLoc1-low}) which reads }%
\begin{align*}
\hspace{0.3in}& \hspace{-0.3in}\iint_{x,t}e^{it\Delta }f_{1}\left(
P_{\leqslant M_{0}}e^{it\Delta }f_{2}\right) e^{it\Delta }f_{3}e^{it\Delta
}f_{4}e^{it\Delta }f_{5}gdxdt \\
& \lesssim T^{\frac{5}{22}}M_{0}^{\frac{5}{11}}\Vert f_{1}\Vert
_{H_{x}^{-1}}\Vert f_{2}\Vert _{H_{x}^{1}}\Vert f_{3}\Vert _{H_{x}^{1}}\Vert
f_{4}\Vert _{H_{x}^{1}}\Vert f_{5}\Vert _{H_{x}^{1}}\Vert g\Vert
_{L_{t}^{\infty }H_{x}^{1}},
\end{align*}

Since in our proof, we assume $M_{1}\geq M_{2}\geq M_{3}\geq M_{4}\geq M_{5}$%
, we cannot assume that $P_{<M_{0}}$ sits on $f_{2}$, but have to allow for $%
f_{1}$, $f_{2}$, $f_{3}$, $f_{4}$, or $f_{5}$. The most difficult case is
when $P_{<M_{0}}$ lands on $f_{5}$ (the least gain case) and the $H_{x}^{-1}$
norm goes on the $f_{1}$ term, so we will assume so in the proofs of the
cases.

\subparagraph{\textit{Case 1A of the low frequency estimate (\protect\ref%
{estimate:MultiLinearWithFreqLoc1-low})}}

Again, we have 
\begin{equation*}
M_{1}\sim M_{2}\geq M_{3}\geq M_{4}\geq M_{5}\text{ and }M\leq M_{3}
\end{equation*}%
For this Case 1A, we will consider%
\begin{equation*}
I_{M_{1},\cdots ,M_{4},M}=\sum_{M_{5}}I_{M_{1},\cdots ,M_{4},M_{5},M}
\end{equation*}%
and we replace $\sum_{M_{5},M_{4}\geq M_{5}}u_{5}=e^{it\Delta
}\sum_{M_{5},M_{4}\geq M_{5}}P_{\leq M_{0}}P_{M_{5}}f_{5}$ with $%
u_{5}=e^{it\Delta }P_{\leq M_{0}}P_{\leq M_{4}}f_{5}$ to shorten some
notations. Below, we will use that, by first Bernstein with the $P_{\leq
M_{0}}$ factor, then Sobolev, 
\begin{equation*}
\Vert u_{5}\Vert _{L_{t}^{\infty }L_{x}^{\infty }}\lesssim M_{0}^{1/2}\Vert
P_{\leq M_{4}}e^{it\Delta }f_{5}\Vert _{L_{t}^{\infty }L_{x}^{6}}\lesssim
M_{0}^{1/2}\Vert P_{\leq M_{4}}f_{5}\Vert _{H^{1}}.
\end{equation*}

As before, divide the $M_{1}$-dyadic space and the $M_{2}$-dyadic space into
subcubes of size $M$, we reach 
\begin{align*}
I_{M_{1},\cdots ,M_{4},M}& \leq \sum_{Q}\Vert
P_{Q}u_{1}\,P_{Q_{c}}u_{2}\,u_{3}\,u_{4}\,u_{5}\,v\Vert _{L_{t,x}^{1}} \\
& \lesssim T^{1/4}\sum_{Q}\Vert P_{Q}u_{1}\,u_{3}\Vert _{L_{t,x}^{2}}\Vert
P_{Q_{c}}u_{2}\Vert _{L_{t,x}^{4}}\Vert u_{4}\Vert _{L_{t}^{\infty
}L_{x}^{8}}\Vert u_{5}\Vert _{L_{t}^{\infty }L_{x}^{\infty }}\Vert v\Vert
_{L_{t}^{\infty }L_{x}^{8}}
\end{align*}%
By Bernstein, 
\begin{equation*}
\lesssim T^{1/4}M_{0}^{1/2}M_{4}^{9/8}M^{9/8}\sum_{Q}\Vert
P_{Q}u_{1}\,u_{3}\Vert _{L_{t,x}^{2}}\Vert P_{Q_{c}}u_{2}\Vert
_{L_{t,x}^{4}}\Vert u_{4}\Vert _{L_{t}^{\infty }L_{x}^{2}}\Vert P_{\leq
M_{4}}f_{5}\Vert _{L_{t}^{\infty }H_{x}^{1}}\Vert v\Vert _{L_{t}^{\infty
}L_{x}^{2}}
\end{equation*}%
By bilinear Strichartz and Strichartz (\ref{E:Str1}), (\ref{E:Str2}), and (%
\ref{E:Str3}), where the factor is $M_{3}^{1/4}$ instead of $M_{2}^{1/4}$, 
\begin{align*}
& \lesssim T^{1/4}M_{0}^{1/2}M_{4}^{9/8}M^{9/8}M_{3}^{1/2}(\frac{M_{3}}{M_{1}%
}+\frac{1}{M_{3}})^{\delta }M_{3}^{1/4} \\
& \qquad \sum_{Q}\Vert P_{Q}P_{M_{1}}f_{1}\Vert _{L_{x}^{2}}\Vert
P_{M_{3}}f_{3}\Vert _{L_{x}^{2}}\Vert P_{Q_{c}}P_{M_{2}}f_{2}\Vert
_{L_{x}^{2}}\Vert P_{M_{4}}f_{4}\Vert _{L_{x}^{2}}\Vert P_{\leq
M_{4}}f_{5}\Vert _{L_{x}^{2}}\Vert P_{M}g\Vert _{L_{t}^{\infty }L_{x}^{2}} \\
& \lesssim T^{1/4}M_{0}^{1/2}(\frac{M_{3}}{M_{1}}+\frac{1}{M_{3}})^{\delta
}M_{3}^{-1/4}M^{1/8}M_{4}^{1/8} \\
& \qquad \sum_{Q}\Vert P_{Q}P_{M_{1}}f_{1}\Vert _{L_{x}^{2}}\Vert
P_{Q_{c}}P_{M_{2}}f_{2}\Vert _{L_{x}^{2}}\Vert P_{M_{3}}f_{3}\Vert
_{H_{x}^{1}}\Vert P_{M_{4}}f_{4}\Vert _{H_{x}^{1}}\Vert P_{\leq
M_{4}}f_{5}\Vert _{H_{x}^{1}}\Vert P_{M}g\Vert _{L_{t}^{\infty }H_{x}^{1}}
\end{align*}%
Carrying out the sum in $Q$ via Cauchy-Schwarz, 
\begin{align*}
& \lesssim T^{1/4}M_{0}^{1/2}(\frac{M_{3}}{M_{1}}+\frac{1}{M_{3}})^{\delta
}M_{3}^{-1/4}M^{1/8}M_{4}^{1/8} \\
& \qquad \Vert P_{M_{1}}f_{1}\Vert _{L_{x}^{2}}\Vert P_{M_{2}}f_{2}\Vert
_{L_{x}^{2}}\Vert P_{M_{3}}f_{3}\Vert _{H_{x}^{1}}\Vert P_{M_{4}}f_{4}\Vert
_{H_{x}^{1}}\Vert P_{\leq M_{4}}f_{5}\Vert _{H_{x}^{1}}\Vert P_{M}g\Vert
_{L_{t}^{\infty }H_{x}^{1}}
\end{align*}%
Swapping an $M_{1}$ and $M_{2}$ factor (since $M_{1}\sim M_{2}$) 
\begin{align*}
& \lesssim T^{1/4}M_{0}^{1/2}(\frac{M_{3}}{M_{1}}+\frac{1}{M_{3}})^{\delta
}M_{3}^{-1/4}M^{1/8}M_{4}^{1/8} \\
& \qquad \Vert P_{M_{1}}f_{1}\Vert _{H_{x}^{-1}}\Vert P_{M_{2}}f_{2}\Vert
_{H_{x}^{1}}\Vert P_{M_{3}}f_{3}\Vert _{H_{x}^{1}}\Vert P_{M_{4}}f_{4}\Vert
_{H_{x}^{1}}\Vert P_{\leq M_{4}}f_{5}\Vert _{H_{x}^{1}}\Vert P_{M}g\Vert
_{L_{t}^{\infty }H_{x}^{1}}
\end{align*}%
Carrying out the sum in this case, we obtain 
\begin{align*}
I_{1A}& \lesssim T^{1/4}M_{0}^{1/2}\sup_{M_{3}}\Vert P_{M_{3}}f_{3}\Vert
_{H_{x}^{1}}\sup_{M_{4}}\Vert P_{M_{4}}f_{4}\Vert
_{H_{x}^{1}}\sup_{M_{4}}\Vert P_{\leq M_{4}}f_{5}\Vert
_{H_{x}^{1}}\sup_{M}\Vert P_{M}g\Vert _{L_{t}^{\infty }H_{x}^{1}} \\
& \qquad \sum_{\substack{ M_{1}  \\ M_{1}\sim M_{2}}}\sigma (M_{1})\Vert
P_{M_{1}}f_{1}\Vert _{H_{x}^{-1}}\Vert P_{M_{2}}f_{2}\Vert _{H_{x}^{1}}
\end{align*}%
where 
\begin{align*}
\sigma (M_{1})& =\sum_{\substack{ M_{3}  \\ M_{3}\leq M_{1}}}(\frac{M_{3}}{%
M_{1}}+\frac{1}{M_{3}})^{\delta }M_{3}^{-1/4}\left( \sum_{\substack{ M_{4} 
\\ M_{4}\leq M_{3}}}M_{4}^{1/8}\sum_{\substack{ M  \\ M\leq M_{3}}}%
M^{1/8}\right) \\
& =\sum_{\substack{ M_{3}  \\ M_{3}\leq M_{1}}}(\frac{M_{3}}{M_{1}}+\frac{1}{%
M_{3}})^{\delta }=O(1)
\end{align*}%
Thus we can complete the proof of Case 1A of the low frequency case by
Cauchy-Schwarz, summing in $M_{1}$.

\subparagraph{\noindent \textit{Case 1B of the low frequency estimate (%
\protect\ref{estimate:MultiLinearWithFreqLoc1-low})}}

Recall Case 1B in which,%
\begin{equation*}
M_{1}\sim M_{2}\geq M\geq M_{3}\geq M_{4}\geq M_{5}
\end{equation*}%
and we divide the $M_{1}$-dyadic space and the $M_{2}$-dyadic space into
subcubes of size $M$. We have\footnote{%
This is where we could not obtain $T^{\frac{1}{4}}M^{\frac{1}{2}}$ but $%
\left( T^{\frac{1}{4}}M^{\frac{1}{2}}\right) ^{1-\varepsilon }$ because (\ref%
{E:Str1}) fails at $p=\frac{10}{3}.$} 
\begin{align*}
I_{M_{1},\cdots ,M_{5},M}& \leq \sum_{Q}\Vert
P_{Q}u_{1}\,P_{Q_{c}}u_{2}\,u_{3}\,u_{4}\,u_{5}\,v\Vert _{L_{t,x}^{1}} \\
& \lesssim T^{5/22}\sum_{Q}\Vert P_{Q}u_{1}\,u_{3}\Vert _{L_{t,x}^{2}}\Vert
P_{Q_{c}}u_{2}\Vert _{L_{t,x}^{11/3}}\Vert u_{4}\Vert _{L_{t}^{\infty
}L_{x}^{\infty }}\Vert u_{5}\Vert _{L_{t}^{\infty }L_{x}^{\infty }}\Vert
v\Vert _{L_{t}^{\infty }L_{x}^{22/5}}
\end{align*}%
By Bernstein, 
\begin{equation*}
\lesssim T^{5/22}M_{0}^{5/11}M_{4}^{3/2}M_{5}^{23/22}M^{9/11}\sum_{Q}\Vert
P_{Q}u_{1}\,u_{3}\Vert _{L_{t,x}^{2}}\Vert P_{Q_{c}}u_{2}\Vert
_{L_{t,x}^{11/3}}\Vert u_{4}\Vert _{L_{t}^{\infty }L_{x}^{2}}\Vert
u_{5}\Vert _{L_{t}^{\infty }L_{x}^{2}}\Vert v\Vert _{L_{t}^{\infty
}L_{x}^{2}}
\end{equation*}%
where we note that for the $u_{5}$ we used both the $P_{\leq M_{0}}$ and $%
P_{M_{5}}$ factors to estimate 
\begin{equation*}
\Vert P_{\leq M_{0}}P_{M_{5}}u_{5}\Vert _{L_{t}^{\infty }L_{x}^{\infty
}}\lesssim M_{0}^{5/11}M_{5}^{23/22}\Vert u_{5}\Vert _{L_{t}^{\infty
}L_{x}^{2}}.
\end{equation*}%
By bilinear Strichartz and Strichartz (\ref{E:Str1}), (\ref{E:Str2}), and (%
\ref{E:Str3}), where the factor is $M^{3/22}$ instead of $M_{2}^{3/22}$, 
\begin{align*}
I_{M_{1},\cdots ,M_{5},M}& \lesssim
T^{5/22}M_{0}^{5/11}M_{4}^{3/2}M_{5}^{23/22}M^{9/11}M_{3}^{1/2}(\frac{M_{3}}{%
M_{1}}+\frac{1}{M_{3}})^{\delta }M^{3/22} \\
& \qquad \sum_{Q}\Vert P_{Q}P_{M_{1}}f_{1}\Vert _{L_{x}^{2}}\Vert
P_{M_{3}}f_{3}\Vert _{L_{x}^{2}}\Vert P_{Q_{c}}P_{M_{2}}f_{2}\Vert
_{L_{x}^{2}}\Vert P_{M_{4}}f_{4}\Vert _{L_{x}^{2}}\Vert P_{M_{5}}f_{5}\Vert
_{L_{x}^{2}}\Vert P_{M}g\Vert _{L_{t}^{\infty }L_{x}^{2}}
\end{align*}%
which rearranges to%
\begin{align*}
& \lesssim T^{5/22}M_{0}^{5/11}(\frac{M_{3}}{M_{1}}+\frac{1}{M_{3}})^{\delta
}M_{3}^{-1/2}M^{-1/22}M_{4}^{1/2}M_{5}^{1/22} \\
& \qquad \sum_{Q}\Vert P_{Q}P_{M_{1}}f_{1}\Vert _{L_{x}^{2}}\Vert
P_{Q_{c}}P_{M_{2}}f_{2}\Vert _{L_{x}^{2}}\Vert P_{M_{3}}f_{3}\Vert
_{H_{x}^{1}}\Vert P_{M_{4}}f_{4}\Vert _{H_{x}^{1}}\Vert P_{M_{5}}f_{5}\Vert
_{H_{x}^{1}}\Vert P_{M}g\Vert _{L_{t}^{\infty }H_{x}^{1}}
\end{align*}%
Carrying out the sum in $Q$ via Cauchy-Schwarz, 
\begin{align*}
& \lesssim T^{5/22}M_{0}^{5/11}(\frac{M_{3}}{M_{1}}+\frac{1}{M_{3}})^{\delta
}M_{3}^{-1/2}M^{-1/22}M_{4}^{1/2}M_{5}^{1/22} \\
& \qquad \Vert P_{M_{1}}f_{1}\Vert _{L_{x}^{2}}\Vert P_{M_{2}}f_{2}\Vert
_{L_{x}^{2}}\Vert P_{M_{3}}f_{3}\Vert _{H_{x}^{1}}\Vert P_{M_{4}}f_{4}\Vert
_{H_{x}^{1}}\Vert P_{M_{5}}f_{5}\Vert _{H_{x}^{1}}\Vert P_{M}g\Vert
_{L_{t}^{\infty }H_{x}^{1}}
\end{align*}%
Swapping an $M_{1}$ and $M_{2}$ factor (since $M_{1}\sim M_{2}$) 
\begin{align*}
& \lesssim T^{5/22}M_{0}^{5/11}(\frac{M_{3}}{M_{1}}+\frac{1}{M_{3}})^{\delta
}M_{3}^{-1/2}M^{-1/22}M_{4}^{1/2}M_{5}^{1/22} \\
& \qquad \Vert P_{M_{1}}f_{1}\Vert _{H_{x}^{-1}}\Vert P_{M_{2}}f_{2}\Vert
_{H_{x}^{1}}\Vert P_{M_{3}}f_{3}\Vert _{H_{x}^{1}}\Vert P_{M_{4}}f_{4}\Vert
_{H_{x}^{1}}\Vert P_{M_{5}}f_{5}\Vert _{H_{x}^{1}}\Vert P_{M}g\Vert
_{L_{t}^{\infty }H_{x}^{1}}
\end{align*}

Carrying out the sum in this case, we obtain (taking without loss $\delta <%
\frac{1}{22}$) 
\begin{align*}
I_{1B}& \lesssim T^{5/22}M_{0}^{5/11}\sup_{M_{3}}\Vert P_{M_{3}}f_{3}\Vert
_{H_{x}^{1}}\sup_{M_{4}}\Vert P_{M_{4}}f_{4}\Vert
_{H_{x}^{1}}\sup_{M_{5}}\Vert P_{M_{5}}f_{5}\Vert _{H_{x}^{1}}\sup_{M}\Vert
P_{M}g\Vert _{L_{t}^{\infty }H_{x}^{1}} \\
& \qquad \sum_{\substack{ M_{1}  \\ M_{1}\sim M_{2}}}\sigma (M_{1})\Vert
P_{M_{1}}f_{1}\Vert _{H_{x}^{-1}}\Vert P_{M_{2}}f_{2}\Vert _{H_{x}^{1}}
\end{align*}%
where 
\begin{eqnarray*}
\sigma (M_{1}) &=&\sum_{\substack{ M,M_{3},M_{4},M_{5}  \\ M_{1}\sim
M_{2}\geq M\geq M_{3}\geq M_{4}\geq M_{5}}}(\frac{M_{3}}{M_{1}}+\frac{1}{%
M_{3}})^{\delta }M_{3}^{-1/2}M^{-1/22}M_{4}^{1/2}M_{5}^{1/22} \\
&=&\sum_{\substack{ M,M_{3}  \\ M_{3}\leq M\leq M_{1}}}(\frac{M_{3}}{M_{1}}+%
\frac{1}{M_{3}})^{\delta }M_{3}^{1/22}M^{-1/22}=O(1)
\end{eqnarray*}%
Therefore we can complete the proof of Case 1B of the low frequency case by
Cauchy-Schwarz, summing in $M_{1}$.

\subparagraph{\noindent \textit{Case 2 of the low frequency estimate (%
\protect\ref{estimate:MultiLinearWithFreqLoc1-low})}}

Recall Case 2 in which 
\begin{equation*}
M_{1}\sim M\geq M_{2}\geq M_{3}\geq M_{4}\geq M_{5}
\end{equation*}%
and we will again write $v=P_{M_{1}}g$ for convenience. Like in Case 1A of 
\textit{(\ref{estimate:MultiLinearWithFreqLoc1-low}), }we consider%
\begin{equation*}
I_{M_{1},\cdots ,M_{4},M}=\sum_{M_{5}}I_{M_{1},\cdots ,M_{4},M_{5},M},
\end{equation*}%
replace $\sum_{M_{5},M_{4}\geq M_{5}}u_{5}=e^{it\Delta
}\sum_{M_{5},M_{4}\geq M_{5}}P_{\leq M_{0}}P_{M_{5}}f_{5}$ with $%
u_{5}=e^{it\Delta }P_{\leq M_{0}}P_{\leq M_{4}}f_{5}$ to shorten some
notations, and use the estimate 
\begin{equation*}
\Vert u_{5}\Vert _{L_{t}^{\infty }L_{x}^{\infty }}\lesssim M_{0}^{1/2}\Vert
P_{\leq M_{4}}e^{it\Delta }f_{5}\Vert _{L_{t}^{\infty }L_{x}^{6}}\lesssim
M_{0}^{1/2}\Vert P_{\leq M_{4}}f_{5}\Vert _{H^{1}}
\end{equation*}

We start with 
\begin{equation*}
I_{2}\lesssim \sum_{\substack{ M_{1},M_{2},M_{3},M_{4}  \\ M_{1}\geq
M_{2}\geq M_{3}\geq M_{4}}}\Vert (\langle \nabla \rangle
^{-1}u_{1})\,u_{2}\Vert _{L_{t,x}^{2}}\Vert u_{3}u_{4}u_{5}\langle \nabla
\rangle v\Vert _{L_{t,x}^{2}}
\end{equation*}%
By Cauchy-Schwarz in the $M_{1}$ sum 
\begin{equation*}
I_{2}\lesssim AB
\end{equation*}%
where $A$ and $B$ are, for fixed $M_{2},M_{3},M_{4}$ given by 
\begin{equation*}
A=\left( \sum_{\substack{ M_{1}  \\ M_{1}\geq M_{2}}}\Vert \langle \nabla
\rangle ^{-1}u_{1}\,u_{2}\Vert _{L_{x,t}^{2}}^{2}\right) ^{1/2}\,,\qquad
B=\left( \sum_{\substack{ M_{1}  \\ M_{1}\geq M_{2}}}\Vert
u_{3}u_{4}u_{5}\langle \nabla \rangle v\Vert _{L_{x,t}^{2}}^{2}\right) ^{1/2}
\end{equation*}%
Term $A$ is the same as in the high frequency estimate, and we have 
\begin{eqnarray*}
A_{2A} &\lesssim &M_{2}^{-\frac{1}{2}-\delta }\Vert P_{M_{2}}f_{2}\Vert
_{H^{1}}\Vert f_{1}\Vert _{H^{-1}} \\
A_{2B} &\lesssim &M_{2}^{-\frac{1}{2}+\delta }\Vert P_{M_{2}}f_{2}\Vert
_{H^{1}}\Vert P_{M_{2}\leq \bullet \leq M_{2}^{2}}f_{1}\Vert _{H^{-1-\delta
}}
\end{eqnarray*}%
For $B$, as in the high frequency estimate, it becomes 
\begin{equation*}
B\lesssim \left( \int_{t}\Vert u_{3}\Vert _{L_{x}^{\infty }}^{2}\Vert
u_{4}\Vert _{L_{x}^{\infty }}^{2}\Vert u_{5}\Vert _{L_{x}^{\infty
}}^{2}\Vert g\Vert _{H^{1}}^{2}\,dt\right) ^{1/2}
\end{equation*}%
and we H\"{o}lder in $t$ to get to 
\begin{equation*}
B\lesssim T^{1/4}\Vert u_{3}\Vert _{L_{t}^{4}L_{x}^{\infty }}\Vert
u_{4}\Vert _{L_{t}^{\infty }L_{x}^{\infty }}\Vert u_{5}\Vert _{L_{t}^{\infty
}L_{x}^{\infty }}\Vert g\Vert _{L_{t}^{\infty }H_{x}^{1}}
\end{equation*}%
By Bernstein and Strichartz (\ref{E:Str1}), and absorbing one derivative
into the $f_{3}$, $f_{4}$ and $f_{5}$ terms 
\begin{equation*}
B\lesssim T^{1/4}M_{0}^{1/2}M_{4}^{1/2}\Vert P_{M_{3}}f_{3}\Vert
_{H_{x}^{1}}\Vert P_{M_{4}}f_{4}\Vert _{H_{x}^{1}}\Vert P_{\leq
M_{4}}f_{5}\Vert _{H_{x}^{1}}\Vert g\Vert _{L_{t}^{\infty }H_{x}^{1}}
\end{equation*}%
Putting it all together (since the $M_{1}$ sum has already been carried out) 
\begin{align*}
I_{2}& \lesssim T^{1/4}M_{0}^{1/2}\sum_{\substack{ M_{2},M_{3},M_{4}  \\ %
M_{2}\geq M_{3}\geq M_{4}}}M_{2}^{-1/2}M_{4}^{1/2}\Vert P_{M_{3}}f_{3}\Vert
_{H_{x}^{1}}\Vert P_{M_{4}}f_{4}\Vert _{H_{x}^{1}}\Vert P_{\leq
M_{4}}f_{5}\Vert _{H_{x}^{1}}\Vert g\Vert _{L_{t}^{\infty }H_{x}^{1}} \\
& \qquad (M_{2}^{-\delta }\Vert f_{1}\Vert _{H^{-1}}\Vert
P_{M_{2}}f_{2}\Vert _{H^{1}}+M_{2}^{\delta }\Vert P_{M_{2}\leq \bullet \leq
M_{2}^{2}}f_{1}\Vert _{H^{-1-\delta }}\Vert P_{M_{2}}f_{2}\Vert _{H^{1}})
\end{align*}%
Suping out the $f_{3}$, $f_{4}$ and $f_{5}$ terms in $M_{3}$ and $M_{4}$, 
\begin{align*}
I_{2}& \lesssim T^{1/4}M_{0}^{1/2}\Vert f_{3}\Vert _{H_{x}^{1}}\Vert
f_{4}\Vert _{H_{x}^{1}}\Vert f_{5}\Vert _{H_{x}^{1}}\Vert g\Vert
_{L_{t}^{\infty }H_{x}^{1}} \\
& \qquad \sum_{\substack{ M_{2},M_{3},M_{4}  \\ M_{2}\geq M_{3}\geq M_{4}}}%
M_{2}^{-1/2}M_{4}^{1/2}\left( M_{2}^{-\delta }\Vert f_{1}\Vert
_{H^{-1}}\Vert P_{M_{2}}f_{2}\Vert _{H^{1}}+M_{2}^{\delta }\Vert
P_{M_{2}\leq \bullet \leq M_{2}^{2}}f_{1}\Vert _{H^{-1-\delta }}\Vert
P_{M_{2}}f_{2}\Vert _{H^{1}}\right)
\end{align*}%
Carry out the $M_{3}$ and $M_{4}$ sums 
\begin{align*}
I_{2}& \lesssim T^{1/4}M_{0}^{1/2}\Vert f_{3}\Vert _{H_{x}^{1}}\Vert
f_{4}\Vert _{H_{x}^{1}}\Vert f_{5}\Vert _{H_{x}^{1}}\Vert g\Vert
_{L_{t}^{\infty }H_{x}^{1}} \\
& \qquad \sum_{M_{2}}(M_{2}^{-\delta }\Vert f_{1}\Vert _{H^{-1}}\Vert
P_{M_{2}}f_{2}\Vert _{H^{1}}+M_{2}^{\delta }\Vert P_{M_{2}\leq \bullet \leq
M_{2}^{2}}f_{1}\Vert _{H^{-1-\delta }}\Vert P_{M_{2}}f_{2}\Vert _{H^{1}})
\end{align*}%
The two $M_{2}$ sums is the same as in high frequency case, thus we have
completed the proof of Case 2 of the low frequency case and hence \textit{(%
\ref{estimate:MultiLinearWithFreqLoc1-low})}. Together with (\ref%
{estimate:MultiLinearWithFreqLoc1-high}), we have proved (\ref%
{estimate:MultiLinearWithFreqLoc1}).

\bigskip

\paragraph{Proof of (\protect\ref{estimate:MultiLinearWithFreqLoc2})}

Estimate (\ref{estimate:MultiLinearWithFreqLoc2}) is a straightforward
corollary of the proof of (\ref{estimate:MultiLinearWithFreqLoc1}). Indeed,
consider the two cases

\noindent \emph{Case 1}. $M_{1}\sim M_{2}\geq M_{3}\geq M_{4}\geq M_{5}$ and 
$M_{1}\sim M_{2}\geq M$

\noindent \emph{Case 2}. $M_{1}\sim M\geq M_{2}\geq M_{3}\geq M_{4}\geq
M_{5} $

In Case 1, in the proof of (\ref{estimate:MultiLinearWithFreqLoc1}) we had 
\begin{equation*}
\Vert P_{M_{1}}f_{1}\Vert _{H^{-1}}\,,\qquad \Vert P_{M_{2}}f_{2}\Vert
_{H^{1}}\,,\qquad \Vert P_{M}g\Vert _{H^{1}}
\end{equation*}%
or equivalently 
\begin{equation*}
\Vert P_{M_{1}}f_{1}\Vert _{L^{2}}\,,\qquad \Vert P_{M_{2}}f_{2}\Vert
_{L^{2}}\,,\qquad M\Vert P_{M}g\Vert _{L^{2}}
\end{equation*}%
and, for (\ref{estimate:MultiLinearWithFreqLoc2}), we would now like to have 
\begin{equation*}
\Vert P_{M_{1}}f_{1}\Vert _{H^{1}}\,,\qquad \Vert P_{M_{2}}f_{2}\Vert
_{H^{1}}\,,\qquad \Vert P_{M}g\Vert _{H^{-1}}
\end{equation*}%
or equivalently 
\begin{equation*}
M_{1}\Vert P_{M_{1}}f_{1}\Vert _{L^{2}}\,,\qquad M_{2}\Vert
P_{M_{2}}f_{2}\Vert _{L^{2}}\,,\qquad M^{-1}\Vert P_{M}g\Vert _{L^{2}}
\end{equation*}%
Thus (\ref{estimate:MultiLinearWithFreqLoc2})'s Case 1 follows immediately
from the previous analysis because summands inside $\sigma (M_{1})$ got
multiplied by $M^{2}/M_{1}M_{2}\lesssim 1$.

In Case 2, the proof is identical. In (\ref{estimate:MultiLinearWithFreqLoc1}%
), we had $\Vert P_{M_{1}}f_{1}\Vert _{H^{-1}}$ and $\Vert P_{M}g\Vert
_{H^{1}}$, and now we want $\Vert P_{M_{1}}f_{1}\Vert _{H^{1}}$ and $\Vert
P_{M}g\Vert _{H^{-1}}$ for (\ref{estimate:MultiLinearWithFreqLoc2}). Since $%
M_{1}\sim M$, we can trade derivatives between these two factors. We omit
further details.

\appendix

\section{Uniform in Time Frequency Localization for NLS\label%
{Sec:AppendixUTFL}}

\begin{definition}
\label{D:uniform}Suppose that $u\in C_{[0,1]}^{0}\dot{H}_{x}^{1}$. We say
that $u$ satisfies \emph{uniform in time frequency localization} (UTFL$_{%
\text{NLS}}$) if for each $\varepsilon >0$ there exists $M(\varepsilon )$
such that 
\begin{equation}
\Vert P_{>M}\nabla u\Vert _{L_{[0,1]}^{\infty }L_{x}^{2}}\leq \varepsilon
\label{E:uniform}
\end{equation}
\end{definition}

\begin{theorem}
\label{Cor:UTFLforNLS}Any solution to the 3D quintic NLS 
\begin{equation}
i\partial _{t}u+\Delta u\pm |u|^{4}u=0
\label{eqn:3d quintic NLS in Appendix}
\end{equation}%
on $0\leq t\leq 1$ belonging to $C_{t}^{0}\dot{H}_{x}^{1}\cap \dot{C}_{t}^{1}%
\dot{H}_{x}^{-1}$ satisfies UTFL$_{\text{NLS}}$.
\end{theorem}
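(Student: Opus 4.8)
The plan is to deduce the statement from a soft precompactness argument that uses only the hypothesis $u\in C^0_t\dot H^1_x$; the assumption $u\in \dot C^1_t\dot H^{-1}_x$ is needed merely so that the word "solution" is meaningful and plays no role in the frequency-localization conclusion itself. First I would observe that, since $t\mapsto u(t)$ is by hypothesis a continuous map from the compact interval $[0,1]$ into $\dot H^1(\mathbb{T}^3)$, its image
\[
K:=\{u(t):t\in[0,1]\}
\]
is a compact, hence totally bounded, subset of $\dot H^1(\mathbb{T}^3)$.

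Second, I would show that any totally bounded subset of $\dot H^1(\mathbb{T}^3)$ is uniformly frequency-localized. Fix $\varepsilon>0$ and pick a finite $\tfrac{\varepsilon}{2}$-net $u_1,\dots,u_n\in\dot H^1(\mathbb{T}^3)$ for $K$. Since $\|P_{>M}\nabla u_j\|_{L^2_x}^2=\sum_{|\xi|>M}|\xi|^2|\widehat{u_j}(\xi)|^2$ is the tail of the convergent series $\sum_{\xi}|\xi|^2|\widehat{u_j}(\xi)|^2$, there is $M(\varepsilon)$ with $\|P_{>M}\nabla u_j\|_{L^2_x}<\tfrac{\varepsilon}{2}$ for every $j$. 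Given $t\in[0,1]$, choose $j$ with $\|u(t)-u_j\|_{\dot H^1_x}<\tfrac{\varepsilon}{2}$; then, since $P_{>M}$ commutes with $\nabla$ and has operator norm $1$ on $L^2_x$,
\[
\|P_{>M}\nabla u(t)\|_{L^2_x}\le \|\nabla(u(t)-u_j)\|_{L^2_x}+\|P_{>M}\nabla u_j\|_{L^2_x}<\varepsilon .
\]
Taking the supremum over $t$ yields $\|P_{>M}\nabla u\|_{L^\infty_{[0,1]}L^2_x}\le\varepsilon$, which is exactly (\ref{E:uniform}).

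For completeness, and to make transparent why \S \ref{subsec:GPUTFL} requires a genuinely delicate argument for the hierarchy, I would also record the energy identity behind (\ref{eqn:key estimate for NLS UTFL in GP UTFL}): write $E_{\text{NLS}}=E_{\text{NLS}}^{L}+E_{\text{NLS}}^{H}$ by sorting the terms of $|u_L+u_H|^6$ according to the number of high-frequency factors as in (\ref{eqn:E_NLS^L}), use conservation of $E_{\text{NLS}}$ to replace $\partial_t E_{\text{NLS}}^{H}$ by $-\partial_t E_{\text{NLS}}^{L}$, substitute the equation, and integrate by parts to transfer derivatives off the high-frequency factors onto the low-frequency ones, paying a Bernstein factor $\langle M\rangle^2$. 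This gives $|\partial_t E_{\text{NLS}}^{H}|\lesssim\langle M\rangle^2 E_{\text{NLS}}$ and, upon integration, short-time control of $\|P_{>M}\nabla u\|_{L^2_x}$. On its own this only produces a time interval depending on $M$ and $\varepsilon$, which suffices for NLS once combined with the compactness observation above, but, crucially, forces a $k$-dependent time when transplanted to the hierarchy (\ref{E:GP}) — the difficulty that Theorem \ref{Thm:UTFLinTime} is designed to circumvent.

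I do not expect a real obstacle in proving Theorem \ref{Cor:UTFLforNLS}: the entire content is the remark that a continuous $\dot H^1_x$-valued curve over a compact time interval has compact image, and that compact subsets of $\dot H^1(\mathbb{T}^3)$ are uniformly frequency-localized. The only points requiring minor care are that $\dot H^1(\mathbb{T}^3)$ should be taken on the zero-mean subspace so that the Littlewood–Paley tail characterization of precompactness applies verbatim (the projection $P_{>M}$ annihilates the zero mode in any case), and that the time interval is compact — over an unbounded interval the image need not be precompact, which is exactly where a nontrivial dynamical input, or in the hierarchy setting the energy-method bootstrap of \S \ref{subsec:GPUTFL}, becomes necessary.
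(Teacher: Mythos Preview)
Your proposal is correct and takes a genuinely different route from the paper. You observe that UTFL$_{\text{NLS}}$ follows from the hypothesis $u\in C^0_t\dot H^1_x$ alone, via the compactness of the image $\{u(t):t\in[0,1]\}$ in $\dot H^1$ and the fact that compact (equivalently, totally bounded) subsets of $\dot H^1$ have uniformly small high-frequency tails; the equation never enters. The paper instead proves a quantitative local-in-time stability lemma (Lemma~\ref{L:local-control}) using the energy decomposition $E=E_L+E_H$: conservation of $E$ together with the bound $|\partial_t E_L|\lesssim C_1^{10}M^2$ (obtained by substituting the equation and shifting derivatives onto low-frequency factors via Bernstein) shows that if $\|P_{>M}\nabla u(t_0)\|_{L^2}\le\frac12\varepsilon$ then the same bound with $\varepsilon$ persists on an interval of explicit length $\delta\sim C_1^{-10}M^{-2}\varepsilon^2$, and then $[0,1]$ is covered by finitely many such intervals. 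Your argument is shorter and isolates the real content of the NLS statement (it is a fact about continuous curves into $\dot H^1$ over compact intervals, not about NLS). The paper's argument, on the other hand, is the template for Theorem~\ref{Thm:UTFLinTime} at the hierarchy level, where there is no single $\dot H^1$-valued curve whose image one can declare compact and one is forced to track the high-frequency energy dynamically; you already note this distinction correctly in your final two paragraphs.
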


Notice that, Theorem \ref{Cor:UTFLforNLS} holds no matter (\ref{eqn:3d
quintic NLS in Appendix}) is focusing or defocusing and one has uniform in
time frequency localization for arbitrarily large time while Theorem \ref%
{Thm:UTFLinTime} is a small time defocusing result. This is certainly
because there is no hierarchical structure, which gets in the way in the
analysis of (\ref{hierarchy:quintic GP in differential form}), for the NLS.
On the other hand, we note that Theorem \ref{Cor:UTFLforNLS} only applies to
solutions of (\ref{eqn:3d quintic NLS in Appendix}) with bounded $\dot{H}%
^{1} $ norm on a closed interval, thus blow ups in the focusing case do not
contradict Theorem \ref{Cor:UTFLforNLS}.

\paragraph{Proof of Theorem \protect\ref{Cor:UTFLforNLS}}

For convenience, we fix the sign in (\ref{eqn:3d quintic NLS in Appendix})
and assume that $u$ solves the 3D quintic \emph{focusing} NLS 
\begin{equation*}
i\partial _{t}u+\Delta u+|u|^{4}u=0
\end{equation*}%
The same argument applies to the defocusing case. Let 
\begin{equation}
C_{1}=\max (\Vert \nabla u(t)\Vert _{L_{[0,1]}^{\infty }L_{x}^{2}},1)
\label{E:C1}
\end{equation}%
We first prove the following lemma which stems from an energy decomposition
into low and high frequency components, and a control in time of the low
frequency part.

\begin{lemma}
\label{L:local-control} Let $0<\varepsilon \ll C_{1}^{-3}$ and suppose that
for some $t_{0}\in \lbrack 0,1]$ and some $M$, 
\begin{equation*}
\Vert P_{>M}\nabla u(t_{0})\Vert _{L_{x}^{2}}\leqslant \tfrac{1}{2}%
\varepsilon
\end{equation*}%
Then there exists $\delta >0$ (which can be taken $\sim
C_{1}^{-10}M^{-2}\varepsilon ^{2}$) such that 
\begin{equation*}
\Vert P_{>M}\nabla u(t)\Vert _{L_{x}^{2}}\leqslant \varepsilon
\end{equation*}%
for all $t\in (t_{0}-\delta ,t_{0}+\delta )$.
\end{lemma}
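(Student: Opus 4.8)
The plan is to decompose the NLS energy $E_{\text{NLS}}(u)=\int|\nabla u|^2+\tfrac13\int|u|^6$ into a low-frequency part $E_{\text{NLS}}^L$ and a high-frequency part $E_{\text{NLS}}^H=E_{\text{NLS}}-E_{\text{NLS}}^L$ exactly as in \eqref{eqn:E_NLS^L}, with cutoff at frequency $M$, and to exploit two facts: (i) $E_{\text{NLS}}$ is conserved, so $\partial_t E_{\text{NLS}}^H=-\partial_t E_{\text{NLS}}^L$; and (ii) as recorded in \eqref{eqn:key estimate for NLS UTFL in GP UTFL}, one has the differential inequality $|\partial_t E_{\text{NLS}}^L|\lesssim \langle M\rangle^2 E_{\text{NLS}}$, proved (in Appendix~\ref{Sec:AppendixUTFL}, which I may invoke) by moving derivatives off the high-frequency factors onto low-frequency factors, at the cost of a Bernstein penalty $M^2$. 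Since $\|\nabla u(t)\|_{L^2}\le C_1$ on $[0,1]$ by \eqref{E:C1}, Sobolev gives $E_{\text{NLS}}(u(t))\lesssim C_1^2+C_1^6\lesssim C_1^6$, so in fact $|\partial_t E_{\text{NLS}}^H(t)|=|\partial_t E_{\text{NLS}}^L(t)|\lesssim M^2 C_1^6$ uniformly for $t\in[0,1]$.

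Next I would extract a lower bound for $E_{\text{NLS}}^H(u)$ of the shape $E_{\text{NLS}}^H(u)\ge \|P_{>M}\nabla u\|_{L^2}^2 - C\,C_1^4\,\|P_{>M}\nabla u\|_{L^2}^2\cdot(\text{small})$, or more robustly just $E_{\text{NLS}}^H(u)\ge \|P_{>M}\nabla u\|_{L^2}^2 - C C_1^{10}\|P_{>M}\nabla u\|_{L^2}^3$, by applying H\"older/Sobolev to the mixed high-low sextic terms appearing in $E_{\text{NLS}}^H$ (each such term carries at least three high factors, hence a power $\|P_{>M}\nabla u\|_{L^2}^3$ can be pulled out, the remaining factors bounded by $C_1$). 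Thus, as long as $\|P_{>M}\nabla u\|_{L^2}\le\varepsilon$ with $\varepsilon\ll C_1^{-3}$, we get $CC_1^{10}\varepsilon^3\le \tfrac12\varepsilon^2$, hence $E_{\text{NLS}}^H(u)\ge\tfrac12\|P_{>M}\nabla u\|_{L^2}^2$. Symmetrically, $E_{\text{NLS}}^H(u)\le 2\|P_{>M}\nabla u\|_{L^2}^2$ in that regime, so $E_{\text{NLS}}^H$ and $\|P_{>M}\nabla u\|_{L^2}^2$ are comparable whenever the high kinetic piece is $\le\varepsilon$.

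Now I would run a standard continuity/bootstrap argument. At $t_0$, $\|P_{>M}\nabla u(t_0)\|_{L^2}\le\tfrac12\varepsilon$ gives $E_{\text{NLS}}^H(u(t_0))\le 2(\tfrac12\varepsilon)^2=\tfrac12\varepsilon^2$. Let $\delta>0$ be chosen so that $CM^2C_1^6\,\delta\le\tfrac18\varepsilon^2$, i.e. $\delta\sim C_1^{-6}M^{-2}\varepsilon^2$ (the exponent of $C_1$ can be taken $-10$ to match the statement, by being generous). Suppose, for contradiction, that there is a first time $t_1\in(t_0-\delta,t_0+\delta)$ where $\|P_{>M}\nabla u(t_1)\|_{L^2}=\varepsilon$; for all $t$ strictly between $t_0$ and $t_1$ we then have $\|P_{>M}\nabla u(t)\|_{L^2}\le\varepsilon$, so the comparability $E_{\text{NLS}}^H\ge\tfrac12\|P_{>M}\nabla u\|_{L^2}^2$ is valid on that interval and, by continuity of $E_{\text{NLS}}^H$ in time, at $t_1$ as well. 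Integrating $|\partial_t E_{\text{NLS}}^H|\lesssim M^2C_1^6$ from $t_0$ to $t_1$ gives $E_{\text{NLS}}^H(u(t_1))\le \tfrac12\varepsilon^2 + CM^2C_1^6|t_1-t_0|\le\tfrac12\varepsilon^2+\tfrac18\varepsilon^2<\varepsilon^2$, whence $\|P_{>M}\nabla u(t_1)\|_{L^2}^2\le 2E_{\text{NLS}}^H(u(t_1))<2\varepsilon^2$, which is not yet a contradiction as stated—so I would instead set the threshold of the bootstrap at $\sqrt2\,\varepsilon$, or equivalently tighten the constant: taking $\delta\sim C_1^{-10}M^{-2}\varepsilon^2$ small enough that $CM^2C_1^6\delta\le\tfrac14\varepsilon^2$ forces $E_{\text{NLS}}^H(u(t_1))\le\tfrac34\varepsilon^2$, hence $\|P_{>M}\nabla u(t_1)\|_{L^2}\le\sqrt{3/2}\,\varepsilon$, and redefining $\varepsilon\mapsto\varepsilon/2$ at the outset (which only costs constants in $\delta$) closes the loop: no such $t_1$ exists, so $\|P_{>M}\nabla u(t)\|_{L^2}\le\varepsilon$ throughout $(t_0-\delta,t_0+\delta)$.

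The main obstacle is obtaining the energy-dissipation bound \eqref{eqn:key estimate for NLS UTFL in GP UTFL} and the two-sided comparability $E_{\text{NLS}}^H\sim\|P_{>M}\nabla u\|_{L^2}^2$ with clean, $\varepsilon$-loss-free constants: one must verify that every mixed high/low sextic term in $E_{\text{NLS}}^H$ (and in $\partial_t E_{\text{NLS}}^L$) can be estimated so that the $M^2$ Bernstein penalty sits in front of a quantity controlled by $E_{\text{NLS}}$ alone, using the a priori bound $\|\nabla u\|_{L^\infty_t L^2_x}\le C_1$ to absorb the surplus $L^6$-factors via Gagliardo--Nirenberg. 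Everything else is the bookkeeping of the continuity argument; this is exactly the NLS model case whose GP analogue is Theorem~\ref{Thm:UTFLinTime}, but here with no hierarchical structure and hence valid for all time, which is why the time step $\delta$ here depends only on $\varepsilon$, $M$, and $C_1$.
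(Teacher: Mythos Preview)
Your approach is essentially the same as the paper's: decompose the conserved NLS energy into low and high pieces with the cutoff at $M$, bound $|\partial_t E_L|\lesssim M^2 C_1^{10}$ (hence the same for $E_H$ by conservation), and run a continuity argument comparing $E_H$ to $\tfrac12\|\nabla u_H\|_{L^2}^2$. Two minor points where the paper is cleaner than your sketch: (i) the derivative bound \eqref{eqn:key estimate for NLS UTFL in GP UTFL} is not something you may invoke here---it \emph{is} the content of this lemma's proof, so the explicit term-by-term computation (H\"older, Bernstein, one integration by parts on $\int u_L^4 u_H\Delta u_H$) must be carried out, yielding the $C_1^{10}$ you anticipated; (ii) your bootstrap with two-sided comparability $\tfrac12\|\nabla u_H\|^2\le E_H\le 2\|\nabla u_H\|^2$ does not close, as you noticed. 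The paper avoids your rescaling workaround by using the sharper bound $\bigl|E_H(u)-\tfrac12\|\nabla u_H\|_{L^2}^2\bigr|\le\tfrac{1}{32}\varepsilon^2$ (valid on the bootstrap interval since $\varepsilon\ll C_1^{-3}$) and then directly estimating the \emph{difference} $\bigl|\|\nabla u_H(t_H)\|^2-\|\nabla u_H(t_0)\|^2\bigr|\le\tfrac14\varepsilon^2$, which contradicts the a priori gap $\ge\tfrac34\varepsilon^2$ between the endpoint values.
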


\begin{proof}
The energy 
\begin{equation*}
E(u)=\frac{1}{2}\Vert \nabla u\Vert _{L^{2}}^{2}-\frac{1}{6}\Vert u\Vert
_{L^{6}}^{6}
\end{equation*}%
is conserved in time. Let $u_{H}=P_{>M}u$ and $u_{L}=P_{\leqslant M}u$. For
convenience we will ignore the complex conjugates for intermediate terms
since they will just be estimated in a way that does not matter whether
there are complex conjugates or not and expand the sixth power as 
\begin{equation*}
|u|^{6}=|u_{L}+u_{H}|^{6}=|u_{L}|^{6}+6u_{L}^{5}u_{H}+15u_{L}^{4}u_{H}^{2}+20u_{L}^{3}u_{H}^{3}+15u_{L}^{2}u_{H}^{4}+6u_{L}u_{H}^{5}+|u_{H}|^{6}
\end{equation*}%
Let 
\begin{equation}
E_{L}(u)=\frac{1}{2}\Vert \nabla u_{L}\Vert _{L^{2}}^{2}-\frac{1}{6}\int
|u_{L}|^{6}-\int u_{L}^{5}u_{H}-\frac{15}{6}\int u_{L}^{4}u_{H}^{2}
\label{E:Eminus}
\end{equation}%
and 
\begin{equation}
E_{H}(u)=\frac{1}{2}\Vert \nabla u_{H}\Vert _{L^{2}}^{2}-\frac{20}{6}\int
u_{L}^{3}u_{H}^{3}-\frac{15}{6}\int u_{L}^{2}u_{H}^{4}-\int u_{L}u_{H}^{5}-%
\frac{1}{6}\int |u_{H}|^{6}  \label{E:Eplus}
\end{equation}%
so that 
\begin{equation}
E(u)=E(u(t))=E_{H}(u(t))+E_{L}(u(t))  \label{E:en-decomp}
\end{equation}%
We do not expect the same for the components $E_{H}(u(t))$ and $E_{L}(u(t))$
although $E(u)$ is independent of $t$. We will prove the following bound for 
$E_{L}(u(t))$ giving short time control: 
\begin{equation}
\left\vert E_{L}(u(t))-E_{L}(u(t_{0}))\right\vert \lesssim
C_{1}^{10}M^{2}|t-t_{0}|  \label{E:en-minus-bd}
\end{equation}%
Since $E(u)$ is conserved, \eqref{E:en-decomp} implies 
\begin{equation*}
E_{H}(u(t))-E_{H}(u(t_{0}))=E_{L}(u(t_{0}))-E_{L}(u(t))
\end{equation*}%
and hence \eqref{E:en-minus-bd} implies the same bound for $E_{H}(u(t))$: 
\begin{equation}
\left\vert E_{H}(u(t))-E_{H}(u(t_{0}))\right\vert \lesssim
C_{1}^{10}M^{2}|t-t_{0}|  \label{E:en-plus-bd}
\end{equation}%
Since $\Vert \nabla u_{H}(t_{0})\Vert _{L^{2}}$ is a small quantity, this
will imply control on $\Vert \nabla u_{H}(t)\Vert _{L^{2}}$ by continuity of 
$\nabla u_{H}(t)$ in $t$ (mapping into $L_{x}^{2}$). More precisely, we
argue as follows. Let $t_{H}>t_{0}$ be the first forward time at which $%
\Vert \nabla u_{H}(t_{H})\Vert _{L_{x}^{2}}=\varepsilon $. We will obtain a
contradiction unless $t_{H}\geqslant t_{0}+\delta $, where $\delta \sim
C_{1}^{-10}M^{-2}\varepsilon ^{2}$.

Indeed, for any $t_{0}\leq t\leq t_{H}$, we have from \eqref{E:Eplus} and H%
\"{o}lder that 
\begin{equation*}
\left\vert E_{H}(u)-\frac{1}{2}\Vert \nabla u_{H}\Vert
_{L_{x}^{2}}^{2}\right\vert \lesssim \Vert u_{L}\Vert _{L_{x}^{6}}^{3}\Vert
u_{H}\Vert _{L_{x}^{6}}^{3}+\Vert u_{L}\Vert _{L_{x}^{6}}^{2}\Vert
u_{H}\Vert _{L_{x}^{6}}^{4}+\Vert u_{L}\Vert _{L_{x}^{6}}\Vert u_{H}\Vert
_{L_{x}^{6}}^{5}+\Vert u_{H}\Vert _{L_{x}^{6}}^{6}
\end{equation*}%
Using that $\Vert u_{L}\Vert _{L_{x}^{6}}\lesssim C_{1}$ by Sobolev, and $%
\Vert u_{H}\Vert _{L_{x}^{6}}\lesssim \Vert \nabla u_{H}\Vert
_{L_{x}^{2}}\leq \varepsilon $, we have 
\begin{equation}
\left\vert E_{H}(u)-\frac{1}{2}\Vert \nabla u_{H}\Vert
_{L_{x}^{2}}^{2}\right\vert \lesssim C_{1}^{3}\varepsilon
^{3}+C_{1}^{2}\varepsilon ^{4}+C_{1}\varepsilon ^{5}+\varepsilon
^{6}\leqslant \frac{1}{32}\varepsilon ^{2}  \label{E:un7}
\end{equation}%
where the last inequality holds provided $\varepsilon \ll C_{1}^{-3}$. By %
\eqref{E:un7} and \eqref{E:en-plus-bd}, if $|t_{H}-t_{0}|\ll
C_{1}^{-10}M^{-2}\varepsilon ^{2}$, then%
\begin{eqnarray*}
&&\left\vert \left\Vert \nabla u_{H}(t_{H})\right\Vert
_{L_{x}^{2}}^{2}-\left\Vert \nabla u_{H}(t_{0})\right\Vert
_{L_{x}^{2}}^{2}\right\vert \\
&\leqslant &2\left\vert E_{H}(u(t_{H}))-\tfrac{1}{2}\Vert \nabla
u_{H}(t_{H})\Vert _{L_{x}^{2}}^{2}\right\vert +2\left\vert E_{H}(u(t_{0}))-%
\tfrac{1}{2}\Vert \nabla u_{H}(t_{0})\Vert _{L_{x}^{2}}^{2}\right\vert \\
&&+2\left\vert E_{H}(u(t_{H}))-E_{H}(u(t_{0}))\right\vert \\
&\leqslant &\tfrac{1}{4}\varepsilon ^{2}
\end{eqnarray*}%
This is a contradiction with the assumption that $\Vert \nabla
u_{H}(t_{H})\Vert _{L_{x}^{2}}=\varepsilon $ and $\Vert \nabla
u_{H}(t_{0})\Vert _{L_{x}^{2}}\leqslant \frac{1}{2}\varepsilon $, which
together imply that 
\begin{equation*}
\left\vert \Vert \nabla u_{H}(t_{H})\Vert _{L_{x}^{2}}^{2}-\Vert \nabla
u_{H}(t_{0})\Vert _{L_{x}^{2}}^{2}\right\vert \geqslant \frac{3}{4}%
\varepsilon ^{2}.
\end{equation*}%
Thus it follows that the assumption $\left\vert t_{H}-t_{0}\right\vert \ll
C_{1}^{-10}M^{-2}\epsilon ^{2}$ is false, and we have $\left\vert
t_{H}-t_{0}\right\vert \gtrsim C_{1}^{-10}M^{-2}\epsilon ^{2}$. A similar
argument applies in the backward time direction $t<t_{0}$, yielding the
result.

It remains to prove the claimed bound \eqref{E:en-minus-bd} on $E_{L}(u(t))$%
. For this, it clearly suffices to prove 
\begin{equation}
\left\vert \partial _{t}E_{L}(u(t))\right\vert \lesssim C_{1}^{10}M^{2}
\label{E:en-minus-bd2}
\end{equation}%
We will compute each term of $\partial _{t}E_{L}(u(t))$ and estimate each
term using Bernstein and Sobolev. First, we have 
\begin{equation*}
\partial _{t}\int |\nabla u_{L}|^{2}=-2\func{Re}\int \Delta \bar{u}%
_{L}\,\partial _{t}u_{L}=2\func{Im}\int \Delta \bar{u}_{L}(\Delta
u_{L}+(|u|^{4}u)_{L})=2\func{Im}\int \Delta \bar{u}_{L}(|u|^{4}u)_{L}
\end{equation*}%
Hence 
\begin{equation}
\left\vert \partial _{t}\int |\nabla u_{L}|^{2}\right\vert \leqslant 2\Vert
\Delta u_{L}\Vert _{L_{x}^{6}}\Vert (|u|^{4}u)_{L}\Vert
_{L_{x}^{6/5}}\lesssim M^{2}\Vert u\Vert _{L_{x}^{6}}^{6}\lesssim
M^{2}C_{1}^{6}  \label{E:un6}
\end{equation}%
Next, we address the power $6$ terms, ignoring real/imaginary parts, complex
conjugates, and $\pm $ signs. We are justified in doing so, since in the
end, every term is just estimated by absolute value.%
\begin{eqnarray}
\partial _{t}\int u_{L}^{4}u_{H}^{2} &=&4\int u_{L}^{3}\left( \partial
_{t}u_{L}\right) u_{H}^{2}+2\int u_{L}^{4}u_{H}\left( \partial
_{t}u_{H}\right)  \label{E:un1} \\
&=&4\int u_{L}^{3}\left( \Delta u_{L}\right) u_{H}^{2}+4\int u_{L}^{3}\left(
u^{5}\right) _{L}u_{H}^{2}  \notag \\
&&+2\int u_{L}^{4}u_{H}\left( u^{5}\right) _{H}+2\int u_{L}^{4}u_{H}\left(
\Delta u_{H}\right)  \notag
\end{eqnarray}%
The first three terms are estimated directly: 
\begin{eqnarray}
\left\vert \int u_{L}^{3}\Delta u_{L}u_{H}^{2}\right\vert &\leqslant &\Vert
u_{L}\Vert _{L_{x}^{6}}^{3}\Vert \Delta u_{L}\Vert _{L_{x}^{6}}\Vert
u_{H}\Vert _{L_{x}^{6}}^{2}  \label{E:un2} \\
&\leqslant &\Vert u\Vert _{L_{x}^{6}}^{3}M^{2}\Vert u_{L}\Vert
_{L_{x}^{6}}\Vert u\Vert _{L_{x}^{6}}^{2}  \notag \\
&\lesssim &M^{2}C_{1}^{6}  \notag
\end{eqnarray}%
The second term: 
\begin{eqnarray*}
\left\vert \int u_{L}^{3}(u^{5})_{L}u_{H}^{2}\right\vert &\leqslant &\Vert
u_{L}\Vert _{L_{x}^{\infty }}^{3}\Vert (u^{5})_{L}\Vert _{L_{x}^{3/2}}\Vert
u_{H}\Vert _{L_{x}^{6}}^{2} \\
&\leqslant &\Vert u_{L}\Vert _{L_{x}^{\infty }}^{3}\Vert (u^{5})_{L}\Vert
_{L_{x}^{3/2}}\Vert u\Vert _{L_{x}^{6}}^{2} \\
&\lesssim &\Vert u_{L}\Vert _{L_{x}^{\infty }}^{3}\Vert (u^{5})_{L}\Vert
_{L_{x}^{3/2}}C_{1}^{2}
\end{eqnarray*}%
By Bernstein, $\Vert u_{L}\Vert _{L_{x}^{\infty }}\lesssim M^{1/2}C_{1}$.
Also, $\Vert (u^{5})_{L}\Vert _{L_{x}^{3/2}}\lesssim M^{1/2}\Vert u^{5}\Vert
_{L_{x}^{6/5}}\lesssim M^{1/2}C_{1}^{5}$. That is, 
\begin{equation}
\left\vert \int u_{L}^{3}(u^{5})_{L}u_{H}^{2}\right\vert \lesssim
M^{2}C_{1}^{10}  \label{E:un3}
\end{equation}%
The third term 
\begin{eqnarray}
\left\vert \int u_{L}^{4}u_{H}(u^{5})_{H}\right\vert &\lesssim &\Vert
u_{L}\Vert _{L_{x}^{\infty }}^{4}\Vert u_{H}\Vert _{L_{x}^{6}}\Vert
(u^{5})_{H}\Vert _{L_{x}^{6/5}}  \label{E:un4} \\
&\lesssim &\left( M^{1/2}C_{1}\right) ^{4}\Vert u\Vert _{L_{x}^{6}}\Vert
u^{5}\Vert _{L_{x}^{6/5}}  \notag \\
&\lesssim &\left( M^{1/2}C_{1}\right) ^{4}C_{1}^{6}  \notag
\end{eqnarray}%
where by Bernstein, $\Vert u_{L}\Vert _{L_{x}^{\infty }}\lesssim
M^{1/2}C_{1} $. For the fourth term, we must first apply integration by
parts: 
\begin{equation*}
\int u_{L}^{4}u_{H}\Delta u_{H}=-4\int u_{L}^{3}\nabla u_{L}u_{H}\nabla
u_{H}-\int u_{L}^{4}\nabla u_{H}\nabla u_{H}
\end{equation*}%
By H\"{o}lder, 
\begin{equation*}
\left\vert \int u_{L}^{4}u_{H}\Delta u_{H}\right\vert \leqslant 4\Vert
u_{L}\Vert _{L_{x}^{\infty }}^{3}\Vert \nabla u_{L}\Vert _{L_{x}^{3}}\Vert
u_{H}\Vert _{L_{x}^{6}}\Vert \nabla u_{H}\Vert _{L_{x}^{2}}+\Vert u_{L}\Vert
_{L_{x}^{\infty }}^{4}\Vert \nabla u_{H}\Vert _{L_{x}^{2}}^{2}
\end{equation*}%
Again, by Bernstein, $\Vert u_{L}\Vert _{L_{x}^{\infty }}\lesssim
C_{1}M^{1/2}$ and by Sobolev, $\Vert \nabla u_{L}\Vert _{L_{x}^{3}}\lesssim
M^{1/2}\Vert \nabla u_{L}\Vert _{L_{x}^{2}}\lesssim C_{1}M^{1/2}$. Hence 
\begin{equation}
\left\vert \int u_{L}^{4}u_{H}\Delta u_{H}\right\vert \lesssim M^{2}C_{1}^{6}
\label{E:un5}
\end{equation}%
Using \eqref{E:un2}, \eqref{E:un3}, \eqref{E:un4}, \eqref{E:un5} as
estimates for the terms in the right side of \eqref{E:un1}, we obtain 
\begin{equation}
\left\vert \partial _{t}\int u_{L}^{4}u_{H}^{2}\right\vert \lesssim
M^{2}C_{1}^{10}  \label{E:un8}
\end{equation}%
Similar methods apply to the other two (easier) terms $\int |u_{L}|^{6}$ and 
$\int u_{L}^{5}u_{H}$ in \eqref{E:Eminus}. Collecting \eqref{E:un6}, %
\eqref{E:un8}, and the corresponding estimates for $\int |u_{L}|^{6}$ and $%
\int u_{L}^{5}u_{H}$, we obtain \eqref{E:en-minus-bd2}.
\end{proof}

We complete the proof of Theorem \ref{Cor:UTFLforNLS} with the following
compactness argument. Fix $0<\varepsilon \ll 1$. For each $t_{0}\in \lbrack
0,1]$, there exists $M(t_{0})$ such that 
\begin{equation*}
\Vert P_{>M(t_{0})}\nabla u(t_{0})\Vert _{L_{x}^{2}}\leq \frac{1}{2}%
\varepsilon
\end{equation*}%
By Lemma \ref{L:local-control}, there exists $\delta >0$ (depending on $%
M(t_{0})$, $C_{1}$, and $\varepsilon $) such that if we set $%
I_{t_{0}}=(t_{0}-\delta ,t_{0}+\delta )$, then 
\begin{equation*}
\Vert P_{>M(t_{0})}\nabla u(t)\Vert _{L_{I_{t_{0}}}^{\infty }L_{x}^{2}}\leq
\varepsilon
\end{equation*}%
The collection of intervals $\{I_{t}\}_{t\in \lbrack 0,1]}$ covers $[0,1]$.
Pass to a finite subcover $I_{t_{1}},\ldots ,I_{t_{k}}$, and then let 
\begin{equation*}
M=\max (M(t_{1}),\ldots ,M(t_{k}))
\end{equation*}%
Then clearly 
\begin{equation*}
\Vert P_{>M}\nabla u(t)\Vert _{L_{[0,1]}^{\infty }L_{x}^{2}}\leq \varepsilon
\end{equation*}%
since this holds on each of the subintervals $I_{t_{j}}$, $1\leq j\leq k$.

\section{An Example of the Bernstein Inequalities on $\mathbb{T}^{d}$\label%
{Sec:AppendixBernstein}}

Take $M\geq 1$. For $\xi \in \mathbb{Z}^{d}$, let $\mathbf{1}_{\leq M}(\xi )$
denote the characteristic function that projects to tuples $\xi =(\xi
_{1},\ldots \xi _{d})$ so that $|\xi _{j}|\leq M$ for each $1\leq j\leq d$.
We define 
\begin{equation*}
P_{\leq M}f(x)=\sum_{\xi \in \mathbb{Z}}e^{ix\cdot \xi }\mathbf{1}_{\leq
M}(\xi )\hat{f}(\xi )
\end{equation*}%
so that 
\begin{equation*}
P_{\leq M}f(x)=K_{M}\ast f(x)
\end{equation*}%
where 
\begin{equation*}
K_{M}(x)=\sum_{\xi \in \mathbb{Z}^{d}}\mathbf{1}_{\leq M}(\xi )e^{ix\cdot
\xi }=\prod_{j=1}^{d}\sum_{|\xi _{j}|\leq M}e^{ix_{j}\xi
_{j}}=\prod_{j=1}^{d}K_{M}(x_{j})
\end{equation*}%
where on the right, each $K_{M}(x_{j})$ is the $d=1$ version of the kernel.
By summing a geometric series, we compute 
\begin{equation*}
K_{M}(x_{j})=\frac{\sin (M+1)x_{j}}{\sin x_{j}}
\end{equation*}%
$K_{M}(x_{j})$ is, of course, $2\pi $ periodic, but due to the fact that $%
K_{M}(x_{j}\pm \pi )=\pm K_{M}(x_{j})$, with signs depending on the parity
of $M$, we have that $|K_{M}(x_{j})|$ is in fact $\pi $-periodic. By
separately considering $|x_{j}|\leq \frac{1}{M}$ (and there using the
approximations $\sin (M+1)x_{j}\approx (M+1)x_{j}$ and $\sin x_{j}\approx
x_{j}$) and the region $\frac{1}{M}\leq |x_{j}|\leq \frac{\pi }{2}$ (there
just using the crude approximation for the denominator $|\sin x_{j}|\geq 
\frac{1}{2}|x_{j}|$), we obtain that for $|x_{j}|\leq \frac{\pi }{2}$, we
have the pointwise bound 
\begin{equation*}
|K_{M}(x_{j})|\leq 
\begin{cases}
\frac{1}{|x_{j}|} & \text{if }\frac{1}{M}\leq |x_{j}|\leq \frac{\pi }{2} \\ 
M & \text{if }|x_{j}|\leq \frac{1}{M}%
\end{cases}%
\end{equation*}%
From this we obtain that for $1<q\leq \infty $, 
\begin{equation*}
\Vert K_{M}(x)\Vert _{L^{q}(\mathbb{T}^{d})}=\prod_{j=1}^{d}\Vert
K_{M}(x_{j})\Vert _{L^{q}(\mathbb{T})}\lesssim M^{d(1-\frac{1}{q})}
\end{equation*}%
Now consider $\frac{1}{r}=\frac{1}{p}-\frac{s}{d}$ with $1\leq p<r\leq
\infty $, $s>0$. The bound 
\begin{equation*}
\Vert P_{\leq M}f\Vert _{L^{r}(\mathbb{T}^{d})}\lesssim M^{s}\Vert f\Vert
_{L^{p}(\mathbb{T}^{d})}
\end{equation*}%
follows from Young's convolution inequality ($q>1$ since $p<r$). The case of 
$r=p$ (in which $q=1$) requires a separate argument, since $\Vert K_{M}\Vert
_{L^{1}}\approx \log M$. In this case, write 
\begin{equation*}
K_{M}(x_{j})(x_{j}-y_{j})=e^{ix_{j}M}\frac{1}{2i\sin (x_{j}-y_{j})}%
e^{-iy_{j}M}-e^{-ix_{j}M}\frac{1}{2i\sin (x_{j}-y_{j})}e^{iy_{j}M}
\end{equation*}%
so that, with $\kappa _{j}$ the operator of convolution in the $j$-th
component with $\func{pv}\frac{1}{2i\sin x_{j}}$ and $e_{M,j}^{\pm }$ the
operator of multiplication by $e^{\pm ix_{j}M}$, 
\begin{equation*}
P_{\leq M}=\prod_{j=1}^{M}(e_{j}^{+}\kappa _{j}e_{j}^{-}-e_{j}^{-}\kappa
_{j}e_{j}^{+})
\end{equation*}%
Then 
\begin{equation*}
\Vert P_{\leq M}f\Vert _{L^{r}}\lesssim \Vert f\Vert _{L^{r}}
\end{equation*}%
follows for $1<r<\infty $ by the boundedness of the Hilbert transform. 
\footnote{%
For a measure space $X$, if $T_{j}$ are each one-dimensional operators
acting in the $j$-th component and are bounded as one-dimensional operators $%
T_{j}:L^{r}(X)\rightarrow L^{r}(X)$, then $T=T_{1}\cdots T_{d}$ is bounded
as a $d$-dimensional operator $T:L^{r}(X^{d})\rightarrow L^{r}(X^{d})$. This
follows readily by Fubini's theorem. In fact, one can generalize this to a
statement for $T_{j}:L^{p}(X)\rightarrow L^{r}(X)$ for $r\geq p$ by using
Minkowski's integral inequality.}

For $M\geq 2$, the projection $P_{M}$ is defined as 
\begin{equation*}
P_{M}=P_{\leq M}-P_{\leq M/2}
\end{equation*}%
and thus the same estimates apply.


\begin{thebibliography}{99}
\bibitem{AGT} R. Adami, F. Golse, and A. Teta, \emph{Rigorous derivation of
the cubic NLS in dimension one}, J. Stat. Phys. 127 (2007), 1194--1220.

\bibitem{Ammari2} Z. Ammari and F. Nier, \emph{Mean Field Propagation of
Wigner Measures and BBGKY Hierarchies for General Bosonic States}, J. Math.
Pures. Appl. \textbf{95} (2011), 585-626.

\bibitem{Ammari1} Z. Ammari and F. Nier, \emph{Mean field limit for bosons
and infinite dimensional phase-space analysis}, Ann. H. Poincare \textbf{9}
(2008), 1503--1574.

\bibitem{AKS} G. B. Arous, K. Kirkpatrick, and B. Schlein, \emph{A Central
Limit Theorem in Many-Body Quantum Dynamics}, Comm. Math. Phys. \textbf{321}
(2013), 371-417.

\bibitem{Anderson} M. H. Anderson, J. R. Ensher, M. R. Matthews, C. E.
Wieman, and E. A. Cornell, \emph{Observation of Bose-Einstein Condensation
in a Dilute Atomic Vapor}, Science \textbf{269 }(1995), 198--201.

\bibitem{SchleinNew} N. Benedikter, G. Oliveira, and B. Schlein, \emph{%
Quantitative Derivation of the Gross-Pitaevskii Equation, }Comm. Pure. Appl.
Math. \textbf{68} (2015), 1399--1482.

\bibitem{BPS} N. Benedikter, M. Porta and B. Schlein, \emph{Mean-field
Evolution of Fermionic Systems}, Comm. Math. Phys. \textbf{331} (2014),
1087-1131.

\bibitem{BD} J. Bourgain and C. Demeter, \emph{The proof of the }$l^{2}$%
\emph{\ Decoupling Conjecture}, Annals Math. \textbf{182} (2015), 351-389.{}

\bibitem{TCNPdeFinitte} T. Chen, C. Hainzl, N. Pavlovi\'{c}, and R.
Seiringer, \emph{Unconditional Uniqueness for the Cubic Gross-Pitaevskii
Hierarchy via Quantum de Finetti}, Commun. Pure Appl. Math. \textbf{68}
(2015), 1845-1884.

\bibitem{TChenAndNpGP1} T. Chen and N. Pavlovi\'{c}, \emph{On the Cauchy
Problem for Focusing and Defocusing Gross-Pitaevskii Hierarchies}, Discrete
Contin. Dyn. Syst. \textbf{27} (2010), 715--739.

\bibitem{TChenAndNP} T. Chen and N. Pavlovi\'{c}, \emph{The Quintic NLS as
the Mean Field Limit of a Boson Gas with Three-Body Interactions}, J. Funct.
Anal. \textbf{260} (2011), 959--997.

\bibitem{TChenAndNP1.5} T. Chen and N. Pavlovi\'{c}, \emph{A new proof of
existence of solutions for focusing and defocusing Gross-Pitaevskii
hierarchies}, \emph{Proc. Amer. Math. Soc.}, \textbf{141} (2013), 279-293.

\bibitem{TChenAndNP2} T. Chen and N. Pavlovi\'{c}, \emph{Higher order energy
conservation and global wellposedness of solutions for Gross-Pitaevskii
hierarchies}, Commun. PDE, \textbf{39} (2014), 1597-1634.

\bibitem{TChenAndNPSpace-Time} T. Chen and N. Pavlovi\'{c}, \emph{Derivation
of the cubic NLS and Gross-Pitaevskii hierarchy from manybody dynamics in }$%
d=3$\emph{\ based on spacetime norms}, Ann. H. Poincare, \textbf{15} (2014),
543 - 588.

\bibitem{TCNPNT} T. Chen, N. Pavlovi\'{c}, and N. Tzirakis, \emph{Energy
Conservation and Blowup of Solutions for Focusing Gross--Pitaevskii
Hierarchies}, Ann. I. H. Poincar\'{e} \textbf{27} (2010), 1271-1290.

\bibitem{ChenDie} X. Chen, \emph{Classical Proofs Of Kato Type Smoothing
Estimates for The Schr\"{o}dinger Equation with Quadratic Potential in }$%
R^{n+1}$\emph{\ with Application}, Differential and Integral Equations 
\textbf{24} (2011), 209-230.

\bibitem{Chen2ndOrder} X. Chen, \emph{Second Order Corrections to Mean Field
Evolution for Weakly Interacting Bosons in the Case of Three-body
Interactions\textbf{,} }Arch. Rational Mech. Anal. \textbf{203} (2012),
455-497.

\bibitem{ChenAnisotropic} X. Chen, \emph{Collapsing Estimates and the
Rigorous Derivation of the 2d Cubic Nonlinear Schr\"{o}dinger Equation with
Anisotropic Switchable Quadratic Traps}, J. Math. Pures Appl. \textbf{98}
(2012), 450--478.

\bibitem{Chen3DDerivation} X. Chen, \emph{On the Rigorous Derivation of the
3D Cubic Nonlinear Schr\"{o}dinger Equation with A Quadratic Trap}, Arch.
Rational Mech. Anal. \textbf{210 }(2013), 365-408.

\bibitem{C-H3Dto2D} X. Chen and J. Holmer, \emph{On the Rigorous Derivation
of the 2D Cubic Nonlinear Schr\"{o}dinger Equation from 3D Quantum Many-Body
Dynamics}, Arch. Rational Mech. Anal. \textbf{210 }(2013), 909-954.

\bibitem{C-H2/3} X. Chen and J. Holmer, \emph{On the Klainerman-Machedon
Conjecture of the Quantum BBGKY Hierarchy with Self-interaction}, J. Eur.
Math. Soc. (JEMS) \textbf{18} (2016), 1161-1200.

\bibitem{C-HFocusing} X. Chen and J. Holmer, \emph{Focusing Quantum
Many-body Dynamics: The Rigorous Derivation of the 1D Focusing Cubic
Nonlinear Schr\"{o}dinger Equation,} Arch. Rational Mech. Anal. \textbf{221}
(2016), 631-676.

\bibitem{C-HFocusingII} X. Chen and J. Holmer, \emph{Focusing Quantum
Many-body Dynamics II: The Rigorous Derivation of the 1D Focusing Cubic
Nonlinear Schr\"{o}dinger Equation from 3D,} Analysis \& PDE \textbf{10}
(2017), 589-633.

\bibitem{C-H<1} X. Chen and J. Holmer, \emph{Correlation structures,
Many-body Scattering Processes and the Derivation of the Gross-Pitaevskii
Hierarchy,} Int. Math. Res. Notices \textbf{2016}, 3051-3110.

\bibitem{C-HFocusingIII} X. Chen and J. Holmer, \emph{The Rigorous
Derivation of the 2D Cubic Focusing NLS from Quantum Many-body Evolution},
Int. Math. Res. Notices \textbf{2017}, 4173--4216.

\bibitem{C-PUniqueness} X. Chen and P. Smith, \emph{On the Unconditional
Uniqueness of Solutions to the Infinite Radial Chern-Simons-Schr\"{o}dinger
Hierarchy}, Analysis \& PDE \textbf{7} (2014), 1683-1712.

\bibitem{OriginalDeFinette} M. Christandl, R. Koenig, G. Mitchison, R.
Renner, \emph{One-and-a-half Quantum de Finetti Theorems}, Comm. Math. Phys.
273 (2007), 473-498.

\bibitem{CKSTT} J. Colliander, M. Keel, G. Staffilani, H. Takaoka, and T.
Tao, \emph{Global well-posedness and scattering for the energy-critical
nonlinear Schr\"{o}dinger equation in $\mathbb{R}^{3}$}, Ann. of Math. (2) 
\textbf{167} (2008), no. 3, 767--865.

\bibitem{Cornish} S. L. Cornish, N. R. Claussen, J. L. Roberts, E. A.
Cornell, and C. E. Wieman, \emph{Stable }$^{85}$\emph{Rb Bose-Einstein
Condensates with Widely Turnable Interactions, }Phys. Rev. Lett. \textbf{85 }
(2000), 1795-1798.

\bibitem{Davis} K. B. Davis, M. -O. Mewes, M. R. Andrews, N. J. van Druten,
D. S. Durfee, D. M. Kurn, and W. Ketterle, \emph{Bose-Einstein condensation
in a gas of sodium atoms}, Phys. Rev. Lett. \textbf{75 }(1995), 3969--3973.

\bibitem{JILA2} E. A. Donley, N. R. Claussen, S. L. Cornish, J. L. Roberts,
E. A. Cornell, and C. E. Wieman, \emph{Dynamics of Collapsing and Exploding
Bose-Einstein Condensates}, Nature \textbf{412} (2001), 295-299.

\bibitem{E-Y1} L. Erd\"{o}s and H. T. Yau, \emph{Derivation of the
Non-linear Schr\"{o}dinger Equation from a Many-body Coulomb System,} Adv.
Theor. Math. Phys. \textbf{5\ }(2001), 1169--1205.

\bibitem{E-S-Y2} L. Erd\"{o}s, B. Schlein, and H. T. Yau, \emph{Derivation
of the Cubic non-linear Schr\"{o}dinger Equation from Quantum Dynamics of
Many-body Systems}, Invent. Math. \textbf{167} (2007), 515--614.

\bibitem{E-S-Y5} L. Erd\"{o}s, B. Schlein, and H. T. Yau, \emph{Rigorous
Derivation of the Gross-Pitaevskii Equation with a Large Interaction
Potential}, J. Amer. Math. Soc. \textbf{22} (2009), 1099-1156.

\bibitem{E-S-Y3} L. Erd\"{o}s, B. Schlein, and H. T. Yau, \emph{Derivation
of the Gross-Pitaevskii Equation for the Dynamics of Bose-Einstein Condensate%
}, Annals Math. \textbf{172} (2010), 291-370.{}

\bibitem{Frolich} J. Fr\"{o}hlich, A. Knowles, and S. Schwarz, \emph{On the
Mean-Field Limit of Bosons with Coulomb Two-Body Interaction, }Commun. Math.
Phys. \textbf{288} (2009), 1023--1059.

\bibitem{Sohinger} P. Gressman, V. Sohinger, and G. Staffilani, \emph{On the
Uniqueness of Solutions to the Periodic 3D Gross-Pitaevskii Hierarchy, }J.
Funct. Anal. \textbf{266} (2014), 4705--4764.

\bibitem{ManosNLW} M. G. Grillakis, \emph{Regularity and asymptotic
behaviour of the wave equation with a critical nonlinearity}, Ann. of Math.
(2), \textbf{132} (1990) 485--509.

\bibitem{GM1} M. G. Grillakis and M. Machedon, \emph{Pair excitations and
the mean field approximation of interacting Bosons, I, }Comm. Math. Phys. 
\textbf{324} (2013), 601-636.

\bibitem{GM2} M. G. Grillakis and M. Machedon, \emph{Pair excitations and
the mean field approximation of interacting Bosons, II, }Commun. PDE, 
\textbf{42} (2017), 24--67.

\bibitem{GMM1} M. G. Grillakis, M. Machedon, and D. Margetis, \emph{Second
Order Corrections to Mean Field Evolution for Weakly Interacting Bosons. I,}
Commun. Math. Phys. \textbf{294} (2010), 273-301.

\bibitem{GMM2} M. G. Grillakis, M. Machedon, and D. Margetis, \emph{Second
Order Corrections to Mean Field Evolution for Weakly Interacting Bosons. II,}
Adv. Math. \textbf{228} (2011) 1788-1815.

\bibitem{3-body Experiment} A. H\"{a}rter, A. Krukow, M. Dei$\beta $, B.
Drews, E. Tiemann and J. Hecker Denschlag, Shedding Light on Three-Body
Recombination in an Ultracold Atomic Gas, Nature Physics \textbf{9 }(2013),
512-517.

\bibitem{NatureExperiment} Z. Hadzibabic, P. Kr\"{u}ger, M. Cheneau, B.
Battelier and J. Dalibard, \emph{Berezinskii--Kosterlitz--Thouless crossover
in a trapped atomic gas, }Nature, \textbf{441} (2006), 1118-1121.

\bibitem{HerrSohinger} S. Herr and V. Sohinger, \emph{The Gross-Pitaevskii
Hierarchy on General Rectangular Tori}, Arch. Rational Mech. Anal., \textbf{%
220} (2016), 1119-1158.

\bibitem{HoTaXi14} Y.~{Hong}, K.~{Taliaferro}, and Z.~{Xie}, \emph{%
Unconditional Uniqueness of the cubic Gross-Pitaevskii Hierarchy with Low
Regularity}, SIAM J. Math. Anal., \textbf{47} (2015), 3314--3341.

\bibitem{HTX} Y. Hong, K. Taliaferro, Z. Xie, \emph{Uniqueness of solutions
to the 3D quintic Gross-Pitaevskii hierarchy}, J. Functional Analysis 
\textbf{270} (2016), no. 1, 34--67.

\bibitem{IP} A. D. Ionescu and B. Pausader, \emph{The energy-critical
defocusing NLS on }$\mathbb{T}^{3}$, Duke Math. J. \textbf{161} (2012),
1581-1612.

\bibitem{Ketterle} W. Ketterle and N. J. van Druten, \emph{Evaporative
Cooling of Trapped Atoms, }Advances In Atomic, Molecular, and Optical
Physics \textbf{37} (1996), 181-236.

\bibitem{Khaykovich} L. Khaykovich, F. Schreck, G. Ferrari, T. Bourdel, J.
Cubizolles, L. D. Carr, Y. Castin, C. Salomon, \emph{Formation of a
Matter-Wave Bright Soliton}, Science \textbf{296 }(2002), 1290-1293.

\bibitem{KV} R. Killip and M. Visan, \emph{Scale invariant Strichartz
estimates on tori and applications, }Math. Res. Lett. \textbf{23} (2016),
445--472.

\bibitem{Kirpatrick} K. Kirkpatrick, B. Schlein and G. Staffilani, \emph{\
Derivation of the Two Dimensional Nonlinear Schr\"{o}dinger Equation from
Many Body Quantum Dynamics}, Amer. J. Math. \textbf{133} (2011), 91-130.

\bibitem{KlainermanMachedonNullForm} S. Klainerman and M. Machedon \emph{\
Space-time estimates for null forms and the local existence theorem}, Comm.
Pure Appl. Math. \textbf{46} (1993), 1221-1268.

\bibitem{KlainermanAndMachedon} S. Klainerman and M. Machedon, \emph{On the
Uniqueness of Solutions to the Gross-Pitaevskii Hierarchy}, Commun. Math.
Phys. \textbf{279} (2008), 169-185.

\bibitem{Kuz} E. Kuz, \emph{Rate of Convergence to Mean Field for
Interacting Bosons}, Commun. PDE, \textbf{40} (2015), 1831-1854.

\bibitem{Kuz2} E. Kuz, \emph{Exact Evolution versus Mean Field with
Second-order Correction for Bosons Interacting via Short-range Two-body
Potential}, Differential Integral Equations \textbf{30 }(2017), 587-630.

\bibitem{KnowlesAndPickl} A. Knowles and P. Pickl, \emph{Mean-Field
Dynamics: Singular Potentials and Rate of Convergence}, Commum. Math. Phys. 
\textbf{298} (2010), 101-138.

\bibitem{Lewin} M. Lewin, P. T. Nam, and N. Rougerie, \emph{Derivation of {H}%
artree's theory for generic mean-field {B}ose systems}, Adv. Math. \textbf{%
254} (2014), 570--621.

\bibitem{LewinFocusing} M. Lewin, P. T. Nam, amd N. Rougerie, \emph{The
mean-field approximation and the non-linear Schr\"{o}dinger functional for
trapped Bose gases,}$\ $Trans. Amer. Math. Soc, \textbf{368 }(2016),
6131--6157.

\bibitem{LNR2018} M. Lewin, P. T. Nam, amd N. Rougerie, \emph{Blow-up
profile of rotating 2D focusing Bose gases,}$\ $arXiv:1802.01854.

\bibitem{Lieb2} E. H. Lieb, R. Seiringer, J. P. Solovej and J. Yngvason, 
\emph{The Mathematics of the Bose Gas and Its Condensation}, Basel,
Switzerland: Birkha\"{u}ser Verlag, 2005.

\bibitem{MNPS} D. Mendelson, A. Nahmod, N. Pavlovi\'{c}, G. Staffilani, 
\emph{An Infinite Sequence of Conserved Quantities for the Cubic
Gross-Pitaevskii Hierarchy on }$\mathbb{R}$, arXiv:1612.09236.

\bibitem{MichelangeliSchlein} A. Michelangeli and B. Schlein,\emph{\
Dynamical Collapse of Boson Stars}, Commum. Math. Phys. \textbf{311} (2012),
645-687.

\bibitem{RodnianskiAndSchlein} I. Rodnianski and B. Schlein, \emph{Quantum
Fluctuations and Rate of Convergence Towards Mean Field Dynamics}, Commun.
Math. Phys. \textbf{291} (2009), 31-61.

\bibitem{Sohinger2} V.~{Sohinger}, \emph{Local existence of solutions to
Randomized Gross-Pitaevskii hierarchies}, Trans. Amer. Math. Soc. \textbf{368%
} (2016), 1759--1835.

\bibitem{Sohinger3} V.~{Sohinger}, \emph{{A Rigorous Derivation of the
Defocusing Cubic Nonlinear {S}chr\"{o}dinger Equation on }}$\mathbb{T}$\emph{%
{$^{3}$ from the Dynamics of Many-body Quantum Systems}}, Ann. Inst. H.
Poincar\'{e} Anal. Non Lin\'{e}aire, \textbf{32} (2015), 1337--1365.

\bibitem{SoSt13} V.~{Sohinger} and G.~{Staffilani}, \emph{Randomization and
the Gross-Pitaevskii hierarchy}, Arch. Rational Mech. Anal. \textbf{218}
(2015), 417-485.

\bibitem{Spohn} H. Spohn, \emph{Kinetic Equations from Hamiltonian Dynamics}%
, Rev. Mod. Phys. \textbf{52} (1980), 569-615.

\bibitem{Streker} K.E. Strecker; G.B. Partridge; A.G. Truscott; R.G. Hulet, 
\emph{Formation and Propagation of Matter-wave Soliton Trains, }Nature 
\textbf{417} (2002), 150-153.

\bibitem{Tao} T. Tao, Nonlinear dispersive equations. Local and global
analysis. CBMS Regional Conference Series in Mathematics, 106. Published for
the Conference Board of the Mathematical Sciences, Washington, DC; by the
American Mathematical Society, Providence, RI, 2006. xvi+373 pp. ISBN:
0-8218-4143-2

\bibitem{Xie} Z. Xie, \emph{Derivation of a Nonlinear Schr\"{o}dinger
Equation with a General Power-type Nonlinearity in }$d=1,2$\textbf{, }
Differential Integral Equations \textbf{28} (2015), 455-504.

\bibitem{Z} Y. Zhang, \emph{A Bilinear Strichartz Estimate on Tori}, 2015,
http://www.math.ucla.edu/\symbol{126}zyf/notes/bilinear\_strichartz.pdf.
\end{thebibliography}
\end{document}